\theoremstyle{plain}
\theoremstyle{definition}
\newtheorem{theorem}{Theorem}[section]
\newtheorem{remark}[theorem]{Remark}
\newtheorem{lemma}[theorem]{Lemma}
\newtheorem{definition}[theorem]{Definition}
\newtheorem{question}[theorem]{Question}
\newtheorem{example}[theorem]{Example}
\newtheorem{proposition}[theorem]{Proposition}
\newtheorem{corollary}[theorem]{Corollary}
\DeclareMathAlphabet{\mathpzc}{OT1}{pzc}{m}{it}
\def\la{\leftarrow}
\def\ra{\rightarrow}
\def\pr{\prime}
\def\ov{\overline}
\def\setm{\setminus}
\begin{document}

\title{Minimal Length Maximal Green Sequences}
\date{}
\author{Alexander Garver}
\address{Laboratoire de Combinatoire et d'Informatique Math\'ematique,
Universit\'e du Qu\'ebec \`a Montr\'eal}
\email{alexander.garver@lacim.ca}

\author{Thomas McConville}
\address{Department of Mathematics,
Massachusetts Institute of Technology}
\email{thomasmc@mit.edu}

\author{Khrystyna Serhiyenko}
\address{Department of Mathematics, 
University of California, Berkeley}
\email{khrystyna.serhiyenko@berkeley.edu}

\maketitle

\begin{abstract}
Maximal green sequences are important objects in representation theory, cluster algebras, and string theory. It is an open problem to determine what lengths are achieved by the maximal green sequences of a quiver. We combine the combinatorics of surface triangulations and the basics of scattering diagrams to address this problem. Our main result is a formula for the length of minimal length maximal green sequences of quivers defined by triangulations of an annulus or a punctured disk.
\end{abstract}

\tableofcontents

\section{Introduction}

A maximal green sequence is a distinguished sequence of local transformations, known as \textbf{mutations}, of a given \textbf{quiver} (i.e., directed graph). Maximal green sequences were introduced by Keller in \cite{keller2011cluster} in order to obtain combinatorial formulas for the refined Donaldson-Thomas invariants of Kontsevich and Soibelman \cite{KS}. They are also important in string theory \cite{alim2013bps}, representation theory \cite{bdp, by14}, and cluster algebras \cite{fomin2002cluster}. 

Recently, there have been many developments on the combinatorics of maximal green sequences (see \cite{mills2016maximal} and references therein). In particular, in \cite{mills2016maximal} it is shown that almost any quiver $Q$ with a finite \textbf{mutation class} (i.e., the set of quivers obtained from $Q$ by mutations) has a maximal green sequence. Our goal is to add to the known combinatorics by developing a numerical invariant of the set of maximal green sequences of $Q$: the length of minimal length maximal green sequences.

This invariant is natural from the perspective of cluster algebras. Fixing a quiver $Q$ induces an orientation of the edges of the corresponding exchange graph. It turns out that the maximal green sequences of $Q$ are in natural bijection with finite length maximal directed paths of the resulting oriented exchange graph (see \cite{bdp}). Examples of oriented exchange graphs include the Hasse diagrams of Tamari lattices and of Cambrian lattices of type $\mathbb{A}$, $\mathbb{D}$, and $\mathbb{E}$ \cite{ReadingCamb}. In these examples, the minimal length maximal green sequences always have length equal to the number of vertices of $Q$, but, in general, the minimal length of a maximal green sequence may be larger than the number of vertices of $Q$. Thus understanding the length of minimal length maximal green sequences provides new information about oriented exchange graphs.

Our main results (see Theorems~\ref{Thm:typeD} and \ref{Thm_affinequivers_min_length}) are formulas for this minimal length when $Q$ is of \textbf{mutation type} $\mathbb{D}_n$ or \textbf{mutation type} $\widetilde{\mathbb{A}}_{n}$ (i.e., $Q$ is in the mutation class of a type $\mathbb{D}_n$ or of an affine type $\mathbb{A}_{n}$ quiver). This number was calculated in mutation type $\mathbb{A}$ in \cite{cormier2015minimal}. We also obtain explicit constructions of minimal length maximal green sequences of $Q$, in the process of proving Theorems~\ref{Thm:typeD} and \ref{Thm_affinequivers_min_length}.


We also suspect that the length of minimal length maximal green sequences may be significant from the perspective of representation theory of algebras. More specifically, given two quivers that define derived equivalent cluster-tilted algebras, it appears that they will have minimal length maximal green sequences of the same length (see Section~\ref{Sec_conc} for more details). 

The paper is organized as follows. In Section~\ref{Sec_MGSs_prelim}, we review the basics of quiver mutation, maximal green sequences, and oriented exchange graphs. We also recall the definitions of $\textbf{c}$- and $\textbf{g}$-vectors, and the result that maximal green sequences are in bijection with certain sequences of $\textbf{c}$-vectors. In Section~\ref{Sec_scatter_diag}, we present the basics of scattering diagrams. We use these to reformulate a result of Muller (see Theorem~\ref{thmMullerSubquiver}) that shows that a maximal green sequence of a quiver determines a maximal green sequence of any full subquiver by removing from the corresponding sequence of $\textbf{c}$-vectors all those that are not supported on the full subquiver.

In Section~\ref{sec:direct sums}, we use Theorem~\ref{thmMullerSubquiver} to show that the minimal length of maximal green sequences of the direct sum of two quivers $Q^{1}$ and $Q^{2}$ is the minimal length of maximal green sequences of $Q^{1}$ plus the minimal length of maximal green sequences of $Q^{2}$. In Section~\ref{Sec_branches}, we use Theorem~\ref{thmMullerSubquiver} to show that if a quiver $\widetilde{Q}$ is obtained from another quiver $Q$ by ``attaching'' several mutation type $\mathbb{A}$ quivers to $Q$, then the problem of finding the length of minimal length maximal green sequences of $\widetilde{Q}$ reduces to finding the length of minimal length maximal green sequences of $Q$ (see Corollary~\ref{Cor:min_length_Qtilde}).

In Sections~\ref{Sec_type_D} and \ref{Sec_type_IV}, we apply our results to quivers of mutation type $\mathbb{D}_n$ and calculate the minimal length of maximal green sequences in Theorem~\ref{Thm:typeD}.    Here, we also use combinatorics of triangulated surfaces in the sense of \cite{fomin2008cluster} and \cite{fomin2012cluster} which we review in Section~\ref{Sec:QT}.   In particular, we recall how maximal green sequences can be interpreted geometrically using shear coordinates.   In Section~\ref{Sec_affine_A_quivers}, we do likewise for quivers of mutation type $\widetilde{\mathbb{A}}_{n}$ (see Theorem~\ref{Thm_affinequivers_min_length}). We remark that Corollary~\ref{Cor:min_length_Qtilde} plays in important role in these sections.


{\bf Acknowledgements.~} Alexander Garver thanks Greg Muller and Rebecca Patrias for useful conversations. At various stages of this project, Alexander Garver received support from an RTG grant DMS-1148634, NSERC, and the Canada Research Chairs program. Khrystyna Serhiyenko was supported by the NSF Postdoctoral Fellowship MSPRF-1502881.

\section{Maximal green sequences}\label{Sec_MGSs_prelim}

\subsection{Quivers and quivers mutation}

A \textbf{quiver} $Q$ is a directed graph. In other words, $Q$ is a 4-tuple $(Q_0,Q_1,s,t)$, where $Q_0 = [m] := \{1,2, \ldots, m\}$ is a set of \textbf{vertices}, $Q_1$ is a set of \textbf{arrows}, and two functions $s, t:Q_1 \to Q_0$ defined so that for every $\alpha \in Q_1$, we have $s(\alpha) \xrightarrow{\alpha} t(\alpha)$. An \textbf{ice quiver} is a pair $(Q,F)$ with $Q$ a quiver and $F \subset Q_0$ a set of \textbf{frozen vertices} with the restriction that any $i,j \in F$ have no arrows of $Q$ connecting them. By convention, we assume $Q_0\backslash F = [n]$ and $F = [n+1,m] := \{n+1, n+2, \ldots, m\}.$ We refer to elements of $Q_0\backslash F$ as \textbf{mutable vertices}. Any quiver $Q$ is regarded as an ice quiver by setting $Q = (Q, \emptyset)$.

If a given ice quiver $(Q,F)$ is \textbf{2-acyclic} (i.e., $Q$ has no loops or 2-cycles), we can define a local transformation of $(Q,F)$ called \textbf{mutation}. The {\bf mutation} of an ice quiver $(Q,F)$ at a mutable vertex $k$, denoted $\mu_k$, produces a new ice quiver $(\mu_kQ,F)$ by the three step process:

(1) For every $2$-path $i \to k \to j$ in $Q$, adjoin a new arrow $i \to j$.

(2) Reverse the direction of all arrows incident to $k$ in $Q$.

(3) Remove any $2$-cycles created, and remove any arrows created that connect two frozen vertices.

\noindent From now on, we will only work with 2-acyclic quivers. We show an example of mutation below with the mutable (resp., frozen) vertices in black (resp., blue).

\vspace{-.1in}
\[
\begin{array}{c c c c c c c c c}
\raisebox{-.4in}{$(Q,F)$} & \raisebox{-.4in}{=} & {\begin{xy} 0;<1pt,0pt>:<0pt,-1pt>:: 
(0,30) *+{1} ="0",
(40,0) *+{2} ="1",
(80,30) *+{3} ="2",
(40,60) *+{\textcolor{blue}{4}} ="3",
"0", {\ar@<-.5ex>"1"},
"0", {\ar@<.5ex>"1"},
"1", {\ar"2"},
"3", {\ar"2"},
\end{xy}} & \raisebox{-.4in}{$\stackrel{\mu_2}{\longmapsto}$} & {\begin{xy} 0;<1pt,0pt>:<0pt,-1pt>:: 
(0,30) *+{1} ="0",
(40,0) *+{2} ="1",
(80,30) *+{3} ="2",
(40,60) *+{\textcolor{blue}{4}} ="3",
"2", {\ar"1"},
"0", {\ar@<-.5ex>"2"},
"0", {\ar@<.5ex>"2"},
"1", {\ar@<-.5ex>"0"},
"1", {\ar@<.5ex>"0"},
"3", {\ar"2"},
\end{xy}} & \raisebox{-.4in}{=} & \raisebox{-.4in}{$(\mu_2Q,F)$}
\end{array}
\]

The information of an ice quiver can be equivalently described by its (skew-symmetric) \textbf{exchange matrix}. Given $(Q,F),$ we define $B = B_{(Q,F)} = (b_{ij}) \in \mathbb{Z}^{n\times m} := \{n \times m \text{ integer matrices}\}$ by $b_{ij} := |\{i \stackrel{\alpha}{\to} j \in Q_1\}| - |\{j \stackrel{\alpha}{\to} i \in Q_1\}|.$ Furthermore, ice quiver mutation can equivalently be defined  as \textbf{matrix mutation} of the corresponding exchange matrix. Given an exchange matrix $B \in \mathbb{Z}^{n\times m}$, the \textbf{mutation} of $B$ at $k \in [n]$, also denoted $\mu_k$, produces a new exchange matrix $\mu_k(B) = (b^\prime_{ij})$ with entries
\[
b^\prime_{ij} := \left\{\begin{array}{ccl}
-b_{ij} & : & \text{if $i=k$ or $j=k$} \\
b_{ij} + \frac{|b_{ik}|b_{kj}+ b_{ik}|b_{kj}|}{2} & : & \text{otherwise.}
\end{array}\right.
\]
For example, the mutation of the ice quiver above (here $m=4$ and $n=3$) translates into the following matrix mutation. Note that mutation of matrices and of ice quivers is an involution (i.e., $\mu_k\mu_k(B) = B$). 
\[
\begin{array}{c c c c c c c c c c}
B_{(Q,F)} & = & \left[\begin{array}{c c c | r}
0 & 2 & 0 & 0 \\
-2 & 0 & 1 & 0\\
0 & -1 & 0 & -1\\
\end{array}\right]
& \stackrel{\mu_2}{\longmapsto} &
\left[\begin{array}{c c c | r}
0 & -2 & 2 & 0 \\
2 & 0 & -1 & 0\\
-2 & 1 & 0 & -1\\
\end{array}\right] 
& = & B_{(\mu_2Q,F)}.
\end{array}
\]

Let Mut($(Q,F)$) denote the collection of ice quivers obtainable from $(Q,F)$ by finitely many mutations where such ice quivers are considered up to an isomorphism of quivers that fixes the frozen vertices. We will refer to Mut($(Q,F)$) as the \textbf{mutation class} of $(Q,F)$. Such an isomorphism is equivalent to a simultaneous permutation of the rows and first $n$ columns of the corresponding exchange matrices.

For our purposes, it will be useful to recall the following classification of \textbf{mutation type $\mathbb{A}_n$ quivers} (i.e., quivers $R \in \text{Mut}(1 \leftarrow 2 \leftarrow \cdots \leftarrow n)$) due to Buan and Vatne.

\begin{lemma}\cite[Prop. 2.4]{bv08}\label{BV}
A connected quiver $Q$ with $n$ vertices is of mutation type $\mathbb{A}_n$ if and only if $Q$ satisfies the following:
\begin{itemize}
      \item[i)] All non-trivial cycles in the underlying graph of $Q$ are oriented and of length 3.
      \item[ii)] Any vertex has degree at most 4.
      \item[iii)] If a vertex has degree 4, then two of its adjacent arrows belong to one 3-cycle, and the other two belong to another 3-cycle.
      \item[iv)] If a vertex has degree 3, then two of its adjacent arrows belong to a 3-cycle, and the third arrow does not belong to any 3-cycle.
     \end{itemize} 
\end{lemma}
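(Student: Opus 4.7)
The statement is a biconditional, so the plan is to handle the two directions separately. Both rely on induction, but they use induction on different quantities: the forward direction inducts on the mutation distance from the linear quiver $1 \leftarrow 2 \leftarrow \cdots \leftarrow n$, and the backward direction inducts on the number of vertices $n$.

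For the forward direction, my plan is to show that the set of quivers satisfying conditions i)--iv) is closed under mutation. The linear quiver $1 \leftarrow 2 \leftarrow \cdots \leftarrow n$ trivially satisfies all four conditions since it has no cycles and every vertex has degree at most $2$. So it suffices to check that if $Q$ satisfies i)--iv) and $k \in Q_0$, then $\mu_k Q$ also does. Using the three-step description of mutation from the text, I would perform a local case analysis at vertex $k$ based on its degree (which is at most $4$ by ii)) and on which of its incident arrows lie in $3$-cycles (by iii), iv)). The key observation is that the arrow-adjunction step of mutation only creates new arrows between vertices that are both neighbors of $k$, so one only needs to check finitely many local configurations. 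In each configuration one verifies that: (a) no $2$-cycles or $\geq 4$-cycles are created (or if $2$-cycles appear, they cancel with existing ones so that the resulting cycles are all oriented $3$-cycles), (b) no vertex ends up with degree exceeding $4$, and (c) the $3$-cycle structure at degree-$3$ and degree-$4$ vertices is of the prescribed form.

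For the backward direction, the plan is to show that any connected $Q$ satisfying i)--iv) can be reduced by mutation to a quiver containing an ``ear'' --- a vertex $v$ of degree $\leq 2$ such that deleting $v$ yields a smaller connected quiver still satisfying i)--iv). Induction on $n$ then finishes. To find such an ear, I would argue using the block structure forced by iii) and iv): the $3$-cycles of $Q$ are glued only along single vertices (never along an edge, because an edge shared by two $3$-cycles would force a degree-$3$ vertex with all three adjacent arrows in $3$-cycles, contradicting iv)). This gives $Q$ the structure of a tree of $3$-cycles and free edges, and every leaf of that tree provides either a pendant edge (degree-$1$ vertex, trivially deletable) or a pendant $3$-cycle whose ``tip'' vertex can be mutated to become a pendant edge. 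Deleting this pendant vertex reduces $n$ and the conditions on the smaller quiver follow from the conditions on $Q$.

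The main obstacle is the case analysis for the forward direction: checking condition iii) for a vertex that becomes degree-$4$ after mutation, and condition iv) for a vertex that becomes degree-$3$, requires tracking exactly which pairs of arrows end up sharing a common $3$-cycle after the ``add $2$-paths / reverse at $k$ / cancel $2$-cycles'' sequence of operations. In particular, one must rule out the creation of non-oriented triangles, of $4$-cycles, and of degree-$5$ vertices, which forces several incompatible configurations to be checked against conditions iii), iv) at neighbors of $k$ in the original $Q$. Once this bookkeeping is in place, both implications follow cleanly.
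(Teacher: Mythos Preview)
The paper does not give its own proof of this lemma; it is quoted verbatim from Buan--Vatne \cite[Prop.~2.4]{bv08} and used as a black box. So there is no ``paper's proof'' to compare against.

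That said, your outline is essentially the standard strategy and matches the approach in the Buan--Vatne paper. The forward direction (closure of conditions i)--iv) under mutation via a local case analysis at the mutated vertex) is exactly how it is done there, and your identification of the key bookkeeping---tracking which neighbors of $k$ acquire or lose a shared $3$-cycle after the add/reverse/cancel steps---is the heart of that argument. For the backward direction, your ``tree of $3$-cycles and free edges'' picture is correct and is the structural consequence of iii) and iv) that drives the induction; Buan--Vatne phrase this slightly differently but the content is the same. One small point worth tightening: in the backward direction you should also verify that the smaller quiver obtained after deleting the ear is mutation-equivalent to $\mathbb{A}_{n-1}$ \emph{as a labeled quiver inside the original mutation class}, i.e., that the mutation sequence reducing the small quiver to a linear orientation lifts to a mutation sequence on $Q$ itself. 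This is straightforward (mutations at vertices other than the deleted one commute with deletion), but it is the step that actually connects the inductive hypothesis back to $Q$.
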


\noindent At times we will also say that a quiver $Q$ is of mutation type $\mathbb{A}$, meaning that $Q$ is of mutation type $\mathbb{A}_{|Q_0|}$.

\subsection{Maximal green sequences and oriented exchange graphs}


In this section, we review the notions of maximal green sequences and oriented exchange graphs. 

Given a quiver $Q$, we define its \textbf{framed} (resp., \textbf{coframed}) quiver to be the ice quiver $\widehat{Q}$ (resp., $\widecheck{Q}$)  where $\widehat{Q}_0 := Q_0 \sqcup [n+1, 2n]$ (resp., $\widecheck{Q}_0 := Q_0 \sqcup [n+1, 2n]$), $F = [n+1, 2n]$, and $\widehat{Q}_1 := Q_1 \sqcup \{i \to n+i: i \in [n]\}$ (resp., $\widecheck{Q}_1 := Q_1 \sqcup \{n+i \to i: i \in [n]\}$). We denote elements of $\text{Mut}(\widehat{Q})$ by $\overline{Q}$, and we will henceforth write $i^\prime = n+i$ where $i \in [n]$. We say that a mutable vertex $i$ of $\overline{Q}$ is \textbf{green} (resp., \textbf{red}) if there are no arrows in $\overline{Q}$ of the form $i \leftarrow j^\prime$ (resp., $i \rightarrow j^\prime$) for some $j \in [n]$. The celebrated theorem of Derksen, Weyman, and Zelevinsky \cite[Theorem 1.7]{dwz10}, known as sign-coherence of $\textbf{c}$-vectors and $\textbf{g}$-vectors, implies that given $\overline{Q} \in \text{Mut}(\widehat{Q})$ any mutable vertex of $\overline{Q}$ is either green or red.

\begin{definition}[\cite{keller2011cluster}]\label{mgsdef}A \textbf{maximal green sequence} of $Q$ is a sequence $\textbf{i} = (i_1, \ldots, i_k)$ of mutable vertices of $\widehat{Q}$ where 

$\begin{array}{rl}
i) & \text{for all $j \in [k]$ vertex $i_j \in [n]$ is green in $\mu_{i_{j-1}}\circ \cdots\circ \mu_{i_1}(\widehat{Q})$, and}\\ 
ii) & \text{each vertex $i \in [n]$ of $\mu_{i_{k}}\circ \cdots\circ \mu_{i_1}(\widehat{Q})$ is red}.\end{array}$ 

\noindent We let $\text{MGS}(Q)$ denote the set of maximal green sequences of $Q$. {At times, we will use the abuse of language where we refer to $\underline{\mu}_\textbf{i}:= \mu_{i_k}\mu_{i_{k-1}}\cdots\mu_{i_1}$ also as a maximal green sequence.} Additionally, we define $\ell(\textbf{i}) := k$ and $\ell(\underline{\mu}_{\textbf{i}}) := k$ to be the \textbf{length} of the maximal green sequence $\textbf{i}$. \end{definition}

For a given quiver $Q$, it is an open question to determine what positive integers can be realized as lengths of maximal green sequences of $Q$. In \cite[Lemma 2.20]{bdp}, it is shown that if $Q$ is acyclic, then $Q$ has a maximal green sequence of length $|Q_0|$ and this is the shortest any maximal green sequence of a quiver can be. The following theorem is the first to address this question for an infinite family of quivers in which oriented cycles may appear.

\begin{theorem}\label{thmtypeA} \cite[Theorem 6.5, Theorem 7.2]{cormier2015minimal}
The length of a minimal length maximal green sequence of a mutation type $\mathbb{A}_n$ quiver $Q$ is $|Q_0| + |\{\text{3-cycles of $Q$}\}|.$
\end{theorem}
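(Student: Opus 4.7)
My plan is to prove the theorem by induction on $n = |Q_0|$, using the classical realization of mutation type $\mathbb{A}_n$ quivers as triangulations of a convex $(n+3)$-gon, in which vertices of $Q$ correspond to diagonals, arrows correspond to pairs of diagonals sharing a triangle, and 3-cycles of $Q$ correspond exactly to internal triangles (triangles with no edge on the polygon boundary). The base cases $n \leq 3$ can be checked by hand; the case $k = 0$ at arbitrary $n$ follows from Lemma~\ref{BV} (which forces $Q$ to be acyclic) combined with \cite[Lemma 2.20]{bdp}, giving the matching upper and lower bound $\ell_{\min}(Q) = n$.

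For the inductive step, I would locate an ear $\tau$ of the triangulation: a triangle with two polygon-boundary edges and one diagonal $d$. The vertex $d$ in $Q$ has degree at most $2$ and lies in at most one 3-cycle. For the upper bound, if $d$ lies in no 3-cycle, then $d$ is a green source or sink of $Q$; mutating first at $d$ and then following an inductively constructed MGS of the quiver associated to the triangulation obtained by removing $\tau$ (which has $n-1$ diagonals and $k$ internal triangles) yields an MGS of $Q$ of length $1 + (n-1+k) = n+k$. If $d$ lies in a 3-cycle $C$, I would instead use an explicit short mutation prefix localized to $C$ that makes $d$ red, reduces the number of internal triangles by one, and leaves a quiver of mutation type $\mathbb{A}_{n-1}$ with $k-1$ 3-cycles on which to apply induction, the counts balanced so that the total length is again $n+k$.

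For the lower bound, I would appeal to Theorem~\ref{thmMullerSubquiver}: any MGS of $Q$ restricts, by retaining only those mutations with $\textbf{c}$-vector supported on a given full subquiver $Q' \subseteq Q$, to an MGS of $Q'$. Applying this to $Q' = Q \setminus \{d\}$ in the first case and inducting on the size of $Q'$, together with the observation that $d$ must be mutated at least once in any MGS, gives $\ell_{\min}(Q) \geq 1 + (n-1+k) = n+k$. In the second case, one applies the restriction simultaneously to the 3-cycle $C$ (forcing at least $4$ mutations whose $\textbf{c}$-vector is supported on $C$) and to $Q \setminus \{v\}$ for a suitable vertex $v$ of $C$ lying in no other 3-cycle (forcing by induction at least $(n-1)+(k-1)$ mutations supported away from $v$), and shows via a $\textbf{c}$-vector disjointness argument that these two counts combine to $\geq n+k$. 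The main obstacle is the careful case-analysis required in the 3-cycle case in both directions — constructing a tight short prefix for the upper bound and verifying the disjointness of restricted mutations for the lower bound — both of which rely on the existence of a leaf in the tree of internal triangles of the triangulation, whose existence follows from the fact that the dual graph of the triangulation is itself a tree.
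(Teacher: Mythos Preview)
This theorem is not proved in the present paper; it is quoted verbatim from \cite[Theorem~6.5, Theorem~7.2]{cormier2015minimal} and used as a black box throughout (e.g., in the proofs of Theorem~\ref{Thm:typeD}, Theorem~\ref{qtilde}, and Theorem~\ref{Thm_affinequivers_min_length}). There is therefore no proof here to compare your attempt against.

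That said, a few remarks on your sketch. Your upper bound strategy is in the spirit of Lemma~\ref{3-cycle} and Theorem~\ref{qtilde} of this paper, though note that Lemma~\ref{3-cycle} requires the $3$-cycle to be attached to the rest of the quiver at a \emph{single} vertex, which is not automatic for the $3$-cycle containing an arbitrary ear diagonal~$d$; you would need to pass to a leaf of the tree of internal triangles (as you hint at the end) to guarantee this, and even then one of the non-ear vertices of the triangle may still have degree~$3$, so some additional care is needed. Your lower bound in the $3$-cycle case has a genuine gap: restricting simultaneously to the $3$-cycle $C$ and to $Q\setminus\{v\}$ and applying inclusion--exclusion gives $\ell(\textbf{i}) \ge 4 + (n-1)+(k-1) - |\text{overlap}|$, where the overlap is the length of the induced maximal green sequence on the single arrow $C\setminus\{v\}$, and this can be $3$ rather than~$2$. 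To rescue the argument you must choose $v$ \emph{after} fixing the maximal green sequence~$\textbf{i}$, taking $v$ in the support of one of the non-simple $\textbf{c}$-vectors of $\textbf{c}(\textbf{i})$ supported on~$C$; then at least two $\textbf{c}$-vectors of $\textbf{i}$ have~$v$ in their support, and the bound follows. A cleaner route is the one used in the proof of Lemma~\ref{min_length_lower_bound_affine_A} here: each vertex contributes the simple $\textbf{c}$-vector $e_v$, and each $3$-cycle contributes (via Theorem~\ref{thmMullerSubquiver}) at least one further $\textbf{c}$-vector with two nonzero entries; since distinct $3$-cycles in a mutation type $\mathbb{A}$ quiver share at most one vertex, these extra $\textbf{c}$-vectors are pairwise distinct and distinct from the simples, giving $\ell(\textbf{i}) \ge n+k$ directly without induction.
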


The maximal green sequences of $Q$ are in natural bijection with the finite length maximal directed paths in the \textbf{oriented exchange graph} of $Q$ \cite[Proposition 2.13]{bdp}, which we now define. The \textbf{exchange graph} of $\widehat{Q}$, denoted $EG(\widehat{Q})$, is the (a priori infinite) graph whose vertices are elements of Mut$(\widehat{Q})$  and two vertices are connected by an edge if the corresponding quivers differ by a single mutation. The \textbf{oriented exchange graph} of $Q$, denoted $\overrightarrow{EG}(\widehat{Q})$, is the directed graph whose vertex set is $\text{Mut}(\widehat{Q})$ and whose edges are of the form $\overline{Q} \longrightarrow \mu_{j}\overline{Q}$ where $j \in [n]$ is green in $\overline{Q}$. Oriented exchange graphs were introduced in \cite{bdp} where they initiated the study of maximal green sequences. We show the oriented exchange graph of $Q = 1 \to 2$ in Figure~\ref{A2oreg}. 

\begin{figure}[h]
$$\begin{array}{cccccl}
\includegraphics[scale=1]{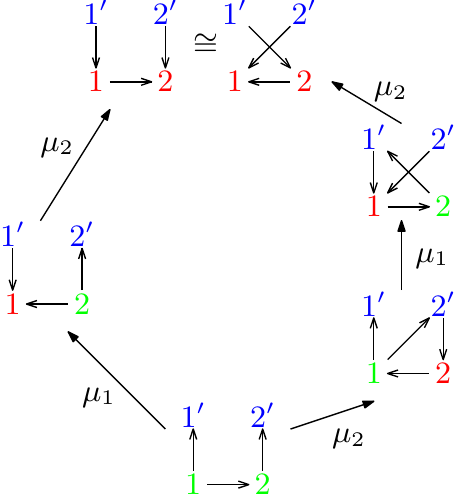} & \raisebox{.85in}{$\cong$} & \includegraphics[scale=1]{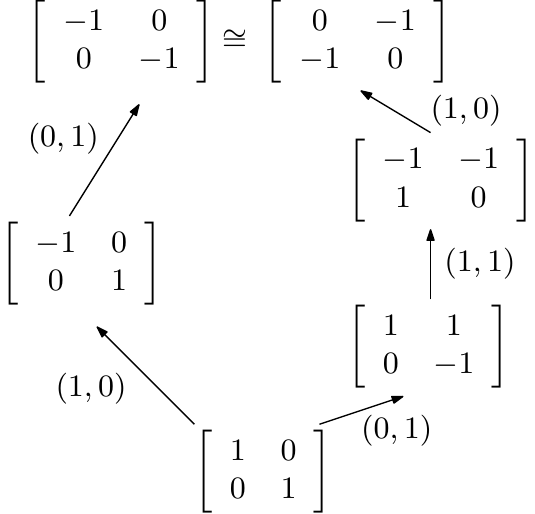} & \raisebox{.85in}{$\cong$} & \includegraphics[scale=1]{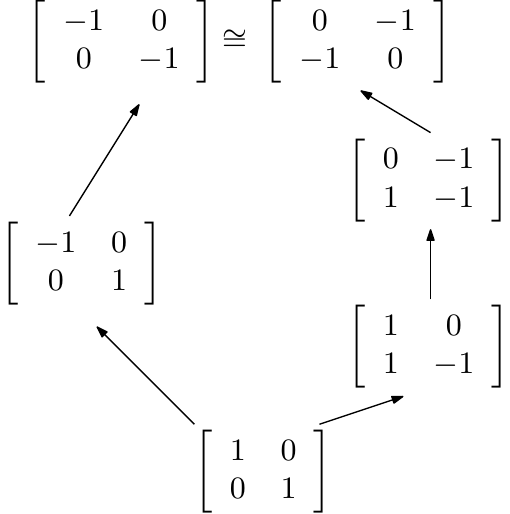}\\ \\ \overrightarrow{EG}(\widehat{Q}) & & \textbf{c}\text{-mat}(Q) & & \textbf{g}\text{-mat}(Q)
\end{array}$$
\caption{The oriented exchange graph of $Q=1\rightarrow 2$ and the directed graphs $\textbf{c}\text{-mat}(Q)$ and $\textbf{g}\text{-mat}(Q)$.}
\label{A2oreg}
\end{figure}


The oriented exchange graph of a quiver $Q$ can be equivalently described using the \textbf{c}-vectors and \textbf{c}-matrices of $Q$. We say that $C = C_{\overline{Q}} \in \mathbb{Z}^n$ is a \textbf{c}-\textbf{matrix} of $Q$ if there exists $\overline{Q} \in EG(\widehat{Q})$ such that $C$ is the $n\times n$ submatrix of $B_{\overline{Q}} = (b_{ij})_{i \in [n], j \in [2n]}$ containing its last $n$ columns. That is, $C = (b_{ij})_{i \in [n], j \in [n+1,2n]}$. We let \textbf{c}-mat($Q$) $:= \{C_{\overline{Q}}: \overline{Q} \in EG(\widehat{Q})\}$. A row vector of a \textbf{c}-matrix is known as a \textbf{c}-\textbf{vector}. Since a \textbf{c}-matrix $C$ is only defined up to a permutations of its rows, $C$ can be regarded simply as a \textit{set} of \textbf{c}-vectors.

Sign coherence of \textbf{c}-vectors, says that any \textbf{c}-vector is either a nonzero element of $\mathbb{Z}^n_{\ge 0}$ or of $\mathbb{Z}^n_{\le 0}$. In the former case, we say a \textbf{c}-vector is \textbf{positive}. In the latter case, we say a \textbf{c}-vector is \textbf{negative}. From our definition of the exchange matrix $\overline{Q}$, we have that $\textbf{c}_k \in C_{\overline{Q}}$ (the \textbf{c}-vector of vertex $k$ in $\overline{Q}$) is positive (resp., negative) if and only if $k$ is green (resp., red) in $\overline{Q}$.

The set $\textbf{c}\text{-mat}(Q)$ can be regarded as a directed graph whose vertices are \textbf{c}-matrices and whose directed edges are exactly those of the form $C_{\overline{Q}} \rightarrow C_{\mu_k\overline{Q}}$ where $\textbf{c}_k \in C_{\overline{Q}}$  is positive. Now by regarding $\textbf{c}\text{-mat}(Q)$ as a directed graph, it follows from \cite{by14} that $\overrightarrow{EG}(\widehat{Q}) \cong \textbf{c}\text{-mat}(Q)$ and maximal directed paths in each are in natural bijection.\footnote{One can define a notion of \textbf{c}-matrix mutation (for example, see \cite[page 35]{by14}). However, we omit this definition because we will not need it.} Moreover, each directed edge $C_{\overline{Q}} \rightarrow C_{\mu_k\overline{Q}}$ of $\textbf{c}\text{-mat}(Q)$ is labeled by the \textbf{c}-vector $\textbf{c}_k$. We obtain the following lemma.

\begin{lemma}\label{cvecs_of_mgs}
A maximal green sequence $\textbf{i} = (i_1, \ldots, i_k)$ of a quiver $Q$ is equivalent to a maximal directed path $\textbf{c}(\textbf{i}) = (\textbf{c}_{i_1}, \ldots, \textbf{c}_{i_k})$ in $\textbf{c}\text{-mat}(Q)$ where $\textbf{c}_{i_j} \in C_{\mu_{i_{j-1}}\circ \cdots \circ \mu_{i_1}(\widehat{Q})}$ is the \textbf{c}-vector corresponding to vertex $i_j \in (\mu_{i_{j-1}}\circ \cdots \circ \mu_{i_1}(\widehat{Q}))_0$.
\end{lemma}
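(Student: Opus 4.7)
The plan is to assemble the statement directly from the three facts just recalled in the paragraphs preceding the lemma, so very little is left to do beyond translating between them.

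Given $\textbf{i} = (i_1,\dots,i_k) \in \mathrm{MGS}(Q)$, set $\overline{Q}^{(0)} := \widehat{Q}$ and $\overline{Q}^{(j)} := \mu_{i_j}\overline{Q}^{(j-1)}$ for $j \in [k]$. By Definition~\ref{mgsdef}(i), $i_j$ is green in $\overline{Q}^{(j-1)}$, which by the earlier identification of greenness with positivity of \textbf{c}-vectors means $\textbf{c}_{i_j} \in C_{\overline{Q}^{(j-1)}}$ is positive. Under the isomorphism $\overrightarrow{EG}(\widehat{Q}) \cong \textbf{c}\text{-mat}(Q)$ of \cite{by14}, each mutation step $\overline{Q}^{(j-1)} \to \overline{Q}^{(j)}$ corresponds to a directed edge $C_{\overline{Q}^{(j-1)}} \to C_{\overline{Q}^{(j)}}$ of $\textbf{c}\text{-mat}(Q)$ labeled by $\textbf{c}_{i_j}$. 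Concatenating these edges gives a directed path, so we obtain the sequence $\textbf{c}(\textbf{i}) = (\textbf{c}_{i_1},\dots,\textbf{c}_{i_k})$.

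Next I would verify that this path is in fact maximal. Definition~\ref{mgsdef}(ii) says every mutable vertex of $\overline{Q}^{(k)}$ is red; equivalently, every \textbf{c}-vector in $C_{\overline{Q}^{(k)}}$ is negative. Since edges of $\textbf{c}\text{-mat}(Q)$ leave a vertex $C_{\overline{Q}}$ only along positive \textbf{c}-vectors, no edge of $\textbf{c}\text{-mat}(Q)$ exits $C_{\overline{Q}^{(k)}}$, so the path terminates. Maximality at the source is automatic: the bijection with $\overrightarrow{EG}(\widehat{Q})$ identifies the unique source with $C_{\widehat{Q}}$, which is where every maximal green sequence begins.

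For the reverse direction, a maximal directed path $P$ in $\textbf{c}\text{-mat}(Q)$ pulls back under the isomorphism to a maximal directed path in $\overrightarrow{EG}(\widehat{Q})$, and \cite[Proposition 2.13]{bdp} records that such paths are precisely maximal green sequences. Reading the label $\textbf{c}_{i_j}$ on the $j$th edge of $P$ determines the mutated vertex $i_j \in [n]$, and by construction $\textbf{c}_{i_j} \in C_{\overline{Q}^{(j-1)}}$. Since the two constructions invert each other on the nose, the correspondence $\textbf{i} \leftrightarrow \textbf{c}(\textbf{i})$ is a bijection as claimed.

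There is no substantive obstacle here; the statement is essentially a bookkeeping consequence of the cited isomorphism $\overrightarrow{EG}(\widehat{Q}) \cong \textbf{c}\text{-mat}(Q)$ together with the already-stated equivalence between greenness of a vertex and positivity of the corresponding \textbf{c}-vector. The only point to handle carefully is to make sure the edge labels are used consistently, so that the indices $i_j$ extracted from the \textbf{c}-vector sequence agree with the indices of the mutations that produced them — this is immediate from the definition of the labeling $C_{\overline{Q}} \to C_{\mu_k\overline{Q}}$ by $\textbf{c}_k$.
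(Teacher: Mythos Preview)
Your proposal is correct and matches the paper's approach exactly: the paper does not give a separate proof of this lemma at all, merely stating ``We obtain the following lemma'' after recording the isomorphism $\overrightarrow{EG}(\widehat{Q}) \cong \textbf{c}\text{-mat}(Q)$ from \cite{by14} and the edge-labeling convention. Your write-up simply unpacks that implication in detail, invoking the same ingredients (the green/positive correspondence, the bijection of \cite[Proposition 2.13]{bdp}, and the edge labels), so there is nothing to add or compare.
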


We conclude this section by presenting yet another equivalent description of the oriented exchange graph of $Q$, which uses \textbf{g}-vectors and \textbf{g}-matrices (see \cite{fomin2007cluster}). The definition of a \textbf{g}-vector $\textbf{g} \in \mathbb{Z}^{n}$ (resp., \textbf{g}-matrix $G \in \mathbb{Z}^{n\times n}$) was originally made for each cluster variable (resp., seed) of a cluster algebra with principal coefficients $\mathcal{A}$ where $n$ is the rank of the cluster algebra. The \textbf{g}-vectors of $\mathcal{A}$ provide a $\mathbb{Z}^n$-grading of the cluster variables by \cite[Proposition 6.1]{fomin2007cluster}. 

In this paper, we define \textbf{g}-matrices and \textbf{g}-vectors using quiver mutation. We let the identity matrix $I_n \in \mathbb{Z}^{n\times n}$ be the \textbf{g-matrix} associated to $\widehat{Q}$. The rows vectors $\{\textbf{g}_1, \ldots, \textbf{g}_n\}$ of $I_n$ are the \textbf{g-vectors} of $I_n$ where $\textbf{g}_i$ corresponds to mutable vertex $i$ in $\widehat{Q}$. Here $\textbf{g}_i$ is the $i$th standard basis vector. Each $\overline{Q} \in \overrightarrow{EG}(\widehat{Q})$ will have an associated \textbf{g-matrix} whose row vectors are its \textbf{g-vectors}. As is the case with \textbf{c}-matrices, the \textbf{g}-matrices are only defined up to row permutations and thus can be regarded simply as sets of \textbf{g}-vectors.

We produce the \textbf{g}-matrix associated to $\overline{Q} \in \overrightarrow{EG}(\widehat{Q})$ by defining a mutation operation on \textbf{g}-matrices. Let $G = G_{\overline{Q}}$ be the \textbf{g}-matrix of $\overline{Q}$ whose \textbf{g}-vectors are $\{\textbf{g}_1,\ldots, \textbf{g}_n\}$. Then the \textbf{g}-matrix of $\mu_k\overline{Q}$, denoted $\mu_k(G)$, is defined to the be matrix whose row vectors are $\{\textbf{g}_1, \ldots, \mu_k(\textbf{g}_k), \ldots, \textbf{g}_n\}$ where $$\begin{array}{rcl} \mu_k(\textbf{g}_k) & := & \left\{\begin{array}{rcl} -\textbf{g}_k + \displaystyle\sum_{(j \stackrel{\alpha}{\rightarrow} k) \in \overline{Q}_1} \textbf{g}_j & : & \text{$k$ is green in $\overline{Q}$}\\ -\textbf{g}_k + \displaystyle\sum_{(j \stackrel{\alpha}{\leftarrow} k) \in \overline{Q}_1} \textbf{g}_j & : & \text{$k$ is red in $\overline{Q}$.} \end{array}\right. \end{array}$$ We let $\textbf{g}\text{-mat}(Q)$ denote the set of \textbf{g}-matrices of $Q$. In Figure~\ref{A2oreg}, we show the \textbf{g}-matrices of $Q = 1 \to 2$. Perhaps surprisingly, the \textbf{c}-matrices and \textbf{g}-matrices of a quiver can be obtained from each other by a simple bijection, as the following theorem shows.

\begin{theorem}(\cite[Theorem 1.2]{nakanishi2012tropical},\cite[Theorem 4.17]{by14})\label{thm_tropical_dual}
The map $$\begin{array}{rcl} \textbf{c}\text{-mat}(Q) & \longrightarrow & \textbf{g}\text{-mat}(Q)\\ C & \longmapsto & (C^{-1})^T \\ (G^{-1})^T & \longmapsfrom & G \end{array}$$ sending a \textbf{c}-matrix or \textbf{g}-matrix to its inverse transpose is a bijection, and this map commutes with mutation of \textbf{c}-matrices and \textbf{g}-matrices of $Q$.
\end{theorem}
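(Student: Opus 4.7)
The plan is to prove the key relation $G_{\overline{Q}} = (C_{\overline{Q}}^{-1})^T$---equivalently $G_{\overline{Q}} C_{\overline{Q}}^T = I_n$---by induction on the length of a mutation sequence from $\widehat{Q}$ to $\overline{Q}$. Bijectivity of the inverse-transpose map and the compatibility with mutation will then fall out of the inductive step. The base case $\overline{Q} = \widehat{Q}$ is immediate: the frozen arrows $i \to i'$ of $\widehat{Q}$ place $1$'s on the diagonal of the last $n$ columns of $B_{\widehat{Q}}$, so $C_{\widehat{Q}} = I_n$, while $G_{\widehat{Q}} = I_n$ by definition.

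For the inductive step, I first unpack the $\mathbf{c}$-matrix mutation rule from the matrix mutation formula applied to $B_{\overline{Q}}$. Restricting to the frozen columns $j \in [n+1, 2n]$ and using sign coherence---a green vertex $k$ forces $b_{kj} \geq 0$ for all such $j$---the cross-term $|b_{ik}|b_{kj} + b_{ik}|b_{kj}|$ factors as $(|b_{ik}| + b_{ik})b_{kj}$, yielding
\[
\mathbf{c}'_k = -\mathbf{c}_k, \qquad \mathbf{c}'_i = \mathbf{c}_i + [b_{ik}]_+ \mathbf{c}_k \quad (i \neq k),
\]
where $[x]_+ := \max(x,0)$. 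Combining this with the $\mathbf{g}$-matrix rule already stated in the excerpt---namely $\mathbf{g}'_i = \mathbf{g}_i$ for $i \neq k$ and $\mathbf{g}'_k = -\mathbf{g}_k + \sum_{j=1}^n [b_{jk}]_+ \mathbf{g}_j$---I rewrite the inductive hypothesis as the bilinear identity $\mathbf{g}_i \cdot \mathbf{c}_j = \delta_{ij}$ for all $i,j \in [n]$ and expand $\mathbf{g}'_i \cdot \mathbf{c}'_j$ in the four cases determined by whether $i$ and $j$ equal $k$. Each case collapses using the inductive hypothesis together with the fact that $b_{kk} = 0$ (no loops in $\overline{Q}$); the only delicate case is $i = k$, $j \neq k$, where the new coefficient $[b_{jk}]_+$ appearing in $\mathbf{c}'_j$ exactly cancels the contribution from the sum $\sum_l [b_{lk}]_+ \mathbf{g}_l$ evaluated against $\mathbf{c}_j$. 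The red case proceeds identically with signs adjusted, or, more cleanly, follows because mutation is an involution: mutating a green vertex produces a red one, so every red mutation is the inverse of a green mutation already covered.

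I expect the main obstacle to be correctly deriving the $\mathbf{c}$-matrix mutation rule, since the paper suppresses it in a footnote and a single sign error in the $[b_{ik}]_+$ coefficient would derail the verification. Once the identity $G' (C')^T = I_n$ is established, invertibility of every $C_{\overline{Q}}$ and $G_{\overline{Q}}$ is automatic, and the inverse-transpose map $C \mapsto (C^{-1})^T$ is a bijection because inverse-transpose is an involution on invertible integer matrices and both $\mathbf{c}\text{-mat}(Q)$ and $\mathbf{g}\text{-mat}(Q)$ are indexed by $\overline{Q} \in \mathrm{Mut}(\widehat{Q})$. The commutativity with mutation is precisely the content of the inductive step, which shows that the image of $\mu_k(C_{\overline{Q}})$ under $C \mapsto (C^{-1})^T$ equals $\mu_k(G_{\overline{Q}})$.
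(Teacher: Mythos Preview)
The paper does not supply its own proof of this theorem: it is quoted as a result from the literature, with citations to Nakanishi--Zelevinsky and Br\"ustle--Yang, and the paper moves on immediately to use it. So there is no ``paper's proof'' to compare against; your argument should be judged on its own.

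Your inductive verification of $G_{\overline{Q}}\,C_{\overline{Q}}^{\,T}=I_n$ is correct and is essentially the standard Nakanishi--Zelevinsky computation. The derivation of the $\mathbf{c}$-vector mutation rule from matrix mutation plus sign coherence is right, the four-case check of $\mathbf{g}'_i\cdot\mathbf{c}'_j=\delta_{ij}$ goes through exactly as you describe, and the red case does indeed follow by replacing $[b_{ik}]_+$ with $[-b_{ik}]_+$ throughout (using skew-symmetry $b_{kj}=-b_{jk}$ to match the $\mathbf{g}$-vector rule for red mutation). One small caution: your alternative ``involution'' justification for the red case is not quite a valid shortcut in the inductive setup---knowing that green mutation preserves $GC^T=I$ does not, without a further linear-algebra observation about the mutation matrices, immediately tell you that its inverse operation does---so rely on the direct ``signs adjusted'' computation, which is clean and complete. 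With that in place, bijectivity and compatibility with mutation follow exactly as you say.
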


In particular, Theorem~\ref{thm_tropical_dual} shows that one can transfer the edge orientation of $\textbf{c}\text{-mat}(Q)$ to $\textbf{g}\text{-mat}(Q)$. Thus, one can regard $\textbf{g}\text{-mat}(Q)$ as a directed graph whose vertices are \textbf{g}-matrices and whose directed edges are exactly those of the form $G \to \mu_kG$ where $k$ is green in $\overline{Q}$ and $G = G_{\overline{Q}}$. In Figure~\ref{A2oreg}, we show $\textbf{g}\text{-mat}(Q)$ with $Q = 1\to 2$.

\section{Scattering diagrams}\label{Sec_scatter_diag}

In \cite{gross2014canonical}, scattering diagrams are shown to be extremely useful in modeling cluster algebras. They are used by Gross, Hacking, Keel, and Kontsevich to prove many long-standing conjectures on cluster algebras including the Positivity Conjecture and sign coherence of \textbf{c}-vectors and \textbf{g}-vectors. The first (resp., second) conjecture was only known for skew-symmetric cluster algebras (i.e., cluster algebras defined by quivers) from \cite[Theorem 1.1]{lee2015positivity} (resp., \cite[Theorem 1.7]{dwz10}). We follow the treatment of scattering diagrams defined by quivers as presented in \cite{muller2015existence}. We recommend that the reader refer to \cite{muller2015existence} for more details. 

In this section, we work with a fixed quiver $Q$, and we will associate to $Q$ a particular scattering diagram whose existence follows from \cite[Theorem 1.13]{gross2014canonical}. This scattering diagram, which we will refer to as the \textbf{consistent scattering diagram} for $Q$ and will denote by $\mathfrak{D}(Q)$, will encode all of the maximal green sequences of $Q$. We begin by reviewing scattering diagram terminology. Our goal in this section is to obtain a restatement of Greg Muller's theorem \cite[Theorem 9]{muller2015existence} (see Theorem~\ref{thmMullerSubquiver}).


Define $R := \mathbb{Q}[x_1^{\pm1},\ldots, x_n^{\pm1}][[y_1, \ldots, y_n]]$ to be the formal powers series ring with coefficients coming from the Laurent polynomial ring in the variables $x_1, \ldots, x_n$. For $\textbf{m} = (m_1, \ldots, m_n) \in \mathbb{Z}^n$ (resp., $\textbf{m} = (m_1, \ldots, m_n) \in \mathbb{N}^n$), we write $x^\textbf{m} = x_1^{m_1}\cdots x_n^{m_n}$ (resp., $y^\textbf{m} = y_1^{m_1}\cdots y_n^{m_n}$). For each $\textbf{m} \in \mathbb{N}^n$, define a \textbf{formal elementary transformation}, denoted $E_\textbf{m}$, by $$\begin{array}{rcl} R & \stackrel{E_{\textbf{m}}}{\longrightarrow} & R\\ x^{\textbf{m}^\prime} & \longmapsto & (1+x^{B\textbf{m}}y^\textbf{m})^{\frac{\textbf{m}\cdot\textbf{m}^\prime}{\text{gcd}(\textbf{m})}}x^{\textbf{m}^\prime}\\ y^{\textbf{m}^\prime} & \longmapsto & y^{\textbf{m}^\prime} \end{array}$$ where $\textbf{m}\cdot \textbf{m}^\prime := \sum_{i = 1}^n m_im_i^\prime$, $\text{gcd}(\textbf{m})$ is the greatest common divisor of the coordinates of \textbf{m}, and $B = B_{Q}$ is the exchange matrix of $Q$. The formal elementary transformation $E_\textbf{m}$ is an automorphism of $R$ with inverse given by $$\begin{array}{rcl} R & \stackrel{E^{-1}_{\textbf{m}}}{\longrightarrow} & R\\ x^{\textbf{m}^\prime} & \longmapsto & (1+x^{B\textbf{m}}y^\textbf{m})^{-\frac{\textbf{m}\cdot\textbf{m}^\prime}{\text{gcd}(\textbf{m})}}x^{\textbf{m}^\prime}\\ y^{\textbf{m}^\prime} & \longmapsto & y^{\textbf{m}^\prime}. \end{array}$$ 

As shown in \cite{muller2015existence}, elementary transformations, in general, do not commute. For example, by \cite[Prop. 4.1.1]{muller2015existence} if $\textbf{m}\cdot B\textbf{m}^\prime = 1$ for $\textbf{m}, \textbf{m}^\prime \in \mathbb{N}^n$, then $E_{\textbf{m}}E_{\textbf{m}^\prime} = E_{\textbf{m}^\prime}E_{\textbf{m}+\textbf{m}^\prime}E_{\textbf{m}}.$

Define a \textbf{wall} to be a pair $(\textbf{m}, W)$ where \begin{itemize} \item $\textbf{m} \in \mathbb{N}^n$ is nonzero and
\item $W \subset \mathbb{R}^n$ is a convex polyhedral cone that spans $\textbf{m}^\perp := \{\textbf{m}^\prime \in \mathbb{R}^n: \ \textbf{m}\cdot \textbf{m}^\prime = 0\}.$
\end{itemize}

\noindent Given a wall $(\textbf{m}, W)$, the fact that $\textbf{m} \in \mathbb{N}^n$ implies that $W \cap (\mathbb{R}_{>0})^n = W \cap (\mathbb{R}_{<0})^n = \emptyset.$ Also, $W$ defines two half-spaces $\{\textbf{m}^\prime \in \mathbb{R}^n: \ \textbf{m}\cdot \textbf{m}^\prime > 0\}$ and $\{\textbf{m}^\prime \in \mathbb{R}^n: \ \textbf{m}\cdot \textbf{m}^\prime < 0\}$, which we will refer to as the \textbf{green} side and \textbf{red} side of $W$, respectively. We define a \textbf{scattering diagram} in $\mathbb{R}^n$, denoted $\mathfrak{D} = \{(\textbf{m}^{(i)}, W_i)\}_{i \in I}$,  to be a \textit{multiset} of walls\footnote{As is noted in \cite{muller2015existence}, we allow multiple copies of the same wall.} where each $(\textbf{m}^{(i)}, W_i) \in \mathfrak{D}$ has $\textbf{m}^{(i)} \in \mathbb{N}^n$ and $W \in \mathbb{R}^n$. We define a \textbf{chamber} of $\mathfrak{D}$ to be a path-connected component of $\mathbb{R}^n-\mathfrak{D}$. 

Scattering diagrams can be used to understand diagrams of elementary transformations, as any wall $(\textbf{m}, W)$ is naturally associated with the elementary transformation $E_{\textbf{m}}$. Given a scattering diagram $\mathfrak{D}$ in $\mathbb{R}^n$ we say that a smooth path $p:[0,1] \to \mathbb{R}^n$ is \textbf{finite transverse} if \begin{itemize}\item $p(0) \not \in W$ and $p(1) \not \in W$ for any wall $(\textbf{m}, W)$ in $\mathfrak{D}$, \item for any wall $(\textbf{m}, W)$ of $\mathfrak{D}$, $p$ may only intersect $W$ transversally, and in this case, $p$ must not intersect the boundary of $W$, \item $p$ intersects at most finitely many distinct walls of $\mathfrak{D}$, and $p$ does not intersect two walls that span distinct hyperplanes at points where the two intersect (see path $q$ in Figure~\ref{DiagEx1}). \end{itemize} 

\begin{figure}
\includegraphics[scale=2]{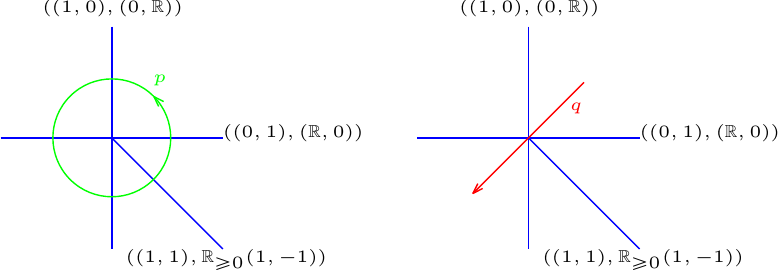}
\caption{Two copies of the consistent scattering diagram for $Q=1 \to 2$. The finite transverse loop $p$ defines the path-ordered product $E^{-1}_{(0,1)}E^{-1}_{(1,1)}E^{-1}_{(1,0)}E_{(0,1)}E_{(1,0)}$. We leave it to the reader to verify that this path-ordered product is the trivial automorphism.}
\label{DiagEx1}
\end{figure}

\noindent Let $(\textbf{m}^{(1)}, W_1), \ldots, (\textbf{m}^{(k)}, W_k)$ be the sequence of walls crossed by a finite transverse path $p$. Then $p$ determines the \textbf{path-ordered product} of elementary transformations given by $$ \displaystyle \overleftarrow{\displaystyle \prod\limits_{i \in [k]}} E^{\epsilon_i}_{\textbf{m}^{(i)}} := E^{\epsilon_k}_{\textbf{m}^{(k)}} \cdots E^{\epsilon_1}_{\textbf{m}^{(1)}}$$ where $\epsilon_{i} = 1$ (resp., $\epsilon_i = -1$) if $p$ crosses $W_i$ from its green side to its red side (resp., from its red side to its green side). 

We say that a scattering diagram $\mathfrak{D} = \{(\textbf{m}^{(i)}, W_i)\}_{i \in I}$ with finitely many walls is \textbf{consistent} if any path-ordered product defined by finite transverse loop is the trivial automorphism of $R$ (see loop $p$ in Figure~\ref{DiagEx1}). We say that two scattering diagrams $\mathfrak{D}_1$ and $\mathfrak{D}_2$ with finitely many walls are \textbf{equivalent} if each smooth path $p$ that is finite transverse in $\mathfrak{D}_1$ and $\mathfrak{D}_2$ defines the same path-ordered product in $\mathfrak{D}_1$ as it does in $\mathfrak{D}_2$. For brevity, we do not define consistency or equivalence for scattering diagrams $\mathfrak{D}$ with infinitely many walls. 


\begin{theorem}\cite[Theorems 1.13 and 1.28]{gross2014canonical}
For each quiver $Q$, there is a consistent scattering diagram, denoted $\mathfrak{D}(Q)$, unique up to equivalence, such that \begin{itemize}
\item for each $i \in \{1,\ldots, n\}$, there is a wall of the form $(\textbf{e}_i, \textbf{e}_i^\perp)$, and
\item every other wall $(\textbf{m}, W)$ in $\mathfrak{D}(Q)$ satisfies $B\textbf{m} \not \in W$.
\end{itemize}
\end{theorem}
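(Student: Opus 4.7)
The plan is to construct $\mathfrak{D}(Q)$ by the standard order-by-order algorithm that originates with Kontsevich--Soibelman and is formalized in \cite{gross2014canonical}. First I would set up the algebraic infrastructure: introduce the $\mathbb{N}^n$-filtration on $R$ by total $y$-degree, and consider the pro-nilpotent Lie algebra $\mathfrak{g}$ of continuous derivations of $R$ that vanish modulo $(y_1,\ldots,y_n)$, so that each formal elementary transformation $E_\textbf{m}$ is the exponential of an element $\log(E_\textbf{m}) \in \mathfrak{g}$. The multigrading $\mathfrak{g} = \bigoplus_{\textbf{d} \in \mathbb{N}^n\setminus\{0\}} \mathfrak{g}_\textbf{d}$ by $y$-multidegree and the Baker--Campbell--Hausdorff formula reduce consistency at each fixed degree to a finite computation inside a nilpotent group. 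Note also that since $B$ is skew-symmetric, $\textbf{m} \cdot B\textbf{m} = 0$, so $B\textbf{m} \in \textbf{m}^\perp$ automatically; hence the second bullet is a genuine condition on the \emph{support} $W$, not on the hyperplane.

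Next I would proceed by induction on total $y$-degree. Start with the \emph{initial scattering diagram} $\mathfrak{D}_0 = \{(\textbf{e}_i, \textbf{e}_i^\perp) : i \in [n]\}$; it contains the required incoming walls, and modulo degree $2$ is consistent since the walls are transverse coordinate hyperplanes. At the inductive step, assume $\mathfrak{D}_k$ is consistent modulo degree $k{+}1$ and every non-initial wall $(\textbf{m},W)$ in it satisfies $B\textbf{m}\notin W$. The total obstruction to consistency at degree $k{+}1$ lies in $\bigoplus_{|\textbf{d}|=k+1}\mathfrak{g}_\textbf{d}$; restricting to a single direction $\textbf{m}$ with $|\textbf{m}|=k+1$ yields an element of the abelian subalgebra generated by characters $x^{B\textbf{m}'}y^{\textbf{m}'}$ parallel to $\textbf{m}$, which is a finite $\mathbb{Z}$-linear combination. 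I would cancel this combination by adjoining a finite multiset of new walls supported on convex sub-cones of $\textbf{m}^\perp$ that all avoid the ray $\mathbb{R}_{>0}\cdot B\textbf{m}$. Iterating produces $\mathfrak{D}_{k+1}$, and the union over $k$ gives $\mathfrak{D}(Q)$.

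Uniqueness up to equivalence would be shown by the same inductive pattern. Given two candidates $\mathfrak{D}$ and $\mathfrak{D}'$ satisfying both bullets, suppose inductively that they define equal path-ordered products modulo degree $k{+}1$. For each $\textbf{m}$ with $|\textbf{m}|=k+1$, the ``difference'' of their restrictions to the hyperplane $\textbf{m}^\perp$ defines a consistent scattering diagram on that hyperplane, all of whose walls avoid $B\textbf{m}$. A separating-hyperplane argument, combined with the fact that in an abelian subalgebra parallel walls combine additively, forces this difference to be trivial as a multiset of walls modulo equivalence. The main obstacle is precisely this inductive step: one must check that the obstruction at each degree decomposes cleanly into one contribution per parallel class, and that in each such class the required automorphism can be \emph{realized} by a finite configuration of walls whose supports avoid $B\textbf{m}$. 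In \cite{gross2014canonical} this is handled by a careful ``outgoing/incoming'' analysis using positivity of the structure constants; one may alternatively appeal to the quiver-specific treatment in \cite{muller2015existence}, which yields the same existence and uniqueness statement directly.
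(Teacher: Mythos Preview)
The paper does not prove this theorem: it is quoted as Theorems~1.13 and~1.28 of \cite{gross2014canonical} and used as a black box. There is therefore no ``paper's own proof'' to compare against. Your sketch is a reasonable outline of the Kontsevich--Soibelman/GHKK order-by-order construction that underlies the cited result, and your observation that $B\mathbf{m}\in\mathbf{m}^\perp$ (so the second bullet constrains the support $W$, not the hyperplane) is correct. If anything, you could simply cite \cite{gross2014canonical} as the authors do rather than reproduce the argument.
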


The scattering diagram $\mathfrak{D}(Q)$ has two distinguished chambers, namely, $(\mathbb{R}_{>0})^n$ the \textbf{all-positive chamber} and $(\mathbb{R}_{<0})^n$ the \textbf{all-negative chamber}. We say that a chamber $\mathcal{C}$ in $\mathfrak{D}(Q)$ is \textbf{reachable} if there exists a finite transverse path from $(\mathbb{R}_{>0})^n$ to $\mathcal{C}$.

\begin{theorem}\cite[Lemmas 2.9 and 5.12]{gross2014canonical}\label{g_c_vec_DQ}
Every reachable chamber of $\mathfrak{D}(Q)$ is of the form $$\mathbb{R}_{>0}\textbf{g}_1 + \mathbb{R}_{>0}\textbf{g}_2+ \cdots + \mathbb{R}_{>0}\textbf{g}_n$$ for some \textbf{g}-matrix $G = \{\textbf{g}_1, \textbf{g}_2, \ldots, \textbf{g}_n\} \in \textbf{g}\text{-mat}(Q)$. The inward-pointing normal vectors of the same reachable chamber are the row vectors of the \textbf{c}-matrix $C = (G^{-1})^T \in \textbf{c}\text{-mat}(Q)$. These correspondences induce bijections between the reachable chambers of $\mathfrak{D}(Q)$ and elements of $\textbf{g}\text{-mat}(Q)$ and elements of $\textbf{c}\text{-mat}(Q)$. In particular, the positive \textbf{c}-vectors of $Q$ are exactly the vectors $\textbf{m}$ that appear in a wall $(\textbf{m}, W)$ of $\mathfrak{D}(Q)$ that is incident to a reachable chamber.
\end{theorem}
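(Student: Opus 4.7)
The plan is to prove the statement by induction on the minimal number of walls that a finite transverse path from $(\mathbb{R}_{>0})^n$ must cross to reach a given reachable chamber. The base case is the all-positive chamber itself: $(\mathbb{R}_{>0})^n = \mathbb{R}_{>0}\textbf{e}_1 + \cdots + \mathbb{R}_{>0}\textbf{e}_n$, corresponding to the initial \textbf{g}-matrix $I_n$, and its inward-pointing normals $\textbf{e}_1,\ldots,\textbf{e}_n$ form precisely the rows of the initial \textbf{c}-matrix $I_n = (I_n^{-1})^T$. This matches the \textbf{c}- and \textbf{g}-matrices of $\widehat{Q}$.

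For the inductive step, I would assume a reachable chamber $\mathcal{C} = \mathbb{R}_{>0}\textbf{g}_1 + \cdots + \mathbb{R}_{>0}\textbf{g}_n$ corresponds to $G=G_{\overline{Q}} \in \textbf{g}\text{-mat}(Q)$, with inward normals $\textbf{c}_1,\ldots,\textbf{c}_n$ forming the rows of $C=(G^{-1})^T$; note the duality $\textbf{g}_i \cdot \textbf{c}_j = \delta_{ij}$. The codimension-one faces of $\mathcal{C}$ lie on the hyperplanes $\textbf{c}_k^{\perp}$, so any bounding wall $(\textbf{m},W)$ satisfies $\textbf{m} \in \mathbb{R}\textbf{c}_k$ for some $k$, and since $\textbf{m} \in \mathbb{N}^n$ and $\textbf{c}_k$ is sign-coherent, $\textbf{m}$ must be a positive multiple of $\textbf{c}_k$ exactly when vertex $k$ is green in $\overline{Q}$. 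Crossing this wall one obtains a new chamber $\mathcal{C}'$, and the claim to verify is that $\mathcal{C}'$ is the cone over the \textbf{g}-vectors of $\mu_k(G)$, i.e.\ that $\textbf{g}_k$ is replaced by $\mu_k(\textbf{g}_k) = -\textbf{g}_k + \sum_{j \to k}\textbf{g}_j$. Using $\textbf{c}_k \cdot \mu_k(\textbf{g}_k) = -1 < 0$, this vector lies on the opposite side of $W$ from $\textbf{g}_k$, confirming the geometry; the consistency condition of $\mathfrak{D}(Q)$ forces $\mathcal{C}'$ to be the simplicial cone on $\{\textbf{g}_i : i \neq k\}\cup\{\mu_k(\textbf{g}_k)\}$.

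The main obstacle is showing that the scattering-diagram wall crossings really implement \textbf{g}-matrix mutation and not merely a compatible combinatorial relation. This step requires the Gross-Hacking-Keel-Kontsevich existence/uniqueness of $\mathfrak{D}(Q)$, because the consistency axiom at each codimension-two locus $\textbf{c}_k^{\perp} \cap \textbf{c}_j^{\perp}$ is what forces the path-ordered product of elementary transformations around this locus to be trivial; this, in turn, pins down both the vectors $\textbf{m}$ appearing on the walls between $\mathcal{C}$ and $\mathcal{C}'$ and their multiplicities. Equivalently, one can verify this step ``ray by ray'' by checking that applying the elementary transformation $E_{\textbf{c}_k}$ to the monomials $x^{\textbf{g}_i}$ recovers the cluster-variable exchange relation in a principal-coefficients cluster algebra, whose $\textbf{g}$-grading reads off the correct mutated \textbf{g}-vector from \cite{fomin2007cluster}. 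Once this ``one-step'' wall-crossing is understood, the inductive step goes through, and the correspondence between reachable chambers and reachable \textbf{g}-matrices is established.

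Finally, to obtain the bijections and the last sentence of the theorem, I would combine the inductive construction with Theorem~\ref{thm_tropical_dual}: each reachable chamber is determined by its tuple of \textbf{g}-vectors, hence by the corresponding $G \in \textbf{g}\text{-mat}(Q)$, and the inverse transpose bijection delivers the \textbf{c}-matrix side. For the ``in particular'' claim, every positive \textbf{c}-vector $\textbf{c}_k \in C_{\overline{Q}}$ arises as the inward normal to a wall of the reachable chamber associated to $\overline{Q}$, so its underlying $\textbf{m}$ appears in $\mathfrak{D}(Q)$ on a wall incident to that chamber; conversely, any $\textbf{m}$ on a wall incident to a reachable chamber is (by the inductive description above) proportional to a positive \textbf{c}-vector and, by sign coherence together with $\textbf{m}\in\mathbb{N}^n$, equals it.
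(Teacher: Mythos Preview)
The paper does not give its own proof of this statement: it is quoted directly from \cite[Lemmas 2.9 and 5.12]{gross2014canonical} and used as a black box. So there is nothing in the paper to compare your argument against; the relevant comparison is to the actual argument in Gross--Hacking--Keel--Kontsevich.

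Your inductive outline is the right skeleton, and the base case and the tropical-duality bookkeeping are fine. The genuine gap is the sentence ``the consistency condition of $\mathfrak{D}(Q)$ forces $\mathcal{C}'$ to be the simplicial cone on $\{\textbf{g}_i : i \neq k\}\cup\{\mu_k(\textbf{g}_k)\}$.'' Consistency is a statement about path-ordered products around loops; it does not by itself tell you that the open region on the far side of a wall is free of further walls, nor that it is a simplicial cone, nor that its extremal rays are the mutated \textbf{g}-vectors. In GHKK this is the content of their construction of the \emph{cluster complex} inside $\mathfrak{D}(Q)$: one must show, for each reachable $G$, that the cone $\sum_i \mathbb{R}_{>0}\textbf{g}_i$ meets no wall of $\mathfrak{D}(Q)$, and this uses the explicit form of the wall-crossing functions together with the tropical characterization of \textbf{g}-vectors, not merely consistency at codimension-two joints. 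Your alternative suggestion (checking that $E_{\textbf{c}_k}$ applied to $x^{\textbf{g}_i}$ reproduces the cluster exchange relation) is closer to what is actually needed, but as written it only shows that the mutated \textbf{g}-vector lies on the correct side of the single hyperplane $\textbf{c}_k^\perp$; it does not show that the mutated cone avoids all other walls of $\mathfrak{D}(Q)$.

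A smaller gap: in the last paragraph you pass from ``$\textbf{m}$ is proportional to a positive \textbf{c}-vector'' to ``$\textbf{m}$ equals it'' by invoking sign-coherence and $\textbf{m}\in\mathbb{N}^n$. Sign-coherence only gives that the \textbf{c}-vector lies in $\mathbb{Z}_{\ge 0}^n$; it does not force primitivity, so this step does not rule out $\textbf{m}$ being a proper divisor or multiple of the \textbf{c}-vector. One needs to know that the walls adjacent to reachable chambers carry exactly the \textbf{c}-vectors themselves, which again comes from the explicit GHKK construction rather than from general principles.
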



A useful consequence of Theorem~\ref{g_c_vec_DQ} is that there is a bijection between maximal green sequences of $Q$ and finite transverse paths $p:[0,1]\to \mathbb{R}^n$ in $\mathfrak{D}(Q)$ from the all-positive chamber to the all-negative chamber with the property that whenever $p$ crosses a wall of $\mathfrak{D}(Q)$ it does so from the green side to the red side. The following is essentially proven in \cite{muller2015existence}.

\begin{theorem}\label{thmMullerSubquiver}
Let $Q$ be any quiver and $Q^\dagger$ any full subquiver of $Q$. There is a map $\text{MGS}(Q) \to \text{MGS}(Q^\dagger)$ defined by sending $\textbf{i} \in \text{MGS}(Q)$ to the maximal green sequence $\textbf{i}^\dagger \in \text{MGS}(Q^\dagger)$ where $\textbf{c}(\textbf{i}^\dagger) = (\textbf{c}^{(1)},\ldots, \textbf{c}^{(\ell)})$ is the unique longest subsequence of $\textbf{c}(\textbf{i})$ where each $\textbf{c}^{(j)} = (c_{1}^{(j)},\ldots, c_{n}^{(j)})$ satisfies $c_i^{(j)} = 0$ if $i \in Q_0\backslash Q^\dagger_0.$  \end{theorem}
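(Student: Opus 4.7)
The plan is to translate the problem into the scattering diagram formalism developed in Section~\ref{Sec_scatter_diag} and then apply a restriction principle relating $\mathfrak{D}(Q)$ and $\mathfrak{D}(Q^\dagger)$; this amounts to repackaging Muller's argument from~\cite{muller2015existence} in the geometric language of Theorem~\ref{g_c_vec_DQ}. First, I would use Theorem~\ref{g_c_vec_DQ} and Lemma~\ref{cvecs_of_mgs} to represent a MGS $\textbf{i}$ of $Q$ by a finite transverse path $p:[0,1]\to\mathbb{R}^n$ in $\mathfrak{D}(Q)$ running from the all-positive chamber to the all-negative chamber and crossing every wall from its green side to its red side. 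The ordered list of walls $(\textbf{m}^{(1)},W_1),\ldots,(\textbf{m}^{(k)},W_k)$ crossed by $p$ then satisfies $\textbf{m}^{(j)}=\textbf{c}_{i_j}$.

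Second, I would establish the key restriction lemma: the coordinate projection $\pi:\mathbb{R}^n\to\mathbb{R}^{Q^\dagger_0}$ carries the sub-multiset of walls of $\mathfrak{D}(Q)$ whose labels are supported on $Q^\dagger_0$ to a scattering diagram in $\mathbb{R}^{Q^\dagger_0}$ equivalent to $\mathfrak{D}(Q^\dagger)$. The crucial identity is
\[
\textbf{m}\cdot B_Q\textbf{m}' \;=\; (\textbf{m}|_{Q^\dagger_0})\cdot B_{Q^\dagger}(\textbf{m}'|_{Q^\dagger_0})
\]
for $\textbf{m},\textbf{m}'\in\mathbb{N}^n$ supported on $Q^\dagger_0$, which holds because $Q^\dagger$ is a \emph{full} subquiver of $Q$, so that $B_{Q^\dagger}$ is the principal submatrix of $B_Q$ indexed by $Q^\dagger_0$. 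This identity forces the commutation relations among the elementary transformations $E_{\textbf{m}}$ attached to $Q^\dagger$-supported vectors to agree whether computed in $R$ for $Q$ or in the analogous ring for $Q^\dagger$. Hence the projected sub-diagram is consistent in $\mathbb{R}^{Q^\dagger_0}$, it contains each coordinate wall $(\textbf{e}_i,\textbf{e}_i^\perp)$ for $i\in Q^\dagger_0$, and its remaining walls $(\textbf{m},W)$ satisfy $B_{Q^\dagger}\textbf{m}\notin W$; uniqueness in \cite[Theorem 1.13]{gross2014canonical} then identifies the projection with $\mathfrak{D}(Q^\dagger)$ up to equivalence.

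With this lemma in hand, I would set $p^\dagger:=\pi\circ p$, perturbing slightly if needed by a generic-position argument so that $p^\dagger$ is finite transverse in $\mathfrak{D}(Q^\dagger)$. Since $\pi$ preserves positivity and negativity of coordinates, $p^\dagger$ runs from the all-positive chamber to the all-negative chamber of $\mathfrak{D}(Q^\dagger)$. By construction $p^\dagger$ crosses exactly those walls of $\mathfrak{D}(Q^\dagger)$ that descend from $Q^\dagger$-supported walls crossed by $p$, and each crossing is oriented from green to red because the sign of $\textbf{m}\cdot p(t)$ coincides with the sign of $(\textbf{m}|_{Q^\dagger_0})\cdot p^\dagger(t)$. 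Invoking Theorem~\ref{g_c_vec_DQ} and Lemma~\ref{cvecs_of_mgs} in reverse, $p^\dagger$ defines a MGS $\textbf{i}^\dagger$ of $Q^\dagger$ whose $\textbf{c}$-vector sequence is precisely the longest subsequence of $\textbf{c}(\textbf{i})$ whose entries have no nonzero coordinate outside $Q^\dagger_0$. Uniqueness of this subsequence is tautological, so $\textbf{i}\mapsto\textbf{i}^\dagger$ is well-defined.

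The main obstacle is the restriction lemma of the second step: rigorously checking that the projected sub-diagram is consistent in $\mathbb{R}^{Q^\dagger_0}$ requires careful bookkeeping of the noncommuting elementary transformations and is the technical heart of the argument, essentially Muller's theorem rephrased. A secondary technical point is the transversality of $p^\dagger$, which is handled by a standard generic perturbation.
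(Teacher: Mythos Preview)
Your proposal is correct and follows essentially the same approach as the paper: both represent the maximal green sequence as a finite transverse path in $\mathfrak{D}(Q)$, invoke the restriction principle identifying the $Q^\dagger$-supported walls of $\mathfrak{D}(Q)$ with (the pullback of) $\mathfrak{D}(Q^\dagger)$, project the path via $\pi$, and read off the induced $\textbf{c}$-vector subsequence. The paper simply cites \cite[Theorem~33]{muller2015existence} and \cite[Theorem~9]{muller2015existence} for the restriction lemma and the transversality of $\pi\circ p$, whereas you sketch the former via the submatrix identity and handle the latter by generic perturbation---but you correctly flag that the restriction lemma is the technical heart and is essentially Muller's result.
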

\begin{proof}
The scattering diagram $\mathfrak{D}(Q^\dagger) = \{(\textbf{m}^\dagger_j, W^\dagger_j)\}_{j \in J}$ can be obtained from $\mathfrak{D}(Q)$ by considering the subset of walls of $\mathfrak{D}(Q)$ given by $\pi^*\mathfrak{D}(Q^\dagger) = \{(\pi^T(\textbf{m}^\dagger_{j}), \pi^{-1}(W^\dagger_j))\}_{j \in J}$. Here $\pi:\mathbb{R}^n \to \mathbb{R}^k$ is the canonical projection and $\pi^T: \mathbb{R}^k \to \mathbb{R}^n$ is the coordinate inclusion. More precisely, by \cite[Theorem 33]{muller2015existence}, the scattering diagram $\pi^*\mathfrak{D}(Q^\dagger)$ is obtained from $\mathfrak{D}(Q)$ by removing all of its walls $(\textbf{m}, W)$ where $\textbf{m} = (m_1, \ldots, m_n)$ has any nonzero entry $m_i$ with $i \in Q_0\backslash Q^\dagger_0.$ Now by Theorem~\ref{g_c_vec_DQ}, any wall $(\textbf{m}^\dagger, W^\dagger)$ of $\mathfrak{D}(Q^\dagger)$ that is incident to a reachable chamber of $\mathfrak{D}(Q^\dagger)$ has the property that $\textbf{m}^\dagger = (m^\dagger_1,\ldots, m^\dagger_n)$ is a positive \textbf{c}-vector of $Q$ where $m^\dagger_i = 0$ if $i \in Q_0\backslash Q^\dagger_0$.

Next, let $\textbf{i} \in \text{MGS}(Q)$ and let $p:[0,1] \to \mathbb{R}^n$ be a finite transverse path that defines $\textbf{i}$. Now consider the path $\pi\circ p:[0,1] \to \mathbb{R}^k$. By the proof of \cite[Theorem 9]{muller2015existence}, we know that $\pi\circ p$ is a finite transverse path in $\mathfrak{D}(Q^\dagger)$. Thus $\pi \circ p$ defines a maximal green sequence $\textbf{i}^\dagger \in \text{MGS}(Q^\dagger)$, and $\pi\circ p$ only crosses walls $(\textbf{m}^\dagger, W^\dagger)$ of $\mathfrak{D}(Q^\dagger)$ that are incident to reachable chambers of $\mathfrak{D}(Q^\dagger)$. Now by the previous paragraph, we conclude that $\textbf{c}(\textbf{i}^\dagger) = (\textbf{c}^{(1)},\ldots, \textbf{c}^{(\ell)})$ is the unique longest subsequence of $\textbf{c}(\textbf{i})$ where each $\textbf{c}^{(j)} = (c_1^{(j)},\ldots, c^{(j)}_n)$ satisfies $c_i^{(j)} = 0$ if $i \in Q_0\backslash Q^\dagger_0.$
\end{proof}




As a corollary we obtain a result on the lengths of maximal green sequences that characterizes acyclic quivers. 

\begin{corollary}\label{Cor:acyclic_result}
A quiver $Q$ is acyclic if and only if $Q$ admits a maximal green sequence of length $\abs{Q_0}$. 
\end{corollary}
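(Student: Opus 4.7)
The forward direction is \cite[Lemma 2.20]{bdp}. For the converse, I assume $\textbf{i}=(i_1,\ldots,i_n)$ is a maximal green sequence of $Q$ with $n=|Q_0|$, and plan to show that $(i_1,\ldots,i_n)$ is a topological ordering of $Q$ --- that is, $i_j$ is a source of the full subquiver of $Q$ on $[n]\setminus\{i_1,\ldots,i_{j-1}\}$ for each $j$ --- which forces $Q$ to be acyclic.

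My main tool is the $\textbf{g}$-vector mutation formula from Section~\ref{Sec_MGSs_prelim}. A maximal green sequence runs from $\widehat{Q}$ (with $\textbf{g}$-matrix $I_n$) to the coframed quiver $\widecheck{Q}$ (whose $\textbf{g}$-matrix is $-I_n$ by Theorem~\ref{thm_tropical_dual} together with the identity $C_{\widecheck{Q}}=-I_n$). Because mutation at a vertex $k$ modifies only the row $\textbf{g}_k$, each vertex must be mutated at least once; with total length $n$, each is mutated exactly once, so $\textbf{g}_{i_j}$ is unchanged except during the step-$j$ mutation, when it goes from $e_{i_j}$ to $-e_{i_j}$. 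Setting $\overline{Q}^{(j-1)}:=\mu_{i_{j-1}}\circ\cdots\circ\mu_{i_1}(\widehat{Q})$ and using that $i_j$ is green in $\overline{Q}^{(j-1)}$, the green-case mutation formula reads
\[
-e_{i_j} \;=\; -e_{i_j} + \sum_{\substack{(m\to i_j)\in\overline{Q}^{(j-1)}_1\\ m\in[n]}}\textbf{g}_m^{(j-1)},
\]
and since $\{\textbf{g}_m^{(j-1)}:m\in[n]\}$ is a basis of $\mathbb{Q}^n$, the sum vanishes: $i_j$ receives no mutable arrows in $\overline{Q}^{(j-1)}$.

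I would finish by inducting on $j$ to upgrade ``no mutable arrows into $i_j$ in $\overline{Q}^{(j-1)}$'' to the topological source condition ``$i_j$ is a source of $Q\setminus\{i_1,\ldots,i_{j-1}\}$''. The base case $j=1$ is immediate since the mutable part of $\overline{Q}^{(0)}=\widehat{Q}$ is $Q$. For the inductive step, the hypothesis ensures each prior $\mu_{i_k}$ is a source mutation of its current quiver; since source mutations create no new arrows (no 2-paths pass through a source) and only reverse the arrows at the source, careful bookkeeping shows that the mutable arrows into $i_j$ in $\overline{Q}^{(j-1)}$ are exactly the arrows of $Q$ from $\{i_{j+1},\ldots,i_n\}$ to $i_j$, which vanish precisely when $i_j$ is a source of $Q\setminus\{i_1,\ldots,i_{j-1}\}$. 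Iterating yields the topological ordering and hence $Q$ is acyclic. The main obstacle is this bookkeeping --- in particular, verifying that an already-mutated $i_k$, which becomes a sink after $\mu_{i_k}$, does not acquire an outgoing arrow to the still-unmutated $i_j$ during any intermediate source mutation at $i_{k+1},\ldots,i_{j-1}$.
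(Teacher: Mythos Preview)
Your $\mathbf{g}$-vector approach is sound and genuinely different from the paper's proof, which instead works with $\mathbf{c}$-vectors and Theorem~\ref{thmMullerSubquiver}: restricting $\mathbf{i}$ to each one-vertex subquiver forces every $\mathbf{c}$-vector in $\mathbf{c}(\mathbf{i})$ to be a standard basis vector, and then restricting to a full subquiver $Q^{\dagger}$ containing no source (which exists whenever $Q$ has an oriented cycle) yields a contradiction by a short $\mathbf{c}$-vector calculation. Your route avoids the scattering-diagram machinery entirely and in fact gives the sharper conclusion that $(i_1,\ldots,i_n)$ is itself a linear extension of $Q$.

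There is, however, one genuine gap. You assert that the terminal $\mathbf{g}$-matrix equals $-I_n$, citing $C_{\widecheck{Q}}=-I_n$ and tropical duality. But the endpoint of a maximal green sequence is $\widecheck{Q}$ only \emph{up to an isomorphism fixing the frozen vertices}, so a priori the terminal $\mathbf{g}$-matrix is $-P$ for some permutation matrix $P$; for instance, the maximal green sequence $(2,1,2)$ of $1\to 2$ terminates with $\mathbf{g}_1=-e_2$ and $\mathbf{g}_2=-e_1$. Hence ``$\mathbf{g}_{i_j}$ goes from $e_{i_j}$ to $-e_{i_j}$'' cannot be asserted in advance. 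The fix is to fold this claim into your forward induction. Take as inductive hypothesis that for all $k<j$ the vertex $i_k$ has no incoming mutable arrows in $\overline{Q}^{(k-1)}$. Then each $\mu_{i_k}$ is a mutable-source mutation: it creates no new mutable arrows, and since arrow reversal at $i_{k'}$ can only give a previously mutated vertex a new outgoing arrow into $i_{k'}$ itself, no mutated vertex ever acquires an arrow into the still-unmutated $i_j$ (this settles exactly the bookkeeping you flagged). The green-case formula therefore reads
\[
\mathbf{g}_{i_j}^{\text{final}}=-e_{i_j}+\sum_{\substack{m\to i_j\text{ in }\overline{Q}^{(j-1)}\\ m\text{ unmutated}}} e_m,
\]
a vector with $i_j$-th coordinate $-1$ and all other coordinates $\geq 0$. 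Since it must lie in the set $\{-e_1,\ldots,-e_n\}$ of terminal $\mathbf{g}$-vectors, it is forced to equal $-e_{i_j}$, the sum vanishes, and the inductive step (for both the permutation claim and the source claim) is complete.
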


\begin{proof}
The forward direction follows from \cite[Lemma 2.20]{bdp}.  To show the backward direction we suppose that there exists a quiver $Q$ with oriented cycles that admits a maximal green sequence {\bf i} of length $\abs{Q_0}$.  Note that in this case every vertex must be mutated exactly once.  Let $Q'$ be a subquiver of $Q$ consisting of a single vertex, and observe that $Q'$ admits a unique maximal green sequence of length 1.  Theorem~\ref{thmMullerSubquiver} implies that ${\bf c}({\bf i})$ consists of $\abs{Q_0}$ vectors of the form ${\bf c}^{(j)}$, where the only nonzero entry of ${\bf c}^{(j)}$ equals 1 and occurs at position $j$.  

Since $Q$ contains oriented cycles there exists a full subquiver $Q^{\dagger}$ of $Q$ such that no vertex of $Q^{\dagger}$ is a source in $Q^{\dagger}$.  By Theorem \ref{thmMullerSubquiver} the sequence ${\bf i}^{\dagger}$ is a maximal green sequence for $Q^{\dagger}$ of length $\abs{Q^{\dagger}}$.   Suppose ${\bf i}^{\dagger}$ begins with a mutation at vertex $i$.  By construction, there exists an arrow $j \to i$ ending at $i$ in $Q^{\dagger}$.  Then in $\mu_i(\widehat{Q^{\dagger}})$ the {\bf c}-vector at vertex $j$ becomes strictly positive at position $i$.  Furthermore, since we will not mutate at $i$ again while performing ${\bf i}^{\dagger}$ it follows that prior to a mutation at $j$ the corresponding {\bf c}-vector will remain strictly positive at position $i$.  However, this contradicts the description of ${\bf c} ({\bf i}^{\dagger})$ according to Theorem \ref{thmMullerSubquiver}.  Therefore, we can conclude that if $Q$ contains oriented cycles then it cannot admit a maximal green sequence of length $\abs{Q_0}$.  
\end{proof}

\section{Direct sums of quivers}\label{sec:direct sums}

In this section, we recall the definition of a direct sum of quivers following \cite{garver2014maximal}. In \cite{garver2014maximal}, it was shown that, under certain restrictions, if a quiver $Q$ can be written as a direct sum of quivers where each summand has a maximal green sequence, then the maximal green sequences of the summands can be concatenated in some way to give a maximal green sequence for $Q$. We show that, under those same restrictions, if $Q$ is a direct sum of quivers, then the minimal length of a maximal green sequence of $Q$ is the sum of the lengths of minimal length maximal green sequences of its summands. Throughout this section, we let $(Q_1,F_1)$ and $(Q_2,F_2)$ be finite ice quivers with $n_1$ and $n_2$ nonfrozen vertices, respectively. Furthermore, we assume $(Q_1)_0\backslash F_1 = [n_1]$ and $(Q_2)_0\backslash F_2 = [n_1 + 1, n_1 + n_2].$  

\begin{definition}\label{dirsum}
Let $(a_1,\ldots,a_k)$ denote a $k$-tuple of elements from $(Q_1)_0\backslash F_1$ and $(b_1,\ldots, b_k)$ a $k$-tuple of elements from $(Q_2)_0\backslash F_2$. (By convention, we assume that the  $k$-tuple $(a_1,\ldots, a_k)$ is ordered so that $a_i \le a_j$ if $i < j$ unless stated otherwise.)  Additionally, let $({R}_1,F_1) \in \text{Mut}(({Q_1},F_1))$ and $(R_2, F_2) \in \text{Mut}(({Q_2},F_2))$. We define the \textbf{direct sum} of $({R}_1,F_1)$ and $(R_2,F_2)$, denoted $(R_1,F_1)\oplus_{(a_1,\ldots, a_k)}^{(b_1,\ldots,b_k)}(R_2,F_2)$, to be the ice quiver with vertices$$\left((R_1,F_1)\oplus_{(a_1,\ldots, a_k)}^{(b_1,\ldots,b_k)}(R_2,F_2)\right)_0 := ({R}_1)_0 \sqcup ({R}_2)_0 = ({Q}_1)_0 \sqcup ({Q}_2)_0 = [n_1+n_2] \sqcup F_1 \sqcup F_2$$ and arrows $$\left((R_1,F_1)\oplus_{(a_1,\ldots, a_k)}^{(b_1,\ldots,b_k)}(R_2,F_2)\right)_1 := (R_1,F_1)_1 \sqcup (R_2,F_2)_1 \sqcup \left\{a_i \stackrel{\alpha_i}{\to} b_i: i \in [k]\right\}.$$ Observe that we have the identification of ice quivers $$\widehat{Q_1\oplus_{(a_1,\ldots, a_k)}^{(b_1,\ldots,b_k)}Q_2} \cong \widehat{Q_1}\oplus_{(a_1,\ldots, a_k)}^{(b_1,\ldots,b_k)}\widehat{Q_2},$$ 
where $m=2(n_1+n_2)$ in both cases. 
\end{definition}

\begin{example}\label{directsum1}
Let $Q$ denote the quiver shown in Figure~\ref{dirsumex}. Define $Q_1$ to be the full subquiver of $Q$ on the vertices $1,\ldots, 4$, $Q_2$ to be the full subquiver of $Q$ on the vertices $6,\ldots, 11$, and $Q_3$ to be the full subquiver of $Q$ on the vertex 5. Note that $Q_1, Q_2,$ and $Q_3$ are each irreducible. Then $$Q = Q_1\oplus_{(1,1,1,3,4,4)}^{(5,8,11,8,9,11)}Q_{23}$$ where $Q_{23} = Q_2\oplus_{(6)}^{(5)}Q_3$. On the other hand, we could write $$Q = Q_{12} \oplus_{(1,6)}^{(5,5)}Q_3$$ where $Q_{12} = Q_1\oplus_{(1,1,3,4,4)}^{(8,11,8,9,11)}Q_2$. Additionally, note that $$Q_1\oplus_{(1,1,1,3,4,4)}^{(5,8,11,8,9,11)}Q_{23} = Q_1\oplus_{(1,1,1,3,4,4)}^{(5,8,11,8,9,11)}\left(Q_2\oplus_{(6)}^{(5)}Q_3\right) \neq \left(Q_1\oplus_{(1,1,1,3,4,4)}^{(5,8,11,8,9,11)}Q_2\right)\oplus_{(6)}^{(5)}Q_3$$
where the last equality does not hold because $Q_1\oplus_{(1,1,1,3,4,4)}^{(5,8,11,8,9,11)}Q_2$ is not defined as $5$ is not a vertex of $Q_2$. This shows that the direct sum of two quivers, in the sense of this paper, is not associative.

\begin{figure}[h]
$$\begin{xy} 0;<1pt,0pt>:<0pt,-1pt>:: 
(25,25) *+{1} ="0",
(0,75) *+{2} ="1",
(50,75) *+{3} ="2",
(25,125) *+{4} ="3",
(150,0) *+{5} ="4",
(200,25) *+{6} ="5",
(150,50) *+{7} ="6",
(200,75) *+{8} ="7",
(150,100) *+{9} ="8",
(200,125) *+{10} ="9",
(150,150) *+{11} ="10",
"1", {\ar"0"},
"0", {\ar"2"},
"0", {\ar"4"},
"0", {\ar"7"},
"0", {\ar"10"},
"3", {\ar"1"},
"2", {\ar"3"},
"2", {\ar"7"},
"3", {\ar"8"},
"3", {\ar"10"},
"5", {\ar"4"},
"6", {\ar"5"},
"5", {\ar"7"},
"7", {\ar"6"},
"7", {\ar"8"},
"8", {\ar"9"},
"10", {\ar"8"},
"9", {\ar"10"},
\end{xy}$$
\caption{}
\label{dirsumex}
\end{figure}
\end{example}

\begin{theorem}\label{tcolordirsum}\cite[Prop. 3.14]{garver2014maximal}
If $\textbf{i}_1 \in \text{MGS}\left(Q_1\right)$ and $\textbf{i}_2 \in \text{MGS}\left(Q_2\right)$ and $Q := Q_1\oplus_{(a_1,\ldots, a_t)}^{(b_1,\ldots, b_t)}Q_2$ is a direct sum of quivers where $|\{(a_i \stackrel{\alpha}{\longrightarrow} b_j) \in (Q)_1\}| \le 1$ for any $i, j \in [t]$, then $\textbf{i}_1\circ\textbf{i}_2 \in \text{MGS}\left(Q\right).$
\end{theorem}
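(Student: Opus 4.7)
The strategy is to compute $\underline{\mu}_{\textbf{i}_1\circ \textbf{i}_2}(\widehat{Q})$ step by step and verify that every mutation is at a green vertex and that, at the end, every mutable vertex is red. I would first track $\widehat{Q}$ through $\textbf{i}_1$ (which mutates only $Q_1$-vertices) and establish, by induction on the length of the executed prefix, three invariants on the current quiver: (a) its sub-ice-quiver on $Q_1\cup F_1$ agrees with the same mutation of $\widehat{Q_1}$; (b) its sub-ice-quiver on $Q_2\cup F_2$ is still $\widehat{Q_2}$; and (c) every arrow connecting $Q_1\cup F_1$ with $Q_2\cup F_2$ runs between a mutable vertex of $Q_1$ and a mutable vertex of $Q_2$, never touching a frozen vertex.

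Invariant (a) follows because any 2-path $i\to k\to j$ with both endpoints in $Q_1\cup F_1$ lies entirely inside the $\widehat{Q_1}$-subquiver, so step (1) of $\mu_k$ reproduces the effect on $\widehat{Q_1}$; invariant (c) uses that every mutation in $\textbf{i}_1$ occurs at a green vertex, so at the moment of $\mu_k$ there is no arrow $j'\to k$ from any frozen $j'$, which forbids 2-paths from a frozen vertex through $k$ and hence any newly created cross arrow with a frozen endpoint. Together (a) and (c) imply that the color of any $v\in Q_1$ in the evolving $\widehat{Q}$ matches its color in $\widehat{Q_1}$, so $\textbf{i}_1$ is executable and leaves every $Q_1$-vertex red. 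For the second phase, invariant (b) guarantees that the $Q_2\cup F_2$ subquiver at the start of $\textbf{i}_2$ equals $\widehat{Q_2}$; the symmetric argument with the roles of $Q_1$ and $Q_2$ exchanged shows that $\textbf{i}_2$ is executable, that the $Q_1\cup F_1$ subquiver is unchanged during $\textbf{i}_2$, and that no new $F_1\to Q_2$ arrows appear, so the $Q_1$-vertices stay red. At the end every mutable vertex of $\widehat{Q}$ is red, giving $\textbf{i}_1\circ \textbf{i}_2\in \text{MGS}(Q)$.

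The main obstacle is invariant (b). A mutation $\mu_k$ with $k\in Q_1$ alters the entry $b_{ij}$ with $i,j\in Q_2$ precisely through the contribution $\tfrac12\bigl(|b_{ik}|b_{kj}+b_{ik}|b_{kj}|\bigr)$ of the matrix mutation formula, which is nonzero exactly when $b_{ik}$ and $b_{kj}$ share a sign, i.e.\ when $k$ has \emph{mixed} bridge directions to $Q_2$ at that instant. Careful sign tracking of the bridge submatrix throughout $\textbf{i}_1$, using the single-bridge-arrow hypothesis $|\{a_i\to b_j\}|\le 1$ to rule out spurious multi-arrows in the bridge, is required to show that no such mixed configuration arises at any mutation of $\textbf{i}_1$. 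This sign analysis is the technical core of the proof, and it is precisely where the single-arrow assumption on bridges is used; without it, mixed bridges at $k$ could propagate nonzero contributions into the $Q_2$-block and destroy the decoupling between the two summands.
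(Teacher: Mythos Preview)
The paper does not give its own proof of this result; it is quoted from \cite{garver2014maximal}. Your three invariants (a), (b), (c) are the right things to track, and the overall architecture matches how a direct combinatorial proof goes.

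There are two genuine gaps. First, your argument for (c) only rules out $2$-paths $j'\to k\to\ast$ through a green $k$; it does not rule out $2$-paths $b\to k\to j'$ with $b\in(Q_2)_0$ and $j'\in F_1$, which would create a $Q_2\to F_1$ arrow. Excluding those requires knowing that a green $k\in(Q_1)_0$ has no incoming bridge $b\to k$, which is exactly the content of (b). So (c) is not independent of (b) as your write-up implies.

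Second, your account of (b) misidentifies the mechanism. Bridge multi-arrows \emph{do} arise during $\textbf{i}_1$ even under the single-arrow hypothesis (take $Q_1=1\to2$, $Q_2$ a single vertex $3$, bridges $1\to3$ and $2\to3$: after $\mu_2$ there are two arrows $1\to3$), so ``ruling out spurious multi-arrows'' is not what is needed. The real lemma is: throughout $\textbf{i}_1$, every green $k\in(Q_1)_0$ has all bridge entries $D_{kb}\ge0$. One clean proof: show inductively that each bridge column is the fixed nonnegative combination $D^{(b)}=\sum_{i\in S_b} c^{(i)}$ of the $\widehat{Q_1}$ $\textbf{c}$-matrix columns, where $S_b=\{i\in (Q_1)_0:i\to b\text{ is an initial bridge}\}$. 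When $k$ is green in $\widehat{Q_1}$, standard sign-coherence gives $c^{(i)}_k\ge0$ for every $i$; hence $D^{(b)}_k\ge0$ by the inductive identity, and then the column-mutation rule at $k$ is one and the same linear map on all of these columns, so the identity persists. This immediately yields (b) and the missing half of (c). The argument in \cite{garver2014maximal} packages the same phenomenon via an inductive ``$t$-colored direct sum'' structure.
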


\begin{proposition}\label{minlengthdirectsum}
Assume $\textbf{i}_1 \in \text{MGS}\left(Q_1\right)$ and $\textbf{i}_2 \in \text{MGS}\left(Q_2\right)$ and $Q := Q_1\oplus_{(a_1,\ldots, a_t)}^{(b_1,\ldots, b_t)}Q_2$ is a direct sum of quivers where $|\{(a_i \stackrel{\alpha}{\longrightarrow} b_j) \in (Q)_1\}| \le 1$ for any $i, j \in [t]$. If $\ell_j$ for $j = 1,2$ is the length of a minimal length maximal green sequence of $Q_j$, then $\ell_1 + \ell_2$ is the length of a minimal length maximal green sequence of $Q$. Furthermore, the maximal green sequence $\textbf{i}_1\circ \textbf{i}_2$ of $Q$ achieves this length.
\end{proposition}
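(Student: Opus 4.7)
The plan is to match upper and lower bounds for the minimal length of a maximal green sequence of $Q = Q_1 \oplus_{(a_1,\ldots,a_t)}^{(b_1,\ldots,b_t)} Q_2$.

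The upper bound is essentially handed to us by Theorem~\ref{tcolordirsum}: choose minimal length maximal green sequences $\textbf{i}_j \in \text{MGS}(Q_j)$ of length $\ell_j$ for $j = 1, 2$. Since $Q$ satisfies the arrow multiplicity hypothesis, Theorem~\ref{tcolordirsum} guarantees that the concatenation $\textbf{i}_1 \circ \textbf{i}_2$ lies in $\text{MGS}(Q)$ and has length $\ell_1 + \ell_2$. This produces the explicit sequence promised in the statement and shows that the minimum length is at most $\ell_1 + \ell_2$.

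The lower bound is where Theorem~\ref{thmMullerSubquiver} does the real work. First, I would observe from Definition~\ref{dirsum} that the only arrows of $Q$ not already present in $Q_1 \sqcup Q_2$ run from $(Q_1)_0$ to $(Q_2)_0$; in particular each $Q_j$ is a full subquiver of $Q$. Next, given any $\textbf{i} \in \text{MGS}(Q)$, Theorem~\ref{thmMullerSubquiver} produces $\textbf{i}^{(j)} \in \text{MGS}(Q_j)$ whose $\textbf{c}$-vector sequence $\textbf{c}(\textbf{i}^{(j)})$ is obtained from $\textbf{c}(\textbf{i})$ by retaining precisely the $\textbf{c}$-vectors supported on $(Q_j)_0$. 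Because $(Q_1)_0 \cap (Q_2)_0 = \emptyset$ and $\textbf{c}$-vectors are nonzero by sign coherence, no single $\textbf{c}$-vector of $\textbf{i}$ can simultaneously belong to $\textbf{c}(\textbf{i}^{(1)})$ and to $\textbf{c}(\textbf{i}^{(2)})$. Consequently
\[
\ell(\textbf{i}) \; = \; |\textbf{c}(\textbf{i})| \; \ge \; |\textbf{c}(\textbf{i}^{(1)})| + |\textbf{c}(\textbf{i}^{(2)})| \; \ge \; \ell_1 + \ell_2,
\]
and minimizing over $\textbf{i} \in \text{MGS}(Q)$ completes the proof.

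The main obstacle is conceptual rather than computational: one must recognize that the two restricted sequences $\textbf{i}^{(1)}$ and $\textbf{i}^{(2)}$ live in \emph{disjoint} positions of $\textbf{i}$, which is what allows their lengths to be added. The arrow multiplicity hypothesis from Theorem~\ref{tcolordirsum} enters only in the upper bound construction; the lower bound argument works for any direct sum, which hints that the equality might extend further, although this is not needed here.
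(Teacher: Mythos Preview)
Your proof is correct and follows essentially the same approach as the paper: the upper bound via Theorem~\ref{tcolordirsum} and the lower bound via Theorem~\ref{thmMullerSubquiver}, using disjointness of $(Q_1)_0$ and $(Q_2)_0$ to conclude that the two restricted $\textbf{c}$-vector subsequences occupy disjoint positions in $\textbf{c}(\textbf{i})$. Your write-up is slightly more explicit than the paper's in noting that nonzero $\textbf{c}$-vectors cannot be simultaneously supported on both vertex sets, but the argument is the same.
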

\begin{proof}
Assume that $\textbf{i}_j \in \text{MGS}\left(Q_j\right)$ for $j = 1,2$ is of minimal length. By Theorem~\ref{tcolordirsum}, the sequence $\textbf{i}_1\circ\textbf{i}_2 \in \text{MGS}\left(Q\right)$ and has length $\ell_1 +\ell_2$. 

We show that $Q$ has no maximal green sequences of length less than $\ell_1 + \ell_2$. Let $\textbf{i} = (i_1,\ldots, i_k) \in \text{MGS}(Q).$ By Theorem~\ref{thmMullerSubquiver}, the full subquivers $Q_1$ and $Q_2$ have maximal green sequences $\textbf{j}_1 \in \text{MGS}(Q_1)$ and $\textbf{j}_2  \in \text{MGS}(Q_2)$ such that $\textbf{c}(\textbf{j}_1)$ (resp., $\textbf{c}(\textbf{j}_2)$) is the unique longest subsequence of $\textbf{c}(\textbf{i})$ where each \textbf{c}-vector $\textbf{c} = (c_{1},\ldots, c_{n})$ in $\textbf{c}(\textbf{j}_1)$ (resp., $\textbf{c}(\textbf{j}_2)$) satisfies $c_i = 0$ if $i \in Q_0\backslash (Q_1)_0$ (resp., $i \in Q_0\backslash (Q_2)_0$). This implies that $\ell(\textbf{j}_1) \ge \ell_1$ and $\ell(\textbf{j}_2) \ge \ell_2$. Now since $Q_1$ and $Q_2$ have no common vertices, we know that $\ell(\textbf{i}) \ge \ell_1 + \ell_2.$ 
\end{proof}

\section{Quivers with branches of mutation type $\mathbb{A}_n$}\label{Sec_branches}

In this section, we show that if a quiver $\widetilde{Q}$ consists of a quiver $Q$ and mutation type $\mathbb{A}$ subquivers branching out from a given set of vertices of $Q$, then we can essentially ignore these mutation type $\mathbb{A}$ subquivers while computing (minimal length) maximal green sequences for $\widetilde{Q}$.  If these branches are made up of acyclic quivers, then $\widetilde{Q}$ can be realized as a direct sum of quivers, and we already discussed in Section~\ref{sec:direct sums} how to construct maximal green sequences for such $\widetilde{Q}$.   However, if the branches contain 3-cycles then we cannot use the same approach.  Therefore, the results presented in this section enable us to decompose $\widetilde{Q}$ even further by disregarding such subquivers of mutation type $\mathbb{A}$ and focusing only on $Q$. 

We begin with the following lemma, which is a reformulation of \cite[Lemma 2.14]{cormier2015minimal}.

\begin{lemma}\label{oldlem}
Let $Q \in \textup{Mut} (\widehat{Q'})$ for some quiver $Q'$, such that the following conditions hold. 

\begin{itemize}
\item  $Q$ is composed of two full subquivers $\mathcal{D}$ and the framed quiver $\widehat{\mathcal{C}}$; 
\item $\widehat{\mathcal{C}}$ and $\mathcal{D}$ are connected by a single arrow $x\rightarrow z$, where $x\in\mathcal{C}_0$ and $z\in\mathcal{D}_0$.   
\end{itemize} 

\noindent Let $\mu_{\mathcal{C}}$ be a maximal green sequence for $\mathcal{C}$, then 

\begin{itemize}
\item $\mu_{\mathcal{C}}(Q)$ is composed of two full subquivers $\mathcal{D}$ and $\mu_{\mathcal{C}} (\widehat{\mathcal{C}})$;
 \item $\mu_{\mathcal{C}} (\widehat{\mathcal{C}})$ and $\mathcal{D}$ are connected by a single arrow $z\rightarrow v$,  where $v$ is a unique vertex of $\mathcal{C}_0$ with an arrow $x' \rightarrow v$ starting at the frozen vertex $x'$.  
\end{itemize}

$$ \begin{tikzpicture}[scale = .4][
> = stealth, 
            shorten > = 4pt, ]
\node[draw=none, fill=none] at (0.7,0.4) {$Q$:};
            
\node[draw=none, fill=none] at (7.7,0.4) {$\widehat{\mathcal{C}}$};
\node[draw, shape=circle, fill=black, scale=0.4] at (10.5,0) { };
\node[draw=none, fill=none] at (10.5,-0.8) {$x$};
\node[draw, shape=circle, fill=black, scale=0.4] at (12.5,0) { };
\node[draw=none, fill=none] at (12.5,-0.9) {$z$};
\node[draw=none, fill=none] at (15.7,0.4) {$\mathcal{D}$};

\draw [black] plot [smooth cycle] coordinates {(4.5,0) (6.5,2) (10.5,2) (10.5,0) (8.5,-2)};
\draw [black] plot [smooth cycle] coordinates {(12.5,0) (14.5,2) (18.5,2) (18.5,0) (16.5,-2)};

\draw [->] (10.6,0) -- node { } (12.2,0);

\end{tikzpicture}$$

\end{lemma}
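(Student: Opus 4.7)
The plan is to treat the non-mutated vertex $z$ as a ``shadow'' of the frozen vertex $x' \in \widehat{\mathcal{C}}$ throughout the mutation sequence $\mu_{\mathcal{C}}$. First, since $\mu_{\mathcal{C}}$ mutates only at vertices of $\mathcal{C}_0$, the internal arrows of $\mathcal{D}$ are untouched. Moreover, $z$ is the only vertex of $\mathcal{D}$ initially adjacent to any vertex of $\mathcal{C}$, and each mutation step only modifies arrows incident to the mutated vertex or arising from a $2$-path through it; no such $2$-path can ever connect a vertex of $\mathcal{C}$ to a vertex of $\mathcal{D} \setminus \{z\}$. Hence throughout the sequence the only arrows between $\mathcal{C}$ and $\mathcal{D}$ are between vertices of $\mathcal{C}_0$ and $z$, which already yields the decomposition of $\mu_{\mathcal{C}}(Q)$ into the two full subquivers $\mathcal{D}$ and $\mu_{\mathcal{C}}(\widehat{\mathcal{C}})$.

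The main step is the following parallel evolution claim, which I would prove by induction on the length of the prefix of $\mu_{\mathcal{C}}$ applied so far: at every intermediate state, the exchange matrix entries satisfy $b_{y,z} = b_{y,x'}$ for every $y \in \mathcal{C}_0$. The base case is immediate, since initially $b_{x,z} = b_{x,x'} = 1$ and $b_{y,z} = b_{y,x'} = 0$ for every $y \in \mathcal{C}_0 \setminus \{x\}$. For the inductive step, when one mutates at $k \in \mathcal{C}_0$ the matrix mutation formula of Section~\ref{Sec_MGSs_prelim} gives $b_{y,z}' = -b_{y,z}$ if $y=k$, and $b_{y,z}' = b_{y,z} + \frac{|b_{y,k}| b_{k,z} + b_{y,k}|b_{k,z}|}{2}$ otherwise, with the identical formula governing $b_{y,x'}'$ after replacing $z$ by $x'$. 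Applying the inductive hypothesis to both $y$ and $k$ yields $b_{y,z}' = b_{y,x'}'$ in either case.

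To conclude, I would apply the parallel evolution claim at the final state. Since $\mu_{\mathcal{C}}$ is a maximal green sequence of $\mathcal{C}$, its final $\textbf{c}$-matrix is of the form $-P$ for some permutation matrix $P$; this is the standard fact that every maximal directed path in $\overrightarrow{EG}(\widehat{\mathcal{C}})$ terminates at the coframed quiver $\widecheck{\mathcal{C}}$ up to a permutation of the mutable vertices. Consequently, the column of the final $\textbf{c}$-matrix corresponding to $x'$ has exactly one nonzero entry, equal to $-1$ at a unique position $v \in \mathcal{C}_0$, which translates to the existence of a unique arrow $x' \to v$ (and no arrows $y \to x'$ for any mutable $y$). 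Feeding this into the parallel evolution claim for the column indexed by $z$ yields $b_{v,z} = -1$ and $b_{y,z} = 0$ for all $y \in \mathcal{C}_0 \setminus \{v\}$, which together with $2$-acyclicity is precisely the statement that the only arrow between $z$ and $\mathcal{C}_0$ in $\mu_{\mathcal{C}}(Q)$ is $z \to v$.

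The main obstacle is really one of packaging: the parallel evolution is a clean consequence of the matrix mutation formula and causes no serious bookkeeping difficulty, so the genuine nontrivial input is the endpoint-structure theorem for maximal green sequences, which must be invoked from the literature rather than reproved here.
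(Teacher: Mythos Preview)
The paper does not supply its own proof of this lemma; it is quoted as a reformulation of \cite[Lemma~2.14]{cormier2015minimal} and used as a black box. Your argument via the ``parallel evolution'' of the $z$-column and the $x'$-column of the exchange matrix is the natural proof and is correct as far as it goes: both columns start equal, the matrix-mutation formula updates them identically, and the endpoint structure of a maximal green sequence (final $\mathbf{c}$-matrix equal to $-P$ for a permutation matrix) then pins down the single arrow $z\to v$ between $z$ and the mutable vertices of $\mathcal{C}$.

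There is one small omission. Your conclusion only controls arrows between $z$ and the \emph{mutable} vertices $\mathcal{C}_0$, whereas the lemma asserts that $\mathcal{D}$ and $\mu_{\mathcal{C}}(\widehat{\mathcal{C}})$ are connected by a \emph{single} arrow, so you must also rule out arrows between $z$ and the frozen vertices $c'$ of $\widehat{\mathcal{C}}$. Such an arrow could in principle arise from a $2$-path $c'\to k\to z$ or $z\to k\to c'$ when mutating at $k\in\mathcal{C}_0$; in either case $b_{z,k}$ and $b_{k,c'}$ would have the same sign. But by your parallel evolution $b_{z,k}=-b_{k,z}=-b_{k,x'}$, so this would force $b_{k,x'}$ and $b_{k,c'}$ to have opposite signs, contradicting sign-coherence of the $\mathbf{c}$-vector of $k$. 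Hence $b_{z,c'}$ remains zero throughout, and with this addition your argument is complete.
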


\begin{lemma}\label{3-cycle}
Consider a quiver $Q$ composed of an arbitrary quiver $\mathcal{C}$ and a 3-cycle joined together at a vertex $x$ as shown below.  
$$\begin{tikzpicture}[scale=.4][
> = stealth, 
            shorten > = 4pt, ]
\node[draw=none, fill=none] at (0.7,0.4) {$Q$:};          
\node[draw=none, fill=none] at (7.7,0.4) {$\mathcal{C}$};
\node[draw, shape=circle, fill=black, scale=0.4] at (10.5,0) { };
\node[draw=none, fill=none] at (10.5,-0.7) {$x$};

\node[draw, shape=circle, fill=black, scale=0.4] at (13.5,2) { };
\node[draw=none, fill=none] at (13.5,2.7) {$y$};

\node[draw, shape=circle, fill=black, scale=0.4] at (13.5,-2) { };
\node[draw=none, fill=none] at (13.5,-2.6) {$z$};

\draw [black] plot [smooth cycle] coordinates {(4.5,0) (6.5,2) (10.5,2) (10.5,0) (8.5,-2)};
\draw [->] (10.6,0) -- node { } (13.35,1.85);
\draw [->] (13.5,2) -- node { } (13.5,-1.8);
\draw [->] (13.5,-2) -- node { } (10.7,-.1);
\end{tikzpicture}$$
Let $\mu_{\mathcal{C}}$ be a maximal green sequence for $\mathcal{C}$, then 

$$\mu_z\mu_y \mu_{\mathcal{C}} \mu_z$$
is a maximal green sequence for $Q$.  
\end{lemma}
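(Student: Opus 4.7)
The plan is to verify directly that $\mu_z \mu_y \mu_{\mathcal{C}} \mu_z$ is a valid maximal green sequence by tracking the quiver through each of the four stages, combining explicit local computation at the vertices $y$ and $z$ with an application of Lemma~\ref{oldlem} for the middle stage.

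First I would compute $\mu_z(\widehat{Q})$. Since $z$ is green in $\widehat{Q}$ (its only frozen arrow is $z \to z'$), the mutation is valid. The $2$-paths $y \to z \to x$ and $y \to z \to z'$ create arrows $y \to x$ and $y \to z'$; reversing the arrows at $z$ produces $x \to z$, $z \to y$, $z' \to z$; and the new $y \to x$ cancels with the pre-existing $x \to y$ by the $2$-cycle rule. The resulting quiver decomposes as $\widehat{\mathcal{C}}$ (unchanged, since no new arrows inside $\widehat{\mathcal{C}}$ are created) together with a full subquiver $\mathcal{D}$ on $\{y, z, y', z'\}$ containing $z \to y$, $y \to y'$, $y \to z'$, $z' \to z$, joined to $\widehat{\mathcal{C}}$ by the single arrow $x \to z$. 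This matches the hypothesis of Lemma~\ref{oldlem} exactly.

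Applying Lemma~\ref{oldlem}, the sequence $\mu_{\mathcal{C}}$ is a valid green sequence on $\mu_z(\widehat{Q})$, leaves $\mathcal{D}$ untouched, sends $\widehat{\mathcal{C}}$ to $\mu_{\mathcal{C}}(\widehat{\mathcal{C}})$ (rendering every mutable vertex of $\mathcal{C}$ red), and replaces the connecting arrow $x \to z$ by $z \to v$, where $v \in \mathcal{C}_0$ is the unique vertex receiving $x' \to v$ in $\mu_{\mathcal{C}}(\widehat{\mathcal{C}})$. Next I would check that $y$ is green at this stage (with outgoing frozen arrows $y \to y'$ and $y \to z'$) and compute $\mu_y$: the $2$-paths $z \to y \to y'$ and $z \to y \to z'$ create $z \to y'$ and $z \to z'$; the latter cancels with the pre-existing $z' \to z$; and the reversals at $y$ give $y \to z$, $y' \to y$, $z' \to y$. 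The crucial observation is that the surviving new arrow $z \to y'$ makes $z$ green again, so the final $\mu_z$ is applicable.

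Finally I would compute the closing $\mu_z$. The arrows at $z$ are now $y \to z$, $z \to v$, $z \to y'$; the $2$-path $y \to z \to y'$ produces $y \to y'$, which cancels the pre-existing $y' \to y$; the $2$-path $y \to z \to v$ produces $y \to v$; and the arrows at $z$ reverse. At the end, $y$'s only frozen arrow is $z' \to y$ and $z$'s only frozen arrow is $y' \to z$, so both are red. The mutable vertices of $\mathcal{C}$ were not touched during steps three and four (neither $\mu_y$ nor this $\mu_z$ alters arrows involving $\mathcal{C}_0$ to frozens of $\mathcal{C}$), so they remain red as after the middle stage. Therefore every mutable vertex is red, and $\mu_z \mu_y \mu_{\mathcal{C}} \mu_z$ is a maximal green sequence.

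The main obstacle is the careful bookkeeping of the $2$-cycle cancellations around $y$ and $z$: the cancellation of $z \to z'$ against $z' \to z$ during $\mu_y$ is exactly what restores greenness at $z$ so the final step is legal, and the cancellation of $y \to y'$ against $y' \to y$ during the final $\mu_z$ is exactly what turns $y$ red. Once these cancellations are tracked, the reduction to Lemma~\ref{oldlem} handles the bulk of the argument.
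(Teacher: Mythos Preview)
Your proof is correct and follows essentially the same approach as the paper: compute $\mu_z(\widehat{Q})$ explicitly, invoke Lemma~\ref{oldlem} to handle $\mu_{\mathcal{C}}$, then track the final two mutations $\mu_y$ and $\mu_z$ by hand. The paper presents the intermediate quivers via diagrams while you spell out the $2$-cycle cancellations in words, but the logic is identical.
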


\begin{proof}
We begin by mutating $\widehat{Q}$ at vertex $z$.  The resulting quiver is shown below and we note that $\widehat{\mathcal{C}}$ remains a full subquiver of $\mu_z(\widehat{Q})$ connected to the remaining vertices of $\widehat{Q}$ by a single arrow.  

$$\begin{tikzpicture}[scale=.4][
> = stealth, 
            shorten > = 4pt, ]
\node[draw=none, fill=none] at (-1.5,0.4) {$\mu_z(\widehat{Q})$:};
\node[draw=none, fill=none] at (3.7,0.4) {$\widehat{\mathcal{C}}$};
\node[draw, shape=circle, fill=black, scale=0.4] at (6.5,0) { };
\node[draw=none, fill=none] at (7.3, .6) {$x$};

\node[draw, shape=circle, fill=black, scale=0.4] at (6.5,-2) { };
\node[draw=none, fill=none] at (6.5,-2.6) {$x'$};

\node[draw, shape=circle, fill=black, scale=0.4] at (9.5,2) { };
\node[draw=none, fill=none] at (9.5,2.9) {$y$};

\node[draw, shape=circle, fill=black, scale=0.4] at (12.5,2) { };
\node[draw=none, fill=none] at (12.5,2.9) {$y'$};

\node[draw, shape=circle, fill=black, scale=0.4] at (9.5,-2) { };
\node[draw=none, fill=none] at (9.5,-2.6) {$z$};

\node[draw, shape=circle, fill=black, scale=0.4] at (12.5,-2) { };
\node[draw=none, fill=none] at (12.5,-2.6) {$z'$};

\draw [black] plot [smooth cycle] coordinates {(0.5,0) (2.5,2) (6.5,2) (6.5,0) (2.5,-2)};
\draw [<-] (9.5,1.7) -- node { } (9.5,-1.8);
\draw [<-] (9.3,-1.8) -- node { } (6.7,-.1);
\draw [<-] (6.5, -1.7) -- node {} (6.5, 0);
\draw [<-] (12.2, 2) -- node {} (9.5, 2);
\draw [<-] (9.8, -2) -- node {} (12.5, -2);
\draw [<-] (12.3, -1.8) -- node {} (9.5, 2);


\node[draw=none, fill=none] at (21,0.4) {$\mu_{\mathcal{C}}\mu_z(\widehat{Q})$:};
\node[draw=none, fill=none] at (26.7,0.4) {$\mu_{\mathcal{C}}(\widehat{\mathcal{C}})$};

\node[draw, shape=circle, fill=black, scale=0.4] at (29.5,-2) { };
\node[draw=none, fill=none] at (29.5,-2.6) {$x'$};

\node[draw, shape=circle, fill=black, scale=0.4] at (30,1) { };
\node[draw=none, fill=none] at (30.2,1.7) {$v$};

\node[draw, shape=circle, fill=black, scale=0.4] at (32.5,2) { };
\node[draw=none, fill=none] at (32.5,2.9) {$y$};

\node[draw, shape=circle, fill=black, scale=0.4] at (35.5,2) { };
\node[draw=none, fill=none] at (35.5,2.9) {$y'$};

\node[draw, shape=circle, fill=black, scale=0.4] at (32.5,-2) { };
\node[draw=none, fill=none] at (32.5,-2.6) {$z$};

\node[draw, shape=circle, fill=black, scale=0.4] at (35.5,-2) { };
\node[draw=none, fill=none] at (35.5,-2.6) {$z'$};

\draw [black] plot [smooth cycle] coordinates {(23.5,0) (25.5,2) (29.5,2) (29.5,0) (25.5,-2)};
\draw [<-] (32.5,1.7) -- node { } (32.5,-1.8);
\draw [->] (32.3,-1.8) -- node { } (30.3,.7);
\draw [->] (29.5, -1.7) -- node {} (30.1, .7);
\draw [<-] (35.2, 2) -- node {} (32.5, 2);
\draw [<-] (32.8, -2) -- node {} (35.5, -2);
\draw [<-] (35.3, -1.8) -- node {} (32.5, 2);

\end{tikzpicture}$$

We can see that $\mu_z(\widehat{Q})$ satisfies conditions of Lemma~\ref{oldlem} with $\mathcal{D}$ being the full subquiver of $\mu_z(\widehat{Q})$ on vertices $y, z, y', z'$.  Thus, by Lemma~\ref{oldlem} the quiver $\mu_{\mathcal{C}}\mu_z (\widehat{Q})$ consists of two full subquivers $\mu_{\mathcal{C}}(\widehat{Q}')$ and $\mathcal{D}$ connected by a single arrow $z\to v$ where $v$ is the unique vertex of $\mathcal{C}$ with an arrow $x'\to v$ in $\mu_{\mathcal{C}} (\widehat{\mathcal{C}})$.  Since, $\mu_{\mathcal{C}}$ is a maximal green sequence for $\mathcal{C}$ if follows that all vertices of $\mathcal{C}$ are red in $\mu_{\mathcal{C}}\mu_z (\widehat{Q})$.  The only remaining green vertex in $\mu_{\mathcal{C}}\mu_z (\widehat{Q})$ is $y$.  Now, we perform $\mu_y$ which makes the vertex $z$ green, and then we perform $\mu_z$.  The resulting quivers are depicted below, where we let $\mu_Q = \mu_z\mu_y\mu_{\mathcal{C}}\mu_z$.

$$\begin{tikzpicture}[scale=.4][
> = stealth, 
            shorten > = 4pt, ]
\node[draw=none, fill=none] at (-2.5,0.4) {$\mu_y\mu_{\mathcal{C}}\mu_z(\widehat{Q})$:};
\node[draw=none, fill=none] at (3.7,0.4) {$\mu_{\mathcal{C}}(\widehat{\mathcal{C}})$};

\node[draw, shape=circle, fill=black, scale=0.4] at (6.5,-2) { };
\node[draw=none, fill=none] at (6.5,-2.6) {$x'$};

\node[draw, shape=circle, fill=black, scale=0.4] at (7,1) { };
\node[draw=none, fill=none] at (7.2,1.7) {$v$};

\node[draw, shape=circle, fill=black, scale=0.4] at (9.5,2) { };
\node[draw=none, fill=none] at (9.5,2.9) {$y$};

\node[draw, shape=circle, fill=black, scale=0.4] at (12.5,2) { };
\node[draw=none, fill=none] at (12.5,2.9) {$y'$};

\node[draw, shape=circle, fill=black, scale=0.4] at (9.5,-2) { };
\node[draw=none, fill=none] at (9.5,-2.6) {$z$};

\node[draw, shape=circle, fill=black, scale=0.4] at (12.5,-2) { };
\node[draw=none, fill=none] at (12.5,-2.6) {$z'$};

\draw [black] plot [smooth cycle] coordinates {(0.5,0) (2.5,2) (6.5,2) (6.5,0) (2.5,-2)};

\draw [->] (9.3,-1.8) -- node { } (7.3,.7);
\draw [->] (6.5, -1.7) -- node {} (7.1, .7);
\draw [->] (9.5, -2) -- node {} (12.2, 1.8);

\draw [->] (9.5,1.7) -- node { } (9.5,-1.7);
\draw [->] (12.2, 2) -- node {} (9.8, 2);
\draw [->] (12.3, -1.8) -- node {} (9.7, 1.8);


\node[draw=none, fill=none] at (19.5,0.4) {$\mu_{Q}(\widehat{Q})$:};
\node[draw=none, fill=none] at (24.7,0.4) {$\mu_{\mathcal{C}}(\widehat{\mathcal{C}})$};

\node[draw, shape=circle, fill=black, scale=0.4] at (28,1) { };
\node[draw=none, fill=none] at (28.2,1.8) {$v$};

\node[draw, shape=circle, fill=black, scale=0.4] at (27.5,-2) { };
\node[draw=none, fill=none] at (27.5,-2.6) {$x'$};

\node[draw, shape=circle, fill=black, scale=0.4] at (30.5,2) { };
\node[draw=none, fill=none] at (30.5,2.9) {$y$};

\node[draw, shape=circle, fill=black, scale=0.4] at (33.5,2) { };
\node[draw=none, fill=none] at (33.5,2.9) {$y'$};

\node[draw, shape=circle, fill=black, scale=0.4] at (30.5,-2) { };
\node[draw=none, fill=none] at (30.5,-2.6) {$z$};

\node[draw, shape=circle, fill=black, scale=0.4] at (33.5,-2) { };
\node[draw=none, fill=none] at (33.5,-2.6) {$z'$};

\draw [black] plot [smooth cycle] coordinates {(21.5,0) (23.5,2) (27.5,2) (27.5,0) (23.5,-2)};

\draw [<-] (30.5,1.7) -- node { } (30.5,-1.8);
\draw [->] (27.5, -2) -- node {} (28.1, .7);
\draw [->] (28, 1) -- node {} (30.3,-1.8);
\draw [->] (33.3, -1.8) -- node {} (30.7, 1.8);
\draw [->] (33.5, 2) -- node {} (30.7, -1.8);
\draw [->] (30.5, 2) -- node {} (28.2, 1.2);

\end{tikzpicture}$$

We can see that in the final quiver $\mu_{Q}(\widehat{Q})$ every vertex is red.  This shows that $\mu_z\mu_y \mu_{\mathcal{C}} \mu_z$ is a maximal green sequence for $Q$.  
\end{proof}

Repeated applications of the lemma will allow us to construct minimal length maximal green sequences for quivers $\widetilde{Q}$ defined below. 

\begin{definition}\label{quiver_qtilde}
Given a quiver $Q$ let $\widetilde{Q}$ be a quiver composed of full connected subquivers $Q, Q^1, Q^2, \dots, Q^k$, such that all of the following conditions hold. 
\begin{itemize}
\item $Q_0^i \cap Q_{0} = \{x_i\}$.
\item $Q_0^i \cap Q_0^j = \Big\{ \begin{array}{ll} \{x_i\} & \text{if } x_i = x_j \\ \varnothing & \text{otherwise} \end{array}$. 
\item for every arrow in $\widetilde{Q}$, whenever one of the endpoints belongs to $Q_0^i \setminus \{x_i\}$ then the other endpoint belongs to $ Q_0^i$. 
\item for every $i$ the quiver $Q^i$ is of mutation type $\mathbb{A}$. 
\end{itemize}

$$
\begin{tikzpicture}
[scale=.4][
> = stealth, 
            shorten > = 4pt, ]
           
\node[draw=none, fill=none] at (-10,2) {$\widetilde{Q}$:};
\node[draw=none, fill=none] at (2,2) {$Q$};
\node[draw=none, fill=none] at (6.5,6) {$Q^1$};
\node[draw=none, fill=none] at (6,-2.5) {$Q^2$};
\node[draw=none, fill=none] at (-4,2) {$Q^k$};

\node[draw, shape=circle, fill=black, scale=0.4] at (-1,2) { };
\node[draw=none, fill=none] at (-1.5,3) {\small ${x_k}$};

\node[draw, shape=circle, fill=black, scale=0.4] at (3.8,4.4) { };
\node[draw=none, fill=none] at (3.3,5.3) {\small ${x_1}$};

\node[draw, shape=circle, fill=black, scale=0.4] at (3.6,-0.7) { };
\node[draw=none, fill=none] at (3,-1.6) {\small ${x_2}$};

\draw (2,2) circle (3cm);

\draw (-4,2) ellipse (3cm and 1cm);
\draw[rotate=30] (8.5,2) ellipse (3cm and 1cm);
\draw[rotate=-35] (6.3,1.5) ellipse (3cm and 1cm);
\draw[dotted][rotate=210]  (2,-1) arc (0:75:5cm);
\end{tikzpicture}$$

\end{definition}

Note that the vertices $x_i$ are not necessarily distinct.  With this notation consider the following results which show that in order to find a (minimal length) maximal green sequence for $\widetilde{Q}$ it suffices to find such sequence for its subquiver $Q$.  In other words, one can disregard the attached mutation type $\mathbb{A}$ quivers.  

\begin{theorem}\label{qtilde}
Let $\mu_{Q}$ be a maximal green sequence for $Q$. Then the quiver $\widetilde{Q}$ admits a maximal green sequence of length 

$$\ell({\mu_{Q}})+ l_{min}^1+l_{min}^2+\dots+ l_{min}^k - k$$
where $l_{min}^i$ is the minimal length of a maximal green sequence for $Q^i$.  
\end{theorem}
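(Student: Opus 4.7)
The plan is to reduce the theorem to a single-branch attachment claim that can be iterated: if $R$ is any quiver with maximal green sequence $\mu_R$, and $P$ is a mutation type $\mathbb{A}$ quiver sharing exactly one vertex $x$ with $R$ (in the sense of Definition~\ref{quiver_qtilde}), then $R\cup P$ admits a maximal green sequence of length $\ell(\mu_R)+l_{min}(P)-1$. Granted this claim, I would attach the branches $Q^1,\ldots,Q^k$ to $Q$ one at a time, letting the base quiver absorb each previously attached branch; the lengths then telescope to $\ell(\mu_Q)+\sum_{i=1}^k(l_{min}^i-1)=\ell(\mu_Q)+\sum_{i=1}^k l_{min}^i-k$, the claimed length.

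The single-branch claim itself I would prove by induction on $|P_0|$. The base $|P_0|=1$ is trivial since $P=\{x\}$, $R\cup P=R$, and $l_{min}(P)=1$. For the inductive step, the Buan--Vatne classification (Lemma~\ref{BV}) guarantees that a connected mutation type $\mathbb{A}$ quiver on more than one vertex admits either (i)~a leaf vertex $w\ne x$ adjacent to its unique neighbor by a single arrow lying in no $3$-cycle, or (ii)~a leaf $3$-cycle with pivot $u$ whose two ``new'' vertices $y,z$ each have degree $2$ in $P$. In case~(i), set $\widetilde{P}=P\setminus\{w\}$; Theorem~\ref{thmtypeA} gives $l_{min}(\widetilde{P})=l_{min}(P)-1$, and the inductive MGS $\nu$ for $R\cup\widetilde{P}$ is extended by one mutation at $w$, appended after $\nu$ if the arrow at $w$ points into $w$ and prepended before $\nu$ if it points out of $w$, with Lemma~\ref{oldlem} certifying that the attachment of $w$ to $\widehat{\mathcal{C}}$ (where $\mathcal{C}=R\cup\widetilde{P}$) remains a single arrow throughout $\nu$. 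In case~(ii), set $\widetilde{P}=P\setminus\{y,z\}$; Theorem~\ref{thmtypeA} gives $l_{min}(\widetilde{P})=l_{min}(P)-3$, and Lemma~\ref{3-cycle} applied with $\mathcal{C}=R\cup\widetilde{P}$ transforms the inductive MGS $\nu$ into $\mu_z\mu_y\nu\mu_z$, of length $\ell(\nu)+3=\ell(\mu_R)+l_{min}(P)-1$.

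I expect the main technical obstacle to be the leaf-vertex case~(i). Appending (or prepending) the single mutation at $w$ must neither disturb the red status of previously mutated vertices of $\mathcal{C}$ nor fail to turn $w$ itself red. Lemma~\ref{oldlem} is intended to do exactly this bookkeeping, showing that during the $\nu$-phase no unintended arrows between $w$ (or its framed counterpart $w'$) and the $\mathcal{C}$-frozens are created, so that $w$ behaves like a frozen appendage until its own mutation. A secondary point is the purely combinatorial claim that every type $\mathbb{A}$ quiver on more than one vertex admits a leaf vertex or leaf $3$-cycle of the stated form; this follows from the tree-like structure given in Lemma~\ref{BV}, and the borderline case in which $x$ itself lies on the unique leaf $3$-cycle is handled transparently by taking $x$ as the pivot $u$ in Lemma~\ref{3-cycle}.
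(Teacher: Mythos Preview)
Your proposal is correct and takes essentially the same approach as the paper: both build $\widetilde{Q}$ from $Q$ by iteratively attaching either a single vertex (via an arrow not in a $3$-cycle) or a leaf $3$-cycle, extending the maximal green sequence by one or three mutations respectively, and both rely on Lemma~\ref{3-cycle} for the $3$-cycle case. The only notable difference is that for the single-arrow attachment the paper invokes Theorem~\ref{tcolordirsum} (the direct-sum result) directly, which immediately yields the extended maximal green sequence without the extra bookkeeping you anticipate from Lemma~\ref{oldlem}; you may find it cleaner to cite Theorem~\ref{tcolordirsum} there as well.
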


\begin{proof}
Starting with $Q$ we will gradually construct $\widetilde{Q}$ by adding either a 3-cycle or an arrow, that does not lies in any 3-cycle.   At each step we will produce a maximal green sequence for the resulting quiver building on the sequence obtained in the previous step.  

Let $z_1$ be a vertex of $\widetilde{Q}$ that does not belong to $Q$ such that it is connected to a vertex in $Q$ by an arrow $\alpha_1$.  By definition of $\widetilde{Q}$ there exists a unique integer $i$ such that $z_1\in Q_0^i$ and the arrow $\alpha_1$ is unique, because $Q^i$ is of mutation type $\mathbb{A}$.  Moreover, the endpoint of $\alpha_1$ that lies in $Q$ equals $x_i$, where $\{x_i\}=Q_0\cap Q^i_0$.  Now we consider four possibilities depending on $\alpha_1$.  If this arrow does not lie in a 3-cycle in $Q^i$ then we have case (i) or (ii), and if $\alpha_1$ lies in a 3-cycle then we have case (iii) or (iv) as depicted below. 
$$\xymatrix@!C=5pt@!R=5pt{&&&&&&&&&y_1\ar[dl]&&&&y_1\ar[dr]\\
x_1 \ar[rr]^{\alpha_1} && z_1 && x_1 &&\ar[ll]_{\alpha_1} z_1 && x_1\ar[rr]^{\alpha_1}&& z_1 \ar[ul] &&  x_1 \ar[ur]&& \ar[ll]_{\alpha_1} z_1\\
&\text{(i)}&&&&\text{(ii)} &&&&\text{(iii)} &&&& \text{(iv)} }$$

Now, let $Q(\alpha_1)$ be a full subquiver of $\widetilde{Q}$ on vertices $Q_0\cup \{z_1\}$ in cases (i) and (ii) or on vertices $Q_0\cup \{z_1, y_1\}$ in cases (iii) and (iv).  By Theorem~\ref{tcolordirsum} we have that $\mu_{z_1} \mu_Q$ or $\mu_Q \mu_{z_1}$ is a maximal green sequence for $Q(\alpha_1)$ in case (i) or (ii) respectively.  On the other hand, in case (iii) or (iv) the quiver $Q(\alpha_1)$ satisfies conditions of Lemma~\ref{3-cycle}, so we have that $\mu_{y_1}\mu_{z_1}\mu_Q \mu_{y_1}$ or $\mu_{z_1}\mu_{y_1}\mu_Q\mu_{z_1}$ is a maximal green sequence for $Q(\alpha_1)$ respectively.  

Next, we proceed in the same way with $Q(\alpha_1)$ as we did with $Q$.  That is, we pick a vertex $z_2 \not \in Q(\alpha_1)_0$ that is connected to $Q(\alpha_1)$ by a (unique) arrow $\alpha_2$.  Analogously, we define $Q(\alpha_1, \alpha_2)$, depending on the configuration around $\alpha_2$, and the corresponding maximal green sequence for this quiver.  Continuing in this way we obtain a maximal green sequence for $\widetilde{Q}$.  

Observe, that at each step we either add a single vertex or a 3-cycle, that is two vertices that lie in the same 3-cycle.  At the same time, we increase the length of the maximal green sequence by one or three respectively.  Thus, the length of the resulting maximal green sequence for $\widetilde{Q}$ is 

$$\ell({\mu_Q}) + (n_1 + t_1) + \dots + (n_k + t_k)$$
where $n_i$ is the number of vertices in $Q_0^i \setminus \{x_i\}$ and $t_i$ is the number of 3-cycles in $Q^i$.  By Theorem~\ref{thmtypeA} we have

$$l^i_{min}=(n_i + 1) +  t_i$$ 
which completes the proof of the theorem.    
\end{proof}

\begin{remark}
One can use the construction in the proof of Theorem~\ref{qtilde} to build a concrete maximal green sequence for $\widetilde{Q}$.  
\end{remark}

We obtain the following corollaries.  

\begin{corollary}\label{qtilde-cor1}
The quiver $\widetilde{Q}$ admits a maximal green sequence if and only if $Q$ admits a maximal green sequence. 
\end{corollary}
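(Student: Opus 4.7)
The plan is to prove both directions directly from machinery already developed: Theorem~\ref{qtilde} for one direction and Theorem~\ref{thmMullerSubquiver} for the other, so no new constructions are needed.

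For the forward direction, suppose $Q$ admits a maximal green sequence $\mu_Q$. Each subquiver $Q^i$ is of mutation type $\mathbb{A}$ and hence admits a maximal green sequence (for instance, a linearly oriented type $\mathbb{A}$ representative is acyclic and any such quiver has an MGS by \cite[Lemma 2.20]{bdp}, so Theorem~\ref{thmtypeA} applies and gives a finite minimal length $l_{min}^i$). Theorem~\ref{qtilde} then produces a maximal green sequence for $\widetilde{Q}$ out of $\mu_Q$ and the $Q^i$'s.

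For the backward direction, the key observation is that $Q$ is a full subquiver of $\widetilde{Q}$: by the third bullet of Definition~\ref{quiver_qtilde}, every arrow of $\widetilde{Q}$ incident to a vertex of $Q_0^i \setminus \{x_i\}$ lies entirely inside $Q^i$, so any arrow of $\widetilde{Q}$ between two vertices of $Q_0$ already belongs to $Q$. Hence if $\widetilde{Q}$ admits a maximal green sequence $\mathbf{i} \in \mathrm{MGS}(\widetilde{Q})$, Theorem~\ref{thmMullerSubquiver} applied with $Q^\dagger = Q$ produces a maximal green sequence $\mathbf{i}^\dagger \in \mathrm{MGS}(Q)$.

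The only real step requiring any care is confirming that $Q$ is full in $\widetilde{Q}$, which is immediate from the hypotheses, so there is no significant obstacle. I expect the proof to be short and to rely entirely on the two cited results.
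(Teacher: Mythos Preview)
Your proof is correct and follows essentially the same approach as the paper: one implication via Theorem~\ref{thmMullerSubquiver} (since $Q$ is a full subquiver of $\widetilde{Q}$) and the other via Theorem~\ref{qtilde}. The only discrepancy is terminological: you label ``$Q$ has an MGS $\Rightarrow$ $\widetilde{Q}$ has an MGS'' as the forward direction, whereas the paper (reading the biconditional left to right) calls that the backward direction; the mathematics is identical, and your added justification that $Q$ is full in $\widetilde{Q}$ is a welcome bit of care that the paper leaves implicit.
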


\begin{proof}
The forward direction follows from Theorem~\ref{thmMullerSubquiver} and the backward direction follows from Theorem~\ref{qtilde}.
\end{proof}

\begin{corollary}\label{Cor:min_length_Qtilde}
Let $\mu_{Q_{min}}$ be a minimal length maximal green sequence for $Q$.  Then the minimal length of a maximal green sequence for $\widetilde{Q}$ is 
$$\ell({\mu_{Q_{min}}})+l_{min}^1+l_{min}^2+\dots+ l_{min}^k - k$$
where $l_{min}^i$ is the minimal length of a maximal green sequence for $Q^i$.  
\end{corollary}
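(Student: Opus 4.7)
The plan is to establish matching upper and lower bounds for the minimal length of a maximal green sequence of $\widetilde{Q}$. The upper bound is immediate from Theorem~\ref{qtilde} applied to the specific sequence $\mu_{Q_{min}}$: the construction there produces a maximal green sequence for $\widetilde{Q}$ of length exactly $\ell(\mu_{Q_{min}}) + \sum_{i=1}^{k} l_{min}^i - k$.

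For the matching lower bound, fix any $\textbf{i} \in \text{MGS}(\widetilde{Q})$. Since $Q$ and each $Q^i$ are full subquivers of $\widetilde{Q}$, Theorem~\ref{thmMullerSubquiver} produces induced maximal green sequences $\textbf{j} \in \text{MGS}(Q)$ and $\textbf{j}^i \in \text{MGS}(Q^i)$ such that $\textbf{c}(\textbf{j})$ and $\textbf{c}(\textbf{j}^i)$ are the longest subsequences of $\textbf{c}(\textbf{i})$ with all \textbf{c}-vectors supported on $Q_0$ and $Q_0^i$, respectively. Minimality gives $\ell(\textbf{j}) \geq \ell(\mu_{Q_{min}})$ and $\ell(\textbf{j}^i) \geq l_{min}^i$, so it suffices to show $\ell(\textbf{i}) \geq \ell(\textbf{j}) + \sum_i \ell(\textbf{j}^i) - k$.

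The crux is to bound the overlap of $\textbf{c}(\textbf{j}), \textbf{c}(\textbf{j}^1), \ldots, \textbf{c}(\textbf{j}^k)$ viewed as subsets of positions in $\textbf{c}(\textbf{i})$. By Definition~\ref{quiver_qtilde}, every pairwise intersection among the vertex sets $Q_0, Q_0^1, \ldots, Q_0^k$ is either empty or equal to $\{y\}$ for some $y \in \{x_1, \ldots, x_k\}$, so a position of $\textbf{c}(\textbf{i})$ that lies in two of these subsequences must carry a \textbf{c}-vector supported on a single such vertex $y$. I will then apply Theorem~\ref{thmMullerSubquiver} one more time, to the single-vertex full subquiver $\{y\}$ of $\widetilde{Q}$: since the unique maximal green sequence of a one-vertex quiver has length $1$ and \textbf{c}-vector $\textbf{e}_y$, there is a unique position $p_y$ of $\textbf{c}(\textbf{i})$ carrying a \textbf{c}-vector supported on $\{y\}$, and that \textbf{c}-vector equals $\textbf{e}_y$. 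This position $p_y$ lies in $\textbf{c}(\textbf{j})$ and in $\textbf{c}(\textbf{j}^i)$ for exactly those indices $i$ with $x_i = y$. Writing $k_y := |\{i : x_i = y\}|$, a direct inclusion-exclusion count then gives
\[
\Big| \textbf{c}(\textbf{j}) \cup \bigcup_{i=1}^k \textbf{c}(\textbf{j}^i) \Big| \;=\; \ell(\textbf{j}) + \sum_{i=1}^k \ell(\textbf{j}^i) - \sum_{y} k_y \;=\; \ell(\textbf{j}) + \sum_{i=1}^k \ell(\textbf{j}^i) - k,
\]
using $\sum_y k_y = k$. The left-hand side is at most $\ell(\textbf{i})$, completing the lower bound.

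The main subtlety is that the vertices $x_i$ are not required to be distinct, so several of the $Q^i$'s may share a common attaching vertex. This is handled uniformly by the single-vertex application of Theorem~\ref{thmMullerSubquiver}: however many branches meet at a given $y$, the vector $\textbf{e}_y$ occupies exactly one position of $\textbf{c}(\textbf{i})$, and the over-counting of that position contributes precisely $k_y$ to the inclusion-exclusion correction. Summing over distinct $y$'s recovers the clean correction term $k$ in the formula and matches the upper bound.
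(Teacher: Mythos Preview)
Your proof is correct. Both your argument and the paper's rest on the same two ingredients: Theorem~\ref{thmMullerSubquiver} applied to the subquivers $Q, Q^1,\ldots,Q^k$, together with the single-vertex instance of that theorem to pin down the overlap at each attaching vertex. The organizational difference is that the paper peels off one branch at a time---defining $Q(j)$ as the full subquiver on $(\widetilde{Q}_0\setminus \bigcup_{i\le j} Q^i_0)\cup\{x_1,\ldots,x_j\}$ and iterating the inequality $\ell\ge \ell(\text{remainder}) + \ell(\text{branch}) - 1$ down to $Q(k)=Q$---whereas you apply Theorem~\ref{thmMullerSubquiver} to all $k+1$ subquivers at once and do a single inclusion--exclusion count on positions in $\textbf{c}(\textbf{i})$. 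Your version has the advantage of making the ``$-1$ per branch'' correction completely explicit (via the single-vertex subquiver), and it handles the case of several branches meeting at a common $x_i$ transparently through the multiplicities $k_y$; the paper's iterative version leaves the justification of the ``$-1$'' at each step implicit but avoids having to think about higher-order intersections altogether.
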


\begin{proof}
Let $\mu_{\widetilde{Q}_{min}}$ be a minimal length maximal green sequence for $\widetilde{Q}$.  Note, that by assumption $Q$ admits a maximal green sequence, so Coroallary~\ref{qtilde-cor1} implies that $\mu_{\widetilde{Q}_{min}}$ exists.  Since, $\mu_{\widetilde{Q}_{min}}$ is of minimal length we have 

$$\ell({\mu_{\widetilde{Q}_{min}}})\leq \ell({\mu_{Q_{min}}})+l_{min}^1+l_{min}^2+\dots+ l_{min}^k - k$$
by Theorem~\ref{qtilde}. 

To show the reverse inequality consider a maximal green sequence ${\bf i}$ for $\widetilde{Q}$.  Let $Q(1)$ be the full subquiver of $\widetilde{Q}$ on vertices $(\widetilde{Q}_0\setminus Q^1_0)\cup \{x_1\}$.   According to Theorem \ref{thmMullerSubquiver} the sequence ${\bf i}$ induces ${\bf i}(1)$ and ${\bf i}_1$ maximal green sequences for $Q(1)$ and $Q^1$ respectively.  By definition of ${\bf i}(1)$ and ${\bf i}_1$ given in the theorem we have 
$$\ell({\mu_{\bf i}})\geq \ell({\mu_{{\bf i}(1)}})+\ell({\mu_{{\bf i}_1}})-1$$
because $Q(1)$ and $Q^1$ have a single vertex in common and together they make up $\widetilde{Q}$.  

Now we apply the same argument to the quiver $Q(1)$ and its corresponding sequence ${\bf i}(1)$.  Let $Q(2)$ be the full subquiver of $Q(1)$ on vertices $(Q(1)_0\setminus Q^2_0)\cup \{x_2\}$.  Then by the same reasoning as above we deduce that 
$$\ell({\mu_{\bf i}})\geq \ell({\mu_{{\bf i}(2)}})+\ell({\mu_{{\bf i}_1}})+\ell({\mu_{{\bf i}_2}})-2$$
where ${\bf i}(2)$ and ${\bf i}_2$ are the resulting maximal green sequences for $Q(2)$ and $Q^2$ respectively.  

Continuing in this way we see that 
\begin{align*}
\ell({\mu_{\bf i}}) & \geq \ell({\mu_{{\bf i}(k)}})+\ell({\mu_{{\bf i}_1}})+\ell({\mu_{{\bf i}_2}})+ \dots + \ell({\mu_{{\bf i}_k}}) -k\\
&\geq \ell({\mu_{{Q}_{min}}})+ l^1_{min} + l^2_{min} + \dots + l^k_{min} - k
\end{align*}
where ${\bf i}_j$ is a maximal green sequence for $Q^j$ for $j\in[k]$ and $Q(k) = Q$ with the corresponding sequence ${\bf i}(k)$.  Since ${\bf i}$ was arbitrary this shows the reverse inequality and completes the proof. 
\end{proof}



\section{Quivers of mutation type $\mathbb{D}_n$}\label{Sec_type_D}

We now focus on the minimal length maximal green sequences of quivers of \textbf{mutation type $\mathbb{D}_n$}. By definition, this is the family of quivers $R$ belonging to the mutation class of an orientation of a $\mathbb{D}_n$ Dynkin diagram where $n \ge 3$. We recall the description of this mutation class, which was discovered by Vatne in \cite[Theorem 3.1]{vatne2010mutation}. There are four families of quivers that make up this mutation class. Vatne refers to these as \textbf{Type I}, \textbf{Type II}, \textbf{Type III}, and \textbf{Type IV} $\mathbb{D}_n$ quivers. Throughout this section, we will assume that $Q$ is a connected quiver. Additionally, we say a vertex $c$ of some quiver is a \textbf{connecting} vertex if it has degree at most 2, and if it has degree 2, it belongs to a 3-cycle of the same quiver.

\textbf{Type I:} A quiver $Q$ is of Type I (see Figure~\ref{type1}) if and only if $Q$ has the following properties: 
\begin{itemize}
\item it has a full subquiver of the form $a \leftrightarrow c \leftrightarrow b$ (we use the notation $a \leftrightarrow c$ to indicate that there exists an arrow in $Q$ connecting $a$ and $c$ and that arrow can be oriented in either direction) where vertices $a$ and $b$ each have degree 1 in $Q$,
\item the full subquiver $Q^\prime$ of $Q$ on the vertices $Q_0\backslash\{a,b\}$ is of mutation type $\mathbb{A}$, and
\item the vertex $c$ is a connecting vertex of $Q^\prime$.
\end{itemize}

\begin{figure}
\captionsetup{width=0.45\textwidth}
\centering
\begin{minipage}{.5\textwidth}
  \centering
  \def\svgwidth{120pt}
  \begin{tikzpicture}[scale=.4][
> = stealth, 
            shorten > = 4pt, ]
\node[draw=none, fill=none] at (-5,0) {$Q$:};
\node[draw=none, fill=none] at (5,0) {$Q^{\prime}$};
\node[draw, shape=circle, fill=black, scale=0.4] at (2,0) { };
\node[draw=none, fill=none] at (2, .6) {$c$};
\node[draw, shape=circle, fill=black, scale=0.4] at (0,1.5) { };
\node[draw=none, fill=none] at (0,2) {$a$};
\node[draw, shape=circle, fill=black, scale=0.4] at (0,-1.5) { };
\node[draw=none, fill=none] at (0,-2.1) {$b$};
\draw [<->] (0.2,1.3) -- node { } (1.8,0.2);
\draw [<->] (0.2,-1.3) -- node { } (1.8,-0.2);
\draw (5,0) ellipse (3cm and 1cm);
\end{tikzpicture}
  \caption{Type I quivers}
   \label{type1}
\end{minipage}%
\begin{minipage}{.5\textwidth}
\centering
\begin{tikzpicture}[scale=.4][
> = stealth, 
            shorten > = 4pt, ]         
\node[draw=none, fill=none] at (-11,0) {$Q$:};
\node[draw=none, fill=none] at (-5,0) {$Q^{(1)}$};
\node[draw=none, fill=none] at (5,0) {$Q^{(2)}$};
\node[draw, shape=circle, fill=black, scale=0.4] at (2,0) { };
\node[draw=none, fill=none] at (1.9, .7) {$d$};
\node[draw, shape=circle, fill=black, scale=0.4] at (-2,0) { };
\node[draw=none, fill=none] at (-2, .6) {$c$};
\node[draw, shape=circle, fill=black, scale=0.4] at (0,1.5) { };
\node[draw=none, fill=none] at (0,2) {$a$};
\node[draw, shape=circle, fill=black, scale=0.4] at (0,-1.5) { };
\node[draw=none, fill=none] at (0,-2.1) {$b$};
\draw [<-] (0.2,1.3) -- node { } (1.8,0.2);
\draw [<-] (0.2,-1.3) -- node { } (1.8,-0.2);
\draw [<-] (-1.8,0.2) -- node { } (-0.2,1.3);
\draw [<-] (-1.8,-0.2) -- node { } (-0.2,-1.3);
\draw [->] (-1.7,0) -- node { } (1.7,0);
\draw (5,0) ellipse (3cm and 1cm);
\draw (-5,0) ellipse (3cm and 1cm);
\end{tikzpicture}
\caption{Type II quivers}
\label{type2}
\end{minipage}
\end{figure}

%
%
%
%
%
%
%
%
%
%
%
%
%
%

\textbf{Type II:} A quiver $Q$ is of Type II (see Figure~\ref{type2}) if and only if $Q$ has the following properties:
\begin{itemize}
\item it has a full subquiver on the vertices $a,b,c,d$ of the form shown in Figure~\ref{type2},
\item the subquiver $Q^\prime$ obtained from $Q$ by removing vertices $a$ and $b$ and the arrow $c \to d$ consists of two connected components $Q^{(1)}$ and $Q^{(2)}$, each of which is of mutation type $\mathbb{A}$, and
\item the vertex $c$ (resp., $d$) is a connecting vertex of $Q^{(1)}$ (resp., $Q^{(2)}$).
\end{itemize}

\textbf{Type III:} A quiver $Q$ is of Type III (see Figure~\ref{type3}) if and only if $Q$ has the following properties:
\begin{itemize}
\item it has a full subquiver on the vertices $a, b, c, d$ that is an oriented 4-cycle as shown in Figure~\ref{type3},
\item the full subquiver $Q^\prime$ of $Q$ on the vertices $Q_0\backslash\{a,b\}$ consists of two connected components $Q^{(1)}$ and $Q^{(2)}$, each of which is of mutation type $\mathbb{A}$, and
\item the vertex $c$ (resp., $d$) is a connecting vertex of $Q^{(1)}$ (resp., $Q^{(2)}$).
\end{itemize}

\begin{figure}
\captionsetup{width=0.45\textwidth}
\centering
\begin{minipage}{.5\textwidth}
  \centering
  \def\svgwidth{120pt}
  \begin{tikzpicture}[scale=.4][
> = stealth, 
            shorten > = 4pt, ]
\node[draw=none, fill=none] at (-11,0) {$Q$:};
\node[draw=none, fill=none] at (-5,0) {$Q^{(1)}$};
\node[draw=none, fill=none] at (5,0) {$Q^{(2)}$};
\node[draw, shape=circle, fill=black, scale=0.4] at (2,0) { };
\node[draw=none, fill=none] at (1.9, .7) {$d$};
\node[draw, shape=circle, fill=black, scale=0.4] at (-2,0) { };
\node[draw=none, fill=none] at (-2, .6) {$c$};
\node[draw, shape=circle, fill=black, scale=0.4] at (0,1.5) { };
\node[draw=none, fill=none] at (0,2) {$a$};
\node[draw, shape=circle, fill=black, scale=0.4] at (0,-1.5) { };
\node[draw=none, fill=none] at (0,-2.1) {$b$};
\draw [<-] (0.2,1.3) -- node { } (1.8,0.2);
\draw [->] (0.2,-1.3) -- node { } (1.8,-0.2);
\draw [<-] (-1.8,0.2) -- node { } (-0.2,1.3);
\draw [->] (-1.8,-0.2) -- node { } (-0.2,-1.3);
\draw (5,0) ellipse (3cm and 1cm);
\draw (-5,0) ellipse (3cm and 1cm);
\end{tikzpicture}
  \caption{Type III quivers}
   \label{type3}
\end{minipage}%
\begin{minipage}{.5\textwidth}
\centering
\begin{tikzpicture}[scale=.4][
> = stealth, 
            shorten > = 4pt, ]
\node[draw=none, fill=none] at (-11,0) {$Q$:};
\node[draw=none, fill=none] at (0,4.5) {{\small $Q^{(1)}$}};
\node[draw=none, fill=none] at (7,2) {{\small $Q^{(2)}$}};
\node[draw=none, fill=none] at (-6.5,2) {{\small $Q^{(k)}$}};
\node[draw, shape=circle, fill=black, scale=0.4] at (2,0) { };
\node[draw=none, fill=none] at (2.3, .7) {$a_2$};
\node[draw, shape=circle, fill=black, scale=0.4] at (-2,0) { };
\node[draw=none, fill=none] at (-2.2, .6) {$a_1$};
\node[draw, shape=circle, fill=black, scale=0.4] at (0,1.5) { };
\node[draw=none, fill=none] at (1,1.5) {$b_1$};
\node[draw, shape=circle, fill=black, scale=0.4] at (4.5,-2.5) { };
\node[draw=none, fill=none] at (5,-3) {$a_3$};
\node[draw, shape=circle, fill=black, scale=0.4] at (4.5,0) { };
\node[draw=none, fill=none] at (5.2,-0.6) {$b_2$};
\node[draw, shape=circle, fill=black, scale=0.4] at (-4.5,-2.5) { };
\node[draw=none, fill=none] at (-5,-3) {$a_k$};
\node[draw, shape=circle, fill=black, scale=0.4] at (-4.5,0) { };
\node[draw=none, fill=none] at (-5,-0.6) {$b_k$};
\draw [<-] (0.2,1.3) -- node { } (1.8,0.2);
\draw [<-] (-1.8,0.2) -- node { } (-0.2,1.3);
\draw [->] (-1.7,0) -- node { } (1.7,0);
\draw [->] (4.2,0) -- node { } (2.2,0);
\draw [<-] (4.3,-2.3) -- node { } (2.2,-0.2);
\draw [<-] (4.5,-0.3) -- node { } (4.5,-2.2);
\draw [<-] (-4.3,0) -- node { } (-2.2,0);
\draw [->] (-4.3,-2.3) -- node { } (-2.2,-0.2);
\draw [->] (-4.5,-0.3) -- node { } (-4.5,-2.2);
\draw[rotate=90] (4, 0) ellipse (2.5cm and 1cm);
\draw[rotate=40] (6, -2.8) ellipse (2.5cm and 1cm);
\draw[rotate=-40] (-6, -2.8) ellipse (2.5cm and 1cm);
\draw[dotted][rotate=240]  (4.5,-2.5) arc (0:65:8.5cm);
\end{tikzpicture}
\caption{Type IV quivers}
\label{type4}
\end{minipage}
\end{figure}

\textbf{Type IV:} A quiver $Q$ is of Type IV (see Figure~\ref{type3}) if and only if $Q$ has the following properties:
\begin{itemize}
\item it has a full subquiver $R$ that is an oriented $k$-cycle where $k \ge 3$, $R_0 = \{a_1,a_2, \ldots, a_k\}$, and $R_1 = \{a_i \to a_{i+1}: i \in [k-1]\}\sqcup \{a_k\to a_1\}$,
\item for each arrow $\alpha \in R_1$, there may be a vertex $b_i \in Q_0\backslash R_0$ that is in a 3-cycle $b_i \to a_i \stackrel{\alpha}{\to} a_{i+1} \to b_i$, which is a full subquiver of $Q$, but there are no other vertices in $Q_0\backslash R_0$ that are connected to vertices of $R$,
\item the full subquiver $Q^\prime$ obtained from $Q$ by removing the vertices and arrows of the subquiver $R$ consists of the quivers $\{Q^{(i)}\}_{i \in [k]}$ some of which may be empty quivers, and where each quiver $Q^{(i)}$ is of mutation type $\mathbb{A}$ and has $b_i$ as a connecting vertex.
\end{itemize}


Next we present the main theorem of this section that describes the minimal length of maximal green sequences for quivers of type $\mathbb{D}_n$ depending on the particular type.  The proof for quivers of Type IV uses triangulations of surfaces and is presented in the following sections.

\begin{theorem}\label{Thm:typeD}
Let $Q$ be a quiver of mutation type $\mathbb{D}_n$. Let $\ell$ denote the length of a minimal length maximal green sequence of $Q$.
\begin{itemize}
\item[i)] If $Q$ is of Type I, then $\ell = n + |\{\text{3-cycles in } Q\}|$.
\item[ii)] If $Q$ is of Type II, then $\ell = n + |\{\text{3-cycles in } Q^{(1)}\}| + |\{\text{3-cycles in } Q^{(2)}\}| + 1.$
\item[iii)] If $Q$ is of Type III, then $\ell = n + |\{\text{3-cycles in } Q\}| + 2.$
\item[iv)] If $Q$ is of Type IV, then $\ell = n + \sum_{i = 1}^{k}|\{\text{3-cycles in } Q^{(i)}\}| + |\{i \in [k]: \text{deg}(a_i) = 4\}| + k - 2$.
\end{itemize}
\end{theorem}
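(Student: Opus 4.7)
The plan is to exploit the decomposition of each Vatne type into a small "core" plus mutation type $\mathbb{A}$ branches and apply Corollary~\ref{Cor:min_length_Qtilde} together with Theorem~\ref{thmtypeA} to reduce the computation to the minimal MGS length of the core. Types I, II, and III yield cores small enough that one can verify both bounds directly, while Type IV requires the surface-triangulation machinery developed in Sections~\ref{Sec:QT} and \ref{Sec_type_IV}.

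For case (i), $Q$ is $Q'$ with two pendant $\mathbb{A}_2$ branches attached at the connecting vertex $c$. Each branch has minimal MGS length $2$, so Corollary~\ref{Cor:min_length_Qtilde} gives a minimal length equal to the minimal MGS length of $Q'$ plus $2+2-2 = 2$. Combining with Theorem~\ref{thmtypeA} and noting that the pendant arrows lie in no 3-cycle of $Q$ yields (i). For cases (ii) and (iii), $Q$ decomposes as the central 4-vertex full subquiver $\mathcal{D}$ on $\{a,b,c,d\}$ with type $\mathbb{A}$ branches $Q^{(1)}, Q^{(2)}$ attached at $c$ and $d$. Corollary~\ref{Cor:min_length_Qtilde} reduces the task to showing that the minimal MGS length of $\mathcal{D}$ is $5$ in Type II and $6$ in Type III; the upper bound is realized by an explicit mutation sequence in each case, and the lower bound follows by a short case analysis combined with Theorem~\ref{thmMullerSubquiver} applied to full subquivers of $\mathcal{D}$.

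For case (iv), we apply Corollary~\ref{Cor:min_length_Qtilde} to strip off the branches $\{Q^{(i)}\}_{i \in [k]}$, reducing to the "wheel" core $W$ consisting of the oriented $k$-cycle $a_1 \to \cdots \to a_k \to a_1$ together with all the ears $b_i$. Writing $e$ for the number of ears, this reduction together with Theorem~\ref{thmtypeA} accounts for the summand $\sum_i |\{\text{3-cycles in } Q^{(i)}\}|$ and turns the problem into showing
\[
\ell_{\min}(W) = |W_0| + k + |\{i \in [k] : \deg(a_i)=4\}| - 2.
\]
The approach is to identify the mutation class of $W$ with tagged triangulations of a punctured disk as reviewed in Section~\ref{Sec:QT} (following \cite{fomin2008cluster, fomin2012cluster}), so that maximal green sequences correspond to sequences of diagonal flips monitored by shear coordinates. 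One then constructs a flip sequence realizing the upper bound and rules out shorter ones by identifying the arcs whose flips are forced as the shear coordinates pass from the all-positive to the all-negative configuration; the detailed argument is carried out in Section~\ref{Sec_type_IV}.

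The main obstacle is the Type IV lower bound: the $k-2$ correction reflects a global constraint imposed by wrapping around the $k$-cycle, and pinning down exactly which flips are unavoidable requires the geometric bookkeeping furnished by shear coordinates, information that is not accessible from the branch-decomposition machinery alone. The upper bound, by contrast, should reduce to carefully chaining the 3-cycle trick of Lemma~\ref{3-cycle} around the cycle, using the explicit construction in the proof of Theorem~\ref{qtilde} to absorb the ears one at a time.
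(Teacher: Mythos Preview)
Your proposal is correct and follows essentially the same route as the paper: reduce each Vatne type to a small core via Corollary~\ref{Cor:min_length_Qtilde} and Theorem~\ref{thmtypeA}, handle the Type~II/III cores by an explicit sequence plus a short argument for the lower bound, and defer Type~IV to the triangulation machinery of Section~\ref{Sec_type_IV}. Two minor points of divergence: for Type~II the paper obtains the lower bound $5$ in one line from Corollary~\ref{Cor:acyclic_result} (the core is not acyclic, so no MGS of length $4$), which is cleaner than invoking Theorem~\ref{thmMullerSubquiver} on subquivers; and your closing speculation that the Type~IV upper bound ``should reduce to carefully chaining the 3-cycle trick of Lemma~\ref{3-cycle}'' is not how the paper proceeds---the construction in Theorem~\ref{Thm:Dn_short_seq} is a direct flip sequence built from fans of arcs in the punctured-disk triangulation, and it is not clear that iterating Lemma~\ref{3-cycle} around the $k$-cycle would hit the exact count $|W_0|+k+|\{i:\deg(a_i)=4\}|-2$.
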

\begin{proof}
i) Letting $Q(a)$ (resp., $Q(b)$) be the full subquiver of $Q$ on the vertex $a$ (resp., $b$), it is clear that these have minimal length maximal green sequences of length 1.

Let $Q^\prime$ be the full subquiver of $Q$ of mutation type $\mathbb{A}$ described in the definition of Type I quivers. By Theorem~\ref{thmtypeA}, there exists a minimal length maximal green sequence of $Q^\prime$ with length $|Q^\prime_0| + |\{\text{3-cycles in } Q^\prime\}|$. By the description of Type I quivers, we have that $\{\text{3-cycles in } Q\} = \{\text{3-cycle in } Q^\prime\}$. Now the result follows from Corollary~\ref{Cor:min_length_Qtilde} (or from Proposition~\ref{minlengthdirectsum}).

ii) One checks that the sequence $(d, a, b, c, d)$ is a maximal green sequence of the full subquiver of $Q$ on the vertices $a, b, c,$ and $d$. Since this full subquiver of $Q$ is not acyclic, Corollary~\ref{Cor:acyclic_result} implies that it does not have maximal green sequences of length 4. Thus $(d,a,b,c,d)$ is a minimal length maximal green sequence.

Next, since the quivers $Q^{(1)}$ and $Q^{(2)}$ are of mutation type $\mathbb{A}$, Theorem~\ref{thmtypeA} implies that there exist minimal length maximal green sequences $\textbf{i}_1 \in \text{MGS}(Q^{(1)})$ and $\textbf{i}_2 \in \text{MGS}(Q^{(2)})$ with lengths $|Q^{(1)}_0| + |\{\text{3-cycles in } Q^{(1)}\}|$ and  $|Q^{(2)}_0| + |\{\text{3-cycles in } Q^{(2)}\}|$, respectively. Now by Corollary~\ref{Cor:min_length_Qtilde}, we know that the minimal length of a maximal green sequence of $Q$ has the desired length.

iii) {One checks that $(a,c,b,d,c,a)$ is a minimal length maximal green sequence of the full subquiver of $Q$ on the vertices $a, b, c,$ and $d$.} As in Case ii), we apply Theorem~\ref{thmtypeA} to the quivers $Q^{(1)}$ and $Q^{(2)}$, and the desired result then follows from Corollary~\ref{Cor:min_length_Qtilde}.

iv)  Let $Q^{(i_1)}, \ldots, Q^{(i_t)}$ be the nonempty mutation type $\mathbb{A}$ quivers obtained from $Q$ by removing the vertices $a_1, \ldots, a_k$, and let $b_{i_1}, \ldots, b_{i_t}$ denote the corresonding connecting vertices. By Theorem~\ref{Dn_IV_lower_bound} and Theorem~\ref{Thm:Dn_short_seq}, we know that the full subquiver $Q^\prime$ of $Q$ on the vertices $a_1, \ldots, a_k, b_{i_1}, \ldots, b_{i_t}$ has a minimal length maximal green sequence of length $|Q^\prime_0| + |\{i \in [k]: \text{deg}(a_i) = 4\}| + k - 2.$ By Theorem~\ref{thmtypeA}, each quiver $Q^{(i_j)}$ has a minimal length maximal green sequence of length $|Q^{(i_j)}_0| + |\{\text{3-cycles in } Q^{(i_j)}\}|$. Now observe that $|Q^\prime_0| - t = k$. The result thus follows from Corollary~\ref{Cor:min_length_Qtilde}.
\end{proof}



\section{Quivers defined by triangulated surfaces}\label{Sec:QT}
To understand the minimal length maximal green sequences of $\mathbb{D}_n$ quivers of Type IV, it will be useful to think of such quivers as \textbf{signed adjacency quivers} of tagged triangulations of a once-punctured disk. In this section, we review the basic notions of quivers arising from triangulated surfaces, following \cite{fomin2008cluster}.

\subsection{Signed adjacency quivers} Let $\textbf{S}$ denote an oriented Riemann surface that may or may not have a boundary and let $\textbf{M} \subset \textbf{S}$ be a finite subset of $\textbf{S}$ where we require that for each component $\textbf{B}$ of $\partial \textbf{S}$ we have $\textbf{B}\cap \textbf{M} \neq \emptyset.$ We call the elements of $\textbf{M}$ \textbf{marked points}, we call the elements of $\textbf{M}\backslash(\textbf{M}\cap \partial \textbf{S})$ \textbf{punctures}, and we call the pair $(\textbf{S},\textbf{M})$ a \textbf{marked surface}. We require that $(\textbf{S},\textbf{M})$ is not one of the following \textbf{degenerate marked surfaces}: a sphere with one, two, or three punctures; a disc with one, two, or three marked points on the boundary; or a punctured disc with one marked point on the boundary. For the remainder of Section~\ref{Sec:QT}, we will work with a fixed marked surface $(\textbf{S},\textbf{M}).$

We define an $\textbf{arc}$ on \textbf{S} to be a curve $\gamma$ in \textbf{S} such that \begin{itemize} 
\item its endpoints are marked points; 
\item $\gamma$ does not intersect itself, except that its endpoints may coincide; 
\item except for the endpoints, $\gamma$ is disjoint from $\textbf{M}$ and from the boundary of $\textbf{S}$; 
\item $\gamma$ does not cut out an unpunctured monogon or and unpunctured digon. (In other words, $\gamma$ is not contractible into $\textbf{M}$ or onto the boundary of $\textbf{S}$.)\end{itemize} 
An arc $\gamma$ is considered up to isotopy relative to the endpoints of $\gamma$. 
We say two arcs $\gamma_1$ and $\gamma_2$ on \textbf{S} are \textbf{compatible} if they are isotopic relative to their endpoints to curves that are nonintersecting except possibly at their endpoints. An \textbf{ideal triangulation} of $(\textbf{S},\textbf{M})$ is defined to be a maximal collection of pairwise compatible arcs, denoted $\textbf{T}$. 

One can also move between different triangulations of a given marked surface $(\textbf{S},\textbf{M}).$ Let $\gamma$ be an arc in a triangulation $\textbf{T}$ that has no \textbf{self-folded triangles} (e.g., the region of \textbf{S} bounded by $\gamma_3$ and $\gamma_4$ in Figure~\ref{taggedarcs} is an example of a self-folded triangle).  Define the $\textbf{flip}$ of arc $\gamma$ to be the unique arc $\gamma^\prime \neq \gamma$ that produces a triangulation of $(\textbf{S},\textbf{M})$ given by $\textbf{T}^\prime = (\textbf{T}\backslash\{\gamma\}) \sqcup \{\gamma^\prime\}$ (see Figure~\ref{msflip}). 

If $(\textbf{S},\textbf{M})$ is a marked surface where $\textbf{M}$ contains punctures, there are triangulations of \textbf{S} that contain self-folded triangles. We refer to the arc $\gamma_3$ (resp., $\gamma_4$) shown in the self-folded triangle in Figure~\ref{taggedarcs} as a \textbf{loop} (resp., a \textbf{radius}). As the flip of a radius of a self-folded triangle is not defined, Fomin, Shapiro, and Thurston introduced \textbf{tagged arcs}, a generalization of arcs, in order to develop such a notion.

\begin{figure}[h]
$\begin{array}{rcl}\raisebox{-1in}{\includegraphics[scale=2]{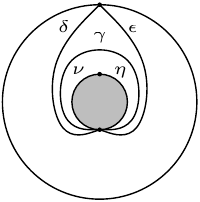}} & \raisebox{-.25in}{$\longleftrightarrow$} & \raisebox{-1in}{\includegraphics[scale=2]{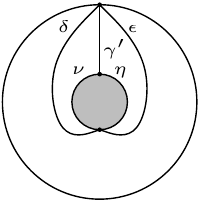}} \end{array}$
\caption{A flip connecting two triangulations of an annulus. The ideal quadrilateral of $\gamma$ is the disk with four marked points determined by the arcs $\{\gamma, \delta, \epsilon, \nu, \eta\}$ with the triangulation $\{\gamma\}$.}
\label{msflip}
\end{figure}

A \textbf{tagged arc} $\iota(\gamma)$ is obtained from an arc $\gamma$ that does not cut out a once-punctured monogon and ``tagging'' its ends either as $\textbf{plain}$ or $\textbf{notched}$ so that: \begin{itemize} \item an end of $\gamma$ lying on the boundary of $\textbf{S}$ is tagged plain; and \item both ends of a loop have the same tagging. \end{itemize} We use the symbol $\bowtie$ to indicate that an end of an arc is notched. We say two tagged arcs $\iota(\gamma_1)$ and $\iota(\gamma_2)$ are \textbf{compatible} if the following hold: \begin{itemize} 
\item Their underlying arcs $\gamma_1$ and $\gamma_2$ are the same, and the tagged arcs $\iota(\gamma_1)$ and $\iota(\gamma_2)$ have the same tagging at exactly one endpoint. 
\item Their underlying arcs $\gamma_1$ and $\gamma_2$ are distinct and compatible, and any common endpoints of $\iota(\gamma_1)$ and $\iota(\gamma_2)$ have the same tagging. \end{itemize} 
A \textbf{tagged triangulation} of $(\textbf{S},\textbf{M})$ is a maximal collection of pairwise compatible tagged arcs.  It follows from the construction that any arc in a tagged triangulation can be flipped.  In particular, the flip of $\gamma_4$ in Figure~\ref{taggedarcs} results in a new arc with one notched endpoint at the puncture and the other endpoint on the boundary where the arcs $\gamma_1$ and $\gamma_2$ meet.  This enables us to define the exchange graph $EG({\bf T})$ of a tagged triangulation {\bf T} whose vertices are tagged triangulations and two vertices are connected by an edge if the corresponding triangulations are related by a flip.   

It will be useful to keep track of the region of a triangulation where a flip of a tagged arc takes place. We refer to this region as an ideal quadrilateral. Define the \textbf{ideal quadrilateral} of a tagged arc $\gamma$ in a tagged triangulation $\textbf{T}$ to be the triangulated surface contained in $(\textbf{S},\textbf{M}$) that is
\begin{itemize}
\item an unpunctured disk with four marked points that consists of two triangles of $\textbf{T}$ that have $\gamma$ as a side if $\gamma$ is not a nontrivial tagged arc in a once-punctured digon (see Figure~\ref{msflip}) or
\item the once-punctured digon containing $\gamma$ and the triangulation of it determined by $\textbf{T}$ (see Figure~\ref{taggedarcs}).
\end{itemize}



\begin{figure}[h]
{\includegraphics[scale=1]{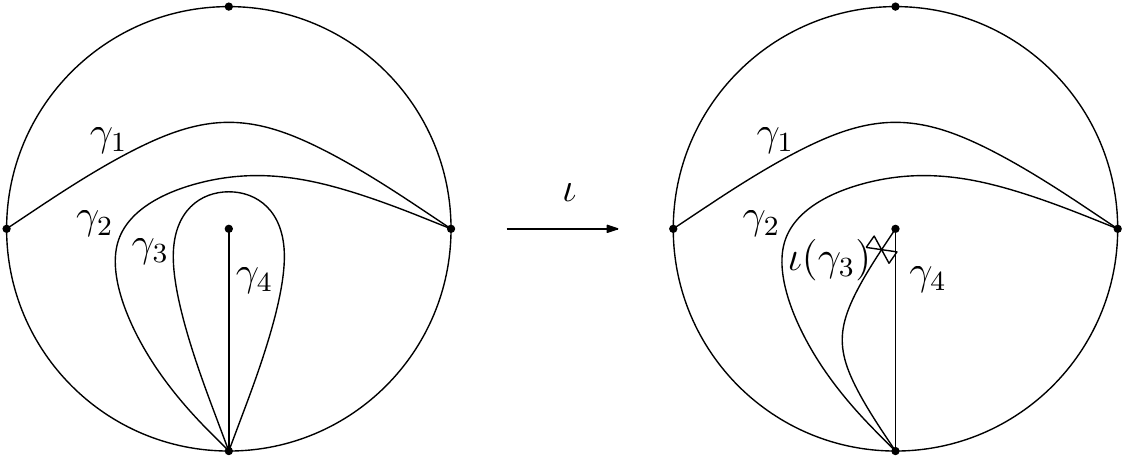}}
\caption{The map identifying a triangulation of a punctured disk as a tagged triangulation of a punctured disk. The ideal quadrilateral of $\iota(\gamma_3)$ in Figure~\ref{taggedarcs} is a once-punctured digon triangulated by $\{\iota(\gamma_3),\gamma_4\}.$}
\label{taggedarcs}
\end{figure}

Each triangulation $\textbf{T}$ of \textbf{S} defines a {\bf signed adjacency quiver} $Q_\textbf{T}$ by associating vertices to arcs and arrows based on oriented adjacencies (see Figure~\ref{msquiv}). More precisely, given an ideal triangulation $\textbf{T}$ consider a map $\pi : \textbf{T} \to \textbf{T}$ on the set of arcs defined as follows.  If $\gamma$ is a radius of a self-folded triangle then let $\pi(\gamma)$ be the corresponding loop, otherwise let $\pi(\gamma)=\gamma$.  Then the quiver $Q_\textbf{T}$ consists of vertices $i_{\gamma}$ for every $\gamma\in \textbf{T}$ and arrows $i_{\gamma}\to i_{\gamma'}$ for every non self-folded triangle with sides $\pi(\gamma)$ and $\pi(\gamma')$ such that $\pi(\gamma)$ follows $\pi(\gamma')$ in the clockwise order.  Finally, we remove a maximal collection of oriented 2-cycles from $Q_\textbf{T}$ to produce a 2-acyclic quiver.  Note that we defined $Q_\textbf{T}$ for an ideal triangulation $\textbf{T}$, however every tagged triangulation ${\bf T}$ can be transformed into an ideal triangulation ${\bf T}^{\circ}$ by forgetting the tagging of each arc and introducing self-folded triangles around puncture $p$ whenever two underlying arcs coincide and have different tagging at $p$.  More precisely, if there are two arcs attached to $p$ with different tagging, then replace the notched arc in ${\bf T}$ by a loop of a self-folded triangle enclosing $p$ in ${\bf T}^{\circ}$.  In this way, the definition of $Q_\textbf{T}$ can be extended to a tagged triangulation $\textbf{T}$ by setting $Q_\textbf{T} = Q_{\textbf{T}^{\circ}}$. 

The following theorem shows that flips are compatible with mutations of the associated quiver so that $EG({\bf T}) \cong EG(\widehat{Q}_{\bf T})$.  

\begin{theorem}
Given a tagged triangulation ${\bf T}$ and an arc $\gamma \in {\bf T}$, let ${\bf T}'$ be obtained from ${\bf T}$ after the flip of $\gamma$, then 
$$\mu_{i_\gamma}(Q_{\bf T}) = Q_{\bf{T}'}.$$
\end{theorem}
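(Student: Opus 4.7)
The plan is to prove this local statement by a local case analysis. Both the definition of $Q_{\bf T}$ and the definition of quiver mutation $\mu_{i_\gamma}$ depend only on the data near $\gamma$: the arrows of $Q_{\bf T}$ incident to $i_\gamma$ come solely from the (at most two) ideal triangles of ${\bf T}^\circ$ having $\pi(\gamma)$ as a side, and $\mu_{i_\gamma}$ changes only arrows that are incident to $i_\gamma$ or that participate in $2$-paths through $i_\gamma$. So the first step is to reduce the problem to a verification inside the ideal quadrilateral of $\gamma$ (together with the possibly self-folded triangles attached to its four boundary sides), and to observe that all arrows of $Q_{\bf T}$ that lie outside this region are identical to the corresponding arrows of $Q_{{\bf T}'}$.

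Next I would pass to the ideal-triangulation setting using the correspondence ${\bf T} \mapsto {\bf T}^\circ$ from the excerpt. Two situations arise: either the tagged arc $\gamma$ corresponds (via $\pi$) to a non-radius arc of ${\bf T}^\circ$, in which case the flip of $\gamma$ is realized by an ordinary flip of the ideal quadrilateral; or $\gamma$ corresponds to a radius of a self-folded triangle, in which case flipping $\gamma$ changes the tagging at the enclosed puncture. In the second situation, one verifies directly that passing to ${\bf T}^\circ$ and ${\bf T}'^\circ$ exchanges the roles of the loop and the radius at the puncture, so $Q_{{\bf T}^\circ}$ and $Q_{{\bf T}'^\circ}$ are already related by $\mu_{i_\gamma}$ by inspection of the local picture.

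The main step is then the explicit case analysis for the generic case. Label the two ideal triangles sharing $\gamma$ by their remaining sides $\alpha_1,\alpha_2$ and $\beta_1,\beta_2$ (with cyclic order given by orientation of $\textbf{S}$), together with a record of whether each of $\alpha_i,\beta_j$ is itself a boundary segment, an ordinary arc, a loop of a self-folded triangle, or a radius. For each such configuration one writes down the contributions to $Q_{\bf T}$ from the two triangles (following the clockwise rule, with $\pi$ applied to sides), applies the three-step definition of $\mu_{i_\gamma}$, performs any resulting $2$-cycle cancellation, and compares with the contributions to $Q_{{\bf T}'}$ coming from the two new triangles formed after the flip. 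Because the definition $Q_{\bf T} := Q_{{\bf T}^\circ}$ is compatible with the replacement of tagged arcs by radii/loops, the same local calculation handles both the tagged and the ideal pictures.

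The hard part will be bookkeeping: several of the boundary sides of the quadrilateral may coincide (for instance when the quadrilateral is embedded in a small piece of surface with few marked points), and one must track the $2$-cycle cancellations carefully in those degenerate configurations, as well as in the sub-cases where a radius or loop appears among $\alpha_i,\beta_j$. Once one checks that in every such sub-case the clockwise-adjacency recipe applied to ${\bf T}'^\circ$ reproduces exactly the output of the mutation rule at $i_\gamma$, the equality $\mu_{i_\gamma}(Q_{\bf T}) = Q_{{\bf T}'}$ follows, and the induced isomorphism $EG({\bf T}) \cong EG(\widehat{Q}_{\bf T})$ is an immediate consequence.
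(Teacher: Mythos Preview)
The paper does not prove this theorem; it is stated as a known result from the literature (it is essentially \cite[Proposition~4.8 and Lemma~9.7]{fomin2008cluster}), so there is no ``paper's own proof'' to compare against. Your proposed local case analysis on the ideal quadrilateral of $\gamma$, together with the reduction from tagged to ideal triangulations via ${\bf T}\mapsto{\bf T}^\circ$ and the separate handling of the radius/loop case, is exactly the standard argument used in the original source, and it is correct in outline.

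One small caution on your plan: you write that flipping a radius ``changes the tagging at the enclosed puncture,'' but more precisely, in the tagged model the flip of one of the two arcs in a once-punctured digon replaces it by the arc with the opposite tagging at the puncture; under ${\bf T}\mapsto{\bf T}^\circ$ the before-and-after ideal triangulations are isomorphic (loop and radius swap roles), which is why $Q_{\bf T}=Q_{{\bf T}'}$ on the nose in that sub-case and $\mu_{i_\gamma}$ must act trivially there --- and indeed it does, since the loop and radius vertices have identical neighborhoods in $Q_{\bf T}$ by the $\pi$-rule. Make sure your write-up reflects that this sub-case is an identity rather than a nontrivial mutation.
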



\begin{figure}[h]
$\begin{array}{rcl}\raisebox{-1in}{\includegraphics[scale=2]{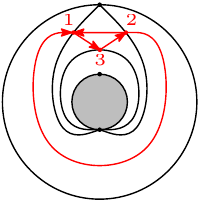}} & \raisebox{-.25in}{$\rightsquigarrow$} & \begin{xy} 0;<1pt,0pt>:<0pt,-1pt>:: 
(0,40) *+{1} ="0",
(60,40) *+{2} ="1",
(30,0) *+{3} ="2",
"0", {\ar"2"},
"2", {\ar"1"},
"1", {\ar@<-.5ex>"0"},
"1", {\ar@<.5ex>"0"},
\end{xy}\end{array}$
\caption{The quiver $Q_\textbf{T}$ defined by a triangulation \textbf{T}.}
\label{msquiv}
\end{figure}

\subsection{Shear coordinates}
Shear coordinates provide a way to orient $EG({\bf T})$ and establish a bijection with $\overrightarrow{EG}(\widehat{Q}_{\bf T})$.  This allows one to translate the notion of maximal green sequences into the geometric setting.   

A fixed orientation $\mathcal{O}$ of a surface $\textbf{S}$ induces an orientation $\mathcal{O}$ on each component of $\partial \textbf{S}$ such that the surface $\textbf{S}$ lies to the right of every component. Let $\gamma$ be a tagged arc in {\bf S}. The {\bf elementary lamination} $\ell({\gamma})$ is a curve that runs along $\gamma$ within a small neighborhood of it. If $\gamma$ has an
endpoint $M$ on the boundary $\partial {\bf S}$, then $\ell({\gamma})$ begins at a
point $M'\not\in {\bf M}$ on $\partial{\bf S}$ located near $M$ in the direction of $\mathcal{O}$, and proceeds along $\gamma$.  If $\gamma$ has an endpoint at a puncture $p$, then $\ell({\gamma})$ spirals
into $p$: clockwise if $\gamma$ is notched at $p$, and counterclockwise if it is tagged plain.  For example see Figure~\ref{fig:elem_lamination}.  A set of curves $L$ is a {\bf lamination} if it consists of a set of elementary laminations arising from some pairwise compatible tagged arcs.

\begin{figure}[htb]

  \centering
  {\begingroup%
  \makeatletter%
  \def\svgwidth{140pt}
  \providecommand\color[2][]{%
    \errmessage{(Inkscape) Color is used for the text in Inkscape, but the package 'color.sty' is not loaded}%
    \renewcommand\color[2][]{}%
  }%
  \providecommand\transparent[1]{%
    \errmessage{(Inkscape) Transparency is used (non-zero) for the text in Inkscape, but the package 'transparent.sty' is not loaded}%
    \renewcommand\transparent[1]{}%
  }%
  \providecommand\rotatebox[2]{#2}%
  \ifx\svgwidth\undefined%
    \setlength{\unitlength}{153.60992736bp}%
    \ifx\svgscale\undefined%
      \relax%
    \else%
      \setlength{\unitlength}{\unitlength * \real{\svgscale}}%
    \fi%
  \else%
    \setlength{\unitlength}{\svgwidth}%
  \fi%
  \global\let\svgwidth\undefined%
  \global\let\svgscale\undefined%
  \makeatother%
  \begin{picture}(1,1.15121086)%
    \put(0,0){\includegraphics[width=\unitlength]{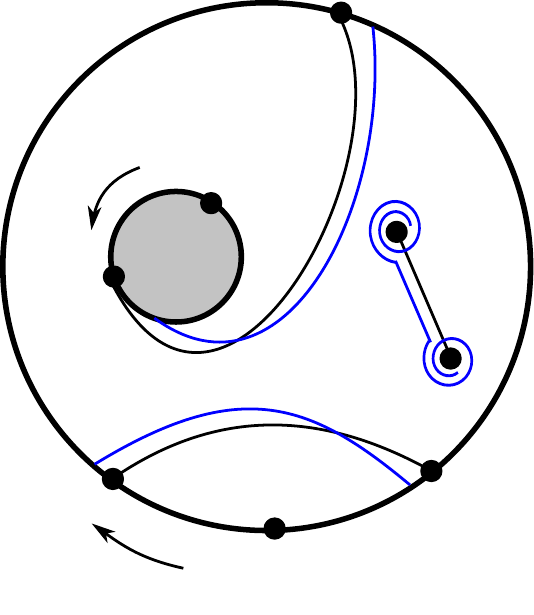}}%
    \put(0.21012815,0.08449305){\color[rgb]{0,0,0}\makebox(0,0)[lt]{\begin{minipage}{0.62867961\unitlength}\raggedright $\mathcal{O}$\end{minipage}}}%
    \put(0.12625487,0.83501751){\color[rgb]{0,0,0}\makebox(0,0)[lt]{\begin{minipage}{0.6342596\unitlength}\raggedright $\mathcal{O}$\end{minipage}}}%
    \put(0.75148429,0.65019976){\color[rgb]{0,0,0}\rotatebox{32.21378771}{\makebox(0,0)[lt]{\begin{minipage}{0.22176609\unitlength}\raggedright $\scriptstyle{\bowtie}$\end{minipage}}}}%
  \end{picture}%
\endgroup%
}
  \caption{Lamination corresponding to the given set of arcs.}
   \label{fig:elem_lamination}
\end{figure}

\begin{definition}\cite[Definition 13.3]{fomin2012cluster}\label{def:shearcoord}
Let $L$ be a lamination, and let ${\bf T}$ be an ideal
triangulation. For each arc $\gamma \in {\bf T}$, that is not a radius of a self-folded triangle, the corresponding {\bf shear coordinate} of $L$ with
respect to ${\bf T}$, denoted by $b_{\gamma}({\bf T}, L)$, is defined as a sum of contributions from all
intersections of curves in $L$ with $\gamma$. Specifically, such an intersection contributes +1
(resp., $-1$) to $b_{\gamma}({\bf T}, L)$ if the corresponding segment of a curve in $L$ cuts through the
quadrilateral surrounding $\gamma$ as shown in Figure~\ref{lamin_convention}  on the left (resp., right).  
\end{definition}

\begin{figure}[h]
$$\begin{array}{cccccccc}
\includegraphics[scale=1]{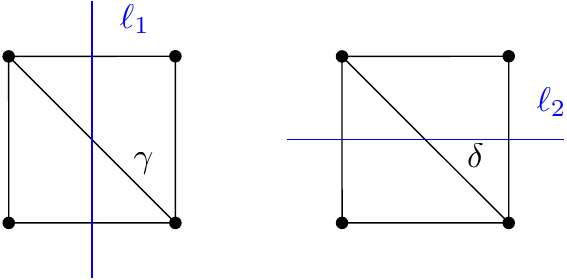}
\end{array}$$
\caption{The shear coordinate of $\gamma$ with respect to $\ell_1$ is $+1$, and the shear coordinate of $\delta$ with respect to $\ell_2$ is $-1$.}
\label{lamin_convention}
\end{figure}

Next, we describe how to extend the definition of shear coordinates to tagged triangulations.

Let ${\bf T}$ be a tagged triangulation.  The shear coordinates $b_{\gamma}({\bf T}, L)$ for $\gamma \in {\bf T}$ and some lamination $L$ are uniquely defined by
the following rules:
\begin{itemize}
\item [(i)] Suppose that tagged triangulations ${\bf T}_1$ and ${\bf T}_2$ coincide except that at a particular
puncture $p$, the tags of the arcs in ${\bf T}_1$ are all different from the tags of
their counterparts in ${\bf T}_2$. Suppose that laminations $L_1$ and $L_2$ coincide except
that each curve in $L_1$ that spirals into $p$ has been replaced in $L_2$ by a curve
that spirals in the opposite direction. Then $b_{\gamma_1}({\bf T}_1, L_1) = b_{\gamma_2}({\bf T}_2, L_2)$ for each arc $\gamma_1 \in {\bf T}_1$ and its counterpart $\gamma_2 \in {\bf T}_2$.
\item [(ii)] By performing tag-changing transformations as
above, we can convert any tagged triangulation into a tagged triangulation ${\bf T}$
that does not contain any notched arcs except possibly inside once-punctured
digons. Let ${\bf T}^{\circ}$ denote the ideal triangulation that is represented by such ${\bf T}$ as defined earlier.  Let $\gamma^{\circ}$
 be an arc in ${\bf T}^{\circ}$ that is not a radius of some self-folded triangle, and let $\gamma$ be the corresponding arc in ${\bf T}$.
Then, for a lamination $L$, define $b_{\gamma}({\bf T}, L)$ by applying Definition~\ref{def:shearcoord} to the arc $\gamma^{\circ}$ viewed inside the triangulation ${\bf T}^{\circ}$.
\end{itemize}

Note that if $\gamma^{\circ}$ is a radius of a self-folded triangle in ${\bf T}^{\circ}$
enveloping a puncture $p$,
then we can first apply the tag-changing transformation (i) to ${\bf T}$ at $p$, and then use
the rule (ii) to determine the shear coordinate in question.    

The following theorem establishes a relationship between shear coordinates and {\bf c}-vectors.  
\begin{theorem}
Consider a triangulation ${\bf T}$ with its associated quiver $Q_{\bf T}$.  Let $Q_{\bf T}' \in EG(\widehat{Q}_{\bf T})$ and let ${\bf T}' \in EG({\bf T})$ be the corresponding triangulation.  Then the {\bf c}-matrix $C_{Q_{\bf T}'}$ has entries, indexed by the arcs $\gamma \in {\bf T}$ and $\gamma' \in \bf{T}'$, as follows 
$$c_{\gamma', \gamma} = b_{\gamma'}({\bf T}', \ell({\gamma}))$$
where $\ell({\gamma})$ is the elementary lamination corresponding to $\gamma$.
\end{theorem}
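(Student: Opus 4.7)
The plan is to induct on the graph-theoretic distance between $\mathbf{T}$ and $\mathbf{T}'$ in the exchange graph $EG(\mathbf{T})$.

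For the base case $\mathbf{T}' = \mathbf{T}$, we have $Q_{\mathbf{T}}' = \widehat{Q}_{\mathbf{T}}$, whose $\mathbf{c}$-matrix equals the identity $I_n$ by the very definition of the framed quiver. It therefore suffices to verify that $b_{\gamma'}(\mathbf{T}, \ell(\gamma)) = \delta_{\gamma, \gamma'}$ for any pair of arcs $\gamma, \gamma' \in \mathbf{T}$. This is a direct local computation: since $\ell(\gamma)$ is drawn in a small neighborhood of $\gamma$, the only arc through whose surrounding quadrilateral the curve $\ell(\gamma)$ passes in the manner prescribed by Figure~\ref{lamin_convention} is $\gamma$ itself, and a careful check of the orientation conventions shows the contribution is $+1$. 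For arcs terminating at a puncture one invokes the spiraling convention in the definition of $\ell(\gamma)$; for tagged triangulations containing notched arcs, rules (i) and (ii) in the definition of shear coordinates reduce the computation to the ideal case.

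For the inductive step, suppose the identity $c_{\gamma', \gamma} = b_{\gamma'}(\mathbf{T}', \ell(\gamma))$ holds for some $\mathbf{T}'$, and let $\mathbf{T}''$ be obtained from $\mathbf{T}'$ by flipping a green arc $\gamma' \in \mathbf{T}'$. By the flip-mutation compatibility theorem stated immediately before the shear coordinates subsection, this flip realizes the quiver mutation $\mu_{i_{\gamma'}}$, so the new $\mathbf{c}$-matrix is obtained from the old one by the tropical $\mathbf{c}$-vector mutation rule (cf. Theorem~\ref{thm_tropical_dual} and \cite{nakanishi2012tropical}). The heart of the argument is the parallel statement for shear coordinates: when $\mathbf{T}'$ is flipped at $\gamma'$, the shear coordinates $b_\delta(\,\cdot\,, L)$ of any fixed lamination $L$ transform by the same tropical mutation formula — the shear coordinate of the flipped arc negates, and for each other arc $\delta$ one adds a tropical correction determined by $b_{\gamma'}(\mathbf{T}', L)$ and the entries of the exchange matrix $B_{Q_{\mathbf{T}'}}$. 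Taking $L = \ell(\gamma)$ and combining with the inductive hypothesis matches both sides of the claimed identity for $\mathbf{T}''$.

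The main obstacle is establishing this tropical mutation rule for shear coordinates under flips. In the generic case, where the ideal quadrilateral surrounding $\gamma'$ is a genuine disk with four distinct boundary arcs and contains no self-folded triangle, the verification is a finite case check: one enumerates the ways in which a transverse strand of $L$ can enter and exit this quadrilateral before the flip, and compares the resulting $\pm 1$ contributions to $b_\delta(\mathbf{T}', L)$ and $b_\delta(\mathbf{T}'', L)$ for each boundary arc $\delta$, as well as to $b_{\gamma'}(\mathbf{T}', L)$ and to the shear coordinate at the replacing arc in $\mathbf{T}''$. The degenerate configurations — self-folded triangles, flips of radii, once-punctured digons, and notched endpoints — are handled by invoking rules (i) and (ii) of the shear coordinate definition to translate them to the generic ideal case, where the finite check applies. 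Once the tropical compatibility is in hand, the induction closes and the theorem follows.
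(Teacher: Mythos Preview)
The paper does not actually prove this theorem: it is stated as background, recalling a known result from the theory of cluster algebras from surfaces (the surrounding material cites \cite{fomin2012cluster}, where the tropical mutation rule for shear coordinates is established). There is thus no ``paper's own proof'' to compare against.

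Your outline is the standard argument and is correct in substance. Two small remarks. First, in the inductive step there is no need to restrict to flipping a \emph{green} arc: the $\mathbf{c}$-matrix mutation rule and the shear-coordinate mutation rule are both valid for an arbitrary flip, and induction on graph distance in $EG(\mathbf{T})$ requires handling flips in either direction. Second, the ``main obstacle'' you identify --- that shear coordinates of a fixed lamination transform under a flip by exactly the matrix-mutation rule applied to the extended exchange matrix --- is precisely the content of Fomin--Thurston's result on tropical duality for shear coordinates; your sketch of the finite case check on the quadrilateral, together with the reduction of degenerate configurations via rules (i) and (ii), is the right shape, though in a fully rigorous write-up the self-folded and once-punctured-digon cases require some care.
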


With the above notation we say that an arc $\gamma' \in {\bf T}'$ is {\bf green} (resp., {\bf red}) if $c_{\gamma', \gamma}\geq 0$ (resp., $c_{\gamma', \gamma}\leq 0$) for all $\gamma \in {\bf T}$.  Note that the color of every arc in ${\bf T}'$ agrees with the color of the associated vertex in $Q_{\bf T}'$.   This enables us to define an oriented exchange graph $\overrightarrow{EG}({\bf T})$ of {\bf T} such that  $\overrightarrow{EG}({\bf T}) \cong \overrightarrow{EG}(\widehat{Q}_{\bf T})$.


Next, we define an automorphism $\varrho$ on the set of arcs in a given $(\textbf{S}, \textbf{M})$.  

\begin{definition}
Recall, that an orientation $\mathcal{O}$ of a surface $\textbf{S}$ induces an orientation $\mathcal{O}$ on each component of $\partial \textbf{S}$ such that the surface $\textbf{S}$ lies to the right of every component.  Given a marked point $M\in \partial \textbf{S}$ let $\rho(M)$ be the next marked point in $\partial \textbf{S}$ along $\mathcal{O}$, and let $\delta_M$ be the corresponding boundary segment connecting $M$ and $\rho(M)$.  If $M\in \textbf{M}$ is a puncture, then let $\rho(M)=M$ and the segment $\delta_M$ equal a single point $M$.  Given an arc $\gamma$ in $\textbf{S}$ with endpoints $M_1, M_2$ let $\varrho(\gamma)$ be the arc with endpoints $\rho(M_1), \rho(M_2)$ that follows along the segments $\delta_{M_1}$, $\gamma$ and then $\delta_{M_2}$ in order.  Furthermore, if $M_i$ is a puncture for $i=1, 2$ then the tagging of $\varrho(\gamma)$ is different from that of $\gamma$ at $M_i$.   For example see Figure~\ref{fig:rho}.
\end{definition}

\begin{figure}[htb]

  \centering
  {\begingroup%
  \makeatletter%
  \def\svgwidth{300pt}
  \providecommand\color[2][]{%
    \errmessage{(Inkscape) Color is used for the text in Inkscape, but the package 'color.sty' is not loaded}%
    \renewcommand\color[2][]{}%
  }%
  \providecommand\transparent[1]{%
    \errmessage{(Inkscape) Transparency is used (non-zero) for the text in Inkscape, but the package 'transparent.sty' is not loaded}%
    \renewcommand\transparent[1]{}%
  }%
  \providecommand\rotatebox[2]{#2}%
  \ifx\svgwidth\undefined%
    \setlength{\unitlength}{418.18134592bp}%
    \ifx\svgscale\undefined%
      \relax%
    \else%
      \setlength{\unitlength}{\unitlength * \real{\svgscale}}%
    \fi%
  \else%
    \setlength{\unitlength}{\svgwidth}%
  \fi%
  \global\let\svgwidth\undefined%
  \global\let\svgscale\undefined%
  \makeatother%
  \begin{picture}(1,0.4137449)%
    \put(0,0){\includegraphics[width=\unitlength]{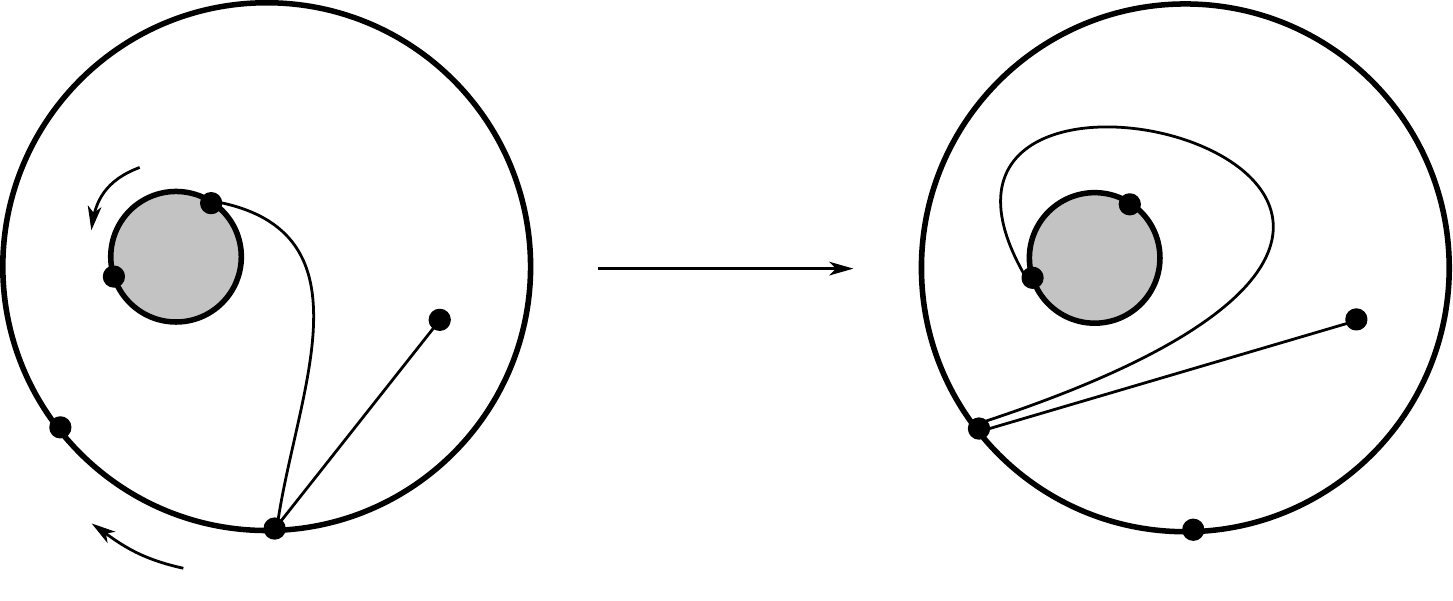}}%
    \put(0.4918099,0.2635868){\color[rgb]{0,0,0}\makebox(0,0)[lt]{\begin{minipage}{0.14894428\unitlength}\raggedright $\varrho$\end{minipage}}}%
    \put(0.06752372,0.03060515){\color[rgb]{0,0,0}\makebox(0,0)[lt]{\begin{minipage}{0.15304367\unitlength}\raggedright $\rho$\end{minipage}}}%
    \put(0.04444455,0.30049675){\color[rgb]{0,0,0}\makebox(0,0)[lt]{\begin{minipage}{0.15304367\unitlength}\raggedright $\rho$\end{minipage}}}%
    \put(0.91193231,0.20181423){\color[rgb]{0,0,0}\rotatebox{-70.0165464}{\makebox(0,0)[lt]{\begin{minipage}{0.05826649\unitlength}\raggedright {\Small $\bowtie$}\end{minipage}}}}%
  \end{picture}%
\endgroup%
}
  \caption{The affect of $\varrho$ on arcs of $(\textbf{S}, \textbf{M})$.}
   \label{fig:rho}
\end{figure}

The following theorem is an consequence of \cite[Theorem 3.8]{brustle2015tagged} and the remarks in the introduction of \cite{brustle2015tagged}. It will be useful for our construction of minimal length maximal green sequences.

\begin{theorem}\label{univ_tagged_thm}
Let $\textbf{T}$ be a triangulation of a marked surface $(\textbf{S}, \textbf{M})$. If $\textbf{i} \in \text{MGS}(Q_\textbf{T})$ and $\underline{\mu}$ is the corresponding sequence of flips, then $\underline{\mu}(\textbf{T}) = \varrho(\textbf{T})$.
\end{theorem}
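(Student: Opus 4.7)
The plan is to translate the statement via the dictionaries set up in this section and apply the cited Brustle--Qiu theorem. First, I would characterize the endpoint of any maximal green sequence purely in terms of $\textbf{c}$-matrices: by Definition~\ref{mgsdef}, at the end of $\textbf{i}$ every mutable vertex of $\underline{\mu}_\textbf{i}(\widehat{Q}_\textbf{T})$ is red, so its $\textbf{c}$-matrix equals $-I_n$. By the isomorphism $\overrightarrow{EG}(\widehat{Q}_\textbf{T}) \cong \textbf{c}\text{-mat}(Q_\textbf{T})$, the terminal quiver of any maximal green sequence is the unique $\overline{Q} \in \text{Mut}(\widehat{Q}_\textbf{T})$ with $C_{\overline{Q}} = -I_n$. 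Combined with $\overrightarrow{EG}(\textbf{T}) \cong \overrightarrow{EG}(\widehat{Q}_\textbf{T})$, the terminal triangulation $\underline{\mu}(\textbf{T})$ is independent of the choice of $\textbf{i}$ and is the unique $\textbf{T}^\prime \in EG(\textbf{T})$ satisfying $b_{\gamma^\prime}(\textbf{T}^\prime, \ell(\gamma)) = -\delta_{\gamma^\prime, \gamma}$ under the natural bijection between arcs of $\textbf{T}$ and $\textbf{T}^\prime$.

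Next I would verify directly that $\textbf{T}^\prime = \varrho(\textbf{T})$ satisfies this condition, with the bijection sending $\gamma \mapsto \varrho(\gamma)$. The tagged arc $\varrho(\gamma)$ is obtained from $\gamma$ by shifting each endpoint one boundary segment along the induced orientation $\mathcal{O}$, and the elementary lamination $\ell(\gamma)$ begins just next to $\gamma$ on the $\mathcal{O}$-side of each endpoint; hence $\ell(\gamma)$ cuts through the ideal quadrilateral around $\varrho(\gamma)$ exactly once, in the configuration contributing $-1$ on the right of Figure~\ref{lamin_convention}. For $\gamma^\prime \neq \gamma$, the arc $\varrho(\gamma^\prime)$ is disjoint from $\ell(\gamma)$ outside of their respective quadrilaterals, so the corresponding shear coordinate vanishes. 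At punctures, $\varrho$ flips the tagging, and the tag-changing rule (i) for shear coordinates matches this flip with the reversal of the spiraling direction of $\ell(\gamma)$, so the same count applies. Uniqueness then forces $\underline{\mu}(\textbf{T}) = \varrho(\textbf{T})$.

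Alternatively, and more conceptually, this is what \cite[Theorem~3.8]{brustle2015tagged} asserts once one identifies the shift functor on the cluster category of $(\textbf{S}, \textbf{M})$ with the combinatorial map $\varrho$, as explained in the introduction of that paper. The main obstacle will be matching conventions: verifying that the orientation $\mathcal{O}$ used in defining $\varrho$ agrees with the orientation governing the green/red dichotomy of shear coordinates, and that the tagging swap introduced by $\varrho$ at punctures interacts correctly with the tagging rules for shear coordinates in once-punctured digons. Once these conventions are aligned, the sign-coherence of $\textbf{c}$-vectors together with the cited result yields the conclusion.
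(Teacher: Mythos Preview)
The paper does not actually prove this theorem; it records it as a direct consequence of \cite[Theorem~3.8]{brustle2015tagged} together with the remarks in that paper's introduction, with no further argument. Your ``alternative'' at the end is therefore precisely the paper's approach, so on that score you are done.

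Your direct shear-coordinate verification goes beyond what the paper offers and is a reasonable independent route, but two steps are too quick. First, ``every mutable vertex is red'' yields only that all terminal $\textbf{c}$-vectors lie in $\mathbb{Z}^n_{\le 0}$; concluding that the terminal $\textbf{c}$-matrix is $-I_n$ (equivalently, that the endpoint is $\widecheck{Q}_\textbf{T}$ up to permutation) is a nontrivial result of \cite{bdp}, not a consequence of Definition~\ref{mgsdef} alone. Second, your claim that $b_{\varrho(\gamma^\prime)}(\varrho(\textbf{T}),\ell(\gamma))=0$ for $\gamma^\prime\neq\gamma$ does not follow merely from disjointness: when $\gamma$ and $\gamma^\prime$ share a boundary endpoint $M$, the lamination $\ell(\gamma)$ starts on the very boundary segment that $\varrho(\gamma^\prime)$ runs along, and one must check carefully that any resulting intersection does not cut the quadrilateral around $\varrho(\gamma^\prime)$ in a $\pm 1$ configuration. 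Both gaps are fillable, and the direct argument has the merit of being self-contained, but since the paper is content to invoke \cite{brustle2015tagged}, your citation alone suffices.
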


\section{Type IV quivers}\label{Sec_type_IV}

Let $Q$ be a quiver of Type IV where each of its subquivers $Q^{(i)}$ with $i = 1, \ldots, k$ either consists of a single vertex or is the empty quiver. Any such quiver $Q$ is the signed adjacency quiver of a triangulation $\textbf{T} = \{\gamma: \gamma \in \textbf{T}\}$ of the once-punctured disk $(\textbf{S}, \textbf{M})$ where $\textbf{M} = \textbf{M}^\prime \sqcup \{p\}$ and where $p$ denotes the unique puncture in the marked surface. A triangulation $\textbf{T}$ giving rise to such a quiver $Q$ is defined by the following properties (see Figure~\ref{typeIVtriang}): \begin{itemize} \item if $\gamma$ is not connected to $p$, it belongs to a triangle whose other two sides are boundary arcs and \item if $\gamma$ is connected to $p$, it is plain at $p$. \end{itemize} The triangulation \textbf{T} is the unique triangulation satisfying $Q_{\textbf{T}} = Q$, up to the action of the universal tagged rotation on the once-punctured disk and up to the action of simultaneously changing the tagging of all ends of arcs connected to $p$. It immediately follows, that the vertices of the quivers $Q^{(i)}$ with $i = 1, \ldots, k$ are in bijection with the triangles of $\textbf{T}$ containing two boundary arcs. We will construct a minimal length maximal green sequence of $Q$ in terms of the triangulation \textbf{T} satisfying $Q_{\textbf{T}} = Q$ just described.

\begin{figure}

$$\begin{array}{rcl}\includegraphics[scale=.5]{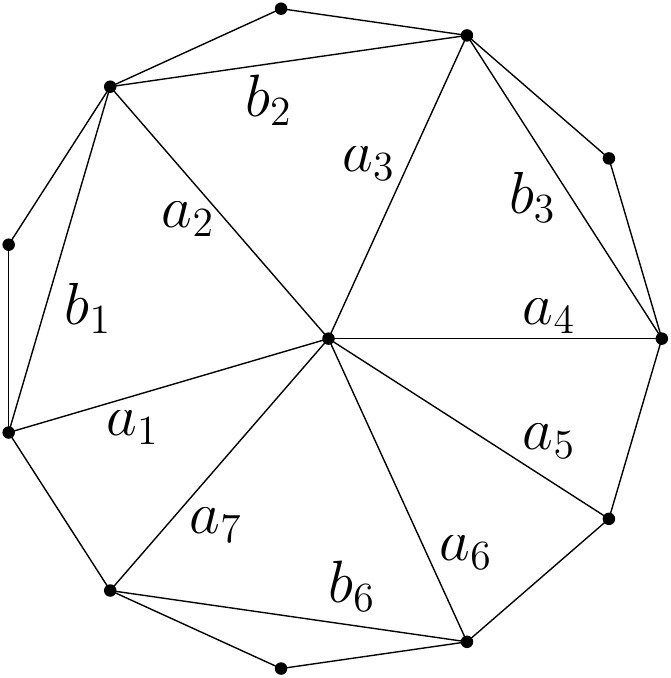} & \raisebox{.5in}{$\leadsto$} & \begin{xy} 0;<1pt,0pt>:<0pt,-1pt>:: 
(0,-55) *+{a_1} ="0",
(0,-85) *+{b_1} ="1",%
(30,-70) *+{a_2} ="2",
(45,-100) *+{b_2} ="3",
(60,-70) *+{a_3} ="4",
(90,-85) *+{b_3} ="5",
(90,-55) *+{a_4} ="6",
(75,-10) *+{a_5} ="7",
(30,-10) *+{a_6} ="8",
(0,5) *+{b_6} ="9",
(0,-25) *+{a_7} ="10",
"1", {\ar"0"},
"0", {\ar"2"},
"10", {\ar"0"},
"2", {\ar"1"},
"3", {\ar"2"},
"2", {\ar"4"},
"4", {\ar"3"},
"5", {\ar"4"},
"4", {\ar"6"},
"6", {\ar"5"},
"6", {\ar"7"},
"7", {\ar"8"},
"9", {\ar"8"},
"8", {\ar"10"},
"10", {\ar"9"},
\end{xy} \end{array}$$
\caption{The triangulation \textbf{T} that gives rise to $Q.$}
\label{typeIVtriang}
\end{figure}

\subsection{Lower bound on lengths of maximal green sequences}  
In this section we establish a lower bound on the lengths of maximal green sequences for quivers $Q_{\bf T}$ as defined above.  Without loss of generality we give the same labels to the vertices of $Q_{\textbf{T}}$ and the corresponding arcs of $\textbf{T}$.  Given a vertex $\gamma$ of $Q_{\textbf{T}}$ we write $\mu_{\gamma}(Q_{\textbf{T}}) = Q_{\mu_{\gamma} ({\textbf{T}})}$, where $\mu_{\gamma}({\textbf{T}})$ denotes the triangulation corresponding to the flip of the arc $\gamma \in {\textbf{T}}$.  
Throughout this section we fix a maximal green sequence $\mu_I$ for $Q_{\textbf{T}}$.  It follows from Theorem \ref {univ_tagged_thm} that $\mu_I ({\textbf{T}}) = \varrho({\textbf{T}})=\{\varrho(\gamma )\mid \gamma \in {\textbf{T}}\}$.  Given an arc $\gamma \in {\textbf{T}}$ let $\bar{\gamma}$ denote the same arc with a different tagging if possible, otherwise let $\bar{\gamma}=\gamma$.    

Let  $\mu_I=\mu_{i_q}\dots\mu_{i_2}\mu_{i_1}$, then we define a triangulation $\mathcal{B}^r_{\mu_I}({\textbf{T}}) = \mu_{i_r}\dots \mu_{i_1}({\textbf{T}})$ for $r\in[q]$ and two sets 

\[ \mathcal{B}_{\mu_I}({\textbf{T}}) = \bigcup ^q _{i=1} \mathcal{B}_{\mu_I}^i ({\textbf{T}}) \setminus {\textbf{T}} \hspace{2cm} \mathcal{D}_{\mu_I}({\textbf{T}})=\bigcup ^q _{i=1} \mathcal{B}_{\mu_I}^i ({\textbf{T}})\setminus \varrho({\textbf{T}}). \] 

In particular, we see that $\varrho({\textbf{T}}) = \mathcal{B}_{\mu_I}^q ({\textbf{T}})$ for a maximal green sequence $\mu_I$.  Also, the sets $\mathcal{B}^i_{\mu_I}({\textbf{T}})$ and $\mathcal{B}_{\mu_I}^{i-1}({\textbf{T}})$ differ only by a single arc, hence the length of $\mu_I$ is equal to the size of $\mathcal{B}_{\mu_I}({\textbf{T}})$.

\begin{definition}
Consider a map on the set of arcs

\[ f_{\mu_I} : \mathcal{D}_{\mu_I}({\textbf{T}}) \to \mathcal{B}_{\mu_I}({\textbf{T}})\]
defined as follows.  Given an arc $\gamma\in \mathcal{D}_{\mu_I}({\textbf{T}})$, there exists some $r\in [q-1]$ such that $\gamma\in\mathcal{B}_{\mu_I}^r({\textbf{T}})$.    Let $s$ be the largest integer such that  $\gamma \in \mathcal{B}_{\mu_I}^s ({\textbf{T}})$.   Observe that $s$ exists because $\gamma\not\in \varrho({\textbf{T}})$.   Then $f_{\mu_I}(\gamma)$ is defined as the unique arc belonging to $\mathcal{B}_{\mu_I}^{s+1}({\textbf{T}})\setminus \mathcal{B}_{\mu_I}^{s}({\textbf{T}})$.  In other words, $f_{\mu_I}(\gamma)$ is the arc obtained as a result of flipping $\gamma$ when performing the mutation sequence $\mu_I$.  To simplify the notation, we omit the subscript $\mu_I$ in $f_{\mu_I}$.

Similarly, given a nonempty set of arcs $S$ such that $S\subset\mathcal{B}_{\mu_I}^r({\textbf{T}})$ for some $r\in[q-1]$.  We define $\phi(S)$ to be $f(\gamma)$ where $\gamma \in S$ is the first arc in $S$ to be flipped along $\mu_{i_q} \dots \mu_{i_{r+1}}$.  More precisely, $\phi(S)$ is the unique arc in $\mathcal{B}^{s+1}_{\mu_I}({\textbf{T}})\setminus \mathcal{B}^{s}_{\mu_I}({\textbf{T}})$ where $s>r$ is the largest integer such that $S\subset \mathcal{B}_{\mu_I}^{s}({\textbf{T}})$.  We will often consider sets of arcs $S\subset \mathcal{B}^r_{\mu_I}({\textbf{T}})$ that define a triangle in $\mathcal{B}^r_{\mu_I}({\textbf{T}})$, and we will refer to these sets as triangles.  
\end{definition}

The following lemma says that once an arc $\gamma$ that appears along $\mu_I$ is flipped then it cannot appear again.

\begin{lemma}\label{arcs}
If $f(\gamma)\in\mathcal{B}^{r+1}_{\mu_I}(\textup{\textbf{T}})\setminus \mathcal{B}^{r}_{\mu_I}(\textup{\textbf{T}})$, then $\gamma\not\in \bigcup_{i=r+1}^{q}\mathcal{B}^i_{\mu_I}(\textup{\textbf{T}})$.  
\end{lemma}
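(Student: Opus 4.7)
The plan is to pass to the consistent scattering diagram $\mathfrak{D}(Q_{\textbf{T}})$ and extract a contradiction from Nakanishi--Zelevinsky duality. Under the bijection between maximal green sequences of $Q_{\textbf{T}}$ and finite transverse paths in $\mathfrak{D}(Q_{\textbf{T}})$ from $(\mathbb{R}_{>0})^n$ to $(\mathbb{R}_{<0})^n$ that cross every wall from the green side to the red side, the sequence $\mu_I$ determines a path $p$ passing successively through the reachable chambers $\mathcal{C}^0,\dots,\mathcal{C}^q$, where $\mathcal{C}^i$ corresponds under Theorem~\ref{g_c_vec_DQ} to the triangulation $\mathcal{B}^i_{\mu_I}(\textbf{T})$. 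Write $\textbf{c}$ for the positive $\textbf{c}$-vector of $\gamma$ in $\mathcal{B}^r_{\mu_I}(\textbf{T})$; then the wall $W$ crossed by $p$ between $\mathcal{C}^r$ and $\mathcal{C}^{r+1}$ lies in $\textbf{c}^{\perp}$, and the crossing sends $p$ from the open half-space $H^+=\{x:\textbf{c}\cdot x>0\}$ into $H^-=\{x:\textbf{c}\cdot x<0\}$.

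The first key observation would be that every wall contained in $\textbf{c}^{\perp}$ has $H^+$ as its green side and $H^-$ as its red side, so any later crossing of such a wall by $p$ would be red-to-green and hence forbidden in a maximal green sequence. Thus after step $r+1$, the path $p$ is trapped in the closed half-space $\overline{H^-}$, and in particular $\overline{\mathcal{C}^i}\subseteq \overline{H^-}$ for every $i>r$. Next I would use the standard fact for cluster algebras with principal coefficients, recalled in the excerpt via \cite{fomin2007cluster}, that the $\textbf{g}$-vector $\textbf{g}_{\gamma}$ is intrinsic to the arc/cluster variable $\gamma$: the same integer vector is assigned to $\gamma$ by every cluster containing it. So if $\gamma\in\mathcal{B}^i_{\mu_I}(\textbf{T})$ for some $i>r$, Theorem~\ref{g_c_vec_DQ} makes $\mathbb{R}_{>0}\textbf{g}_{\gamma}$ an extremal ray of $\mathcal{C}^i$, forcing $\textbf{g}_{\gamma}\in\overline{\mathcal{C}^i}\subseteq \overline{H^-}$ and hence $\textbf{c}\cdot\textbf{g}_{\gamma}\leq 0$.

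Finally, Theorem~\ref{thm_tropical_dual} applied at the cluster $\mathcal{B}^r_{\mu_I}(\textbf{T})$ identifies the rows of its $\textbf{c}$-matrix and its $\textbf{g}$-matrix as dual bases via $C=(G^{-1})^T$, so the row indexed by $\gamma$ contributes $\textbf{c}\cdot\textbf{g}_{\gamma}=1$, contradicting the inequality above. The step I anticipate requiring most care is justifying that $\textbf{g}_{\gamma}$ is literally the same integer vector whenever $\gamma$ reappears in a different cluster, since the mutation rule for $\textbf{g}$-matrices in the excerpt only records how the single changed row evolves under a single mutation. This consistency follows cleanly from the chamber bijection in Theorem~\ref{g_c_vec_DQ}: two distinct reachable chambers sharing a common extremal ray must record the same primitive vector as a $\textbf{g}$-vector in both of their $\textbf{g}$-matrices, and by construction that vector is attached to the arc corresponding to the shared ray.
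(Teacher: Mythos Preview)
Your scattering-diagram approach is quite different from the paper's, which passes to the module category of the cluster-tilted algebra $B_{\textbf{T}}$: tagged arcs become indecomposable $\tau$-rigid modules, triangulations become support $\tau$-tilting modules, and a maximal green sequence becomes a maximal chain $T_0,\ldots,T_q$ with $\operatorname{Fac}T_i\supsetneq\operatorname{Fac}T_{i+1}$. Once an indecomposable $M$ is removed by a left mutation one has $M\notin\operatorname{Fac}T_{r+1}$, hence $M\notin\operatorname{add}T_i$ for any later $i$, and the arc cannot recur.

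Your argument has a genuine gap at the ``trapping'' step. You infer $\overline{\mathcal{C}^i}\subseteq\overline{H^-}$ for all $i>r$ from the observation that every wall lying in $\textbf{c}^{\perp}$ can only be crossed green-to-red. But $\textbf{c}^{\perp}$ is \emph{not} in general a union of walls of $\mathfrak{D}(Q_{\textbf{T}})$: reachable chambers may straddle it. Already for $Q=1\to 2$ and the positive $\textbf{c}$-vector $(1,1)$, the reachable chamber $\mathbb{R}_{>0}(-1,0)+\mathbb{R}_{>0}(0,1)$ contains the ray $\mathbb{R}_{>0}(-1,1)\subset(1,1)^{\perp}$ in its interior. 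So nothing you have written prevents the green path from passing into $H^{+}$ by entering a straddling chamber through a facet \emph{not} contained in $\textbf{c}^{\perp}$. The direct inductive check on $\textbf{g}$-vectors also stalls: if all extremal rays of $\mathcal{C}^i$ lie in $\overline{H^-}$ and one green-mutates at a vertex $k$ with $\textbf{c}\cdot\textbf{g}_k<0$, then
\[
\textbf{c}\cdot\mu_k(\textbf{g}_k)\;=\;-\,\textbf{c}\cdot\textbf{g}_k\;+\;\sum_{j\to k}\textbf{c}\cdot\textbf{g}_j
\]
is a nonnegative term plus a nonpositive term, with no sign forced. Proving this remains $\le 0$ along every green continuation is essentially the lemma itself; it is exactly the monotonicity that the paper's $\operatorname{Fac}$ argument supplies and that your chamber picture is missing.
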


\begin{proof}
Given a once punctured polygon $({\bf S}, {\bf M})$ there exists the corresponding cluster category $\mathcal{C}$ of type $\mathbb{D}_n$ such that indecomposable objects of $\mathcal{C}$ are in bijection with tagged arcs in $({\bf S}, {\bf M})$.  Furthermore, triangulations of the surface are in bijection with cluster-tilting objects in $\mathcal{C}$, \cite{schiffler2008geometric}. Thus, given a triangulation ${\bf T}$ of the surface we obtain the associated cluster-tilted algebra $B_{\bf T}$, and it follows from \cite{adachi2014tau} that cluster-tilting objects in $\mathcal{C}$ are in bijection with support $\tau$-tilting modules of $B_{\bf T}$.  Combining the two results we obtain a correspondence between triangulations of $({\bf S}, {\bf M})$ and support $\tau$-tilting modules of $B_{\bf T}$.  Moreover, this correspondence behaves well under mutations, so that a maximal green sequence for $Q_{\bf T}$ yields a maximal chain of support $\tau$-tilting modules in $B_{\bf T}$.  

Now to prove the lemma, it suffices to show that an indecomposable $\tau$-rigid module $M$ over an algebra $B_{\bf T}$ does not appear twice along a maximal chain of support $\tau$-tilting modules
$$(B_{\bf T})_{B_{\bf T}}=T_1, T_2,\dots, T_q=0.$$  
If $M$ belongs to $\text{add}\, T_r$ for some $r$, and there is a negative mutation of $T_r$ at $M$ resulting in the new support $\tau$-tilting module $T_{r+1}$, then by definition of mutation it follows that $M$ does not belong to $\text{Fac}\, T_{r+1}$.  Since in a maximal green sequence we have that $\text{Fac}\, T_i \subset \text{Fac}\, T_{i-1}$ for all $i$, we deduce that $M$ does not belong to $\text{add}\, T_i$ for any $i > r$.  Therefore, $M$ cannot appear again, so the corresponding arc of $({\bf S}, {\bf M})$ cannot appear more then once along a maximal green sequence. 
\end{proof}

Recall that we are interested in quivers $Q_{\textbf{T}}$ of Type IV, where each $Q^{(i)}$ is either empty or a single vertex.  The corresponding triangulation ${\textbf{T}}$ consists of a set of arcs $\mathcal{K}=\{a_i\mid i\in [k]\}$ that are attached to the puncture $p$, and a set of arcs $\mathcal{T}=\{b_i \mid Q^{(i)} \text{ is nonempty}\}$ that are not attached to $p$ and lie in a triangle that contains two boundary segments.  Here we consider the subscripts $i,j$ in the notation $a_i, b_j$ modulo $k$.  Moreover, we assume that all arcs in $\mathcal{K}$ are tagged plain.

Thus, $\abs{\mathcal{K}}=k$ and let $\abs{\mathcal{T}}=t$.  Hence, ${\textbf{T}} = \mathcal{K}\sqcup \mathcal{T}$ and $\abs{{\textbf{T}}} = k+t$.  Let 

$$\mathcal{M}=\{b_i \in \mathcal{T} \mid \varrho^2(b_i )\in \mathcal{T}\}$$  

\noindent and $\abs{\mathcal{M}}=m$.

\begin{definition}
Let $\mathcal{C}_{\mu_I}({\textbf{T}})$ be a collection of arcs such that 
\begin{itemize} 
\item $\mathcal{C}_{\mu_I}({\textbf{T}})\subset \mathcal{B}_{\mu_I}({\textbf{T}})\setminus \varrho({\textbf{T}})$ 
\item the arcs of $\mathcal{C}_{\mu_I}({\textbf{T}})$ are not attached to $p$
\item $f(\gamma)\in \mathcal{C}_{\mu_I}({\textbf{T}})$ whenever $\gamma\in \mathcal{D}_{\mu_I}({\textbf{T}})$ is attached to $p$ and tagged plain.     
\end{itemize}
\end{definition}

\begin{lemma}\label{01}
 $\abs{\mathcal{C}_{\mu_I}(\textup{\textbf{T}})}\geq k-2$.  
\end{lemma}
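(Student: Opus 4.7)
The strategy is to produce at least $k-2$ distinct elements of $\mathcal{C}_{\mu_I}(\textbf{T})$ coming from the arcs of $\mathcal{K}$. The starting observation is that each arc $a_i \in \mathcal{K}$ is plain at $p$ and is not in $\varrho(\textbf{T})$, since $\varrho$ changes the tag at $p$ while $a_i$ is plain. Thus, provided $a_i$ is not the very first arc flipped by $\mu_I$ (in which case it would be absent from every $\mathcal{B}^r_{\mu_I}(\textbf{T})$ with $r \geq 1$ by Lemma~\ref{arcs}), we have $a_i \in \mathcal{D}_{\mu_I}(\textbf{T})$, and condition (3) in the definition of $\mathcal{C}_{\mu_I}(\textbf{T})$ forces $f(a_i) \in \mathcal{C}_{\mu_I}(\textbf{T})$ as soon as $f(a_i)$ satisfies conditions (1) and (2).

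Next I verify conditions (1) and (2) for $f(a_i)$. At the moment $a_i$ is flipped, both triangles of the current triangulation sharing $a_i$ have $p$ and $M_i$ as vertices, and their respective third vertices are distinct boundary marked points $M^L_i, M^R_i$; hence the ideal quadrilateral of $a_i$ has vertices $p, M^L_i, M_i, M^R_i$, and $f(a_i)$ is the diagonal $M^L_i M^R_i$, which is not attached to $p$ (condition (2)). For condition (1), the containment $f(a_i) \in \mathcal{B}_{\mu_I}(\textbf{T})$ reduces to $f(a_i) \notin \textbf{T}$, which follows from Lemma~\ref{arcs}: were $f(a_i)$ already an arc of $\textbf{T}$, it would have to persist in the triangulation until $a_i$ is flipped (as, once flipped, it could not reappear), producing a duplicate after the flip. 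A similar application of Lemma~\ref{arcs} shows the arcs $f(a_1),\ldots,f(a_k)$ are pairwise distinct: if $f(a_i)=f(a_j)=\delta$ with $a_i$ flipped before $a_j$, the arc $\delta$ would have to disappear between the two events and then reappear, contradicting the lemma.

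This produces $k$ pairwise distinct candidates for $\mathcal{C}_{\mu_I}(\textbf{T})$, all satisfying condition (2) and the $f(a_i)\notin\textbf{T}$ half of condition (1); the ones that actually lie in $\mathcal{C}_{\mu_I}(\textbf{T})$ are those for which moreover $a_i$ is not the first flipped arc and $f(a_i) \notin \varrho(\textbf{T})$. The main obstacle of the proof is to show that the total number of exclusions is at most $2$. If $f(a_i) \in \varrho(\textbf{T})$ then, since $f(a_i)$ is not attached to $p$ while every arc of $\varrho(\textbf{T})$ attached to $p$ is notched, we must have $f(a_i) = \varrho(b_{j(i)})$ for some $b_{j(i)} \in \mathcal{T}$. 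Equating the endpoints $\{M^L_i, M^R_i\}$ with the $\rho$-shifts of the endpoints of $b_{j(i)}$ pins $j(i)$ down uniquely from the boundary position of $M_i$ and forces specific prior flips of neighboring $b$-arcs in $\mu_I$. I expect the key bound to follow from a combinatorial case analysis on the ordering of flips in $\mu_I$, exploiting the cyclic arrangement of the $a_i$'s around the puncture and the rotational nature of $\varrho$ to show that these ``bad'' configurations cannot be simultaneously realized for more than two indices. Combining this with the preceding steps yields $|\mathcal{C}_{\mu_I}(\textbf{T})| \geq k-2$.
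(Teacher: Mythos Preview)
Your outline has two genuine gaps, one of which is a wrong claim and the other a wrong expectation.

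First, the assertion that when $a_i$ is flipped the two adjacent triangles have \emph{distinct} third vertices on the boundary, so that $f(a_i)$ is never attached to $p$, is false. If at the moment of the flip there remain only two plain arcs at $p$, then $a_i$ sits inside a once-punctured digon and $f(a_i)$ is a \emph{notched} arc at $p$, violating condition~(2). The paper handles this by first locating the last two plain arcs at $p$ along $\mu_I$, called $\alpha$ and $\alpha'$ (the pair present just before the first notched arc appears), and restricting attention to $\mathcal{K}'=\mathcal{K}\setminus\{\alpha,\alpha'\}$. For $a_i\in\mathcal{K}'$ the arcs $\alpha,\alpha'$ are still present at the time of the flip, so there are at least three plain arcs at $p$ and $f(a_i)$ is genuinely not attached to $p$. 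Your ``first arc flipped'' exclusion is a separate, minor technicality and does not cover this.

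Second, and more seriously, your expectation that at most two indices are excluded---in particular that only boundedly many $i$ satisfy $f(a_i)\in\varrho(\textbf{T})$---is unfounded, and the paper does not prove any such bound. There is no obstruction to having $f(a_i)=\varrho(b_{i-1})$ for many $i$ simultaneously. The paper's key idea is instead a \emph{redirection}: whenever $f(a_i)\in\varrho(\textbf{T})$ one shows that the specific plain arc $\varrho^{-1}(\bar a_i)$ must be present in the triangulation at that moment (see Figure~\ref{fig:ha}), that it lies in $\mathcal{D}_{\mu_I}(\textbf{T})$ but not in $\textbf{T}$, and that it crosses no arc of $\varrho(\mathcal{T})$; hence its flip $f(\varrho^{-1}(\bar a_i))$ is not attached to $p$ and not in $\varrho(\textbf{T})$, so it lands in $\mathcal{C}_{\mu_I}(\textbf{T})$. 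This defines an injective map $H$ on $\mathcal{K}'$ into $\mathcal{C}_{\mu_I}(\textbf{T})$, undefined only when $\varrho^{-1}(\bar a_i)\in\{\alpha,\alpha'\}$. The final count then observes that each such undefined case forces the corresponding $\alpha$ or $\alpha'$ to lie outside $\mathcal{K}$ (it crosses $b_{i-1}\in\textbf{T}$), which enlarges $\mathcal{K}'$ by one and exactly compensates. It is this replacement mechanism, not a bound on bad indices, that you are missing.
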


\begin{proof}
Observe that for any maximal green sequence $\mu_I$ for $Q_{\textbf{T}}$  there exists an arc $\alpha \in \mathcal{D}_{\mu_I}({\textbf{T}})$ such that $f(\alpha)$ is the first tagged arc to appear along the mutation sequence $\mu_I ({\textbf{T}})$.  That is, $f(\alpha)\in\mathcal{B}_{\mu_I}^s({\textbf{T}})\setminus \mathcal{B}^{s-1}_{\mu_I}({\textbf{T}})$ and the set $\bigcup _{i=1}^{s-1}\mathcal{B}^i_{\mu_I}({\textbf{T}})$ does not contain any tagged arcs.  Moreover, there are precisely two plain tagged arcs attached to the puncture in the triangulation $\mathcal{B}^{s-1}_{\mu_I}({\textbf{T}})$.  One of these arcs is $\alpha$, and call the other arc $\alpha'$.  This implies that all arcs in $\mathcal{K'}=\mathcal{K}\setminus\{\alpha, \alpha'\}$ need to be flipped prior to a flip in $\alpha$. 

Given $a_i\in\mathcal{K'}$ we make the following definition.  

\[ H(a_i) = \begin{cases} 
      f(a_i) & f(a_i)\not\in\varrho({\textbf{T}}) \\
      f(\varrho^{-1}(\bar{a_i})) & f(a_i)\in\varrho({\textbf{T}}) \;\text{and}\; \varrho^{-1}(\bar{a_i})\not\in \{\alpha,\alpha'\}\\
      NA & \text{otherwise} 
   \end{cases}\]

We claim that when defined $H(a_i)\in \mathcal{C}_{\mu_I}({\textbf{T}})$.  First, observe that by construction whenever $a_i\in \mathcal{K}'$ then $f(a_i)$ is not attached to the puncture.  Suppose $a_i \in \mathcal{K'}$ such that $f(a_i)\not\in \varrho({\textbf{T}})$. By definition $a_i$ is tagged plain and attached to $p$ while $f(a_i)$ is not attached to $p$, which implies that $f(a_i)\in \mathcal{C}_{\mu_I}({\textbf{T}})$.

Now, suppose that $a_i\in \mathcal{K'}$ such that $f(a_i)\in\varrho({\textbf{T}})$ and $\varrho^{-1}(\bar{a_i})\not\in \{\alpha,\alpha'\}$ and we want to show that $f(\varrho^{-1}(\bar{a}_i))\in\mathcal{C}_{\mu_I}({\textbf{T}})$.  Since $f(a_i)$ is not attached to the puncture and $f(a_i)\in\varrho({\textbf{T}})$ we conclude that $f(a_i)\in \varrho(\mathcal{T})$.  Hence, $a_i$ intersects some arc in $\varrho(\mathcal{T})$. However, the particular structure of the triangulation ${\textbf{T}}$ implies that there exists a unique arc $b_{i-1}\in \mathcal{T}$ such that $a_i$ intersects $\varrho(b_{i-1})$, see Figure \ref{fig:ha}.  Therefore, we have that $f(a_i)=\varrho(b_{i-1})$, meaning $a_i$ is flipped to $\varrho(b_{i-1})$.  This implies, that there exists a triangulation $\mathcal{B}^r_{\mu_I}({\textbf{T}})$ where $r<s-1$ such that $\{a_i, \varrho(\bar{a}_i), \varrho^{-1}(\bar{a}_i)\}\subset \mathcal{B}^r_{\mu_I}({\textbf{T}})$.  

\begin{figure}[htb]
  \centering
  {\begingroup%
  \makeatletter%
  \def\svgwidth{150pt}
  \providecommand\color[2][]{%
    \errmessage{(Inkscape) Color is used for the text in Inkscape, but the package 'color.sty' is not loaded}%
    \renewcommand\color[2][]{}%
  }%
  \providecommand\transparent[1]{%
    \errmessage{(Inkscape) Transparency is used (non-zero) for the text in Inkscape, but the package 'transparent.sty' is not loaded}%
    \renewcommand\transparent[1]{}%
  }%
  \providecommand\rotatebox[2]{#2}%
  \ifx\svgwidth\undefined%
    \setlength{\unitlength}{178.7390378bp}%
    \ifx\svgscale\undefined%
      \relax%
    \else%
      \setlength{\unitlength}{\unitlength * \real{\svgscale}}%
    \fi%
  \else%
    \setlength{\unitlength}{\svgwidth}%
  \fi%
  \global\let\svgwidth\undefined%
  \global\let\svgscale\undefined%
  \makeatother%
  \begin{picture}(1,0.90036846)%
    \put(0,0){\includegraphics[width=\unitlength]{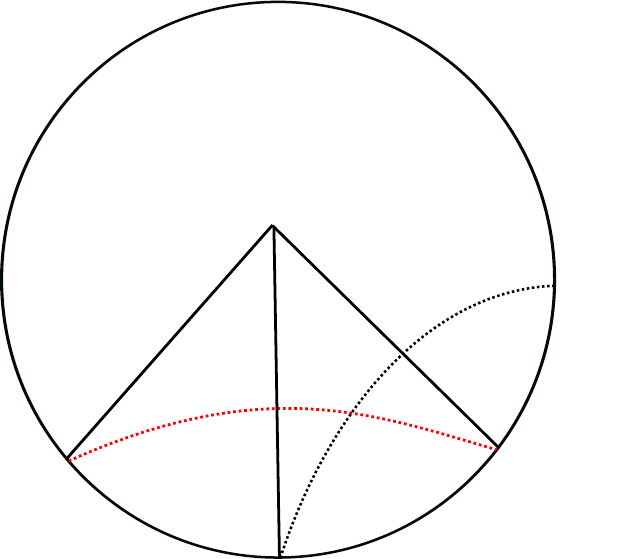}}%
    \put(0.10788463,0.46382257){\color[rgb]{0,0,0}\makebox(0,0)[lt]{\begin{minipage}{0.39610821\unitlength}\raggedright $\varrho(\bar{a}_i)$\end{minipage}}}%
    \put(0.35181567,0.3452139){\color[rgb]{0,0,0}\makebox(0,0)[lt]{\begin{minipage}{0.39387031\unitlength}\raggedright $a_i$\end{minipage}}}%
    \put(0.50623074,0.55333855){\color[rgb]{0,0,0}\makebox(0,0)[lt]{\begin{minipage}{0.4386283\unitlength}\raggedright $\varrho^{-1}(\bar{a}_i)$\end{minipage}}}%
    \put(0.74344808,0.38773399){\color[rgb]{0,0,0}\makebox(0,0)[lt]{\begin{minipage}{0.67360774\unitlength}\raggedright $b_{i-1}$\end{minipage}}}%
    \put(0.20187641,0.19975043){\color[rgb]{0,0,0}\makebox(0,0)[lt]{\begin{minipage}{0.51247898\unitlength}\raggedright ${\color{red}\varrho(b_{i-1})}$\end{minipage}}}%
  \end{picture}%
\endgroup%
}
  \caption{Local configuration around $a_i$ when $f(a_i)\in \varrho(T)$.}
   \label{fig:ha}
\end{figure}

Next we show that if ${\varrho}^{-1}(\bar{a}_{i}) \not \in \{\alpha, \alpha'\}$ then $f({\varrho}^{-1}(\bar{a}_{i})) \in \mathcal{C}_{\mu_I}({\textbf{T}})$.  Note that $\varrho^{-1}(\bar{a}_{i})$ is tagged plain, attached to $p$, and does not belong to ${\textbf{T}}$ as it intersects $b_{i-1}$.  Moreover, since ${\varrho}^{-1}(\bar{a}_{i}) \not \in \{\alpha, \alpha'\}$ then $f({\varrho}^{-1}(\bar{a}_{i}))$ cannot be a notched arc attached to the puncture.  Hence, $f({\varrho}^{-1}(\bar{a}_{i}))$ is not attached to the puncture.   Finally, since $\varrho^{-1}(\bar{a}_i)$ does not intersect any arc in $\varrho(\mathcal{T})$, it follows that its flip $f({\varrho}^{-1}(\bar{a}_{i}))$ does not belong to $\varrho({\textbf{T}})$.  This implies that $f({\varrho}^{-1}(\bar{a}_{i}))\in \mathcal{C}{\mu_I}({\textbf{T}})$.  It also shows the claim that $H(a_i)\in\mathcal{C}_{\mu_I}({\textbf{T}})$ if $H(a_i)$ is defined.  Moreover, it is easy to see that $H(a_i)=H(a_j)$ if and only if $a_i=a_j$.  

Let 

\[\mathcal{H}=\{H(a_i)\mid a_i\in \mathcal{K'} \;\text{and}\; H(a_i) \; \text{is defined}\}\]

and by above we know that $\mathcal{H}\subset\mathcal{C}_{\mu_I}({\textbf{T}})$.  Since, $\abs{\mathcal{K'}}\geq k-2$ and $H(a_i)$ is defined for every $a_i\in\mathcal{K'}$ with at most two exceptions, we have $\abs{\mathcal{H}}\geq k-4$.

We claim that $\abs{\mathcal{H}}\geq k-2$ which in turn implies that $\abs{\mathcal{C}_{\mu_I}({\textbf{T}})}\geq k-2$ and completes the proof of the lemma.  First, we observe that $H(a_i)$ is not defined only if $f(a_i)\in\varrho({\textbf{T}})$ and $\varrho^{-1}(\bar{a_i})\in\{\alpha,\alpha'\}$.   From Figure~\ref{fig:ha} we can see that $\varrho^{-1}(\bar{a_i})$ intersects $b_{i-1}\in {\textbf{T}}$, so in particular $\varrho^{-1}(\bar{a_i})$ does not belong to $\mathcal{K}$.  

If $H$ is defined for all arcs of $\mathcal{K}'$, then the claim holds.  Suppose $a_j\in\mathcal{K'}$ is the unique arc such that $H(a_j)$ is not defined.  Then $\abs{\mathcal{K'}}\geq k-1$ since $\varrho^{-1}(\bar{a}_j)$ equals $\alpha$ or $\alpha'$ and it does not belong to $\mathcal{K}$.  By assumption, $H$ is defined for every arc in $\mathcal{K'}\setminus \{a_j\}$ and $\abs{\mathcal{K'}\setminus\{a_j\}}\geq k-2$, since $\abs{\mathcal{K'}}\geq k-1$.  This shows the claim in the case when $H$ is not defined for exactly one arc in $\mathcal{K}'$.  A similar argument shows that the claim holds whenever there are precisely two arcs $a_j, a_{j'}$ such that $H(a_j), H(a_{j'})$ are not defined.  Therefore, we obtain $\abs{\mathcal{H}}\geq k-2$.  
\end{proof}

\begin{remark}\label{rm 04}
It follows from the definition of $\mathcal{H}$ that for every $\sigma\in \{\alpha,\alpha'\}$ such that $\sigma\not\in {\textbf{T}}$, the cardinality of $\mathcal{H}$ increases by 1 unless $\sigma = \varrho^{-1}({\bar{a}_i})$ and $f(a_i)=\varrho(b_{i-1})$ for some $i$.   
\end{remark}

\begin{definition}
Define $\tilde{\mathcal{C}}_{\mu_I}({\textbf{T}})$ to be any subset of $\mathcal{H}$ of cardinality $k-2$.  In particular, we have the following inclusions.
$$\tilde{\mathcal{C}}_{\mu_I}({\textbf{T}})\subset \mathcal{H}\subset \mathcal{C}_{\mu_I}({\textbf{T}})$$
\end{definition}

\begin{remark}\label{rm 02}
By Remark \ref{rm 04}, the set $\mathcal{H}\setminus \tilde{\mathcal{C}}_{\mu_I}({\textbf{T}})$ is nonempty if there exists $\sigma\in \{\alpha,\alpha'\}$ such that $\sigma\not\in {\textbf{T}}$ unless $\sigma = \varrho^{-1}({\bar{a}_i})$ and $f(a_i)=\varrho(b_{i-1})$ for some $i$.   
\end{remark}

We also define 
$$\tilde{\mathcal{B}}_{\mu_I}({\textbf{T}}) = \mathcal{B}_{\mu_I}({\textbf{T}})\setminus (\tilde{\mathcal{C}}_{\mu_I}({\textbf{T}})\cup \varrho({\textbf{T}})).$$ 
With this notation we can write $\mathcal{B}_{\mu_I}({\textbf{T}})$ as a disjoint union of three sets. 
$$\mathcal{B}_{\mu_I}({\textbf{T}}) = \tilde{\mathcal{B}}_{\mu_I}({\textbf{T}}) \sqcup \tilde{\mathcal{C}}_{\mu_I}({\textbf{T}}) \sqcup \varrho({\textbf{T}})$$
To find a lower bound on the cardinality of $\mathcal{B}_{\mu_I}({\textbf{T}})$ we need to find a lower bound on the cardinality of $\tilde{\mathcal{B}}_{\mu_I}({\textbf{T}})$, since we have that $\abs{\tilde{\mathcal{C}}_{\mu_I}({\textbf{T}})}=k-2$ and $\abs{\varrho({\textbf{T}})}=k+t$. We achieve this by first constructing a map $\mathcal{M} \to \tilde{\mathcal{B}}_{\mu_I}({\textbf{T}})$ in Lemma \ref{02}, and then by modifying it in Lemma \ref{03} to make it injective.  

\begin{lemma}\label{02}
For every $b_i \in \mathcal{M}$ there exists an arc $\beta_i \in \tilde{\mathcal{B}}_{\mu_I}(\textup{\textbf{T}})$. 
\end{lemma}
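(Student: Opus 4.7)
The plan is to set $\beta_i = f(b_i)$, the arc produced when $b_i$ is flipped in the sequence $\mu_I$ (which occurs exactly once by Lemma \ref{arcs}). Since $b_i \in \mathcal{T}$ sits in a triangle whose other two sides are the boundary segments $M_iM_i'$ and $M_i'M_{i+1}$, this outer triangle is preserved throughout $\mu_I$. Hence $f(b_i)$ is the other diagonal of the quadrilateral obtained by joining this fixed outer triangle to whatever inner triangle is adjacent to $b_i$ at the moment of flip; in particular, $f(b_i)$ has one endpoint at the outer marked point $M_i'$, and the other endpoint at the third vertex $V$ of that inner triangle.

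Next I verify the three conditions defining membership in $\tilde{\mathcal{B}}_{\mu_I}(\textbf{T}) = \mathcal{B}_{\mu_I}(\textbf{T}) \setminus (\tilde{\mathcal{C}}_{\mu_I}(\textbf{T}) \cup \varrho(\textbf{T}))$. Containment in $\mathcal{B}_{\mu_I}(\textbf{T})$ is immediate from the construction. For $f(b_i) \notin \varrho(\textbf{T})$, I use that the outer vertex $M_i'$ carries no incident arcs in $\textbf{T}$ and appears as an endpoint of exactly one arc in $\varrho(\textbf{T})$, namely $\varrho(b_i)$ running from $M_i'$ to $M_{i+1}'$; hence $f(b_i) \in \varrho(\textbf{T})$ forces $V = M_{i+1}'$. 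For $f(b_i) \notin \tilde{\mathcal{C}}_{\mu_I}(\textbf{T})$, I use that $\tilde{\mathcal{C}}_{\mu_I}(\textbf{T}) \subset \mathcal{H}$ is built from arcs $H(a_j)$ arising from flipping plain arcs attached to the puncture $p$, while $b_i$ is an ear arc not attached to $p$; when an equality $f(b_i) = H(a_j)$ nevertheless threatens, the flexibility in choosing $\tilde{\mathcal{C}}_{\mu_I}(\textbf{T})$ as any subset of $\mathcal{H}$ of cardinality $k-2$ allows excluding $f(b_i)$ from the selection (provided $|\mathcal{H}|$ strictly exceeds $k-2$ in such cases, which can be arranged via Remarks \ref{rm 04} and \ref{rm 02}).

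The main obstacle is the case $V = M_{i+1}'$, where $f(b_i) = \varrho(b_i)$ and the naive choice fails condition (b). Here I exploit the hypothesis $b_i \in \mathcal{M}$: a direct computation of endpoints shows $\varrho^2(b_i) = b_{i+1}$, so the next ear $b_{i+1} \in \mathcal{T}$ exists. For $V = M_{i+1}'$ to be the third vertex of the inner triangle of $b_i$ at the moment of flip, an arc from $M_{i+1}$ to $M_{i+1}'$ must already be present in the triangulation; since no such arc belongs to $\textbf{T}$ and $M_{i+1}'$ acquires no incident arcs except via flips involving $b_{i+1}$, this arc must have arisen from a prior flip in the region of $b_{i+1}$. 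I then take $\beta_i$ to be one of these prior intermediate arcs with endpoint $M_{i+1}'$ (typically $f(b_{i+1})$ or the flip of an adjacent $a$-arc), and verify separately that it lies in $\mathcal{B}_{\mu_I}(\textbf{T})$, differs from $\varrho(b_{i+1}) \in \varrho(\textbf{T})$ (the only other arc of $\varrho(\textbf{T})$ incident to $M_{i+1}'$), and is not of the form $H(a_j) \in \tilde{\mathcal{C}}_{\mu_I}(\textbf{T})$. The central bookkeeping challenge, and where the main work lies, is handling this case consistently across all the possible configurations of the preceding mutation history.
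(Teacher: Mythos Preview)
Your approach --- defining $\beta_i = f(b_i)$ and exploiting the fact that the outer ear triangle persists until $b_i$ is flipped --- is a natural idea, and it is genuinely different from the paper's construction, which associates to each $b_{i-1}\in\mathcal{M}$ the puncture arc $a_i$ and analyses $f(a_i)$ through a six-case flowchart. Unfortunately, there is a concrete error that breaks your argument.

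\textbf{The missing arcs of $\varrho(\textbf{T})$ at $M_i'$.}  You assert that $M_i'$ ``appears as an endpoint of exactly one arc in $\varrho(\textbf{T})$, namely $\varrho(b_i)$''. This is false. Since $\rho^{-1}(M_i')=M_i$ (in your notation, the boundary endpoint of $a_i$), every arc of $\textbf{T}$ incident to $M_i$ contributes an arc of $\varrho(\textbf{T})$ incident to $M_i'$. There are always at least two such arcs: $a_i$ and $b_i$; and if $b_{i-1}\in\mathcal{T}$ there is a third. Hence $\varrho(a_i)$ (the notched arc from $M_i'$ to $p$) and possibly $\varrho(b_{i-1})$ (the short arc from $M_{i-1}'$ to $M_i'$) are also candidates for $f(b_i)$. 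For instance, if at the moment of the flip the inner triangle of $b_i$ has the boundary segment $M_iM_{i-1}'$ as one side (which is possible once $a_i$ and $b_{i-1}$ have been flipped away), then $V=M_{i-1}'$ and $f(b_i)=\varrho(b_{i-1})$; similarly, if the inner triangle consists of two notched arcs to $p$, then $f(b_i)=\varrho(a_i)$. Your case analysis therefore misses at least two scenarios in which $f(b_i)\in\varrho(\textbf{T})$, and the single exceptional case you do treat is handled only in outline (``I then take $\beta_i$ to be one of these prior intermediate arcs \ldots\ and verify separately'') without an actual verification.

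\textbf{Minor points and contrast with the paper.}  Your worry about $f(b_i)\in\tilde{\mathcal{C}}_{\mu_I}(\textbf{T})$ is unfounded: Lemma~\ref{arcs} forces $f$ to be injective on $\mathcal{D}_{\mu_I}(\textbf{T})$, and since every element of $\mathcal{H}$ is $f(\gamma)$ for some plain $\gamma$ attached to $p$ while $b_i$ is not attached to $p$, one has $f(b_i)\notin\mathcal{H}\supset\tilde{\mathcal{C}}_{\mu_I}(\textbf{T})$ automatically --- no appeal to ``flexibility'' in choosing $\tilde{\mathcal{C}}$ is needed. The paper avoids the endpoint-at-$M_i'$ problem entirely by working instead with $f(a_i)$ and the triangle $\Delta_i$ it creates; the resulting case analysis (Cases~1--6) is longer but systematically rules out membership in $\varrho(\textbf{T})$ for the chosen $\beta_i$. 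If you want to salvage the $f(b_i)$ route you would need to handle all three possible coincidences $f(b_i)\in\{\varrho(a_i),\varrho(b_i),\varrho(b_{i-1})\}$, and in each case produce a replacement arc together with a proof that it lies in $\tilde{\mathcal{B}}_{\mu_I}(\textbf{T})$.
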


\begin{proof}
For every $b_{i-1}\in\mathcal{M}$ there exists $a_i\in\mathcal{K}$ such that $a_i$ intersects $\varrho(b_{i-1})$.  Based on the properties of $f(a_i)$ we proceed to define the corresponding $\beta_i$.  The flowchart in Figure~\ref{flowchart} illustrates various cases explained in detail below.   

First we observe that $f(a_i)$ is either a notched arc attached to the puncture or it is not attached to the puncture.   Let $r$ be the unique integer such that $f(a_i)\in\mathcal{B}^r_{\mu_I}({\textbf{T}})\setminus \mathcal{B}^{r-1}_{\mu_I}({\textbf{T}})$.  

Suppose $f(a_i)$ is not attached to the puncture.  Then there exists a triangle $\Delta_i$ consisting of $f(a_i), \delta_i$ and $\gamma_i$ as shown in Figure~\ref{fig:betas}(a).  If neither $\delta_i$ nor $\gamma_i$ are boundary segments, then $\Delta_i\subset \mathcal{B}^{r}_{\mu_I}({\textbf{T}})$, and we define ${\beta}_i=\phi(\Delta_i)$.  In other words $\beta_i=f(\sigma)$, where $\sigma$ is the next arc of $\Delta_i$ that is flipped along $\mu_{i_q}\dots \mu_{i_{r+1}}$.  Observe that $\beta_i$ is indeed an element of $\tilde{\mathcal{B}}_{\mu_I}({\textbf{T}})$, because by construction $\beta_i \not\in \tilde{\mathcal{C}}_{\mu_I}({\textbf{T}})$, and since neither $\delta_i$ nor $\gamma_i$ are boundary segments, it is easy to see that $\beta_i\not\in\varrho({\textbf{T}})$.    

Now we consider what happens if $\delta_i$ or $\gamma_i$ are boundary segments.  Since $f(a_i)$ is not attached to the puncture, there exists another triangle in $\mathcal{B}^r_{\mu_I}({\textbf{T}})$ consisting of arcs $c_i, d_i$ and $f(a_i)$ as shown in Figure~\ref{fig:betas}(a).  Suppose $\delta_i$ is a boundary segment then $c_i$ does not belong to the original triangulation ${\textbf{T}}$.  Indeed, by assumption $b_{i-1}\in\mathcal{M}$ which implies that $b_i=\varrho^2(b_{i-1})\in\mathcal{T}$, and in this case $c_i$ and $b_i$ cross.  Moreover, since $c_i$ is tagged plain it does not belong to $\varrho({\textbf{T}})$ and since it is attached to the puncture it does not belong to $\tilde{\mathcal{C}}_{\mu_I}({\textbf{T}})$.  Hence, $c_i\in\tilde{\mathcal{B}}_{\mu_I}({\textbf{T}})$ and we define $\beta_i = c_i$.   On the other hand, if $\delta_i$ is not a boundary segment but $\gamma_i$ is a boundary segment then $d_i$ does not belong to the original triangulation ${\textbf{T}}$, because it crosses $b_{i-1}$.  So by similar reasoning as above $d_i\not \in \varrho({\textbf{T}})\cup \tilde{\mathcal{C}}_{\mu_I}({\textbf{T}})$, and we define $\beta_i = d_i$. 

Suppose $f(a_i)$ is a notched arc attached to the puncture.  If $f(a_i)\not\in\varrho({\textbf{T}})$, then let $\beta_i=f(a_i)$.  Again, in this case $f(a_i)\not\in\tilde{\mathcal{C}}_{\mu_I}({\textbf{T}})$, because $\tilde{\mathcal{C}}_{\mu_I}({\textbf{T}})\subset\mathcal{C}_{\mu_I}({\textbf{T}})$ does not contain arcs attached to the puncture, so $f(a_i)\in \tilde{\mathcal{B}}_{\mu_I}({\textbf{T}})$.  

If $f(a_i)\in\varrho({\textbf{T}})$, then in $\mathcal{B}^r_{\mu_I}({\textbf{T}})$  there are either two notched arcs attached to the puncture, or there is one notched arc and one tagged plain arc attached to the puncture, see Figure~\ref{fig:betas} parts (b) and (c) respectively.  In the first case, since $\epsilon_i$ intersects $\varrho(b_{i-1})$ it follows that $\epsilon_i \not \in \varrho({\textbf{T}})$.  Also, since $\epsilon_i$ is attached to the puncture it does not belong to $\tilde{\mathcal{C}}_{\mu_I}({\textbf{T}})$, so it must belong to $\tilde{\mathcal{B}}_{\mu_I}({\textbf{T}})$.  Therefore, we let $\beta_i = \epsilon_i$.  In the second case, we let $\beta_i=\phi(\{\delta_i, \gamma_i, \epsilon_i\})$.  If $\beta_i=f(\epsilon_i)$ then we are in the situation of case one, see Figure~\ref{fig:betas}(b) and replace $\epsilon_i$ by $f(\epsilon_i)$.  Hence, by the same reasoning as in case one $f(\epsilon_i)\in\tilde{\mathcal{B}}_{\mu_I}({\textbf{T}})$.  If $\beta_i$ equals $f(\gamma_i)$ or $f(\delta_i)$, then these arcs are not attached to the puncture, so they cannot belong to $\mathcal{C}_{\mu_I}({\textbf{T}})$.  It is also easy to see that they cannot lie in $\varrho({\textbf{T}})$ as they enclose the puncture.  Hence, in this situation $\beta_i \in \tilde{\mathcal{B}}_{\mu_I}({\textbf{T}})$. 

Thus, in all scenarios we provided a definition of $\beta_i$ and showed that it is well defined, which finishes the proof of the lemma. 
\end{proof}

\begin{figure}[htb]

 (a) \hspace{5cm} (b) \hspace{5cm}(c) 
 \vspace{.3cm}

  \centering
  {\begingroup%
  \makeatletter%
  \def\svgwidth{500pt}
  \providecommand\color[2][]{%
    \errmessage{(Inkscape) Color is used for the text in Inkscape, but the package 'color.sty' is not loaded}%
    \renewcommand\color[2][]{}%
  }%
  \providecommand\transparent[1]{%
    \errmessage{(Inkscape) Transparency is used (non-zero) for the text in Inkscape, but the package 'transparent.sty' is not loaded}%
    \renewcommand\transparent[1]{}%
  }%
  \providecommand\rotatebox[2]{#2}%
  \ifx\svgwidth\undefined%
    \setlength{\unitlength}{555.8818378bp}%
    \ifx\svgscale\undefined%
      \relax%
    \else%
      \setlength{\unitlength}{\unitlength * \real{\svgscale}}%
    \fi%
  \else%
    \setlength{\unitlength}{\svgwidth}%
  \fi%
  \global\let\svgwidth\undefined%
  \global\let\svgscale\undefined%
  \makeatother%
  \begin{picture}(1,0.32904676)%
    \put(0,0){\includegraphics[width=\unitlength]{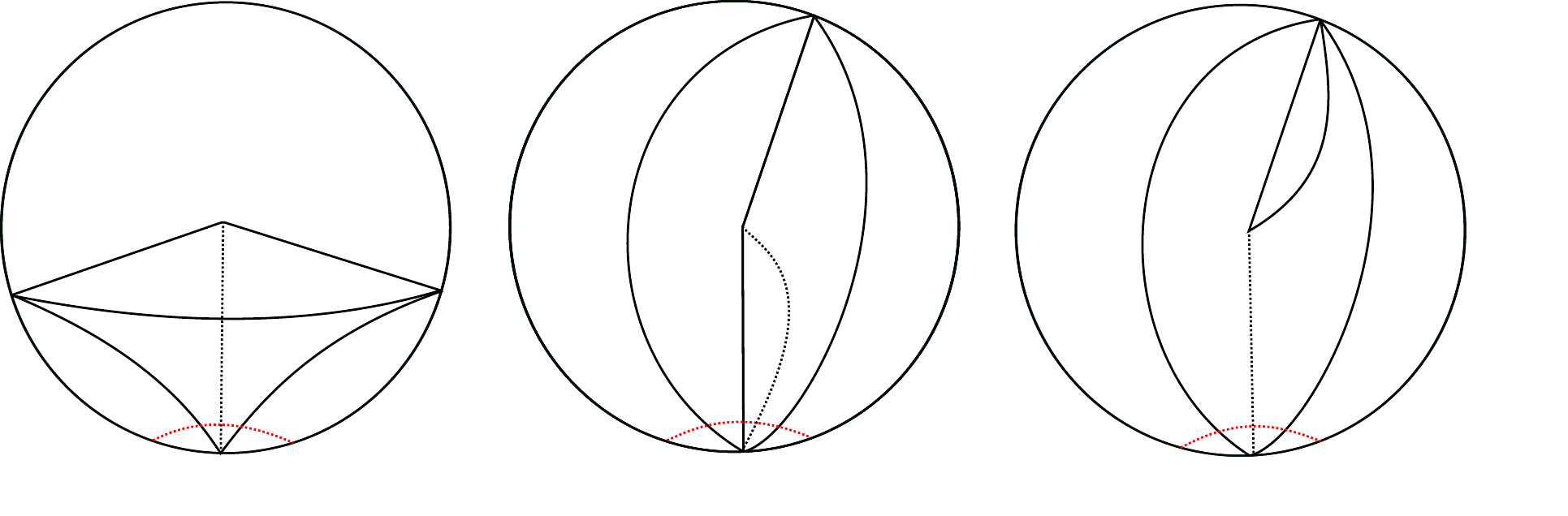}}%
    \put(0.06830382,0.18658133){\color[rgb]{0,0,0}\makebox(0,0)[lt]{\begin{minipage}{0.07504164\unitlength}\raggedright $c_i$\end{minipage}}}%
    \put(0.18549214,0.19994491){\color[rgb]{0,0,0}\makebox(0,0)[lt]{\begin{minipage}{0.07195774\unitlength}\raggedright $d_i$\end{minipage}}}%
    \put(0.17670016,0.15544122){\color[rgb]{0,0,0}\makebox(0,0)[lt]{\begin{minipage}{0.06784587\unitlength}\raggedright $f(a_i)$\end{minipage}}}%
    \put(0.19988367,0.09430748){\color[rgb]{0,0,0}\makebox(0,0)[lt]{\begin{minipage}{0.09354506\unitlength}\raggedright $\gamma_i$\end{minipage}}}%
    \put(0.05082837,0.1040512){\color[rgb]{0,0,0}\makebox(0,0)[lt]{\begin{minipage}{0.09354506\unitlength}\raggedright $\delta_i$\end{minipage}}}%
    \put(0.10736659,0.03444212){\color[rgb]{0,0,0}\makebox(0,0)[lt]{\begin{minipage}{0.23848852\unitlength}\raggedright ${\color{red} \varrho(b_{i-1})}$\\ \end{minipage}}}%
    \put(0.37669412,0.21536442){\color[rgb]{0,0,0}\makebox(0,0)[lt]{\begin{minipage}{0.08120943\unitlength}\raggedright $\delta_i$\end{minipage}}}%
    \put(0.70721816,0.17863252){\color[rgb]{0,0,0}\makebox(0,0)[lt]{\begin{minipage}{0.08120943\unitlength}\raggedright $\delta_i$\end{minipage}}}%
    \put(0.52456241,0.21271052){\color[rgb]{0,0,0}\makebox(0,0)[lt]{\begin{minipage}{0.09354506\unitlength}\raggedright $\gamma_i$\end{minipage}}}%
    \put(0.83157203,0.13355701){\color[rgb]{0,0,0}\makebox(0,0)[lt]{\begin{minipage}{0.09354506\unitlength}\raggedright $\gamma_i$\end{minipage}}}%
    \put(0.44600975,0.27733027){\color[rgb]{0,0,0}\makebox(0,0)[lt]{\begin{minipage}{0.06651099\unitlength}\raggedright $f(a_i)$\end{minipage}}}%
    \put(0.77082366,0.28041418){\color[rgb]{0,0,0}\makebox(0,0)[lt]{\begin{minipage}{0.06784587\unitlength}\raggedright $f(a_i)$\end{minipage}}}%
    \put(0.44925402,0.15784102){\color[rgb]{0,0,0}\makebox(0,0)[lt]{\begin{minipage}{0.09765694\unitlength}\raggedright $\epsilon_i$\end{minipage}}}%
    \put(0.82703934,0.20086268){\color[rgb]{0,0,0}\makebox(0,0)[lt]{\begin{minipage}{0.09765694\unitlength}\raggedright $\epsilon_i$\end{minipage}}}%
    \put(0.4411733,0.03678583){\color[rgb]{0,0,0}\makebox(0,0)[lt]{\begin{minipage}{0.23848852\unitlength}\raggedright ${\color{red} \varrho(b_{i-1})}$\\ \end{minipage}}}%
    \put(0.76189922,0.0347299){\color[rgb]{0,0,0}\makebox(0,0)[lt]{\begin{minipage}{0.23848852\unitlength}\raggedright ${\color{red} \varrho(b_{i-1})}$\\ \end{minipage}}}%
    \put(0.12018701,0.15961917){\color[rgb]{0,0,0}\makebox(0,0)[lt]{\begin{minipage}{0.05015493\unitlength}\raggedright $a_i$\end{minipage}}}%
    \put(0.49162416,0.17851812){\color[rgb]{0,0,0}\makebox(0,0)[lt]{\begin{minipage}{0.07196143\unitlength}\raggedright $a_i$\end{minipage}}}%
    \put(0.77510856,0.09710722){\color[rgb]{0,0,0}\makebox(0,0)[lt]{\begin{minipage}{0.05015495\unitlength}\raggedright $a_i$\end{minipage}}}%
    \put(0.47157908,0.20328049){\color[rgb]{0,0,0}\rotatebox{-22.39495158}{\makebox(0,0)[lt]{\begin{minipage}{0.05506745\unitlength}\raggedright {\Small $\bowtie$}\end{minipage}}}}%
    \put(0.46575191,0.1710291){\color[rgb]{0,0,0}\makebox(0,0)[lt]{\begin{minipage}{0.05052259\unitlength}\raggedright {\Small $\bowtie$}\end{minipage}}}%
    \put(0.79787746,0.20942883){\color[rgb]{0,0,0}\rotatebox{-22.39495158}{\makebox(0,0)[lt]{\begin{minipage}{0.05506745\unitlength}\raggedright {\Small $\bowtie$}\end{minipage}}}}%
  \end{picture}%
\endgroup%
}
  \caption{Each of these figures shows the quadrilateral containing $f(a_i)$ in $\mathcal{B}^r_{\mu_I}({\textbf{T}})$.}
   \label{fig:betas}
\end{figure}

\begin{remark}\label{rm 03}
Observe that no $\beta_i$ as defined in the lemma above belongs to $\mathcal{H}$.  Recall that $\mathcal{H}$ consists of arcs that are not attached to the puncture, but come from flips of plain tagged arcs attached to the puncture.  Moreover, the arcs of $\mathcal{H}$ appear in the surface prior to the appearance of any notched arcs in the surface. 
\end{remark}

\begin{example}\label{ex01}
Consider a triangulation ${\textbf{T}}$ depicted in Figure \ref{ex1} on the left.  By definition  

\[\mathcal{K}=\{a_1, a_2, a_3, a_4, a_5\} \hspace{1cm} \mathcal{T}=\{b_1, b_2, b_3\} \hspace{1cm} \mathcal{M} = \{b_1, b_2\}\]
The corresponding quiver $Q_{\textbf{T}}$ admits a maximal green sequence 
\[\mu_I=\mu_{b_2}\mu_{a_5}\mu_{a_3}\mu_{a_2}\mu_{b_1}\mu_{b_2}\mu_{b_3}\mu_{a_1}\mu_{a_3}\mu_{a_2}\mu_{a_5}\mu_{a_4}\mu_{b_3}\]
and we see that 
\[\mathcal{B}_{\mu_I}({\textbf{T}}) = \{f(b_3), f(a_4), f(a_5), f(a_2), f(a_3), f(a_1), f^2(b_3), f(b_2), f(b_1), f^2(a_2), f^2(a_3), f^2(a_5), f^2(b_2)\}.\]
Moreover, 
\[\varrho({\textbf{T}}) = \{ f(a_1), f^2(a_2), f^2(a_3), f(a_4), f^2(a_5), f(b_1), f^2(b_2), f^2(b_3) \}\]
and it is depicted in Figure \ref{ex1} on the right.  We begin by computing $\tilde{\mathcal{C}}_{\mu_I}({\textbf{T}})$.  The triangulation $\mathcal{B}_{\mu_I}^7({\textbf{T}})$ is shown in Figure \ref{ex1} in the middle, and we see that $\{\alpha, \alpha'\} = \{a_1, b_3\}$.  Hence, $\mathcal{K'}=\mathcal{K}\setminus\{a_1\}$.   Observe that $f(a_4)=\varrho(b_3)$ and $b_3 = \alpha'$, which means $H(a_4)$ is not defined.  However, the function $H$ is defined for the rest of the arcs in $\mathcal{K'}$, and 
\[ \tilde{\mathcal{C}}_{\mu_I}({\textbf{T}}) = \mathcal{H} = \{f(a_2), f(a_3), f(a_5)\}.\]
Let us compute $\beta_1$ and $\beta_2$.  To do so we consider the corresponding $f(a_2)$ and $f(a_3)$.  Observe that $\Delta_2 = \{f(a_2), b_1, b_2\}$ and $b_2$ will be flipped first among the rest of the arcs of $\Delta_2$ along $\mu_I$.  This implies that $\beta_1=\phi(\Delta_2)=f(b_2)$.  On the other hand $\Delta_3$ consists of only two arcs, as $\delta_3$ is a boundary segment.  Therefore, by definition $\beta_2 = f(b_3)$, and in this example $\tilde{\mathcal{B}}_{\mu_I}({\textbf{T}}) = \{f(b_2), f(b_3)\}$.  
\begin{figure}[htb]
  \centering
  {\begingroup%
  \makeatletter%
  \def\svgwidth{500pt}
  \providecommand\color[2][]{%
    \errmessage{(Inkscape) Color is used for the text in Inkscape, but the package 'color.sty' is not loaded}%
    \renewcommand\color[2][]{}%
  }%
  \providecommand\transparent[1]{%
    \errmessage{(Inkscape) Transparency is used (non-zero) for the text in Inkscape, but the package 'transparent.sty' is not loaded}%
    \renewcommand\transparent[1]{}%
  }%
  \providecommand\rotatebox[2]{#2}%
  \ifx\svgwidth\undefined%
    \setlength{\unitlength}{555.8818378bp}%
    \ifx\svgscale\undefined%
      \relax%
    \else%
      \setlength{\unitlength}{\unitlength * \real{\svgscale}}%
    \fi%
  \else%
    \setlength{\unitlength}{\svgwidth}%
  \fi%
  \global\let\svgwidth\undefined%
  \global\let\svgscale\undefined%
  \makeatother%
  \begin{picture}(1,0.29260748)%
    \put(0,0){\includegraphics[width=\unitlength]{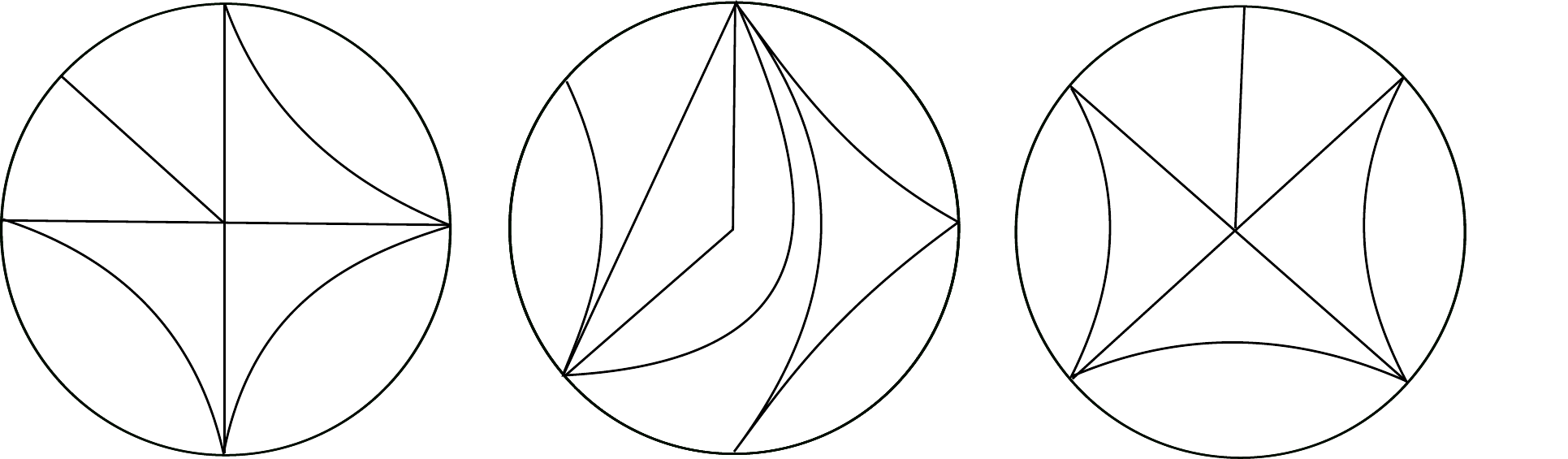}}%
    \put(0.45990041,0.16242355){\color[rgb]{0,0,0}\makebox(0,0)[lt]{\begin{minipage}{0.07504241\unitlength}\raggedright {\Small $\bowtie$}\end{minipage}}}%
    \put(0.11946012,0.22692057){\color[rgb]{0,0,0}\makebox(0,0)[lt]{\begin{minipage}{0.08649907\unitlength}\raggedright $\scriptstyle{a_1}$\end{minipage}}}%
    \put(0.17252258,0.16513552){\color[rgb]{0,0,0}\makebox(0,0)[lt]{\begin{minipage}{0.11339374\unitlength}\raggedright $\scriptstyle{a_2}$\end{minipage}}}%
    \put(0.14926232,0.11643436){\color[rgb]{0,0,0}\makebox(0,0)[lt]{\begin{minipage}{0.09449478\unitlength}\raggedright $\scriptstyle{a_3}$\end{minipage}}}%
    \put(0.07730091,0.14550968){\color[rgb]{0,0,0}\makebox(0,0)[lt]{\begin{minipage}{0.09522168\unitlength}\raggedright $\scriptstyle{a_4}$\end{minipage}}}%
    \put(0.05040624,0.20584097){\color[rgb]{0,0,0}\makebox(0,0)[lt]{\begin{minipage}{0.08431841\unitlength}\raggedright $\scriptstyle{a_5}$\end{minipage}}}%
    \put(0.19578282,0.23564316){\color[rgb]{0,0,0}\makebox(0,0)[lt]{\begin{minipage}{0.10321739\unitlength}\raggedright $\scriptstyle{b_1}$\end{minipage}}}%
    \put(0.20377855,0.09971606){\color[rgb]{0,0,0}\makebox(0,0)[lt]{\begin{minipage}{0.12647763\unitlength}\raggedright $\scriptstyle{b_2}$\end{minipage}}}%
    \put(0.0649439,0.09898917){\color[rgb]{0,0,0}\makebox(0,0)[lt]{\begin{minipage}{0.09304101\unitlength}\raggedright $\scriptstyle{b_3}$\end{minipage}}}%
    \put(0.33534435,0.17676564){\color[rgb]{0,0,0}\makebox(0,0)[lt]{\begin{minipage}{0.08504532\unitlength}\raggedright $\scriptstyle{f(a_4)}$\end{minipage}}}%
    \put(0.40367134,0.25817654){\color[rgb]{0,0,0}\makebox(0,0)[lt]{\begin{minipage}{0.08795283\unitlength}\raggedright $\scriptstyle{f(a_5)}$\end{minipage}}}%
    \put(0.4400155,0.12370319){\color[rgb]{0,0,0}\makebox(0,0)[lt]{\begin{minipage}{0.09522164\unitlength}\raggedright $\scriptstyle{f(a_1)}$\end{minipage}}}%
    \put(0.43565419,0.0640988){\color[rgb]{0,0,0}\makebox(0,0)[lt]{\begin{minipage}{0.09449478\unitlength}\raggedright $\scriptstyle{f(a_3)}$\end{minipage}}}%
    \put(0.52724144,0.15713981){\color[rgb]{0,0,0}\makebox(0,0)[lt]{\begin{minipage}{0.08504527\unitlength}\raggedright $\scriptstyle{f(a_2)}$\end{minipage}}}%
    \put(0.53523717,0.22619368){\color[rgb]{0,0,0}\makebox(0,0)[lt]{\begin{minipage}{0.09522164\unitlength}\raggedright $\scriptstyle{b_1}$\end{minipage}}}%
    \put(0.54686727,0.08735905){\color[rgb]{0,0,0}\makebox(0,0)[lt]{\begin{minipage}{0.0981292\unitlength}\raggedright $\scriptstyle{b_2}$\end{minipage}}}%
    \put(0.42257029,0.18621513){\color[rgb]{0,0,0}\makebox(0,0)[lt]{\begin{minipage}{0.10685184\unitlength}\raggedright $\scriptstyle{f^2(b_3)}$\end{minipage}}}%
    \put(0.79400749,0.24800017){\color[rgb]{0,0,0}\makebox(0,0)[lt]{\begin{minipage}{0.11048622\unitlength}\raggedright $\scriptstyle{f^2(b_3)}$\end{minipage}}}%
    \put(0.87614522,0.1535054){\color[rgb]{0,0,0}\makebox(0,0)[lt]{\begin{minipage}{0.1003099\unitlength}\raggedright $\scriptstyle{f(b_1)}$\end{minipage}}}%
    \put(0.76420526,0.06700632){\color[rgb]{0,0,0}\makebox(0,0)[lt]{\begin{minipage}{0.10903249\unitlength}\raggedright $\scriptstyle{f^2(a_2)}$\end{minipage}}}%
    \put(0.71986544,0.13678708){\color[rgb]{0,0,0}\makebox(0,0)[lt]{\begin{minipage}{0.11048622\unitlength}\raggedright $\scriptstyle{f(a_1)}$\end{minipage}}}%
    \put(0.77801606,0.11279995){\color[rgb]{0,0,0}\makebox(0,0)[lt]{\begin{minipage}{0.10321737\unitlength}\raggedright $\scriptstyle{f^2(a_3)}$\end{minipage}}}%
    \put(0.81436022,0.17240435){\color[rgb]{0,0,0}\makebox(0,0)[lt]{\begin{minipage}{0.1068518\unitlength}\raggedright $\scriptstyle{f^2(b_2)}$\end{minipage}}}%
    \put(0.72204608,0.22619368){\color[rgb]{0,0,0}\makebox(0,0)[lt]{\begin{minipage}{0.08867979\unitlength}\raggedright $\scriptstyle{f^2(a_5)}$\end{minipage}}}%
    \put(0.66171478,0.1615011){\color[rgb]{0,0,0}\makebox(0,0)[lt]{\begin{minipage}{0.08286471\unitlength}\raggedright $\scriptstyle{f(a_4)}$\end{minipage}}}%
    \put(0.45685657,0.14906771){\color[rgb]{0,0,0}\rotatebox{-38.89624967}{\makebox(0,0)[lt]{\begin{minipage}{0.08563479\unitlength}\raggedright {\Small $\bowtie$}\end{minipage}}}}%
    \put(0.78045342,0.17560279){\color[rgb]{0,0,0}\makebox(0,0)[lt]{\begin{minipage}{0.07504241\unitlength}\raggedright {\Small $\bowtie$}\end{minipage}}}%
    \put(0.77550303,0.14671107){\color[rgb]{0,0,0}\rotatebox{-45.20409643}{\makebox(0,0)[lt]{\begin{minipage}{0.08364755\unitlength}\raggedright {\Small $\bowtie$}\end{minipage}}}}%
    \put(0.79949902,0.16682881){\color[rgb]{0,0,0}\rotatebox{-44.8796434}{\makebox(0,0)[lt]{\begin{minipage}{0.08377474\unitlength}\raggedright {\Small $\bowtie$}\end{minipage}}}}%
    \put(0.76469128,0.15504405){\color[rgb]{0,0,0}\rotatebox{48.03769616}{\makebox(0,0)[lt]{\begin{minipage}{0.08242328\unitlength}\raggedright {\Small $\bowtie$}\end{minipage}}}}%
    \put(0.7896024,0.13323403){\color[rgb]{0,0,0}\rotatebox{48.03769616}{\makebox(0,0)[lt]{\begin{minipage}{0.08242328\unitlength}\raggedright {\Small $\bowtie$}\end{minipage}}}}%
  \end{picture}%
\endgroup%
}
  \caption{Triangulations $\textbf{T}$, $\mathcal{B}_{\mu_I}^7({\textbf{T}})$, and $\mathcal{B}_{\mu_I}^{13} ({\textbf{T}})$ for a maximal green sequence $\mu_I$ given in Example~\ref{ex01}.}
   \label{ex1}
\end{figure}

\end{example}

\begin{lemma}\label{03}
For every $b_i \in \mathcal{M}$ there exists a unique arc $\tilde{\beta}_i \in \tilde{\mathcal{B}}_{\mu_I}(\textup{\textbf{T}})$. 
\end{lemma}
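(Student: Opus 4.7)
The plan is to refine the assignment $b_{i-1}\mapsto\beta_i$ from Lemma~\ref{02} into an injection $b_{i-1}\mapsto\tilde{\beta}_i$ into $\tilde{\mathcal{B}}_{\mu_I}(\textbf{T})$, which will automatically give $|\mathcal{M}|$ distinct arcs. For each $b_{i-1}\in\mathcal{M}$ I would start from the $\beta_i$ produced in Lemma~\ref{02} and only alter the choice when a clash $\beta_i=\beta_j$ with $i\ne j$ arises; the set of possible $\tilde{\beta}_i$ will be a subset of $\tilde{\mathcal{B}}_{\mu_I}(\textbf{T})$ chosen from the same local quadrilateral around $f(a_i)$ that was used to define $\beta_i$.

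The key geometric observation is that $\beta_i$ always comes from a quadrilateral of $\mathcal{B}^{r_i}_{\mu_I}(\textbf{T})$ incident to $f(a_i)$, and this quadrilateral sits in a small neighborhood of $\varrho(b_{i-1})$. Because the arcs $\{\varrho(b_{i-1}):b_{i-1}\in\mathcal{M}\}$ are pairwise disjoint, two such quadrilaterals can share a side only when the indices are cyclically adjacent, which drastically limits how clashes can occur. I would then walk through the four cases (i)--(iv) in the proof of Lemma~\ref{02}. In each subcase where a collision $\beta_i=\beta_j$ is possible, the quadrilateral contains a second arc of $\mathcal{B}^{r_i}_{\mu_I}(\textbf{T})\setminus(\varrho(\textbf{T})\cup\tilde{\mathcal{C}}_{\mu_I}(\textbf{T}))$ which I can promote to $\tilde{\beta}_i$: for example, replacing $\beta_i=\phi(\Delta_i)$ by the flip of the \emph{next} arc of $\Delta_i$ mutated along $\mu_{i_q}\cdots\mu_{i_{r_i+1}}$, or swapping the roles of $c_i$ and $d_i$ in Figure~\ref{fig:betas}(a), or moving between $\epsilon_i$ and $\phi(\{\delta_i,\gamma_i,\epsilon_i\})$ in Figure~\ref{fig:betas}(b)--(c). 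Remark~\ref{rm 03} guarantees that these alternative choices are never in $\mathcal{H}$, hence they stay in $\tilde{\mathcal{B}}_{\mu_I}(\textbf{T})$.

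The main obstacle will be controlling the propagation of swaps: changing $\beta_i$ to some $\tilde{\beta}_i$ may create a fresh collision with index $i\pm 1$. To handle this I would argue that the hypothesis $b_{i-1}\in\mathcal{M}$, that is $b_i=\varrho^2(b_{i-1})\in\mathcal{T}$, forces an additional non-boundary side in the quadrilateral around $f(a_i)$ in $\mathcal{B}^{r_i}_{\mu_I}(\textbf{T})$, supplying the extra candidate needed at every step; combined with the cyclic-adjacency restriction on clashes, this allows the chain of swaps to be resolved in one pass around the disk. Once this termination is established, the resulting map $b_{i-1}\mapsto\tilde{\beta}_i$ is the desired injection and the lemma follows.
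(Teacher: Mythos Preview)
Your overall strategy---start from the $\beta_i$ of Lemma~\ref{02} and resolve collisions by locally replacing $\beta_i$ with another arc in the same quadrilateral---is exactly what the paper does. But several of the concrete claims you make to execute it are not right, and they hide where the real work is.

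First, the assertion that collisions occur only for cyclically adjacent indices is false. In the paper's Case~ii(a), two triangles $\Delta_i,\Delta_j$ with $\phi(\Delta_i)=\phi(\Delta_j)$ share a side, but resolving this creates a polygon $\Delta^2$ whose boundary may then clash with a \emph{third} triangle $\Delta_{i_3}$, and so on, producing a chain $\Delta_{i_1},\ldots,\Delta_{i_q}$ of arbitrary length. The fix is not to pick ``the next arc flipped'' in a single triangle but to define $\tilde\beta_{i_j}=\phi(\partial\Delta^j)$ iteratively as these polygons grow. This is not a one-pass cyclic argument.

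Second, in the paper's Case~i (your proposed swap of $c_i$ and $d_i$), when $f(d_j)$ happens to be a notched arc at the puncture there is no further arc in the quadrilateral to use. The paper instead shows, via Remark~\ref{rm 02}, that in this situation $\mathcal{H}\setminus\tilde{\mathcal{C}}_{\mu_I}(\textbf{T})$ is nonempty and pulls $\tilde\beta_j$ from there. Your local replacement scheme does not account for this, and Remark~\ref{rm 03} alone does not supply the needed arc.

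Third, the reference to ``cases (i)--(iv) in the proof of Lemma~\ref{02}'' is off: that proof is organized by the six cases of the flowchart in Figure~\ref{flowchart}. A substantial part of the paper's argument is an exhaustive pairwise check that Cases~1--6 can overlap \emph{only} in the three scenarios it isolates (Case~i, Case~ii(a), Case~ii(b)); this uses the specific geometry of each case (for instance, that arcs from Cases~4 and~5 cannot coincide because of how they sit relative to $\varrho(b_{i-1})$). Your sketch does not address this exhaustive verification, and without it one cannot conclude that the modified assignment is injective.

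Finally, the termination you need is not ``one pass around the disk.'' In Case~ii(b) the paper iterates through a sequence of sets $\{f(\delta_{i_t}),\delta_{i_{t+1}},\epsilon_i\}$ and associated polygons $\Delta^{j_u}$, terminating because the endpoints of $\delta_{i_u}$ are forced to converge, shrinking the polygons to nothing. That geometric shrinking, not cyclic adjacency, is what stops the propagation.
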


\begin{proof}
The proof of Lemma \ref{02} provides a construction of $\beta_i$ for every $b_{i-1}\in\mathcal{M}$.  Recall that the definition of $\beta_i$ can also be seen in the flowchart in Figure~\ref{flowchart}.  However, it can happen that $\beta_i=\beta_j$ for some pair of distinct arcs $b_{i-1}$ and $b_{j-1}$.  There are three possible cases, which we discuss in detail below.

\medskip
\underline{Case i.} If $\beta_j = d_j = c_i =\beta_i$, where $\beta_i$ arrises in Case 2 and $\beta_j$ arrises in Case 3 in the flowchart in Figure~\ref{flowchart}. Then from the flowchart, we can see that $\gamma_j, \delta_i$ are boundary segments while $\delta_j$ is not a boundary segment.  Moreover, $d_j, \gamma_j, \delta_i$ share the same endpoint, which means $j=i+1$,  see Figure~\ref{fig:cases}(a).  Let $\tilde{\beta}_i = d_j$, and it remains to define $\tilde{\beta}_j$.  

First, we claim that if $f(d_j)$ is a notched arc attached to the puncture then $\mathcal{H}\setminus\tilde{\mathcal{C}}_{\mu_I}({\textbf{T}})$ is nonempty.  Let $\alpha, \alpha'$ be defined as in the proof of Lemma \ref{01}, that is there exists a triangulation $\mathcal{B}^s_{\mu_I}({\textbf{T}})$ containing plain tagged arcs $\alpha, \alpha'$ that are attached to the puncture such that $f(\alpha)$ is the first notched arc to appear along $\mu_I$.   First we show that $d_j\in\{\alpha, \alpha'\}$.  

By Figure~\ref{fig:cases}(a) there exists some $r'$ such that $\{d_j, a_j, a_i\}\subset \mathcal{B}^{r'}_{\mu_I}({\textbf{T}})$, because $a_i, a_j \in {\textbf{T}}$ and by construction neither of them can be flipped prior to the appearance of $d_j$ in the surface.  Since $a_i, a_j \in {\textbf{T}}$ these arcs must also belong to every triangulation $\mathcal{B}^{g}_{\mu_I}({\textbf{T}})$ for $g\leq r'$ by Lemma~\ref{arcs}.  This shows, that there cannot be notched arcs in $\bigcup_{i=1}^{i=r'}\mathcal{B}^i_{\mu_I}({\textbf{T}})$.  Therefore, if $f(d_j)$ equals $f(\alpha)$, which by definition is the first notched arc to appear along $\mu_I$, then $d_j=\alpha$.  On the other hand, if $d_j$ is flipped after $f(\alpha)$ appears in the surface then there exists a triangulation $\mathcal{B}^{r''}_{\mu_I}({\textbf{T}})$ where $r''>r'$ containing $f(\alpha)$ and $d_j$.  Therefore, $d_j=\alpha'$ because prior to a flip in $\alpha$, there are exactly two plain tagged arcs $d_j$, and $\alpha$.  This shows that $d_j \in \{\alpha, \alpha'\}$. 

Next, recall that $d_j\not\in {\textbf{T}}$, so by Remark~\ref{rm 02} the set $\mathcal{H}\setminus\tilde{\mathcal{C}}_{\mu_I}({\textbf{T}})$ is nonempty unless $d_j = \varrho^{-1}({\bar{a}_h})$ and $f(a_h)=\varrho(b_{h-1})$ for some $h$.  Suppose $d_j = \varrho^{-1}({\bar{a}_h})$ and $f(a_h)=\varrho(b_{h-1})$ for some $h$.  By construction we know that $d_j=\varrho^{-1}(\bar{a}_j)$, see Figure~\ref{fig:cases}(a). Hence we see that $h=j$ and $f(a_j)=\varrho(b_{j-1})$.  However, $f(a_j)=\varrho(b_{j-1})$ implies that $\delta_j$ is a boundary segment, but this contradicts the assumption we made in the beginning of Case i.  Therefore, we see that $\mathcal{H}\setminus\tilde{\mathcal{C}}_{\mu_I}({\textbf{T}})$ is nonempty and the claim holds.  

Recall that we let $\tilde{\beta}_i = d_j$, and we want to define $\tilde{\beta}_j$. If $f(d_j)$ is a notched arc attached to the puncture, by the claim above the set $\mathcal{H}\setminus\tilde{\mathcal{C}}_{\mu_I}({\textbf{T}})$ is nonempty, so we define $\tilde{\beta}_j$ as some arc in $\mathcal{H}\setminus\tilde{\mathcal{C}}_{\mu_I}({\textbf{T}})$. Otherwise,  we let $\tilde{\beta}_j=f(d_j)$ provided that $f(d_j)$.  To see that this definition makes sense, we need to justify that whenever $f(d_j)$ is not attached to the puncture, then $f(d_j)\in \tilde{\mathcal{B}}_{\mu_I}({\textbf{T}})$, or equivalently $f(d_j)\not\in\varrho({\textbf{T}})\cup\tilde{\mathcal{C}}_{\mu_I}({\textbf{T}})$.

Note that because $f(d_j)$ is not attached to the puncture and it does not intersect any arc in $\varrho(\mathcal{T})$ we conclude that $f(d_j)\not\in\varrho({\textbf{T}})$.  If $f(d_j)\in\tilde{\mathcal{C}}_{\mu_I}({\textbf{T}})$ then since $d_j\not\in {\textbf{T}}$, by definition of $H$, we must have that $f(a_j)=\varrho(b_{j-1})$.  As before, we conclude that the corresponding $\delta_j$ is a boundary segment, which is a contradiction to the assumption we made in the beginning of Case i.  Hence, $f(d_j)\not\in \tilde{\mathcal{C}}_{\mu_I}({\textbf{T}})$.   This shows that if $f(d_j)$ is not attached to the puncture, then it belongs to $\tilde{\mathcal{B}}_{\mu_I}({\textbf{T}})$.  

Now it remains to justify that $\tilde{\beta}_j$ defined in this way does not coincide with some other $\beta_l$ as defined in Lemma~\ref{02} and the corresponding flowchart in Figure~\ref{flowchart}.  Suppose $\tilde{\beta}_j=f(d_j)$, meaning $f(d_j)$ is not attached to the puncture.  Then $\tilde{\beta}_j$ cannot equal some $\beta_l$ arising in Cases 2-5, because $f(d_j)$ is not attached to the puncture.  Also, we note that whenever $f(\gamma)=f(\gamma')$ then $\gamma=\gamma'$, hence it is easy to see that if $\beta_l$ arises in Case 1 or Case 6 and is not attached to the puncture, then it does not equal a flip of a plain tagged arc.  This implies that $\tilde{\beta}_j=f(d_j)$ does not equal some other $\beta_l$ arising in any of the cases.  If on the other hand $\tilde{\beta}_j\in\mathcal{H}\setminus\tilde{\mathcal{C}}$ then by Remark~\ref{03} it follows that $\tilde{\beta}_j$ does not equal some $\beta_l$ for any $l$.  

This completes Case i, and let us sum up that here we define $\tilde{\beta_i} = \beta_i$ and $\tilde{\beta_j}=f(d_j)$ if $f(d_j)$ is not attached to the puncture, and otherwise $\tilde{\beta}_j$ is some arc in $\mathcal{H}\setminus\tilde{\mathcal{C}}_{\mu_I}({\textbf{T}})$.

\medskip

\underline{Case ii(a).}  Suppose $\beta_i=\phi(\Delta_i)=\phi(\Delta_j)=\beta_j$,  where both $\beta_i$ and $\beta_j$ arise in Case 1 in the flowchart in Figure~\ref{flowchart}.  Then from the flowchart we can see that $\Delta_i, \Delta_j$ are two triangles that do not contain boundary segments.  For example see Figure~\ref{fig:cases}(b).  To solve this case we consider a more general situation where there are a number of such triangles that share arcs with other triangles.  

Consider a triangle $\Delta_{i_1}$ corresponding to an arc $a_{i_1}\in\mathcal{K}$ such that no arc of $\Delta_{i_1}$ is a boundary segment.  By definition we have $\beta_{i_1}=\phi(\Delta_{i_1})$ and $\beta_{i_1}\in\tilde{\mathcal{B}}_{\mu_I}({\textbf{T}})$.  If we also have that $\beta_{i_1}=\phi(\Delta_{i_2})=\beta_{i_2}$ for some different triangle $\Delta_{i_2}$ where again no arc of $\Delta_{i_2}$ is a boundary segment, then let $\Delta^1=\Delta_{i_1}$ and 

\[\Delta^2=\Delta^1\cup \Delta_{i_2}.\] 

Since $\phi(\Delta^1)=\phi(\Delta_{i_2})$, there exists some $r_2$ such that the boundary $\partial \Delta^2$ of $\Delta^2$ lies in $\mathcal{B}^{r_2}_{\mu_I}({\textbf{T}})$, where 
$$\partial \Delta^2 = \Delta^2\setminus\{\text{common arcs of } \Delta_{i_1},\Delta_{i_2}\}.$$
Observe that by construction $\phi(\partial \Delta^2)\not\in\tilde{\mathcal{C}}_{\mu_I}({\textbf{T}})$, as none of the arcs in $\partial\Delta^2$ are attached to the puncture.  Also, from Figure~\ref{fig:cases}(b) it is easy to see that if $\phi(\partial \Delta^2)$ is a notched arc attached to the puncture, then the corresponding plain tagged arc $\overline{\phi(\partial \Delta^2)}$ is an arc in $\mathcal{K}$.  This implies that $\phi(\partial \Delta^2)\not\in\varrho(\mathcal{K})$.  On the other hand, if $\phi(\partial \Delta^2)$ is not attached to the puncture then it cannot belong to $\varrho(\mathcal{T})$, again see Figure~\ref{fig:cases}(b).  Hence, we conclude that $\phi(\partial\Delta^2)\not\in\tilde{\mathcal{C}}_{\mu_I}({\textbf{T}})\cup\varrho({\textbf{T}})$, so $\phi(\partial\Delta^2)\in\tilde{\mathcal{B}}_{\mu_I}({\textbf{T}})$.   

If we also have that 

\[\phi(\partial \Delta^2) = \beta_{i_3}=\phi(\Delta_{i_3})\] 

\noindent for some new triangle $\Delta_{i_3}$ where no arc of $\Delta_{i_3}$ is a boundary segment, then let 

\[ \Delta^3=\Delta^2\cup \Delta_{i_3}.\]

\noindent Because $\phi(\partial \Delta^2) = \phi(\partial \Delta_{i_3})$ there exists some $r_3$ such that $\partial \Delta^3\subset \mathcal{B}^{r_3}_{\mu_I}({\textbf{T}})$, where
 
\[  \partial \Delta^3 = \Delta^3\setminus\{\text{pairwise common arcs of } \Delta_{i_1},\Delta_{i_2}, \Delta_{i_3}\}. \]

\noindent By the same argument as before, we conclude that $\phi(\partial\Delta^3)\in\tilde{\mathcal{B}}_{\mu_I}({\textbf{T}})$.  Continue in this way to obtain a sequence of distinct triangles 

\[ \Delta_{i_1}, \Delta_{i_2}, \Delta_{i_3}, \dots, \Delta_{i_{q-1}}, \Delta_{i_q}\]

\noindent and the corresponding sequence of distinct arcs in $\tilde{\mathcal{B}}_{\mu_I}({\textbf{T}})$

\[\phi(\partial\Delta^1), \phi(\partial\Delta^2), \phi(\partial\Delta^3), \dots, \phi(\partial\Delta^{q-1}), \phi(\partial\Delta^q)\]

\noindent  where $\phi(\partial\Delta^q)$ does not coincide with $\beta_{i_{q+1}}$ for some new triangle $\Delta_{i_{q+1}}$.  For an example of such a construction when $q=4$ see Figure~\ref{delta}.  Now, we define 

\[\tilde{\beta}_{i_j}=\phi(\partial\Delta^{i_j}) \;\text{for all}\; j\in[q].  \]

Next we justify what $\tilde{\beta}_{i_j}$ defined in Case ii(a) do not coincide with $\tilde{\beta}$'s defined in Case i or with other $\beta$'s defined in the flowchart coming from Cases 1-5.  Recall that in Case ii(a) we already considered what happens if $\tilde{\beta}_{i_j}$ equals some other $\beta_i$ coming from Case 1.  Note that $\tilde{\beta}_{i_j}$ defined in this way cannot be a plain tagged arc attached to the puncture, because it would equal some arc in $\mathcal{K}$, which would contradict Lemma~\ref{arcs}.  Hence, $\tilde{\beta}_{i_j}$ cannot equal some $\beta_l$ where $\beta_l$ arrises in Cases 2, 3.  By construction $\tilde{\beta}_{i_j}$ cannot equal some $\beta_l$ that arrises in Cases 4.  It also cannot equal $\beta_l$ in Case 5, because here the corresponding $\beta_i=\epsilon_i$ is a flip of some plain tagged arc attached to the puncture.  Finally, $\tilde{\beta}_{i_j}$ does not equal other $\tilde{\beta}_l$ constructed in Case i, as these arcs are either plain tagged arcs attached to the puncture or flips of such arcs.  Therefore, it remains to investigate the case when $\tilde{\beta}_{i_j}$ coincides with some $\beta_i$  arising in Case 6.  We consider this situation next.

\begin{figure}[htb]

  \hspace*{2cm}
    {\begingroup%
  \makeatletter%
   \def\svgwidth{500pt}
  \providecommand\color[2][]{%
    \errmessage{(Inkscape) Color is used for the text in Inkscape, but the package 'color.sty' is not loaded}%
    \renewcommand\color[2][]{}%
  }%
  \providecommand\transparent[1]{%
    \errmessage{(Inkscape) Transparency is used (non-zero) for the text in Inkscape, but the package 'transparent.sty' is not loaded}%
    \renewcommand\transparent[1]{}%
  }%
  \providecommand\rotatebox[2]{#2}%
  \ifx\svgwidth\undefined%
    \setlength{\unitlength}{546.8796923bp}%
    \ifx\svgscale\undefined%
      \relax%
    \else%
      \setlength{\unitlength}{\unitlength * \real{\svgscale}}%
    \fi%
  \else%
    \setlength{\unitlength}{\svgwidth}%
  \fi%
  \global\let\svgwidth\undefined%
  \global\let\svgscale\undefined%
  \makeatother%
  \begin{picture}(1,0.33044918)%
    \put(0,0){\includegraphics[width=\unitlength]{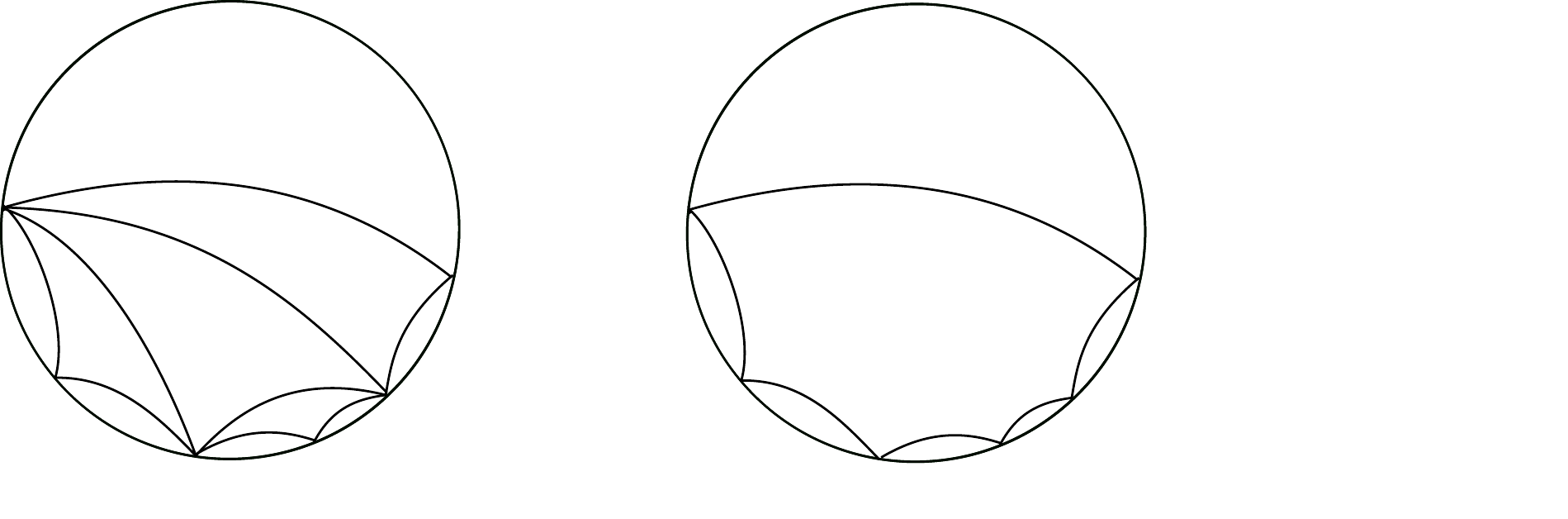}}%
    \put(0.55935718,0.14880389){\color[rgb]{0,0,0}\makebox(0,0)[lt]{\begin{minipage}{0.11156605\unitlength}\raggedright $\partial\Delta^4$\end{minipage}}}%
    \put(0.05915704,0.18205206){\color[rgb]{0,0,0}\makebox(0,0)[lt]{\begin{minipage}{0.14259768\unitlength}\raggedright ${\bf \Delta_{i_1}}$\end{minipage}}}%
    \put(0.0924052,0.20865059){\color[rgb]{0,0,0}\makebox(0,0)[lt]{\begin{minipage}{0.0953114\unitlength}\raggedright ${\bf \Delta_{i_4}}$\end{minipage}}}%
    \put(0.03034196,0.16358086){\color[rgb]{0,0,0}\makebox(0,0)[lt]{\begin{minipage}{0.0768402\unitlength}\raggedright ${\bf \Delta_{i_2}}$\end{minipage}}}%
    \put(0.16924539,0.07565793){\color[rgb]{0,0,0}\makebox(0,0)[lt]{\begin{minipage}{0.0938337\unitlength}\raggedright ${\bf \Delta_{i_3}}$\end{minipage}}}%
    \put(0.09462174,0.13254923){\color[rgb]{0,0,0}\makebox(0,0)[lt]{\begin{minipage}{0.13299265\unitlength}\raggedright $\scriptstyle{\phi(\partial\Delta^1)}$\end{minipage}}}%
    \put(0.14781881,0.10373417){\color[rgb]{0,0,0}\makebox(0,0)[lt]{\begin{minipage}{0.1625466\unitlength}\raggedright $\scriptstyle{\phi(\partial\Delta^2)}$\end{minipage}}}%
    \put(0.13525839,0.1827909){\color[rgb]{0,0,0}\makebox(0,0)[lt]{\begin{minipage}{0.25638028\unitlength}\raggedright $\scriptstyle{\phi(\partial\Delta^3)}$\end{minipage}}}%
    \put(0.22096476,0.15249813){\color[rgb]{0,0,0}\makebox(0,0)[lt]{\begin{minipage}{0.17584586\unitlength}\raggedright $\scriptstyle{\phi(\partial\Delta^4)}$\end{minipage}}}%
  \end{picture}%
\endgroup%
}
  \caption{Example of $\Delta^4$ on the left and the corresponding $\partial\Delta^4$ on the right.}
   \label{delta}
\end{figure}

\medskip
\underline{Case ii(b).}  If $\beta_i=\tilde{\beta}_{j_1}$, where $\beta_i$ arrises in Case 6, then 

$$\beta_i=\phi(\{\gamma_i, \delta_i, \epsilon_i\})=\phi(\partial \Delta^{j_1})$$

\noindent for some triangulated polygon $\Delta^{j_1}$.  Recall that by Case ii(a) the polygon is made up of triangles 

\[\Delta^{j_1} = \Delta_{i_1}\cup \Delta_{i_2}\cup \dots \cup \Delta_{i_j}\]

\noindent where no triangle contains a boundary segment.  Therefore, there exists a triangulation $\mathcal{B}^r_{\mu_I}({\textbf{T}})$ containing both $\partial\Delta^{j_1}$ and $\{\gamma_i, \delta_i, \epsilon_i\}$.  Observe that $\beta_i$ cannot equal $f(\epsilon_i)$, because in accordance with Case 6 the arc $\epsilon_i$ cannot lie on the boundary of some triangulated polygon, see Figure~\ref{fig:betas}(c).  Without loss of generality let $\beta_i=f(\delta_i)$, see Figure~\ref{fig:cases}(c).   The same argument can be made if $\beta_i=f(\gamma_i)$.  

Since we assume that $\delta_i\in\partial\Delta^{j_1}$, it follows from the structure of $\Delta^{j_1}$ that $\delta_i$ belongs to a triangle $\Delta_{l_1}$ for some $l_1\in \{i_1, \dots, i_j\}$.  Hence, $\delta_i \in \{f(a_{l_1}), \delta_{l_1}, \gamma_{l_1}\}$, but from the construction it is clear that $\delta_i=f(a_{l_1})$, see Figure~\ref{fig:cases}(c).  Also, in accordance with Case ii(a) we have $\tilde{\beta}_{j_1}=f(\delta_i)$, therefore it remains to define $\tilde{\beta}_i$.   

First, notice that by construction of $\Delta^{j_1}$ the set $\partial\Delta^{j_1}\setminus\{\delta_i\}$ consists of arcs of the form $\delta_{s_u}$ and $\gamma_{s_v}$ where the corresponding triangles $\Delta_{s_u}, \Delta_{s_v}\subset \Delta^{j_1}$.  Moreover, two arcs in $\partial\Delta^{j_1}\setminus\{\delta_i\}$ share an endpoint $P$ if and only if there exists some $a_{p}\in\mathcal{K}$ ending in the point $P$ and the corresponding $b_{p-1}\in\mathcal{T}$.  In particular we see that any arc having $P$ as an endpint does not belong to $\varrho({\textbf{T}})$ as it crosses $\varrho(b_{p-1})$.  

Next, we consider the set of arcs $\{f(\delta_i), \epsilon_i, \delta_{i_2}\}$ as shown in Figure~\ref{fig:cases}(c).  Observe that these arcs are in the analogous situation as the arcs in $\{\gamma_i, \delta_i, \epsilon_i\}$ were prior to a flip in $\delta_i$.  Next, we claim that $\phi(\{f(\delta_i), \epsilon_i, \delta_{i_2}\})\in\tilde{\mathcal{B}}_{\mu_I}({\textbf{T}})$.  Let $f(\sigma)=\phi(\{f(\delta_i), \epsilon_i, \delta_{i_2}\})$ and indeed it is easy to see that if $\sigma$ equals $\delta_{i_2}$ or $f(\delta_i)$ then $f(\sigma)\not\in\tilde{\mathcal{C}}_{\mu_I}({\textbf{T}})\cup\varrho({\textbf{T}})$.   If $\sigma=\epsilon_i$ then by definition of the set $\tilde{\mathcal{C}}_{\mu_I}({\textbf{T}})$ we have that $f(\epsilon_i)\not\in\tilde{\mathcal{C}}_{\mu_I}({\textbf{T}})$. But also $f(\epsilon_i)$ has the endpoint  $P$ in common with $\partial\Delta^{j_1}\setminus \{\delta_i\}$ located on the boundary of the disk.  By properties of $\partial\Delta^{j_1}$ discussed in the paragraph above we see that $f(\epsilon_i)\not\in\varrho({\textbf{T}})$.  This shows the claim that $\phi(\{f(\delta_i), \epsilon_i, \delta_{i_2}\})\in\tilde{\mathcal{B}}_{\mu_I}({\textbf{T}})$.  

In addition, we claim that $f(\sigma)$ for any $\sigma$ does not coincide with any other $\beta$ or $\tilde{\beta}$ coming from Case i, and Cases 2-6. We can see that $f(\sigma)$ cannot be a plain tagged arc attached to the puncture, so it cannot equal any $\beta_s$ arising in Case 2 or Case 3.  By construction $f(\sigma)$ cannot equal some $\beta_s=\epsilon_s$ arising in Case 5 because a notched arc $\epsilon_s$ would have to be present in the triangulation prior to a flip in $\delta_i$.  The same conclusion holds in Case 6, because we already mutated in $\delta_i$.  Observe that in Case 4, if $\beta_s=f(a_s)=f(\sigma)$, then we must have $\sigma=\epsilon_i=a_s$.  Recall that by assumption $f(a_i)\in\varrho({\textbf{T}})$, so in particular $f(a_i)$ does not cross any arc of $\varrho(\mathcal{T})$.  However, if $a_s\in\mathcal{K}$ such that $a_s$ crosses $\varrho(b_{s-1})$, then we would also have that $f(a_i)$ crosses $\varrho(b_{s-1})$, because $\epsilon_i=a_s$ and $f(a_i)$ share the same endpoint.  But this is a contradiction to $f(a_i)\in\varrho({\textbf{T}})$.  This shows that $f(\sigma)$ cannot equal some $\beta_s$ arising in Case 4.   Finally, $f(\sigma)$ does not equal some $\tilde{\beta}$ arising in Case i, because such arcs are never notched and come from flips of plain tagged arcs attached to the puncture.  

However, it can happen that there exists some $\tilde{\beta}_{j_2}$ such that $f(\sigma)=f(\delta_{i_2})=\tilde{\beta}_{j_2}=\phi(\partial\Delta^{j_2})$ for some triangulated polygon $\Delta^{j_2}$ as defined in Case ii(a).  If no such $\tilde{\beta}_{j_2}$ exists, then set $\tilde{\beta}_i=f(\sigma)$.  Otherwise, set $\tilde{\beta}_{j_2}=f(\delta_{i_2})$, but then again we need to define $\tilde{\beta}_i$.  

We can proceed in the same manner as before with the new set of arcs  $\{f(\delta_{i_2}), \delta_{i_2}, \epsilon_i\}$.  Continuing in this way, we obtain a sequence of sets 

\[\{\gamma_i, \delta_i, \epsilon_i\},\{f(\delta_i),\delta_{i_2}, \epsilon_i\},   \{f(\delta_{i_2}), \delta_{i_3}, \epsilon_i\}, \dots, \{f(\delta_{i_t}),\delta_{i_{t+1}}, \epsilon_i\}\]

\noindent and a sequence of triangulated polygons 

\[ \Delta^{j_1}, \Delta^{j_2}, \Delta^{j_3}, \dots, \Delta^{j_{t}}\]

\noindent such that $\delta_{i_u}\in\partial\Delta^{j_u}$ for all $u\in\{2, \dots, t-1\}$.  Also, we define $\tilde{\beta}_{j_u}=f(\delta_{i_u})$ for $u\in\{2, \dots, t-1\}$ and finally $\phi(\{f(\delta_{i_t}),\delta_{i_{t+1}}, \epsilon_i\}) \not= \tilde{\beta}_{j_t}$ for any $\tilde{\beta}_{j_t}=\phi(\partial\Delta^{j_t})$.  Indeed this holds, because the sizes of the triangulated polygons $\Delta^{j_u}$ must eventually decrease to zero as the endpoints of $\delta_{i_u}$ move closer together as $u$ increases.   Hence, we define $\tilde{\beta}_i = \phi(\{f(\delta_{i_t}),\delta_{i_{t+1}}, \epsilon_i\})$.  This completes Case ii(b).  Moreover, we managed to define $\tilde{\beta}$'s in Cases ii(a) and ii(b) such that $\tilde{\beta}_i=\tilde{\beta}_j$ if and only if $b_{i-1}=b_{j-1}$.  
 
 \medskip
We already justified that $\tilde{\beta}_i$'s as defined in the three cases above do not coincide with other $\beta_j$'s or $\tilde{\beta}$'s.  Therefore, it remains to show that the scenarios discussed in this lemma are the only ones when $\beta_i=\beta_j$ for some pair of distinct arcs $b_{i-1}, b_{j-1}$.  Recall that above we investigated when Case 1 overlapped with Cases 1 and 6, and Case 2 overlapped with Case 3.  In Case 1 the arc $\beta_i$ cannot be tagged plain and attached to the puncture, so it cannot overlap with Cases 2 and 3.  Also, by definition this $\beta_i$ cannot equal $f(a_j)$ for some $j$, hence Cases 1 and 4 cannot overlap.  By construction Case 1 and Case 5 cannot overlap, because $\epsilon_i$ as defined in Case 5 is a flip of some plain tagged arc attached to the puncture.  It is easy to see that Case 2 cannot overlap with Cases 4, 5, 6, as none of these yield a plain tagged arc attached to the puncture.  Also, Case 2 cannot overlap with another Case 2, because $\beta_i$ are defined uniquely by the corresponding $a_i$.  Similarly, Case 3 cannot overlap with Cases 3, 4, 5, 6.  

Suppose $\beta_i=f(a_i)$ arrises in Case 4, then it cannot equal another $\beta_j$ arising in Cases 4, because whenever $f(a_j)=f(a_i)$ then $a_j=a_i$.  Suppose that $\beta_i=f(a_i)=\epsilon_j=\beta_j$, where $\beta_i$ arrises in Case 4 and $\beta_j$ arrises in Case 5.  Then by definition, see the flowchart in Figure~\ref{flowchart}, there exists a triangulation depicted in Figure~\ref{case4}, such that $f(a_j)$ is a notched arc attached to the puncture and belonging to $\varrho({\textbf{T}})$.  However, $f(a_j)$ and $a_i$ share an endpoint on the boundary of the disk, and since $\varrho(b_{i-1})$ and $a_i$ cross, see the flowchart, it follows that $f(a_j)$ and $\varrho(b_{i-1})$ also cross.  This implies that $f(a_j)\not\in\varrho({\textbf{T}})$, which is a contradiction.  Hence, Case 4 cannot overlap with Case 5.  Also, Case 4 cannot overlap with Case 6, because the arcs $\delta_j, \gamma_j$ arising in Case 6 are not attached to the puncture.   Moreover, if $\beta_i=\beta_j=f(\epsilon_j)= \phi(\{\delta_j, \gamma_j, \epsilon_j\})$ where $\beta_j$ is defined in Case 6, then we obtain the same triangulation as in Case 5.  Proceeding in the same manner as above, we also obtain a contradiction.

\begin{figure}[htb]
  \centering
  {\begingroup%
  \makeatletter%
  \def\svgwidth{150pt}
  \providecommand\color[2][]{%
    \errmessage{(Inkscape) Color is used for the text in Inkscape, but the package 'color.sty' is not loaded}%
    \renewcommand\color[2][]{}%
  }%
  \providecommand\transparent[1]{%
    \errmessage{(Inkscape) Transparency is used (non-zero) for the text in Inkscape, but the package 'transparent.sty' is not loaded}%
    \renewcommand\transparent[1]{}%
  }%
  \providecommand\rotatebox[2]{#2}%
  \ifx\svgwidth\undefined%
    \setlength{\unitlength}{178.7390378bp}%
    \ifx\svgscale\undefined%
      \relax%
    \else%
      \setlength{\unitlength}{\unitlength * \real{\svgscale}}%
    \fi%
  \else%
    \setlength{\unitlength}{\svgwidth}%
  \fi%
  \global\let\svgwidth\undefined%
  \global\let\svgscale\undefined%
  \makeatother%
  \begin{picture}(1,0.96296744)%
    \put(0,0){\includegraphics[width=\unitlength]{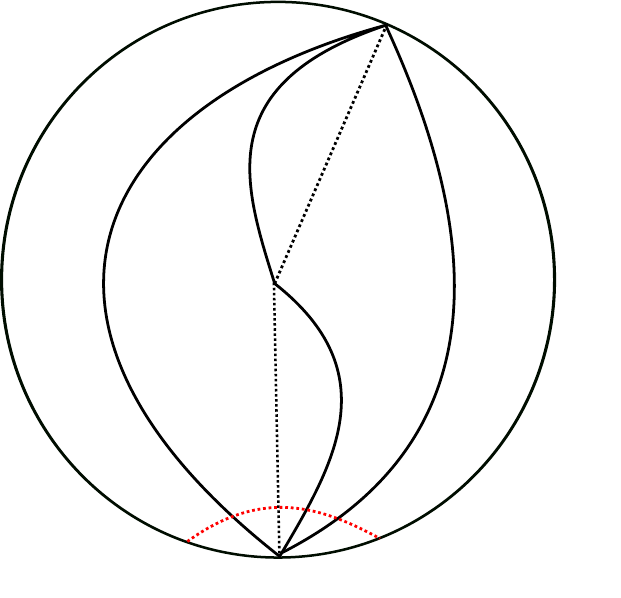}}%
    \put(0.35181572,0.4078165){\color[rgb]{0,0,0}\makebox(0,0)[lt]{\begin{minipage}{0.39387031\unitlength}\raggedright $a_i$\end{minipage}}}%
    \put(0.24439658,0.64017671){\color[rgb]{0,0,0}\makebox(0,0)[lt]{\begin{minipage}{0.23497945\unitlength}\raggedright $f(a_i)$\end{minipage}}}%
    \put(0.50655055,0.51087386){\color[rgb]{0,0,0}\makebox(0,0)[lt]{\begin{minipage}{0.29092693\unitlength}\raggedright $f(a_j)$\end{minipage}}}%
    \put(0.53852043,0.71388331){\color[rgb]{0,0,0}\makebox(0,0)[lt]{\begin{minipage}{0.25735844\unitlength}\raggedright $a_j$\end{minipage}}}%
    \put(0.530528,0.08886988){\color[rgb]{0,0,0}\makebox(0,0)[lt]{\begin{minipage}{0.3452759\unitlength}\raggedright ${\color{red}\varrho(b_{i-1})}$\end{minipage}}}%
    \put(0.39805121,0.55901256){\color[rgb]{0,0,0}\rotatebox{32.01696107}{\makebox(0,0)[lt]{\begin{minipage}{0.20867995\unitlength}\raggedright {\Small $\bowtie$} \end{minipage}}}}%
    \put(0.50494496,0.49622585){\color[rgb]{0,0,0}\rotatebox{-123.7174911}{\makebox(0,0)[lt]{\begin{minipage}{0.19122362\unitlength}\raggedright {\Small $\bowtie$} \end{minipage}}}}%
  \end{picture}%
\endgroup%
}
  \caption{Triangulation depicting $a_i$ in Case 4 and $a_j$ in Case 5.}
   \label{case4}
\end{figure}

Again, we can see that Case 5 can be though of as a special situation of Case 6 when $\beta_i=\phi(\{\delta_i, \gamma_i, \epsilon_i\})=f(\epsilon_i)$, but Case 5 cannot overlap with Case 5, because in this scenario there cannot be two notched arcs belonging to $\varrho({\textbf{T}})$.  Finally, by the same reasoning Case 6 cannot overlap with another Case 6.  Therefore, considering various scenarios we can see that Cases i, ii(a), ii(b) are the only possibilities when definitions of $\beta$'s can overlap.  In all other cases we let $\tilde{\beta}_i=\beta_i$, which completes the proof of the lemma. 

\begin{figure}[htb]
(a) \hspace{5cm} (b) \hspace{5cm} (c)
\vspace{.3cm}
  \centering
  {\begingroup%
  \makeatletter%
  \def\svgwidth{500pt}
  \providecommand\color[2][]{%
    \errmessage{(Inkscape) Color is used for the text in Inkscape, but the package 'color.sty' is not loaded}%
    \renewcommand\color[2][]{}%
  }%
  \providecommand\transparent[1]{%
    \errmessage{(Inkscape) Transparency is used (non-zero) for the text in Inkscape, but the package 'transparent.sty' is not loaded}%
    \renewcommand\transparent[1]{}%
  }%
  \providecommand\rotatebox[2]{#2}%
  \ifx\svgwidth\undefined%
    \setlength{\unitlength}{555.8818378bp}%
    \ifx\svgscale\undefined%
      \relax%
    \else%
      \setlength{\unitlength}{\unitlength * \real{\svgscale}}%
    \fi%
  \else%
    \setlength{\unitlength}{\svgwidth}%
  \fi%
  \global\let\svgwidth\undefined%
  \global\let\svgscale\undefined%
  \makeatother%
  \begin{picture}(1,0.32875897)%
    \put(0,0){\includegraphics[width=\unitlength]{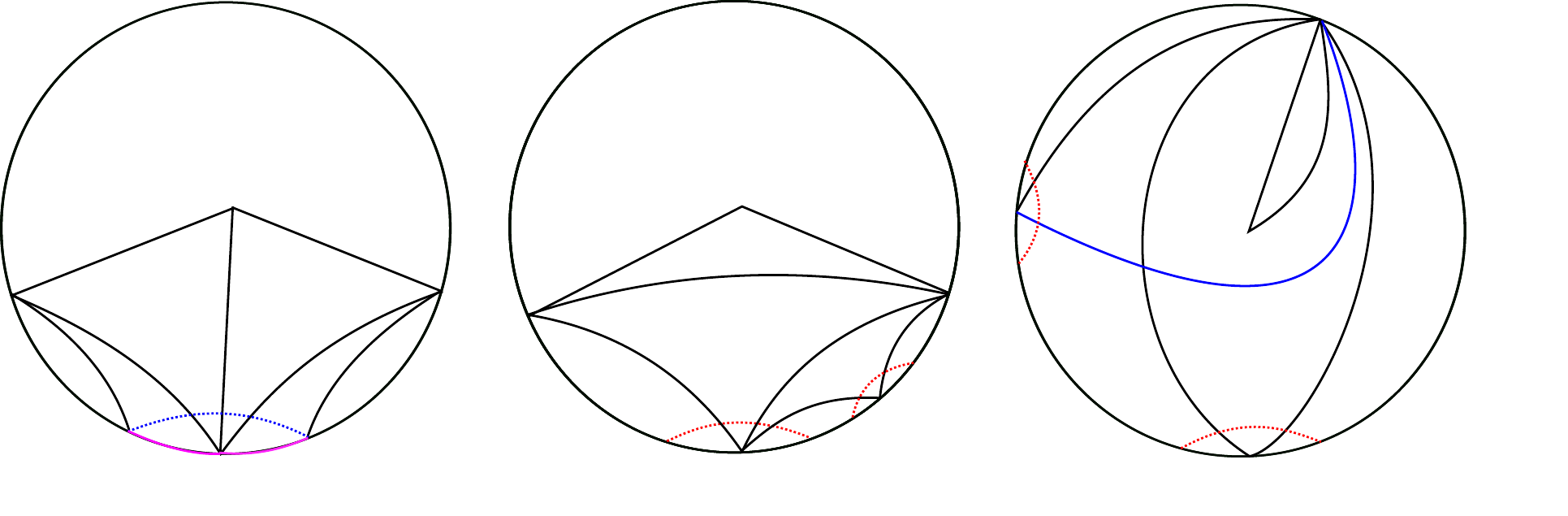}}%
    \put(0.1497147,0.16012576){\color[rgb]{0,0,0}\makebox(0,0)[lt]{\begin{minipage}{0.07504164\unitlength}\raggedright $c_i$\end{minipage}}}%
    \put(0.18549214,0.20692595){\color[rgb]{0,0,0}\makebox(0,0)[lt]{\begin{minipage}{0.07195774\unitlength}\raggedright $d_i$\end{minipage}}}%
    \put(0.18156398,0.14347165){\color[rgb]{0,0,0}\makebox(0,0)[lt]{\begin{minipage}{0.06784587\unitlength}\raggedright $f(a_i)$\end{minipage}}}%
    \put(0.22169016,0.09256593){\color[rgb]{0,0,0}\makebox(0,0)[lt]{\begin{minipage}{0.09354506\unitlength}\raggedright $\gamma_i$\end{minipage}}}%
    \put(0.38832425,0.09586784){\color[rgb]{0,0,0}\makebox(0,0)[lt]{\begin{minipage}{0.08120943\unitlength}\raggedright $\delta_i$\end{minipage}}}%
    \put(0.72757089,0.10129514){\color[rgb]{0,0,0}\makebox(0,0)[lt]{\begin{minipage}{0.08120943\unitlength}\raggedright $\delta_i$\end{minipage}}}%
    \put(0.82503008,0.10855521){\color[rgb]{0,0,0}\makebox(0,0)[lt]{\begin{minipage}{0.09354506\unitlength}\raggedright $\gamma_i$\end{minipage}}}%
    \put(0.45037105,0.18195721){\color[rgb]{0,0,0}\makebox(0,0)[lt]{\begin{minipage}{0.06651099\unitlength}\raggedright $f(a_i)$\end{minipage}}}%
    \put(0.77082366,0.2801264){\color[rgb]{0,0,0}\makebox(0,0)[lt]{\begin{minipage}{0.06784587\unitlength}\raggedright $f(a_i)$\end{minipage}}}%
    \put(0.45652285,0.11175962){\color[rgb]{0,0,0}\makebox(0,0)[lt]{\begin{minipage}{0.09765694\unitlength}\raggedright $f(a_j)$\end{minipage}}}%
    \put(0.82267803,0.19403295){\color[rgb]{0,0,0}\makebox(0,0)[lt]{\begin{minipage}{0.09765694\unitlength}\raggedright $\epsilon_i$\end{minipage}}}%
    \put(0.4411733,0.03649805){\color[rgb]{0,0,0}\makebox(0,0)[lt]{\begin{minipage}{0.23848852\unitlength}\raggedright ${\color{red} \varrho(b_{i-1})}$\\ \end{minipage}}}%
    \put(0.76189922,0.03444212){\color[rgb]{0,0,0}\makebox(0,0)[lt]{\begin{minipage}{0.23848852\unitlength}\raggedright ${\color{red} \varrho(b_{i-1})}$\\ \end{minipage}}}%
    \put(0.06668201,0.13573535){\color[rgb]{0,0,0}\makebox(0,0)[lt]{\begin{minipage}{0.06784587\unitlength}\raggedright $f(a_j)$\end{minipage}}}%
    \put(0.04677183,0.18404541){\color[rgb]{0,0,0}\makebox(0,0)[lt]{\begin{minipage}{0.07341517\unitlength}\raggedright $c_j$\end{minipage}}}%
    \put(0.0307804,0.11063022){\color[rgb]{0,0,0}\makebox(0,0)[lt]{\begin{minipage}{0.08359153\unitlength}\raggedright $\delta_j$\end{minipage}}}%
    \put(0.56340092,0.08194601){\color[rgb]{0,0,0}\makebox(0,0)[lt]{\begin{minipage}{0.23848852\unitlength}\raggedright ${\color{red} \varrho(b_{j-1})}$\\ \end{minipage}}}%
    \put(0.53160275,0.05102583){\color[rgb]{0,0,0}\makebox(0,0)[lt]{\begin{minipage}{0.1272045\unitlength}\raggedright $\delta_j$\end{minipage}}}%
    \put(0.59774909,0.11862596){\color[rgb]{0,0,0}\makebox(0,0)[lt]{\begin{minipage}{0.13738087\unitlength}\raggedright $\gamma_j$\end{minipage}}}%
    \put(0.65571507,0.15608808){\color[rgb]{0,0,0}\makebox(0,0)[lt]{\begin{minipage}{0.23848852\unitlength}\raggedright ${\color{red} \varrho(b_{p-1})}$\\ \end{minipage}}}%
    \put(0.75675178,0.14445795){\color[rgb]{0,0,0}\makebox(0,0)[lt]{\begin{minipage}{0.23848852\unitlength}\raggedright ${\color{blue} f(\delta_i)}$\\ \end{minipage}}}%
    \put(0.7067815,0.26327565){\color[rgb]{0,0,0}\makebox(0,0)[lt]{\begin{minipage}{0.16282178\unitlength}\raggedright $\delta_{i_2}$\end{minipage}}}%
    \put(0.19586367,0.04499963){\color[rgb]{0,0,0}\makebox(0,0)[lt]{\begin{minipage}{0.16910069\unitlength}\raggedright ${\color{blue} b_{j-1}}$\end{minipage}}}%
    \put(0.11946012,0.16950773){\color[rgb]{0,0,0}\makebox(0,0)[lt]{\begin{minipage}{0.04724739\unitlength}\raggedright $d_j$\end{minipage}}}%
    \put(0.62599153,0.21312071){\color[rgb]{0,0,0}\makebox(0,0)[lt]{\begin{minipage}{0.02264204\unitlength}\raggedright $P$\end{minipage}}}%
    \put(0.10274181,0.03721504){\color[rgb]{0,0,0}\makebox(0,0)[lt]{\begin{minipage}{0.16064113\unitlength}\raggedright ${\color{magenta}\gamma_j}$\end{minipage}}}%
    \put(0.15507738,0.03503441){\color[rgb]{0,0,0}\makebox(0,0)[lt]{\begin{minipage}{0.1911702\unitlength}\raggedright ${\color{magenta}\delta_i}$\end{minipage}}}%
    \put(0.79741354,0.20639084){\color[rgb]{0,0,0}\rotatebox{-30.48920197}{\makebox(0,0)[lt]{\begin{minipage}{0.105589\unitlength}\raggedright {\Small $\bowtie$}\end{minipage}}}}%
  \end{picture}%
\endgroup%
}
  \caption{Each figure shows a different scenario when $\beta_i=\beta_j$ for some distinct $i,j$.}
   \label{fig:cases}
\end{figure}

\end{proof}

\begin{theorem}\label{Dn_IV_lower_bound}
Suppose $Q_{\textbf{T}}$ is a quiver of Type IV, where each $Q^{(i)}$ is either empty or a single vertex.  The length of a maximal green sequence for $Q_{\textbf{T}}$ is at least $2k-2+t+m$.   
\end{theorem}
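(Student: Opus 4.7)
The plan is to show that any maximal green sequence $\mu_I$ for $Q_{\textbf{T}}$ has length equal to the cardinality of $\mathcal{B}_{\mu_I}(\textbf{T})$, and then to give a lower bound for this cardinality using the decomposition into the three pieces $\tilde{\mathcal{B}}_{\mu_I}(\textbf{T})$, $\tilde{\mathcal{C}}_{\mu_I}(\textbf{T})$, and $\varrho(\textbf{T})$ introduced above.

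First I would recall that, by construction, consecutive triangulations $\mathcal{B}^{i-1}_{\mu_I}(\textbf{T})$ and $\mathcal{B}^i_{\mu_I}(\textbf{T})$ differ by exactly one arc, so the length $q$ of $\mu_I$ equals $|\mathcal{B}_{\mu_I}(\textbf{T})|$. By Theorem~\ref{univ_tagged_thm}, the final triangulation $\mathcal{B}^q_{\mu_I}(\textbf{T})$ is $\varrho(\textbf{T})$; in particular every arc of $\varrho(\textbf{T})\setminus \textbf{T}$ is introduced at some step of $\mu_I$, so $\varrho(\textbf{T})\setminus\textbf{T} \subset \mathcal{B}_{\mu_I}(\textbf{T})$. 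The remaining arcs of $\varrho(\textbf{T})$ coincide with arcs of $\textbf{T}$ that are never flipped, and these are also picked up by the definition of the sets involved so that the decomposition
\[
\mathcal{B}_{\mu_I}(\textbf{T}) \;=\; \tilde{\mathcal{B}}_{\mu_I}(\textbf{T})\ \sqcup\ \tilde{\mathcal{C}}_{\mu_I}(\textbf{T})\ \sqcup\ \bigl(\varrho(\textbf{T})\cap \mathcal{B}_{\mu_I}(\textbf{T})\bigr)
\]
is an honest disjoint union. (The first step is to verify that $\tilde{\mathcal{C}}_{\mu_I}(\textbf{T})\subset\mathcal{B}_{\mu_I}(\textbf{T})\setminus\varrho(\textbf{T})$ by definition of $\mathcal{C}_{\mu_I}(\textbf{T})$, and $\tilde{\mathcal{B}}_{\mu_I}(\textbf{T})$ is disjoint from both $\tilde{\mathcal{C}}_{\mu_I}(\textbf{T})$ and $\varrho(\textbf{T})$ by its very definition.)

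Next I would invoke the three preparatory lemmas to bound each piece from below. Since $\varrho$ is a bijection on arcs and $|\textbf{T}|=k+t$, we have $|\varrho(\textbf{T})|=k+t$. By Lemma~\ref{01} we have $|\mathcal{C}_{\mu_I}(\textbf{T})|\ge k-2$, and in fact $\tilde{\mathcal{C}}_{\mu_I}(\textbf{T})$ is defined as a subset of $\mathcal{H}\subset \mathcal{C}_{\mu_I}(\textbf{T})$ of cardinality exactly $k-2$. Finally, Lemma~\ref{03} produces an injection $\mathcal{M}\to \tilde{\mathcal{B}}_{\mu_I}(\textbf{T})$ sending $b_{i-1}\mapsto \tilde{\beta}_i$, so $|\tilde{\mathcal{B}}_{\mu_I}(\textbf{T})|\ge m$. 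Adding the three contributions yields
\[
\ell(\mu_I) \;=\; |\mathcal{B}_{\mu_I}(\textbf{T})| \;\ge\; (k-2) + m + (k+t) \;=\; 2k - 2 + t + m,
\]
which is the desired lower bound.

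The genuine obstacle is, of course, that all of the real work is packaged inside Lemmas~\ref{01} and~\ref{03}: the former requires analysing the first plain-to-notched flip at the puncture and keeping track of the two surviving plain arcs $\alpha,\alpha'$, while the latter requires the case analysis in the flowchart that shows the tentative assignment $b_{i-1}\mapsto \beta_i$ can be repaired into an injection. Given those lemmas, however, the theorem itself is just bookkeeping: the partition of $\mathcal{B}_{\mu_I}(\textbf{T})$ into three pieces of controlled size, one of which is $\varrho(\textbf{T})$ (a fixed piece), one of which comes from the $k$ arcs at the puncture, and one of which accounts for the $m$ arcs in $\mathcal{M}$.
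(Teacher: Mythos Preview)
Your proposal is correct and follows essentially the same approach as the paper: decompose $\mathcal{B}_{\mu_I}(\textbf{T})$ into the three disjoint pieces $\tilde{\mathcal{B}}_{\mu_I}(\textbf{T})$, $\tilde{\mathcal{C}}_{\mu_I}(\textbf{T})$, and $\varrho(\textbf{T})$, and bound their sizes by $m$, $k-2$, and $k+t$ using Lemma~\ref{03}, the definition of $\tilde{\mathcal{C}}_{\mu_I}(\textbf{T})$, and $|\textbf{T}|=k+t$ respectively. One small wrinkle: you write the third piece as $\varrho(\textbf{T})\cap\mathcal{B}_{\mu_I}(\textbf{T})$ but then use $|\varrho(\textbf{T})|=k+t$ in the final inequality; to close this you should note that for this particular $\textbf{T}$ one has $\textbf{T}\cap\varrho(\textbf{T})=\varnothing$ (the arcs $a_i$ are plain at $p$ while the $\varrho(a_i)$ are notched, and each $\varrho(b_i)$ crosses $b_i$), so indeed $\varrho(\textbf{T})\subset\mathcal{B}_{\mu_I}(\textbf{T})$ and the paper's decomposition holds as stated.
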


\begin{proof}
By definition  $\mathcal{B}_{\mu_I}({\textbf{T}})=\tilde{\mathcal{B}}_{\mu_I}({\textbf{T}})\cup\varrho({\textbf{T}}) \cup \tilde{\mathcal{C}}_{\mu_I}({\textbf{T}})$.  Moreover, the three sets are disjoint, and $\abs{\varrho({\textbf{T}})}=k+t$, $\abs{\tilde{\mathcal{C}}_{\mu_I}({\textbf{T}})}=k-2$, and by the lemma above $\abs{\tilde{\mathcal{B}}_ {\mu_I}({\textbf{T}})}\geq m$.  Therefore, $\abs{\mathcal{B}_{\mu_I}({\textbf{T}})}\geq 2k-2+t+m$.   
\end{proof}

\begin{figure}[htb]
  \centering
  \rotatebox[origin=c]{90}{\begingroup%
  \makeatletter%
  \def\svgwidth{500pt}
  \providecommand\color[2][]{%
    \errmessage{(Inkscape) Color is used for the text in Inkscape, but the package 'color.sty' is not loaded}%
    \renewcommand\color[2][]{}%
  }%
  \providecommand\transparent[1]{%
    \errmessage{(Inkscape) Transparency is used (non-zero) for the text in Inkscape, but the package 'transparent.sty' is not loaded}%
    \renewcommand\transparent[1]{}%
  }%
  \providecommand\rotatebox[2]{#2}%
  \ifx\svgwidth\undefined%
    \setlength{\unitlength}{529.28527925bp}%
    \ifx\svgscale\undefined%
      \relax%
    \else%
      \setlength{\unitlength}{\unitlength * \real{\svgscale}}%
    \fi%
  \else%
    \setlength{\unitlength}{\svgwidth}%
  \fi%
  \global\let\svgwidth\undefined%
  \global\let\svgscale\undefined%
  \makeatother%
  \begin{picture}(1,0.6096473)%
    \put(0,0){\includegraphics[width=\unitlength]{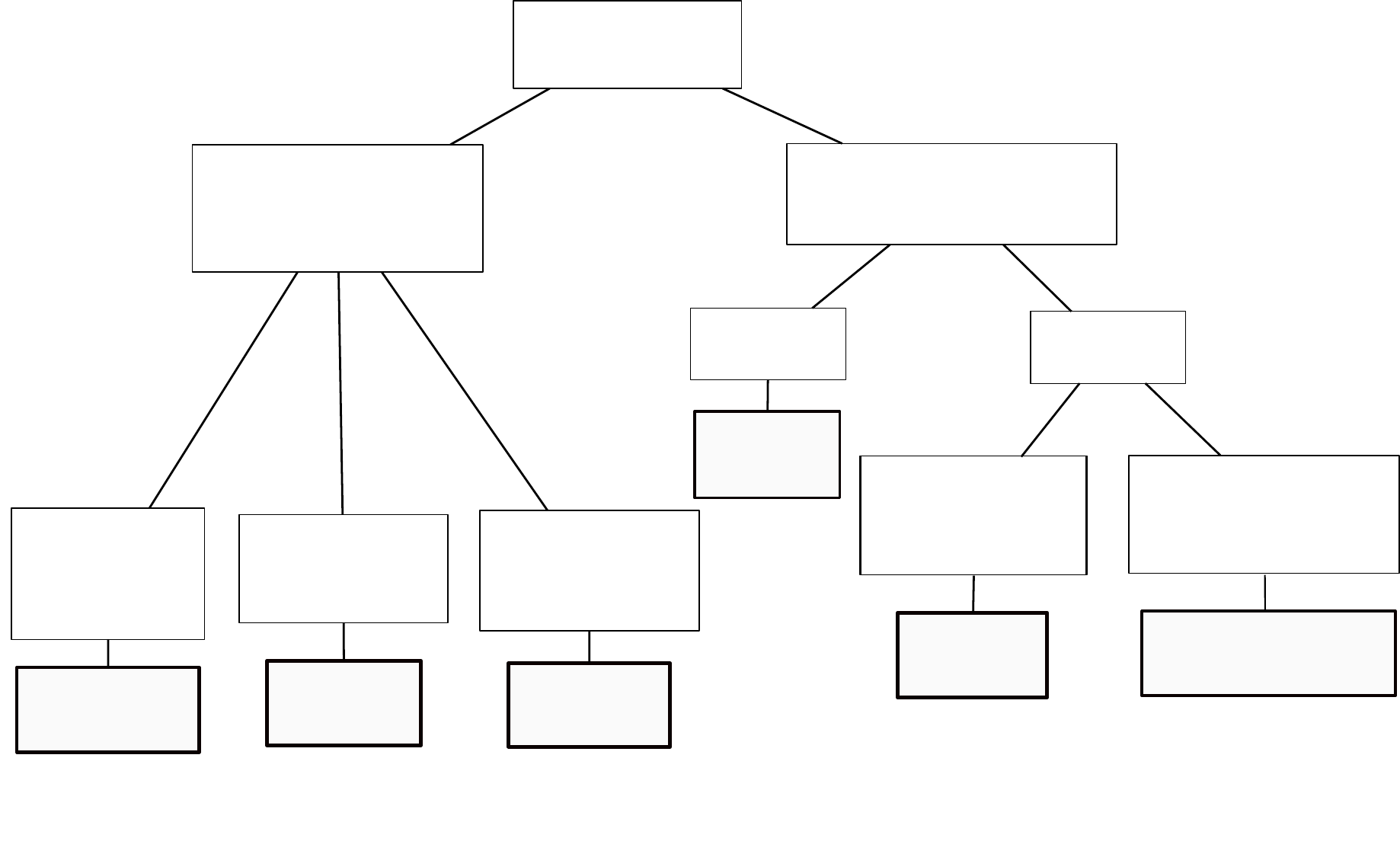}}%
    \put(0.36937639,0.59847337){\color[rgb]{0,0,0}\makebox(0,0)[lt]{\begin{minipage}{0.15739176\unitlength}\centering $\scriptstyle{a_i\in \mathcal{K}}$ {\tiny such that} $\scriptstyle{a_i}$ {\tiny crosses} $\scriptstyle{\varrho(b_{i-1})}$\end{minipage}}}%
    \put(0.15050375,0.49164212){\color[rgb]{0,0,0}\makebox(0,0)[lt]{\begin{minipage}{0.18026743\unitlength}\centering $\scriptstyle{f(a_i)}$ {\tiny is not attached to the puncture} \\ {\tiny see Figure \ref{fig:betas}(a)}\end{minipage}}}%
    \put(0.56942008,0.49432265){\color[rgb]{0,0,0}\makebox(0,0)[lt]{\begin{minipage}{0.22066723\unitlength}\centering $\scriptstyle{f(a_i)}$ {\tiny is (a notched arc) attached to the puncture }\end{minipage}}}%
    \put(0.73890205,0.37071462){\color[rgb]{0,0,0}\makebox(0,0)[lt]{\begin{minipage}{0.10458698\unitlength}\centering $\scriptstyle{f(a_i)\in\varrho(\textbf{T})}$\end{minipage}}}%
    \put(0.02582354,0.23449521){\color[rgb]{0,0,0}\makebox(0,0)[lt]{\begin{minipage}{0.10523305\unitlength}\centering $\scriptstyle{\delta_i, \gamma_i}$ {\tiny are not boundary segments} \end{minipage}}}%
    \put(0.18373558,0.22074783){\color[rgb]{0,0,0}\makebox(0,0)[lt]{\begin{minipage}{0.1314591\unitlength}\centering $\scriptstyle{\delta_i}$ {\tiny is a boundary segment} \end{minipage}}}%
    \put(0.6175485,0.26540984){\color[rgb]{0,0,0}\makebox(0,0)[lt]{\begin{minipage}{0.16205998\unitlength}\centering $\scriptstyle{\epsilon_i}$ {\tiny is  notched \\ see Figure \ref{fig:betas}(b)} \end{minipage}}}%
    \put(0.49766486,0.37228734){\color[rgb]{0,0,0}\makebox(0,0)[lt]{\begin{minipage}{0.10458698\unitlength}\centering $\scriptstyle{f(a_i)\not\in\varrho(\textbf{T})}$\end{minipage}}}%
    \put(0.48098574,0.30417999){\color[rgb]{0,0,0}\makebox(0,0)[lt]{\begin{minipage}{0.13184132\unitlength}\centering {\tiny \bf Case 4}\\ $\scriptstyle{\beta_i=f(a_i)}$\end{minipage}}}%
    \put(0.63486322,0.15918148){\color[rgb]{0,0,0}\makebox(0,0)[lt]{\begin{minipage}{0.1143018\unitlength}\centering {\tiny \bf Case 5}\\ $\scriptstyle{\beta_i=\epsilon_i}$\end{minipage}}}%
    \put(0.81822593,0.1678216){\color[rgb]{0,0,0}\makebox(0,0)[lt]{\begin{minipage}{0.18112571\unitlength}\centering {\tiny \bf Case 6}\\ $\scriptstyle{\beta_i =\phi(\{\delta_i, \gamma_i, \epsilon_i\})}$\end{minipage}}}%
    \put(0.16126293,0.12831239){\color[rgb]{0,0,0}\makebox(0,0)[lt]{\begin{minipage}{0.16411257\unitlength}\centering {\tiny \bf Case 2}\\ $\scriptstyle{\beta_i=c_i}$\end{minipage}}}%
    \put(0.34567117,0.12528007){\color[rgb]{0,0,0}\makebox(0,0)[lt]{\begin{minipage}{0.15100698\unitlength}\centering {\tiny \bf Case 3}\\ $\scriptstyle{\beta_i=d_i}$\end{minipage}}}%
    \put(-0.00646646,0.1213139){\color[rgb]{0,0,0}\makebox(0,0)[lt]{\begin{minipage}{0.16061542\unitlength}\centering {\tiny \bf Case 1}\\ $\scriptstyle{{\beta}_i =\phi(\Delta_i)}$\end{minipage}}}%
    \put(0.33811237,0.22795716){\color[rgb]{0,0,0}\makebox(0,0)[lt]{\begin{minipage}{0.16413289\unitlength}\centering $\scriptstyle{\gamma_i}$ {\tiny is a boundary segment but} $\scriptstyle{\delta_i}$ \\ {\tiny is not}\end{minipage}}}%
    \put(0.82002327,0.26409652){\color[rgb]{0,0,0}\makebox(0,0)[lt]{\begin{minipage}{0.17152281\unitlength}\centering $\scriptstyle{\epsilon_i}$ {\tiny is tagged plain\\ see Figure \ref{fig:betas}(c)}\end{minipage}}}%
  \end{picture}%
\endgroup}

  \caption{The definition of the arc $\beta_i$ associated to $b_{i-1}$ in the proof of Lemma \ref{02}}
   \label{flowchart}
\end{figure}

\subsection{Minimal length maximal green sequences} For convenience, throughout this section, we refer to all tagged triangulations (resp., tagged arcs) simply as triangulations (resp., arcs). We construct minimal length maximal green sequences of $Q_\textbf{T}$ using the notion of a \textbf{fan} of arcs. A \textbf{fan} with respect to $m \in \textbf{M}$ is a sequence $\mathcal{F} = (\gamma_{1}, \ldots, \gamma_{t})$ of arcs in a given triangulation $\textbf{T}^\prime$ of $(\textbf{S}, \textbf{M})$ where \begin{itemize} \item[a)] each $\gamma_{i}$ has $m$ as one of its endpoints and \item[b)] the arc $\gamma_{j+1}$ is immediately clockwise  from $\gamma_{j}$ about $m$ in the triangulation $\textbf{T}^\prime$ of $(\textbf{S}, \textbf{M})$. \end{itemize} A fan $\mathcal{F} = (\gamma_{1}, \ldots, \gamma_{t})$ with respect to $m$ is \textbf{complete} if every arc of $\textbf{T}^\prime$ that is incident to $m$ appears in $\mathcal{F}$ exactly once. 


Given any sequence $\mathcal{S}$ of arcs of a triangulation $\textbf{T}^\prime$, we let $\textbf{i}_{\mathcal{S}}$ denote the sequence of vertices of $Q_{\textbf{T}^\prime}$ in the same order as the arcs of $\mathcal{S}$. In particular, any fan $\mathcal{F}$ defines such a sequence $\textbf{i}_\mathcal{F}$ of vertices of $Q_{\textbf{T}^\prime}$. Similarly, we let $\underline{\mu}_{\mathcal{S}}$ to denote corresponding sequence of flips of arcs in $\mathcal{S}$. If $\mathcal{S}$ is the empty sequence of arcs, then $\textbf{i}_{\mathcal{S}}$ and $\underline{\mu}_{\mathcal{S}}$ will also be empty sequences.

Each nonempty quiver $Q^{(i)}$, determines a complete fan of arcs in $\textbf{T}$ as follows. Define $$\begin{array}{lcl} \mathcal{F}(i) & := & \left\{\begin{array}{lcl} ({b_{i+1}}, {a_{i+1}}, {b_{i}}) & : & \text{if $Q^{(i+1)}$ is nonempty}\\ ({a_{i+1}}, {b_i}) & : & \text{otherwise} \end{array}\right. \end{array}$$ where $i+1$ is calculated\footnote{We will henceforth perform all such calculations mod $k$.} mod $k$ and where  we abuse notation and identify the vertices of $Q$ with the arcs of $\textbf{T}$.

Define the following sequence of vertices of $Q$. If at least one of the quivers $Q^{(i)}$ is the empty quiver or if none of the quivers $Q^{(i)}$ is the empty quiver and there is an even number of quivers $Q^{(i)}$, we let $$\begin{array}{rcl} \textbf{i}_1 & := & \textbf{i}_{\mathcal{F}(i_1)} \circ \cdots \circ \textbf{i}_{\mathcal{F}(i_t)} \\ \underline{\mu}_1 & := & \underline{\mu}_{\mathcal{F}(i_t)}\cdots \underline{\mu}_{\mathcal{F}(i_1)} \end{array}$$ where $\{\mathcal{F}(i_j)\}_{j \in [t]}$ is a set of complete fans of arcs of $\textbf{T}$ as defined in the previous paragraph, enumerated in any order, with the property that an arc of $\textbf{T}$ belongs to at most one of these fans and the set of all arcs appearing in these fans is maximal with respect to this property. 

On the other hand, if none of the quivers $Q^{(i)}$ is the empty quiver and there is an odd number of quivers $Q^{(i)}$, we first fix an arc ${b_i} \in \textbf{T}$. Then we let $$\begin{array}{rcl} \textbf{i}_1 & := & \textbf{i}_{({b_i})}\circ\textbf{i}_{\mathcal{F}(i_1)} \circ \cdots \circ \textbf{i}_{\mathcal{F}(i_t)} \\ \underline{\mu}_1 & := & \underline{\mu}_{\mathcal{F}(i_t)}\cdots \underline{\mu}_{\mathcal{F}(i_1)}{\mu}_{{b_i}} \end{array}$$ where $\{\mathcal{F}(i_j)\}_{j \in [t]}$ is a set of complete fans of arcs of $\mu_{{b_i}}(\textbf{T})$, enumerated in any order, with the property that an arc of $\mu_{{b_i}}(\textbf{T})$ belongs to at most one of these fans and the set of all arcs appearing in these fans is maximal with respect to this property. 

Next, we let $$\begin{array}{rcl} \textbf{i}_2 & := & \textbf{i}_{\mathcal{F}(p,1)}\circ \cdots \circ \textbf{i}_{\mathcal{F}(p,s)}\\ \underline{\mu}_2 & := & \underline{\mu}_{\mathcal{F}(p,s)} \cdots \underline{\mu}_{\mathcal{F}(p,1)}\end{array}$$ where $\{\mathcal{F}(p,j)\}_{j \in [s]}$ is a set of fans of arcs in $\textbf{T} \cap \underline{\mu}_{1}(\textbf{T})$ with respect to $p$, enumerated in any order, such that every arc of $\textbf{T} \cap \underline{\mu}_{1}(\textbf{T})$ with an endpoint at $p$ belongs to one of the fans $\mathcal{F}(p, j)$ and where the set of arcs appearing in each fan $\mathcal{F}(p, j)$ is as large as possible. 

Let $\mathcal{C} := (\alpha_1, \ldots, \alpha_t)$ denote the sequence of arcs in $\underline{\mu}_{2}\underline{\mu}_{1}(\textbf{T}) \cap \underline{\mu}_1(\textbf{T})$, enumerated in any order, where each arc $\alpha_i$ appears in a triangle of $\underline{\mu}_{1}(\textbf{T})$ with the following properties: \begin{itemize} \item one of its other arcs is $\gamma_1$ from $\mathcal{F}(p,j) = (\gamma_1, \ldots, \gamma_t)$---one of the fans defining $\textbf{i}_2$---and  \item the arc $\alpha_i$ is immediately counterclockwise from $\gamma_1$ about their common endpoint\footnote{In other words, there is an arrow in the quiver $Q_{\underline{\mu}_{2}\underline{\mu}_{1}(\textbf{T})}$ whose source (resp., target) is the vertex of $Q_{\underline{\mu}_{2}\underline{\mu}_{1}(\textbf{T})}$ corresponding to $\alpha_i$ (resp., $\gamma_1$).}.  

\end{itemize} {In addition, we require that each arc $\alpha_i$ does not have $p$ as an endpoint.  Let $$\textbf{i}_3 := \left\{\begin{array}{rcl} \textbf{i}_\mathcal{C} & : & \text{if one of the fans defining $\textbf{i}_1$ consists of 3 arcs} \\ \emptyset & : & \text{otherwise,} \end{array}\right. $$ and let $\underline{\mu}_3$ denote the sequence of flips along $\textbf{i}_3$.

Let $\textbf{i}_4 := \textbf{i}_{\mathcal{F}_p}$ and $\underline{\mu}_4 := \underline{\mu}_{\mathcal{F}_p}$ where $\mathcal{F}_p$ is any fan of arcs of $\underline{\mu}_3\underline{\mu}_{2}\underline{\mu}_{1}(\textbf{T})$ with respect to $p$
where each arc appearing in $\mathcal{F}_p$ is plain at $p$ and where the underlying set of arcs of $\mathcal{F}_p$ is as large as possible.

The last component of our sequence is defined inductively. Let $\mathcal{S}_1 = (\sigma^{(1)}_{1}, \ldots, \sigma^{(1)}_{t})$ denote the sequence of arcs of $\underline{\mu}_{4}\underline{\mu}_{3}\underline{\mu}_{2}\underline{\mu}_{1}(\textbf{T})$, enumerated in any order, where each $\sigma^{(1)}_{j}$ satisfies the following: \begin{itemize} \item the arc $\sigma^{(1)}_{j}$ appears in a triangle of $\underline{\mu}_{4}\underline{\mu}_{3}\underline{\mu}_{2}\underline{\mu}_{1}(\textbf{T})$ whose other two arcs are notched at $p$, and \item the arc $\sigma^{(1)}_{j}$ does not appear in a triangle of $\underline{\mu}_{4}\underline{\mu}_{3}\underline{\mu}_{2}\underline{\mu}_{1}(\textbf{T})$ whose other two arcs are boundary arcs. \end{itemize} We define $\mathcal{S}_i$ analogously. Assume that $\mathcal{S}_{i-1}$ has been defined and now let $\mathcal{S}_i = (\sigma^{(i)}_{1}, \ldots, \sigma^{(i)}_{t})$ denote the sequence of arcs of $\underline{\mu}_{\mathcal{S}_{i-1}}\cdots\underline{\mu}_{\mathcal{S}_1}\underline{\mu}_{4}\underline{\mu}_{3}\underline{\mu}_{2}\underline{\mu}_{1}(\textbf{T})$, enumerated in any order, where each $\sigma^{(i)}_{j}$ satisfies the following: \begin{itemize} \item the arc $\sigma^{(i)}_{j}$ appears in a triangle of $\underline{\mu}_{\mathcal{S}_{i-1}}\cdots\underline{\mu}_{\mathcal{S}_1}\underline{\mu}_{4}\underline{\mu}_{3}\underline{\mu}_{2}\underline{\mu}_{1}(\textbf{T})$ whose other two arcs are notched at $p$, and \item the arc $\sigma^{(i)}_{j}$ does not appear in a triangle of $\underline{\mu}_{\mathcal{S}_{i-1}}\cdots\underline{\mu}_{\mathcal{S}_1}\underline{\mu}_{4}\underline{\mu}_{3}\underline{\mu}_{2}\underline{\mu}_{1}(\textbf{T})$ whose other two arcs are boundary arcs. \end{itemize} Define $\textbf{i}_5 := \textbf{i}_{\mathcal{S}_1} \circ \cdots \circ \textbf{i}_{\mathcal{S}_r}$ and $\underline{\mu}_5 := \underline{\mu}_{\mathcal{S}_r} \cdots \underline{\mu}_{\mathcal{S}_1}$ where $\mathcal{S}_{r+1}$ is the empty sequence of arcs. 

\begin{theorem}\label{Thm:Dn_short_seq}
Let $Q$ be a Type IV quiver where each of its subquivers $Q^{(i)}$ with $i = 1,\ldots, k$ either consists of a single vertex or is the empty quiver. Then the sequence $\textbf{i}_1\circ \textbf{i}_2 \circ \textbf{i}_3 \circ \textbf{i}_4\circ \textbf{i}_5$ is a maximal green sequence. 
\end{theorem}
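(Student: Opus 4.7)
The plan is to prove this by directly analyzing how the triangulation $\textbf{T}$ evolves under the composite sequence of flips $\underline{\mu} := \underline{\mu}_5\underline{\mu}_4\underline{\mu}_3\underline{\mu}_2\underline{\mu}_1$, establishing two things at each stage: first, that every flip is performed at a green arc (equivalently, at a green vertex of the signed adjacency quiver of the current triangulation); and second, that $\underline{\mu}(\textbf{T}) = \varrho(\textbf{T})$. Since the first condition guarantees that $\textbf{i} = \textbf{i}_1\circ\textbf{i}_2\circ\textbf{i}_3\circ\textbf{i}_4\circ\textbf{i}_5$ consists entirely of green mutations, and the second, combined with Theorem~\ref{univ_tagged_thm}, forces the final ice quiver to be all-red, this will establish that $\textbf{i}$ is a maximal green sequence of $Q_\textbf{T}$.

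I would treat each $\textbf{i}_j$ in turn. For $\textbf{i}_1$, the key observation is that each complete fan $\mathcal{F}(i) = ({b_{i+1}}, {a_{i+1}}, {b_i})$ (or the shorter $({a_{i+1}}, {b_i})$ when $Q^{(i+1)}$ is empty) corresponds to a sequence of flips which locally produces the $\varrho$-image of the arcs involved: $b_{i+1}$ and $b_i$ go to $\varrho(b_{i+1})$ and $\varrho(b_i)$, while $a_{i+1}$ goes to $\varrho(a_{i+1})$. The disjointness requirement on the fans ensures they act independently, so greenness at each flip can be checked locally inside a single fan. In the odd all-nonempty case, the initial $\mu_{{b_i}}$ produces a triangulation to which the even-case argument applies. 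For $\textbf{i}_2$, the arcs at $p$ that survive $\underline{\mu}_1$ form fans pivoting on $p$, and flipping each such fan in order $(\gamma_1,\ldots,\gamma_t)$ is green because $\gamma_1$ is always outermost and its flip exposes $\gamma_2$ as the next green arc. The $\textbf{i}_3$ step then cleans up the arcs $\alpha_i$ that become flippable as a consequence of $\textbf{i}_2$, and greenness here follows from the definition: $\alpha_i$ lies immediately counterclockwise from $\gamma_1$ in some $\mathcal{F}(p,j)$, so after $\gamma_1$ is flipped, $\alpha_i$ is the source of an arrow in the quiver at the relevant stage.

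The last two stages are the subtlest. Stage $\textbf{i}_4$ flips a single fan of plain-tagged arcs at $p$ into notched arcs at $p$, and greenness is again a local check. Stage $\textbf{i}_5$ is recursive: each round $\mathcal{S}_i$ flips arcs lying in triangles with two sides notched at $p$ (and not in a triangle with two boundary sides), and greenness holds because such an arc sits in a configuration whose $\textbf{c}$-vector is positive at that stage. Termination is ensured by the fact that each round strictly reduces the number of remaining arcs in $\textbf{T}\setminus\varrho(\textbf{T})$, so $\mathcal{S}_{r+1}$ is eventually empty. The main obstacle is the bookkeeping: one must precisely track the evolving triangulation through all five stages, verifying greenness at every flip, and confirming that the terminal triangulation equals $\varrho(\textbf{T})$ rather than differing from it by some residual arcs. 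The most delicate verification is the inductive step for $\textbf{i}_5$---that after each round the required structural hypotheses for the next round remain satisfied---and the handling of the case distinctions in $\textbf{i}_1$, particularly the parity condition on the number of nonempty $Q^{(i)}$.
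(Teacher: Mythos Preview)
Your overall strategy---stage-by-stage analysis verifying that each flip is green and that the final triangulation is $\varrho(\textbf{T})$---matches the paper's approach. However, there are two concrete issues worth flagging.

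First, your invocation of Theorem~\ref{univ_tagged_thm} is backwards. That theorem says that if $\textbf{i}$ \emph{is} a maximal green sequence then the resulting triangulation is $\varrho(\textbf{T})$; it does not directly say that reaching $\varrho(\textbf{T})$ via green flips implies the final state is all-red. The conclusion is still correct (since $\mathbb{D}_n$ admits some MGS, $\varrho(\textbf{T})$ is known to correspond to the all-red state), but you should either justify this or, as the paper does, verify redness of all arcs in $\underline{\mu}_5\underline{\mu}_4\underline{\mu}_3\underline{\mu}_2\underline{\mu}_1(\textbf{T})$ directly via shear coordinates.

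Second, and more substantively, your description of $\underline{\mu}_1$ is wrong: you assert that under the fan flips $(b_{i+1},a_{i+1},b_i)$ the arc $a_{i+1}$ goes to $\varrho(a_{i+1})$. But $a_{i+1}$ is plain at $p$, so $\varrho(a_{i+1})$ is \emph{notched} at $p$, and no notched arcs appear until strictly after $\underline{\mu}_1$ (indeed, the first notched arcs arise during $\underline{\mu}_2$ or $\underline{\mu}_4$, depending on the case). The triangulation $\underline{\mu}_1(\textbf{T})$ is not a partial $\varrho$-rotation in the way you describe; the paper tracks its actual structure in Figure~\ref{i1_transformation}, and this feeds into a genuine case analysis for $\underline{\mu}_2$ (three cases, depending on whether $(\textbf{S}(\mathcal{F}(p,i)),\textbf{M}(\mathcal{F}(p,i)))$ is an unpunctured disk or a once-punctured disk). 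Your sketch elides this case split entirely. Similarly, your termination argument for $\textbf{i}_5$ (``each round strictly reduces the number of remaining arcs in $\textbf{T}\setminus\varrho(\textbf{T})$'') is not the right invariant: the arcs in the $\mathcal{S}_i$ need not have been in $\textbf{T}$, and the paper instead argues via the explicit local configurations in Figure~\ref{i5_transformation}. The bookkeeping you identify as ``the main obstacle'' is essentially the entire content of the paper's proof, which relies on Lemma~\ref{Lem:subsurfaces} for commutation and on figure-based verification of colors via elementary laminations at every stage.
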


\begin{corollary}\label{Cor:Dn_short_seq_length}
The length of the maximal green sequence $\textbf{i}_1\circ \textbf{i}_2\circ \textbf{i}_3 \circ \textbf{i}_4 \circ \textbf{i}_5$ is $2k_\text{in} - 2 + m + k_\text{out}$ where $k_\text{in} = |\{\text{arcs of $\textbf{T}$ connected to $p$}\}|$, $k_\text{out} = |\{\text{arcs of $\textbf{T}$ not connected to $p$}\}|$, and $$m = |\{\text{arcs $\gamma \in \textbf{T}$ not connected to $p$ where $\varrho^2(\gamma) \in \textbf{T}$ is also not connected to $p$}\}|.\footnote{Note that $k_\text{in} = k$ and $m = \{j \in [k]: \text{deg}(a_j) = 4\}$.}$$
\end{corollary}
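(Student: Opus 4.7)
The proof is a direct length computation for the maximal green sequence constructed in Theorem~\ref{Thm:Dn_short_seq}. Since Theorem~\ref{Dn_IV_lower_bound} gives the lower bound $2k-2+t+m$ on the length of \emph{any} maximal green sequence of $Q_\textbf{T}$ (where $k=k_{\text{in}}$ and $t=k_{\text{out}}$ in our notation), and since Theorem~\ref{Thm:Dn_short_seq} already establishes that $\textbf{i}_1 \circ \textbf{i}_2 \circ \textbf{i}_3 \circ \textbf{i}_4 \circ \textbf{i}_5$ is a maximal green sequence, it suffices to produce the matching upper bound by counting the mutations in each of the five blocks and adding them up.

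\noindent I would proceed block-by-block. For $\textbf{i}_1$: each complete fan $\mathcal{F}(i)$ contributes $2$ or $3$ mutations according to whether $Q^{(i+1)}$ is empty, and the fans are chosen pairwise disjoint on arcs with maximal union, so one sums the contributions across the three cases of the construction (some $Q^{(i)}$ empty; all nonempty with $k$ even; all nonempty with $k$ odd, in which case an extra single mutation $\mu_{b_i}$ appears). For $\textbf{i}_2$: the arcs used are the fans with respect to $p$ inside $\textbf{T}\cap\underline{\mu}_1(\textbf{T})$, that is, the plain arcs at $p$ that were not yet flipped in $\textbf{i}_1$. For $\textbf{i}_3$: the sequence $\mathcal{C}$ is nonempty only when a fan of $\textbf{i}_1$ has three arcs, and its elements are in bijection with the pairs of consecutive nonempty $Q^{(i)},Q^{(i+1)}$, equivalently with the vertices $a_j$ of degree $4$ in $Q$, which is precisely the quantity $m$ by the definition of $\mathcal{M}$ in Section~\ref{Sec_type_IV}.

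\noindent For $\textbf{i}_4$ and $\textbf{i}_5$: after performing $\underline{\mu}_3\underline{\mu}_2\underline{\mu}_1$, I would show that the current triangulation differs from $\varrho(\textbf{T})$ only in a controlled configuration around the puncture $p$ and around the ``unresolved'' boundary triangles; the fan $\mathcal{F}_p$ and the inductively defined sequences $\mathcal{S}_1, \mathcal{S}_2, \ldots$ are then exactly what is needed to convert the remaining plain arcs at $p$ into their notched counterparts and to clean up the boundary triangles. A careful termination argument shows that the sequences $\mathcal{S}_i$ stabilize after finitely many steps, once no triangle with two notched radii remains. Adding the five contributions gives an upper bound for $\ell(\textbf{i}_1\circ\textbf{i}_2\circ\textbf{i}_3\circ\textbf{i}_4\circ\textbf{i}_5)$ which, combined with the lower bound from Theorem~\ref{Dn_IV_lower_bound}, forces equality at $2k_{\text{in}} - 2 + m + k_{\text{out}}$.

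\noindent The main obstacle is the bookkeeping for $\ell(\textbf{i}_4) + \ell(\textbf{i}_5)$: unlike the earlier blocks whose sizes read off directly from the combinatorics of $\textbf{T}$, these sizes depend on the state of the triangulation $\underline{\mu}_3\underline{\mu}_2\underline{\mu}_1(\textbf{T})$, and one has to verify the inductive sequence $\mathcal{S}_i$ terminates in the right number of steps to match the global count. The secondary difficulty is the case split in the definition of $\textbf{i}_1$ (empty versus all-nonempty, and $k$ even versus odd), which introduces small discrepancies that must cancel against the corresponding small discrepancies in $\textbf{i}_2,\textbf{i}_3$ (e.g., the initial $\mu_{b_i}$ in the odd case shifts which arcs remain at $p$, hence shifts the size of the $\textbf{i}_2$ fans by one).
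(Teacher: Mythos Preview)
Your overall squeeze strategy (upper bound by direct count, lower bound from Theorem~\ref{Dn_IV_lower_bound}) is sound, but the block-by-block count contains a concrete error at $\textbf{i}_3$. You claim that the elements of $\mathcal{C}$ are in bijection with the degree-$4$ vertices $a_j$, giving $\ell(\textbf{i}_3)=m$. This is false: each $\alpha_i\in\mathcal{C}$ sits in a triangle of $\underline{\mu}_1(\textbf{T})$ together with the \emph{first} arc $\gamma_1$ of some fan $\mathcal{F}(p,j)$ defining $\textbf{i}_2$, so $\mathcal{C}$ only accounts for those degree-$4$ vertices $a_j$ whose arc $a_j$ survives $\underline{\mu}_1$ and becomes a first arc of one of the $p$-fans. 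The remaining degree-$4$ vertices are precisely those whose arc $a_i$ was already flipped inside a three-arc fan $(b_i,a_i,b_{i-1})$ of $\textbf{i}_1$; for these, the corresponding contribution shows up in $\textbf{i}_4$, not $\textbf{i}_3$. In the paper's running example one has $m=2$ while $\ell(\textbf{i}_3)=1$ and $\ell(\textbf{i}_4)=1$. With your miscount of $\textbf{i}_3$, the five contributions will not sum to the lower bound and the squeeze fails.

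The paper avoids this trap by grouping the blocks differently. First, it observes in one line that $\underline{\mu}_2\underline{\mu}_1$ flips every arc of $\textbf{T}$ exactly once, so $\ell(\textbf{i}_1\circ\textbf{i}_2)=|(Q_\textbf{T})_0|=k_{\text{in}}+k_{\text{out}}$; this bypasses your fan-by-fan case analysis and the even/odd split entirely. Second, it builds a single bijection $f:\text{supp}(\textbf{i}_3\circ\textbf{i}_4)\to\{a_i:\deg(a_i)=4\}$, sending $\alpha_i\in\text{supp}(\textbf{i}_3)$ to the vertex of the associated $\gamma_1$, and $\delta_i\in\text{supp}(\textbf{i}_4)$ to $v_{\overline{\varrho^{-1}(\delta_i)}}$, and checks surjectivity via the dichotomy just described; hence $\ell(\textbf{i}_3\circ\textbf{i}_4)=m$. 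Third, it computes $\ell(\textbf{i}_5)=k_{\text{in}}-2$ directly: $\varrho(\textbf{T})$ has exactly $k_{\text{in}}$ arcs notched at $p$, the triangulation $\underline{\mu}_4\underline{\mu}_3\underline{\mu}_2\underline{\mu}_1(\textbf{T})$ has exactly two, and each flip in $\underline{\mu}_5$ adds exactly one new notched arc at $p$. No appeal to the lower bound is needed. If you want to salvage your outline, the minimal fix is to merge your $\textbf{i}_3$ and $\textbf{i}_4$ counts into a single bijection with the degree-$4$ vertices, and to replace the vague termination argument for $\textbf{i}_5$ by the notched-arc count.
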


\begin{example}
Here we give an example of the maximal green sequence $\textbf{i}_1\circ \textbf{i}_2 \circ \textbf{i}_3 \circ \textbf{i}_4 \circ \textbf{i}_5$ from Theorem~\ref{Thm:Dn_short_seq} in the context of Figure~\ref{Dn_seq_example}. We have that $\textbf{i}_1 = (a_7, b_6, a_4, b_3, b_2, a_2, b_1),$ $\textbf{i}_2 = (a_3, a_5, a_6, a_1),$ $\textbf{i}_3 = (i_\alpha)$, $\textbf{i}_4 = (i_\delta)$, and $\textbf{i}_5 = (i_{\sigma^{(1)}_1}, i_{\sigma^{(1)}_2}, i_{\sigma^{(2)}_1}, i_{\sigma^{(2)}_2}, i_{\sigma^{(3)}_1}).$ Notice that the arc $\alpha^\prime$ obtained by flipping $\alpha$ also satisfies $\alpha^\prime = \varrho({b_2})$, as desired.
\end{example}

\begin{figure}
\includegraphics[scale=.8]{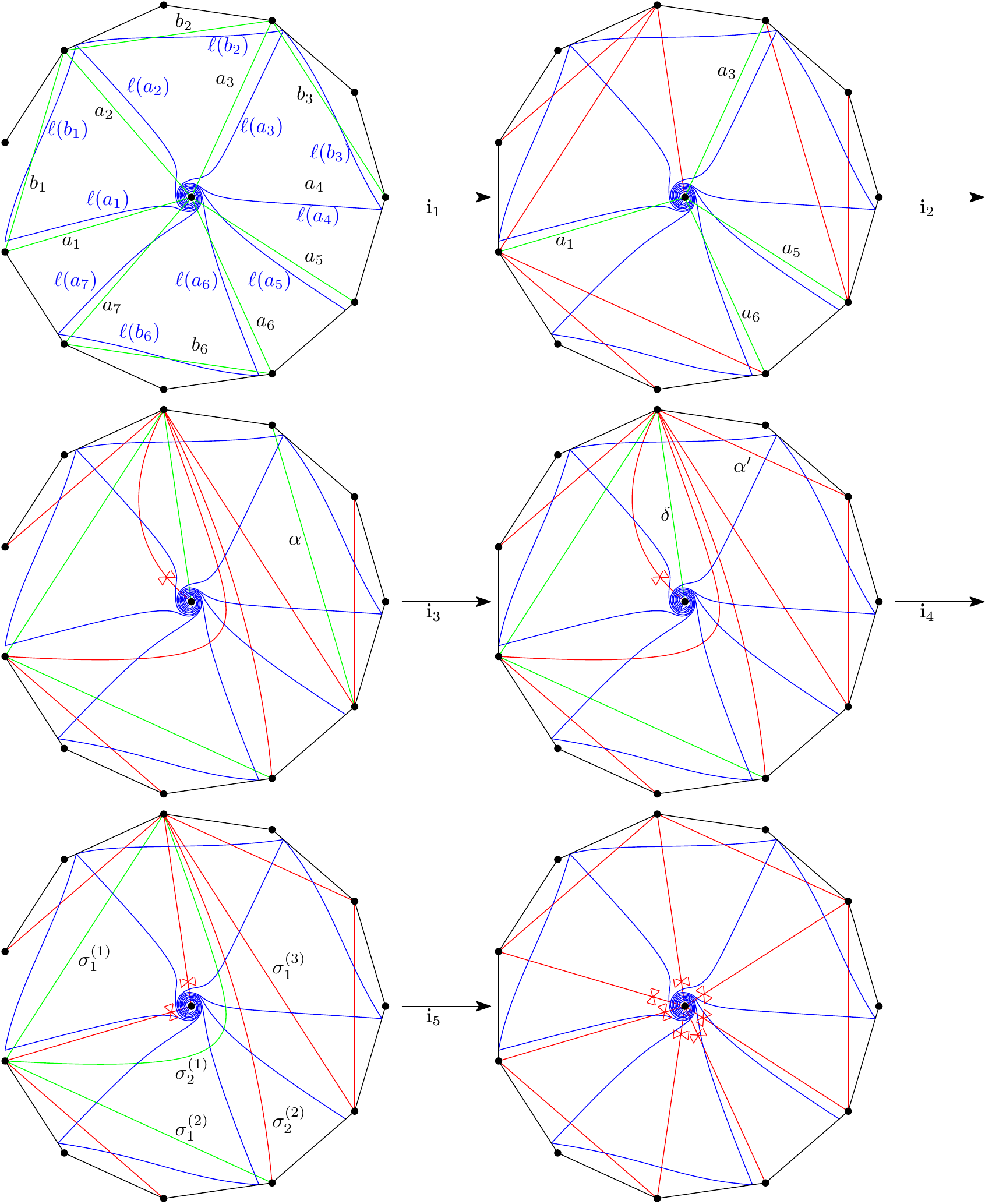}
\caption{An example of a maximal green sequence $\textbf{i}_1\circ\textbf{i}_2\circ\textbf{i}_3\circ\textbf{i}_4\circ\textbf{i}_5$.}
\label{Dn_seq_example}
\end{figure}


In the proof of Theorem~\ref{Thm:Dn_short_seq}, we will need to consider how a specific part of a triangulation $\textbf{T}_1$ of $(\textbf{S}, \textbf{M})$ is affected by flips. In particular, we will need to understand how a single flip $\mu_\gamma: \textbf{T}_1 \to \textbf{T}_1\backslash\{\gamma\}\sqcup\{\gamma^\prime\}$ affects the colors of arcs in $\textbf{T}_1\backslash\{\gamma\}$ that are close to $\gamma$ in $\textbf{T}_1$. We formulate what we mean by a specific part of a triangulation $\textbf{T}_1$ as follows. 

Let $\textbf{T}_1$ be a triangulation of $(\textbf{S}, \textbf{M})$ and let $\mathcal{C} \subset \textbf{T}_1$. We define ${(\textbf{S}(\mathcal{C}), \textbf{M}(\mathcal{C}))}$ to be the marked surface obtained from $(\textbf{S}, \textbf{M})$ by taking the disjoint union of all ideal quadrilaterals in $\textbf{T}_1$ of arcs appearing in $\mathcal{C}$ and then identifying any common triangles appearing in these ideal quadrilaterals. The resulting (possibly disconnected) surface ${(\textbf{S}(\mathcal{C}), \textbf{M}(\mathcal{C}))}$ clearly has a triangulation given by the arcs of $\mathcal{C}$.  If we are working with a specific set of arcs $\{\alpha_1, \ldots, \alpha_t\} \in \textbf{T}_1$, for convenience, we write ${(\textbf{S}(\alpha_1, \ldots, \alpha_t), \textbf{M}(\alpha_1,\ldots, \alpha_t))}$ to denote ${(\textbf{S}(\{\alpha_1, \ldots, \alpha_t\}), \textbf{M}(\{\alpha_1,\ldots, \alpha_t\}))}$. Similarly, when we are considering a fan $\mathcal{F}$ of arcs in $\textbf{T}_1$, we write ${(\textbf{S}(\mathcal{F}), \textbf{M}(\mathcal{F}))}$ to denote ${(\textbf{S}(\mathcal{C}), \textbf{M}(\mathcal{C}))}$ where $\mathcal{C}$ is the underlying set of arcs in $\mathcal{F}$. Note that for such a fan $\mathcal{F}$ the surface ${(\textbf{S}(\mathcal{F}), \textbf{M}(\mathcal{F}))}$ will be connected. 

The following facts, which we will use in the proof of Theorem~\ref{Thm:Dn_short_seq}, are straightforward to verify.

\begin{lemma}\label{Lem:subsurfaces}
Let $\mathcal{C}$ and $\mathcal{C}^\prime$ be two subsets of arcs of a triangulation $\textbf{T}_1 \in \overrightarrow{EG}(\textbf{T})$ of $(\textbf{S}, \textbf{M})$, let $Q^\prime_\textbf{T} \in \overrightarrow{EG}(\widehat{Q}_\textbf{T})$ be the quiver corresponding to $\textbf{T}_1$, and let $(\gamma_1,\ldots, \gamma_s)$ and $(\delta_1,\ldots, \delta_t)$ be total orderings of the arcs in $\mathcal{C}$ and $\mathcal{C}^\prime$, respectively.
\begin{itemize}
\item[$a)$] If the intersection of $(\textbf{S}(\mathcal{C}),\textbf{M}(\mathcal{C}))$ and $(\textbf{S}(\mathcal{C}^\prime),\textbf{M}(\mathcal{C}^\prime))$, regarding the two as subspaces of $(\textbf{S},\textbf{M})$, is empty or consists only of boundary arcs of these two surfaces, then $${\mu}_{\gamma_s}\cdots\mu_{\gamma_1}{\mu}_{\delta_t}\cdots\mu_{\delta_1}Q^\prime_\textbf{T} = {\mu}_{\delta_t}\cdots\mu_{\delta_1}{\mu}_{\gamma_s}\cdots\mu_{\gamma_1}Q^\prime_\textbf{T}.$$

\item[$b)$] If an arc $\gamma \in \textbf{T}_1$ is neither an interior arc nor a boundary arc of $(\textbf{S}(\mathcal{C}),\textbf{M}(\mathcal{C}))$, then the color of $\gamma$ as an arc of ${\mu}_{\gamma_s}\cdots\mu_{\gamma_1}(\textbf{T}_1)$ is the same as its color as an arc of $\textbf{T}_1.$ 
\end{itemize}
\end{lemma}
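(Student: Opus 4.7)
My plan is to exploit the locality of flips: flipping an arc $\alpha$ modifies only the two triangles making up the ideal quadrilateral of $\alpha$. First I would establish a key geometric claim by induction on $i$: after performing $\mu_{\gamma_i}\cdots\mu_{\gamma_1}$, every modification to $\textbf{T}_1$ is confined to the subsurface $(\textbf{S}(\mathcal{C}), \textbf{M}(\mathcal{C}))$. The base case is immediate, since the ideal quadrilateral of $\gamma_1$ in $\textbf{T}_1$ lies in $(\textbf{S}(\mathcal{C}), \textbf{M}(\mathcal{C}))$ by definition, and the flip replaces $\gamma_1$ by an arc inside this quadrilateral. For the inductive step, the flip $\mu_{\gamma_{i+1}}$ takes place in the ideal quadrilateral of $\gamma_{i+1}$ in the updated triangulation; both triangles of this quadrilateral still lie in $(\textbf{S}(\mathcal{C}), \textbf{M}(\mathcal{C}))$, since each is either unchanged from $\textbf{T}_1$ or was produced by an earlier flip inside the subsurface. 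An analogous statement holds for $\mathcal{C}^\prime$ with respect to $(\textbf{S}(\mathcal{C}^\prime), \textbf{M}(\mathcal{C}^\prime))$.

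For part~(a), I would use the hypothesis that $(\textbf{S}(\mathcal{C}), \textbf{M}(\mathcal{C}))$ and $(\textbf{S}(\mathcal{C}^\prime), \textbf{M}(\mathcal{C}^\prime))$ meet only in boundary arcs to conclude that the triangles modified by $\mu_{\gamma_s}\cdots\mu_{\gamma_1}$ and by $\mu_{\delta_t}\cdots\mu_{\delta_1}$ are disjoint. Hence each individual pair $\mu_{\gamma_i}$ and $\mu_{\delta_j}$ commutes as an operation on triangulations, and an easy induction upgrades this to
\[
\mu_{\gamma_s}\cdots\mu_{\gamma_1}\mu_{\delta_t}\cdots\mu_{\delta_1}(\textbf{T}_1) = \mu_{\delta_t}\cdots\mu_{\delta_1}\mu_{\gamma_s}\cdots\mu_{\gamma_1}(\textbf{T}_1).
\]
Transferring this equality of triangulations through the isomorphism $\overrightarrow{EG}(\textbf{T}) \cong \overrightarrow{EG}(\widehat{Q}_\textbf{T})$ recalled in Section~\ref{Sec:QT} will then yield the desired equality of ice quivers.

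For part~(b), I would use that $\gamma$ lies in no triangle containing any $\gamma_i$, which follows from $\gamma$ being neither interior nor boundary of $(\textbf{S}(\mathcal{C}), \textbf{M}(\mathcal{C}))$. Applying the geometric claim, the two triangles adjacent to $\gamma$ in $\textbf{T}_1$ are not among the triangles of $(\textbf{S}(\mathcal{C}), \textbf{M}(\mathcal{C}))$ and thus remain intact throughout the mutation sequence. In particular, $\gamma$ still appears in $\mu_{\gamma_s}\cdots\mu_{\gamma_1}(\textbf{T}_1)$, and its ideal quadrilateral there coincides with its ideal quadrilateral in $\textbf{T}_1$. Combining the identification $c_{\gamma, \alpha} = b_\gamma(\textbf{T}^\prime, \ell(\alpha))$ of $\textbf{c}$-vector entries with shear coordinates with the fact that each shear coordinate is computed purely from intersections of $\ell(\alpha)$ with this quadrilateral (Definition~\ref{def:shearcoord}), the $\textbf{c}$-vector at $\gamma$ is preserved. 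Since the color of $\gamma$ is determined by the sign of its $\textbf{c}$-vector, the color is preserved as well.

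The main subtlety will be a careful verification of the inductive geometric claim: as earlier flips modify triangles inside $(\textbf{S}(\mathcal{C}), \textbf{M}(\mathcal{C}))$, one must confirm that the pair of triangles surrounding the next arc to be flipped remains inside the original subsurface. This should follow cleanly because each flip replaces an interior arc by another arc inside the same quadrilateral, so modifications can never escape $(\textbf{S}(\mathcal{C}), \textbf{M}(\mathcal{C}))$ and in particular can never reach the pair of triangles surrounding $\gamma$ in part~(b).
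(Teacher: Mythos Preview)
The paper does not actually prove this lemma; it states the two facts and remarks that they ``are straightforward to verify.'' Your proposal supplies exactly the kind of verification the authors have in mind: the locality of a flip to the ideal quadrilateral of the flipped arc, together with an induction showing that all changes produced by $\mu_{\gamma_i}\cdots\mu_{\gamma_1}$ remain inside $(\textbf{S}(\mathcal{C}),\textbf{M}(\mathcal{C}))$, is the natural and correct argument. Your treatment of part~(b) via the identification of $\textbf{c}$-vector entries with shear coordinates computed in the unchanged quadrilateral of $\gamma$ is also the expected route. So your approach is correct and is essentially what the paper leaves implicit.
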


\begin{proof}[Proof of Theorem~\ref{Thm:Dn_short_seq}]
We assume that at least one of the quivers $Q^{(i)}$ is the empty quiver or that none of the quivers $Q^{(i)}$ is the empty quiver and $k$ is even. The proof when each of the quivers $Q^{(i)}$ consists of a single vertex and $k$ is odd is similar so we omit it. We prove that $\textbf{i}_1\circ \textbf{i}_2 \circ \textbf{i}_3 \circ \textbf{i}_4\circ \textbf{i}_5$ is a maximal green sequence by analyzing the effect each subsequence $\underline{\mu}_j$ of $\underline{\mu}_5\cdots \underline{\mu}_1$ has on the triangulation $\underline{\mu}_{j-1} \cdots \underline{\mu}_1(\textbf{T})$. In the course of the proof, we also justify why in the definition of the sequence $\textbf{i}_i$ for $i = 1, 2, 3, 4$ we can enumerate the subsequences defining it in any order. Similarly, we explain why the arcs in $\mathcal{S}_i$ for $i = 1, \ldots, r$ may be flipped in any order.


We first consider the effect of $\underline{\mu}_1$ on $\textbf{T}$. Let $\textbf{i}_1 = \textbf{i}_{\mathcal{F}(i_1)} \circ \cdots \circ \textbf{i}_{\mathcal{F}(i_t)}$. One observes that the intersection of the surfaces $(\textbf{S}(\mathcal{F}(i_j)), \textbf{M}(\mathcal{F}(i_{j})))$ and $(\textbf{S}(\mathcal{F}(i_{j^\prime})), \textbf{M}(\mathcal{F}(i_{j^\prime})))$ where $j \neq j^\prime$ is either empty or consists of a boundary arc of these surfaces. Therefore, by Lemma~\ref{Lem:subsurfaces} $a)$, we can assume $i_1 > i_2 > \cdots > i_t$. Now the sequence $\textbf{i}_1$ can be expressed as the composition of a unique family of sequences of the form $\textbf{i}_{\mathcal{F}(i)}\circ \textbf{i}_{\mathcal{F}(i-2)}\circ \cdots \circ \textbf{i}_{\mathcal{F}(j+2)} \circ \textbf{i}_{\mathcal{F}(j)}$ where $j = 1$ and $i = k-1$ if each of the quivers $Q^{(1)}, \ldots, Q^{(k)}$ is a single vertex (and, in this case, there is exactly one sequence in this family) or $Q^{(i+2)}$ and $Q^{(j-1)}$ are empty quivers. Given such a sequence $\textbf{i}_{\mathcal{F}(i)}\circ\textbf{i}_{\mathcal{F}(i-2)} \circ \cdots \circ \textbf{i}_{\mathcal{F}(j+2)} \circ \textbf{i}_{\mathcal{F}(j)}$, we show the two possible ways that $\underline{\mu}_{\mathcal{F}(j)}\underline{\mu}_{\mathcal{F}(j+2)} \cdots \underline{\mu}_{\mathcal{F}(i-2)}\underline{\mu}_{\mathcal{F}(i)}$ may affect the triangulation $\textbf{T}$ in Figure~\ref{i1_transformation}. It is clear that all flips in $\underline{\mu}_1$ take place at green arcs. 


\begin{figure}
$$\begin{array}{cccc}\includegraphics[scale=.9]{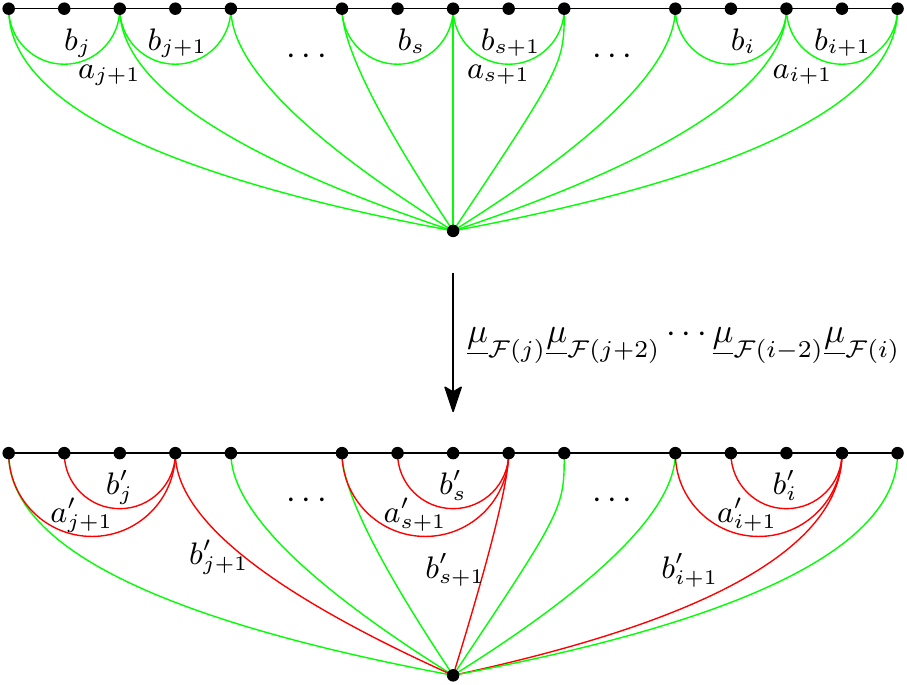} & \includegraphics[scale=.9]{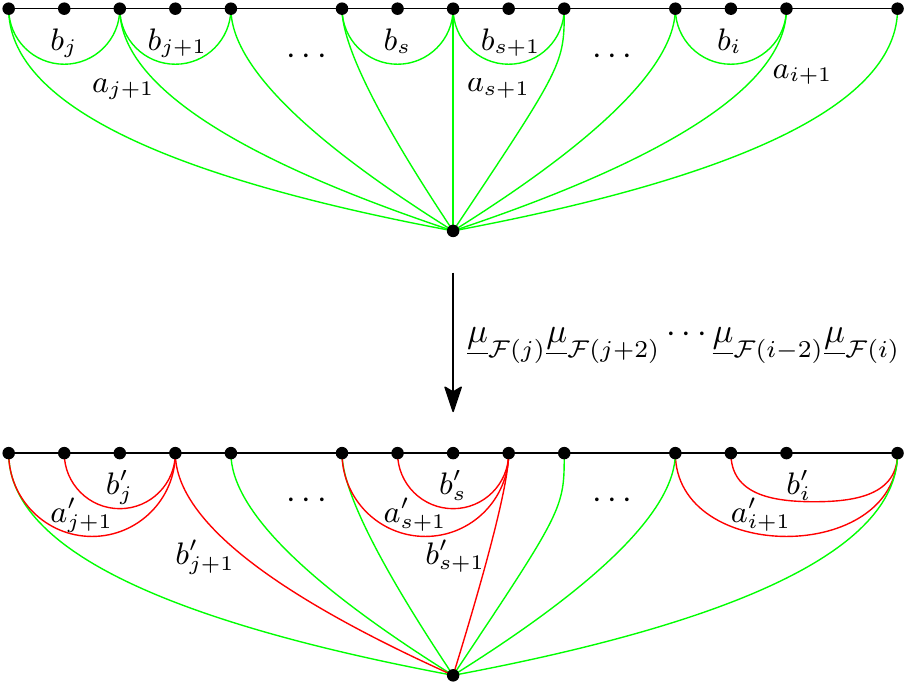} \\ (a) & (b) \end{array}$$
\caption{The effect of the sequence of flips $\underline{\mu}_{\mathcal{F}(j)}\underline{\mu}_{\mathcal{F}(j+2)} \cdots \underline{\mu}_{\mathcal{F}(i-2)}\underline{\mu}_{\mathcal{F}(i)}$ on the triangulation $\textbf{T}$. The quiver $Q^{(i+1)}$ is a single vertex in $(a)$ and it is empty in $(b)$.}
\label{i1_transformation}
\end{figure}



Next, we consider the effect of $\underline{\mu}_2$ on $\underline{\mu}_1(\textbf{T})$. From the description of $\underline{\mu}_1(\textbf{T})$ given by Figure~\ref{i1_transformation}, the surfaces $(\textbf{S}(\mathcal{F}(p,i)), \textbf{M}(\mathcal{F}(p, i)))$ and $(\textbf{S}(\mathcal{F}(p,j)), \textbf{M}(\mathcal{F}(p, j)))$ with $i \neq j$ can only intersect at their boundaries. By Lemma~\ref{Lem:subsurfaces} $a)$, we can enumerate the fans appearing in $\textbf{i}_2$ in any order.

\begin{figure}

$$\begin{array}{cccc} \includegraphics[scale=.9]{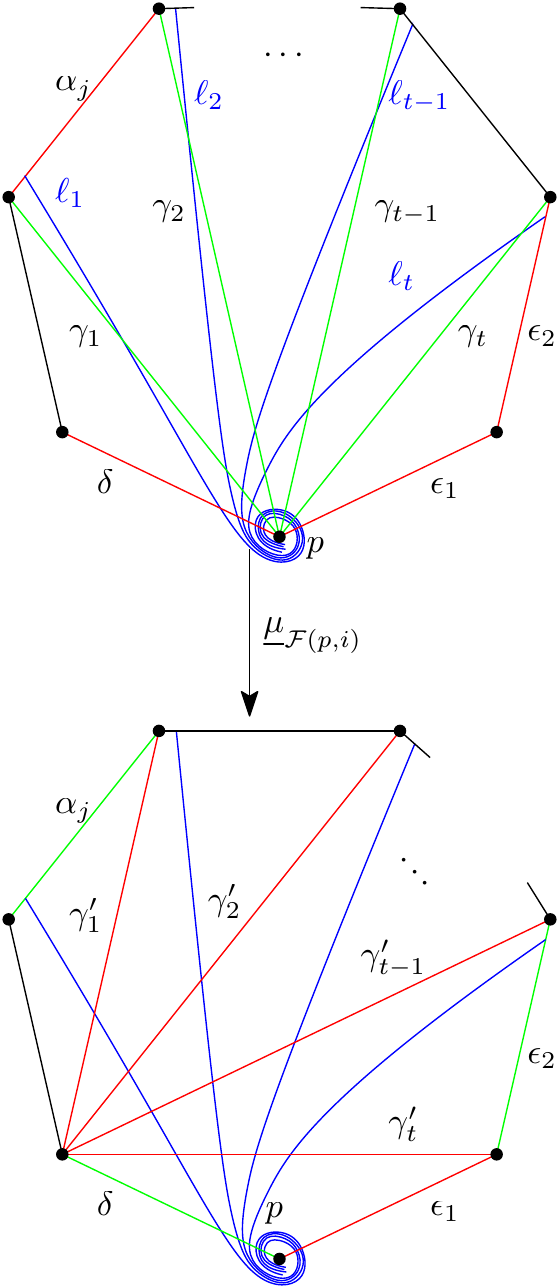} & \raisebox{.24in}{\includegraphics[scale=.9]{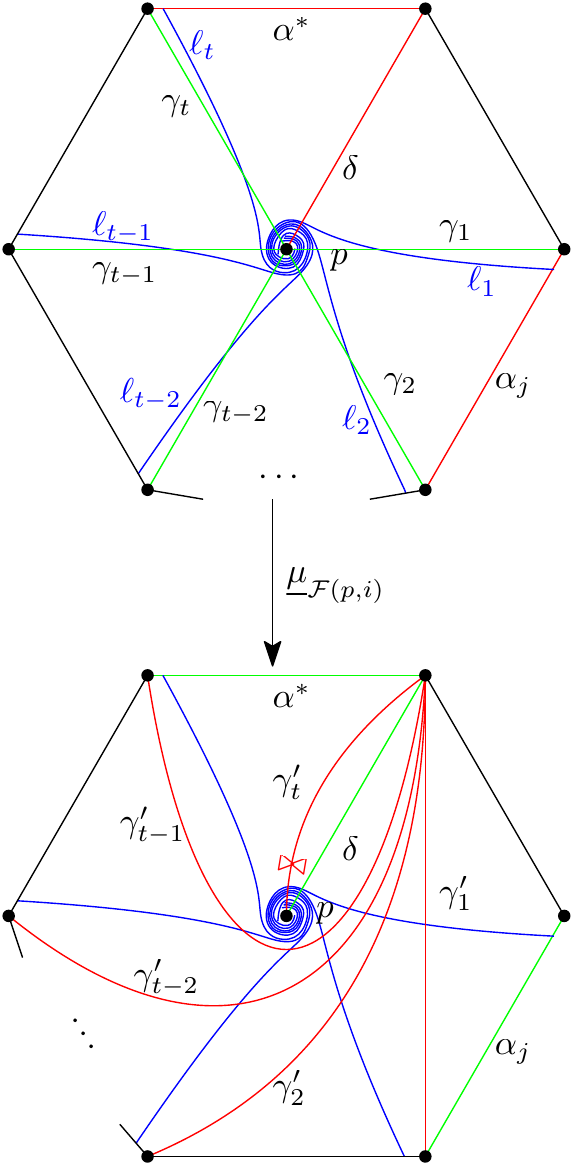}} & \raisebox{.24in}{\includegraphics[scale=.9]{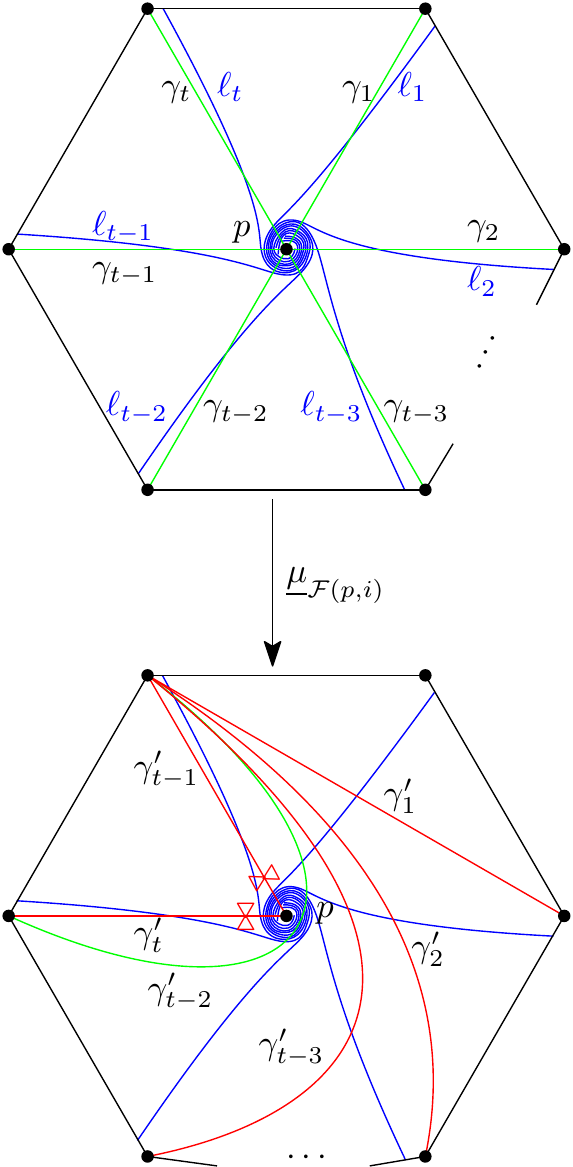}} \\ (a) & (b) & (c) \end{array}$$
\caption{The three ways in which $\underline{\mu}_{\mathcal{F}(p,i)}$ may affect $\underline{\mu}_1(\textbf{T})$ where $\mathcal{F}(p,i) = (\gamma_1, \ldots, \gamma_t)$ is any fan appearing in $\textbf{i}_2$. Here we only show the effect on the triangulated surface $(\textbf{S}(\mathcal{F}(p,i)), \textbf{M}(\mathcal{F}(p,i)))$. In $(a),$ this surface is an unpunctured disk, and in $(b)$ and $(c)$, this surface is a once-punctured disk. In this figure and in later figures, we let $\ell_i$ denote the elementary lamination defined by $\gamma_i$. We let $\delta = {b^\prime_r}$, $\epsilon_1 = {b^\prime_\ell}$, and $\epsilon_2 = {a^\prime_\ell}$  where ${b_r}$, ${b_\ell}$, and ${a_\ell}$ were flipped when we applied $\underline{\mu}_1$. We use the same notation for these arcs in Figures~\ref{i2_transformation_a_intermed} and \ref{i2_transformation_b_intermed}. The arc $\alpha^* = {a^\prime_\ell}$ in $(b)$ where $a_\ell$ was flipped when we applied $\underline{\mu}_1$, and this is the same arc $\alpha^*$ that appears in later figures. In $(a)$ and $(b)$, if $\alpha_j$ is not a boundary arc of $(\textbf{S}, \textbf{M})$, then it appears in $\textbf{i}_3$.}
\label{i2_transformation}
\end{figure}



\begin{figure}
$$\begin{array}{cccccc} \includegraphics[scale=.5]{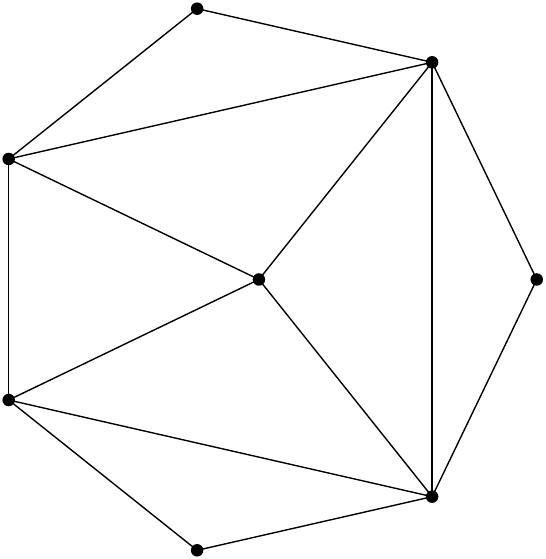} & \includegraphics[scale=.5]{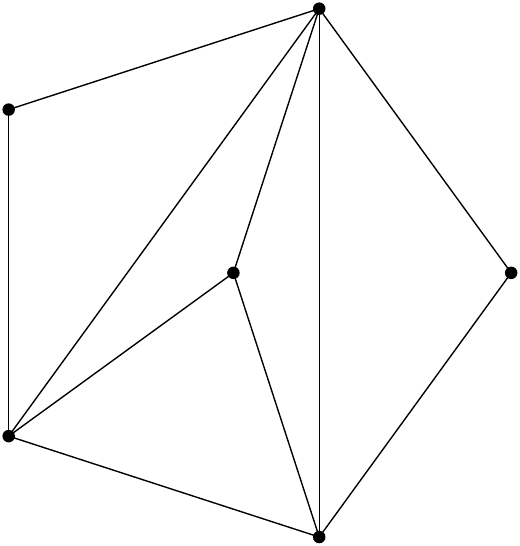} & \includegraphics[scale=.5]{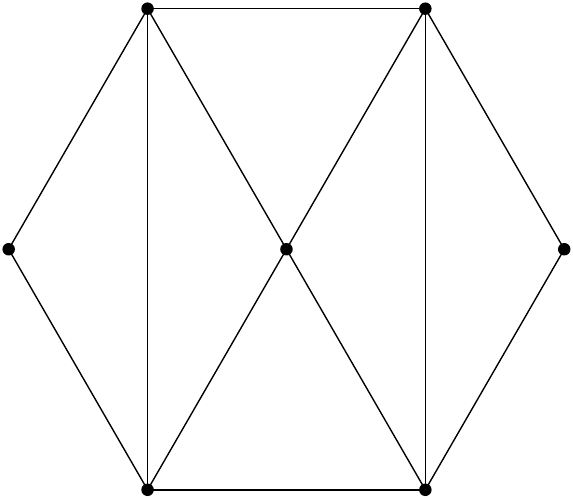} \\ (b_1^\prime) & (b_2^\prime) & (c^\prime) \end{array}$$
\caption{In $(c^\prime)$ (resp., $(b_1^\prime)$ and $(b_2^\prime)$), we show the triangulation(s) where $\textbf{i}_2$ consists of a single fan with exactly two arcs.}
\label{special_cases_b_c}
\end{figure}

The sequence $\textbf{i}_2$ is either defined by at least two fans $\mathcal{F}(p,i)$ and $\mathcal{F}(p,j)$ or it is defined by a single fan $\mathcal{F}(p,i)$. Let $\mathcal{F}(p,i) = (\gamma_1, \ldots, \gamma_t)$ be one of the fans defining $\textbf{i}_2$. In the former case, the surface $(\textbf{S}(\mathcal{F}(p,i)), \textbf{M}(\mathcal{F}(p,i)))$ is an unpunctured disk, triangulated by a fan triangulation (see Figure~\ref{i2_transformation} $(a)$). In the latter case, the surface is a once-punctured disk, triangulated by a fan of arcs about the puncture $p$ (see Figure~\ref{i2_transformation} $(b)$ and $(c)$)\footnote{These three cases can equivalently be described by saying Figure~\ref{i2_transformation} $(a)$, Figure~\ref{i2_transformation} $(b)$, and Figure~\ref{i2_transformation} $(c)$, respectively, is the case in which among the fans defining $\textbf{i}_1$ at least two fans consist of three arcs, exactly one fan consists of three arcs, and no fans consist of three arcs.}. Since Figure~\ref{i1_transformation} shows that the only green arcs of $\underline{\mu}_1(\textbf{T})$ are those in $\textbf{T}\cap \underline{\mu}_1(\textbf{T})$, all flips in $\underline{\mu}_2$ take place at green arcs.

One checks that if $\textbf{i}_2 = \textbf{i}_{\mathcal{F}(p,1)}$, then $\mathcal{F}(p,1)$ consists of at least two arcs. In the case when $\mathcal{F}(p,1)$ consists of exactly two arcs, the triangulation $\textbf{T}$ is one of three shown in Figure~\ref{special_cases_b_c}. In these cases, we leave it to the reader to verify that $\textbf{i}_1 \circ \textbf{i}_2 \circ \textbf{i}_3 \circ \textbf{i}_4 \circ \textbf{i}_5$ is a maximal green sequence, and we focus on the generic case when $t > 2$. 

In Figure~\ref{i2_transformation}, we also show the effect of performing the transformation $\underline{\mu}_{\mathcal{F}(p,i)}$ on the triangulated surface $(\textbf{S}(\mathcal{F}(p,i)), \textbf{M}(\mathcal{F}(p,i)))$ in each of these three cases. The Figures~\ref{i2_transformation_a_intermed}, \ref{i2_transformation_b_intermed}, and \ref{i2_transformation_c_intermed} justify why the resulting triangulation of $(\textbf{S}(\mathcal{F}(p,i)), \textbf{M}(\mathcal{F}(p,i)))$ is what appears in the lower image in Figure~\ref{i2_transformation} and that the arcs in that triangulation have the indicated colors.  



\begin{figure}
$$\begin{array}{cccc} \includegraphics[scale=.9]{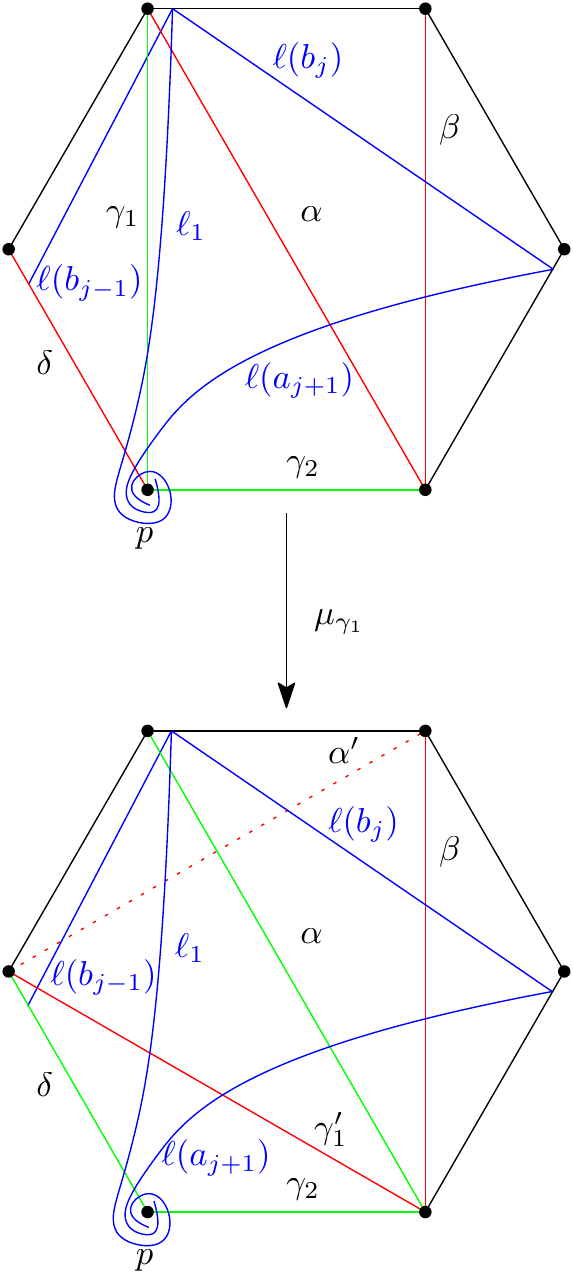} & \includegraphics[scale=.9]{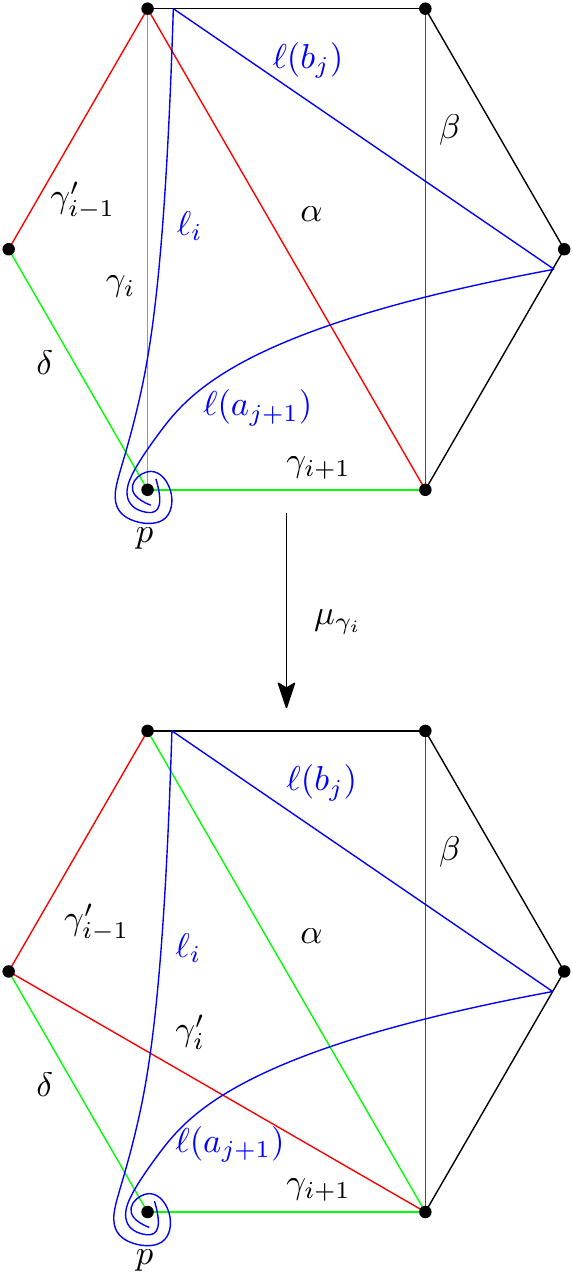} & \includegraphics[scale=.9]{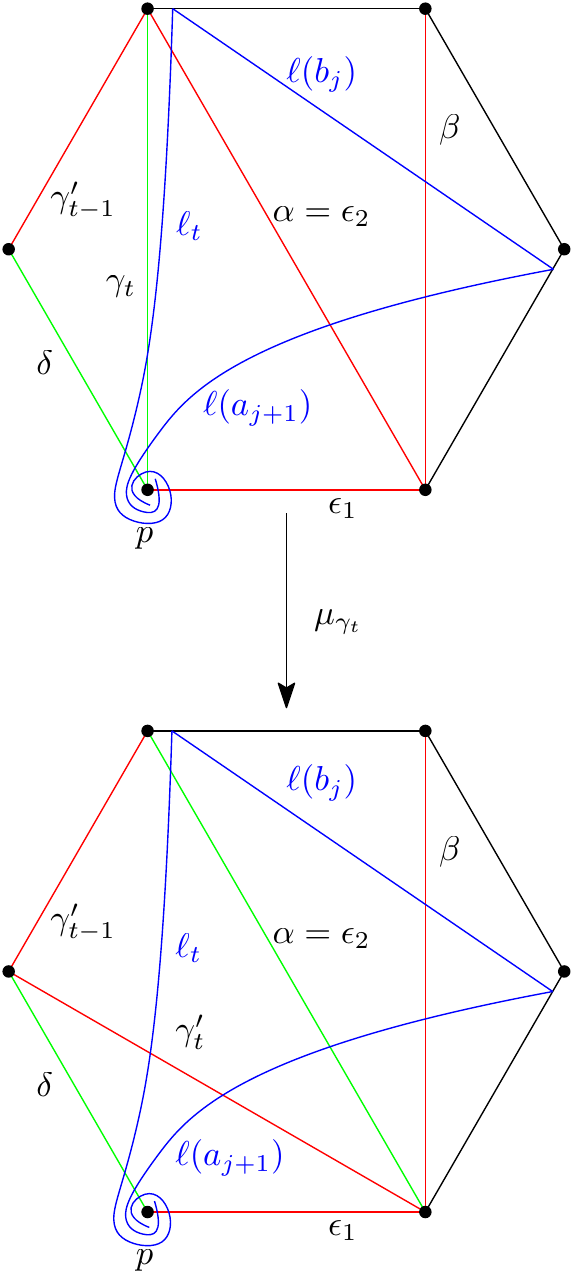} \\ (a_1) & (a_2) & (a_3) \end{array}$$\caption{The effect of applying $\underline{\mu}_{\mathcal{F}(p,i)}$ to $\underline{\mu}_1(\textbf{T})$ when the surface $(\textbf{S}(\mathcal{F}(p,i)), \textbf{M}(\mathcal{F}(p,i)))$ is an unpunctured disk, as in Figure~\ref{i2_transformation} $(a)$. The arc $\alpha$ may be a boundary arc of $(\textbf{S}, \textbf{M})$, in which case the arc $\beta$ and the black arcs in the ideal quadrilateral containing $\beta$ should be removed from the picture. If not, then $\alpha ={a^\prime_{j+1}}$ and $\beta ={b^\prime_j}$ where $a_{j+1}$ and $b_j$ were flipped when we applied $\underline{\mu}_1$. The black arcs are boundary arcs of $(\textbf{S}, \textbf{M})$.}
\label{i2_transformation_a_intermed}
\end{figure}

\begin{figure}
$$\begin{array}{cccc} \includegraphics[scale=.9]{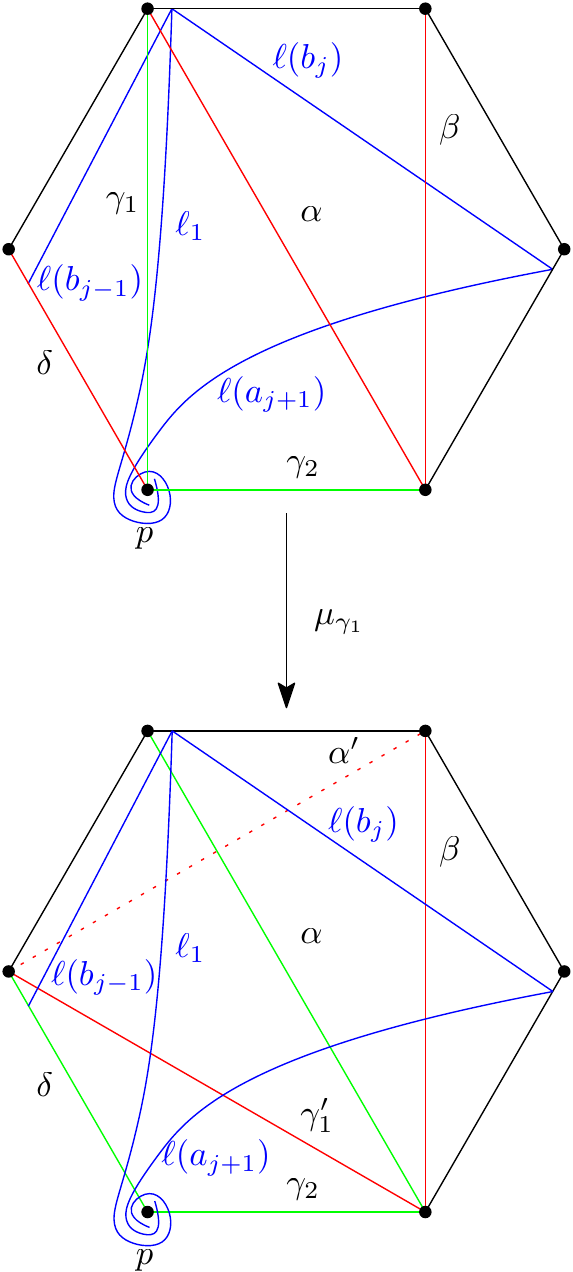} & \raisebox{.2in}{\includegraphics[scale=.9]{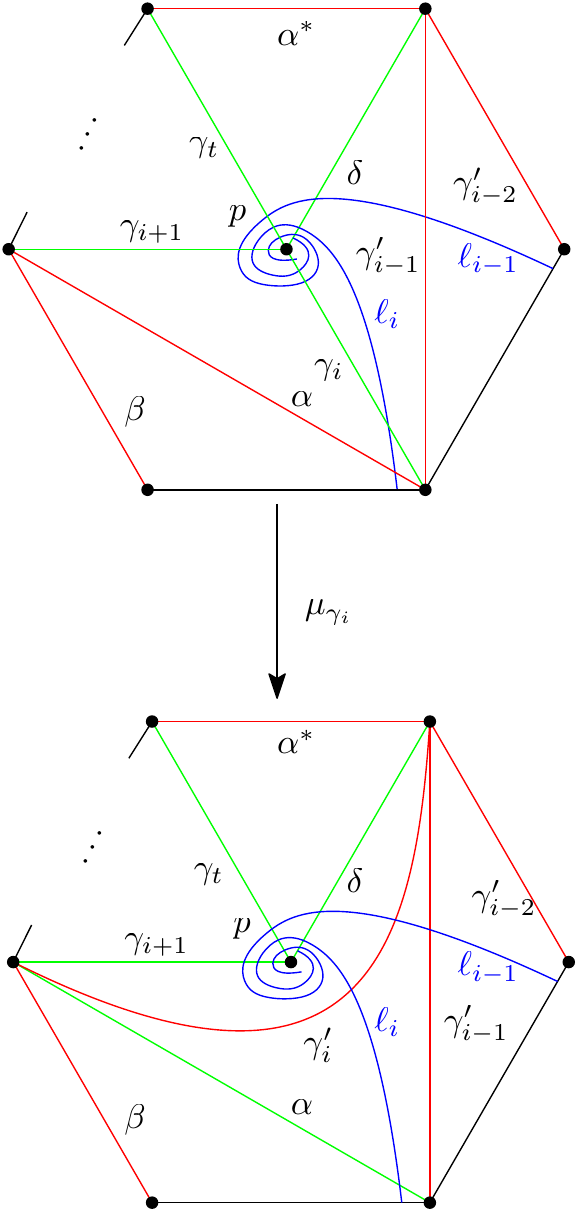}} & \raisebox{.1in}{\includegraphics[scale=.9]{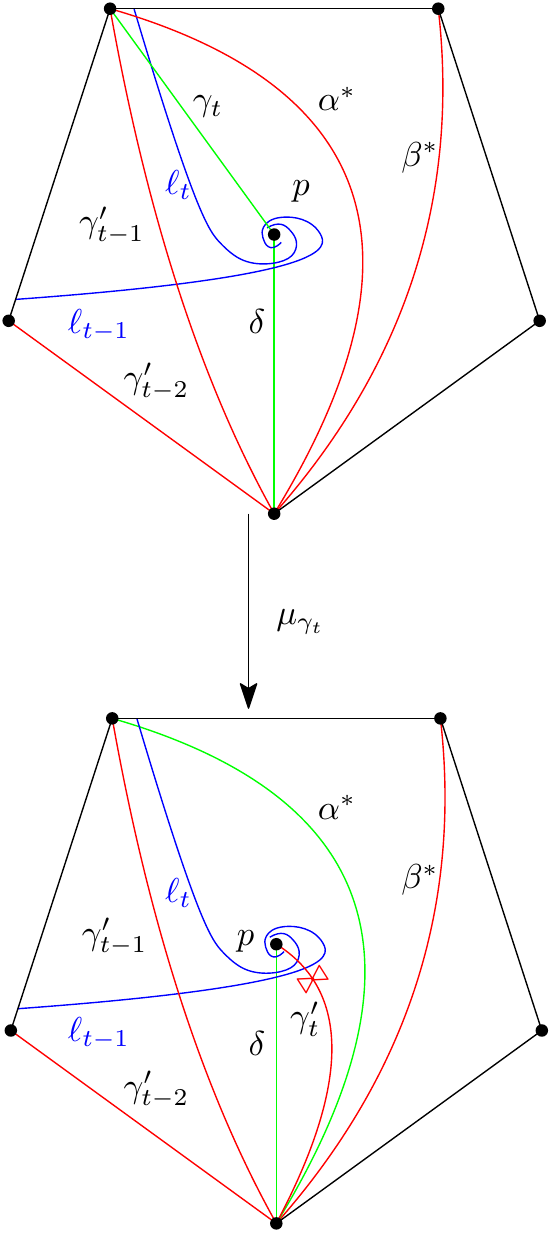}} \\ (b_1) & (b_2) & (b_3) \end{array}$$\caption{The effect of applying $\underline{\mu}_{\mathcal{F}(p,i)}$ to $\underline{\mu}_1(\textbf{T})$ when the surface $(\textbf{S}(\mathcal{F}(p,i)), \textbf{M}(\mathcal{F}(p,i)))$ is a punctured disk, as in Figure~\ref{i2_transformation} $(b)$. Any arc $\alpha$ may be a boundary arc of $(\textbf{S}, \textbf{M})$, in which case the arc $\beta$ and the black arcs in the ideal quadrilateral containing $\beta$ should be removed from the picture. If not, then $\alpha ={a^\prime_{j+1}}$ and $\beta ={b^\prime_j}$ where $a_{j+1}$ and $b_j$ were flipped when we applied $\underline{\mu}_1$. Also, $\alpha^* = a^\prime_{\ell+1}$ and $\beta^* = b^\prime_\ell$ where $a_{\ell+1}$ and $b_\ell$ were flipped when we applied $\underline{\mu}_1$. The arc $\alpha^*$ is not a boundary arc of $(\textbf{S}, \textbf{M})$. The black arcs in $(b_1)$ are boundary arcs of $(\textbf{S}, \textbf{M})$.}
\label{i2_transformation_b_intermed}
\end{figure}

\begin{figure}
$$\begin{array}{cccccc} \includegraphics[scale=.9]{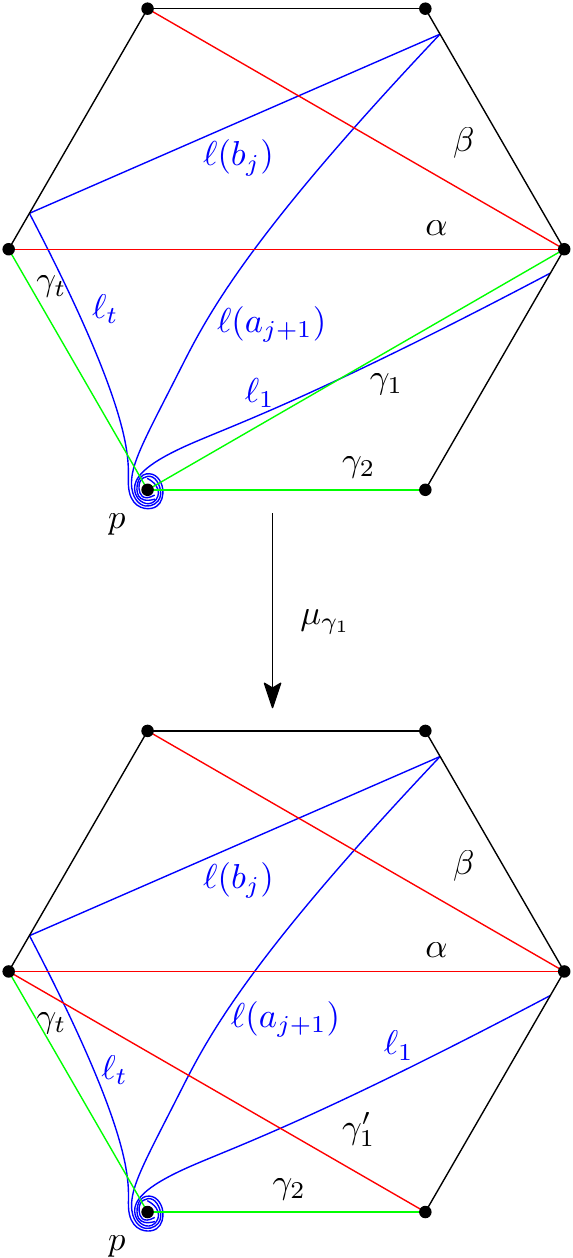} & \includegraphics[scale=.9]{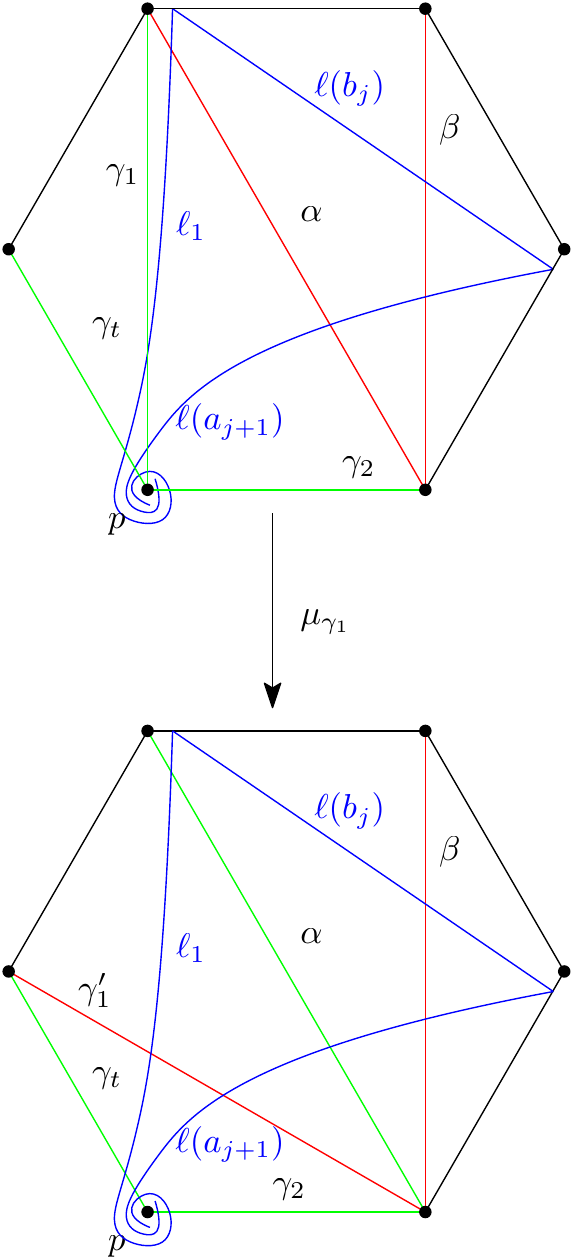} & \raisebox{.175in}{\includegraphics[scale=.9]{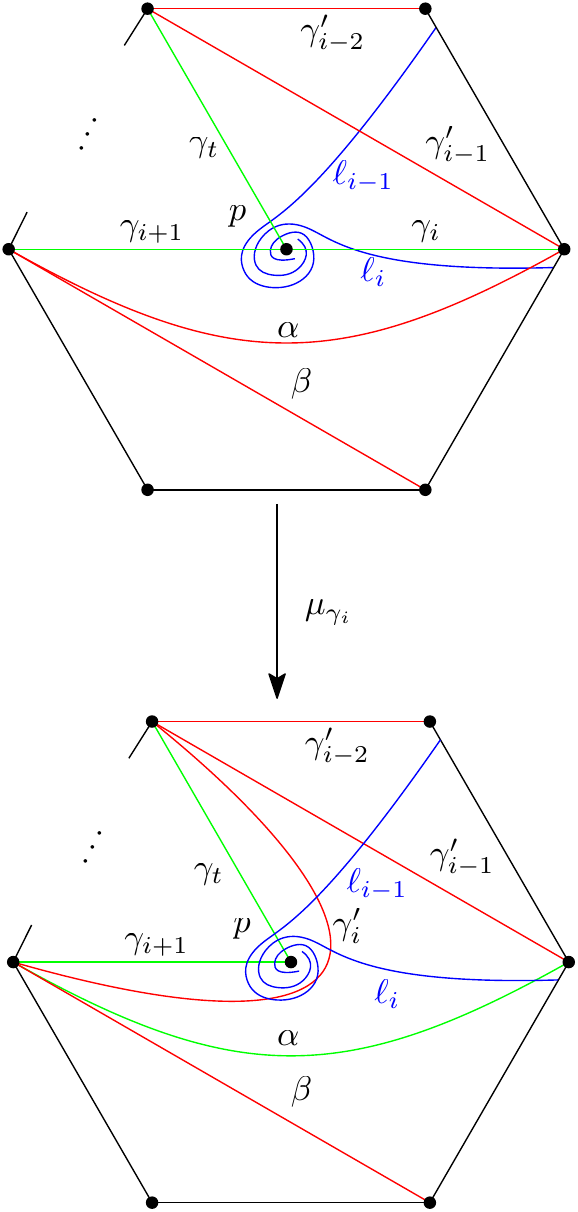}} \\ (c_1) & (c_1^\prime) & (c_2) \end{array}$$ $$\begin{array}{cccccccccc} \includegraphics[scale=.9]{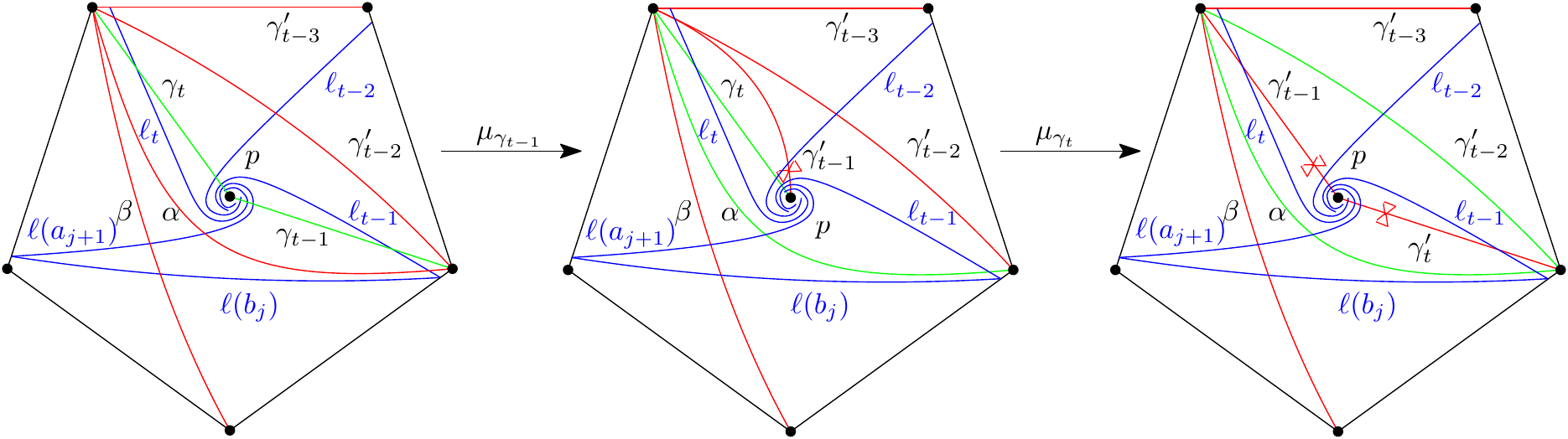} \\ (c_3) \end{array}$$
\caption{The effect of applying $\underline{\mu}_{\mathcal{F}(p,i)}$ to $\underline{\mu}_1(\textbf{T})$ when the surface $(\textbf{S}(\mathcal{F}(p,i)), \textbf{M}(\mathcal{F}(p,i)))$ is a punctured disk, as in Figure~\ref{i2_transformation} $(c)$. The arc $\alpha$ may be a boundary arc of $(\textbf{S}, \textbf{M})$, in which case the arc $\beta$ and the black arcs in the ideal quadrilateral containing $\beta$ should be removed from the picture. If not, then $\alpha ={a^\prime_{j+1}}$ and $\beta ={b^\prime_j}$ where $a_{j+1}$ and $b_j$ were flipped when we applied $\underline{\mu}_1$. In $(c_1)$ and $(c_1^\prime)$, we show the two possible configurations in which $\gamma_1$ may appear. The black arcs in $(c_1)$ and $(c_1^\prime)$ are boundary arcs of $(\textbf{S}, \textbf{M}).$}
\label{i2_transformation_c_intermed}
\end{figure}

The Figures~\ref{i2_transformation_a_intermed}, \ref{i2_transformation_b_intermed}, and \ref{i2_transformation_c_intermed} also show that an arc of $\underline{\mu}_2\underline{\mu}_1(\textbf{T})$ one of whose endpoints is $p$ is green if and only if the arc is plain at $p$. Furthermore, we conclude that in the cases of Figures~\ref{i2_transformation} $(a)$ and $(b)$ an arc $\gamma$ of $\underline{\mu}_2\underline{\mu}_1(\textbf{T})$ none of whose endpoints is $p$ is green if and only if $\gamma = {a^\prime_{j+1}}$ for some $j+1 \in [k]$ where ${a_{j+1}}$ was flipped when applying $\underline{\mu}_1$. Lastly, in the case of Figure~\ref{i2_transformation} $(c)$, an arc $\gamma$ of $\underline{\mu}_2\underline{\mu}_1(\textbf{T})$ none of whose endpoints is $p$ is green if and only if $\gamma = {a^\prime_{j+1}}$ for some $j+1 \in [k]$ and the arcs $\gamma, \gamma_\ell,$ and $\gamma_{\ell+1}$ with $\ell = 2, \ldots, t-1$ form a triangle in $\underline{\mu}_1(\textbf{T})$ or $\gamma = \gamma_{t-2}^\prime$.


We now apply $\underline{\mu}_3$ to $\underline{\mu}_2\underline{\mu}_1(\textbf{T}).$ By referring to Figures~\ref{i2_transformation_a_intermed}, \ref{i2_transformation_b_intermed}, and \ref{i2_transformation_c_intermed}, we see that applying $\underline{\mu}_3$ to $\underline{\mu}_2\underline{\mu}_1(\textbf{T})$ means that we flip exactly the green arcs $\alpha$ from Figure~\ref{i2_transformation_a_intermed} $(a_1)$ and Figure~\ref{i2_transformation_b_intermed} $(b_1)$. Observe that none of these flips turn any red arcs green. Furthermore, the surfaces $(\textbf{S}(\mathcal{F}(\alpha_i)), \textbf{M}(\mathcal{F}(\alpha_i)))$ and $(\textbf{S}(\mathcal{F}(\alpha_j)), \textbf{M}(\mathcal{F}(\alpha_j)))$ do not intersect nontrivially so the arcs defining $\textbf{i}_3$ may be flipped in any order.   

Next, we apply $\underline{\mu}_4$ to $\underline{\mu}_3\underline{\mu}_2\underline{\mu}_1(\textbf{T}).$ Note that $\text{supp}(\textbf{i}_4) = \emptyset$ if and only if the sequence $\textbf{i}_2$ is defined by a single fan $\mathcal{F}(p,i)$ where $(\textbf{S}(\mathcal{F}(p,i)), \textbf{M}(\mathcal{F}(p,i)))$ is a once-punctured disk with a fan triangulation given by the arcs of $\mathcal{F}(p,i)$ as in Figure~\ref{i2_transformation} $(c)$. 


If $\text{supp}(\textbf{i}_4) \neq \emptyset$, then every boundary arc of the surface $(\textbf{S}(\mathcal{F}_p), \textbf{M}(\mathcal{F}_p))$ is an internal arc of the triangulation $\underline{\mu}_3\underline{\mu}_2\underline{\mu}_1(\textbf{T})$. Indeed, if $\textbf{i}_2$ is defined by at least two fans (resp., a single fan), the boundary arcs of $(\textbf{S}(\mathcal{F}_p), \textbf{M}(\mathcal{F}_p))$ are exactly the arcs $\gamma_t^\prime$ from Figure~\ref{i2_transformation} $(a)$ (resp., $\alpha^*$ and $\gamma_{t-1}^\prime$ from Figure~\ref{i2_transformation} $(b)$). In the former case, every boundary arc of $(\textbf{S}(\mathcal{F}_p), \textbf{M}(\mathcal{F}_p))$ is red in $\underline{\mu}_3\underline{\mu}_2\underline{\mu}_1(\textbf{T})$, and, in the latter case, the arc $\gamma_{t-1}^\prime$ is the only boundary arc of $(\textbf{S}(\mathcal{F}_p), \textbf{M}(\mathcal{F}_p))$ that is red in $\underline{\mu}_3\underline{\mu}_2\underline{\mu}_1(\textbf{T})$. 

Let $\mathcal{D} := \{\text{arcs of }\mathcal{F}_p\} \sqcup \{\text{boundary arcs of } (\textbf{S}(\mathcal{F}_p), \textbf{M}(\mathcal{F}_p))\}$ and consider $(\textbf{S}(\mathcal{D}), \textbf{M}(\mathcal{D}))$. We show how $\underline{\mu}_4$ affects $\underline{\mu}_3\underline{\mu}_2\underline{\mu}_1(\textbf{T})$ in Figure~\ref{i4_transformation}. In particular, the elementary laminations appearing in Figure~\ref{i4_transformation} $(a)$ do appear because we have previously determined that the arcs $\delta_1, \ldots, \delta_s$ (resp., the boundary arcs of $(\textbf{S}(\mathcal{F}_p), \textbf{M}(\mathcal{F}_p))$) are green (resp., red) in $\underline{\mu}_3\underline{\mu}_2\underline{\mu}_1(\textbf{T})$. Furthermore, every flip in $\underline{\mu}_4$ occurs at a green arc.

\begin{figure}
$$\begin{array}{cccccc} \includegraphics[scale=.9] {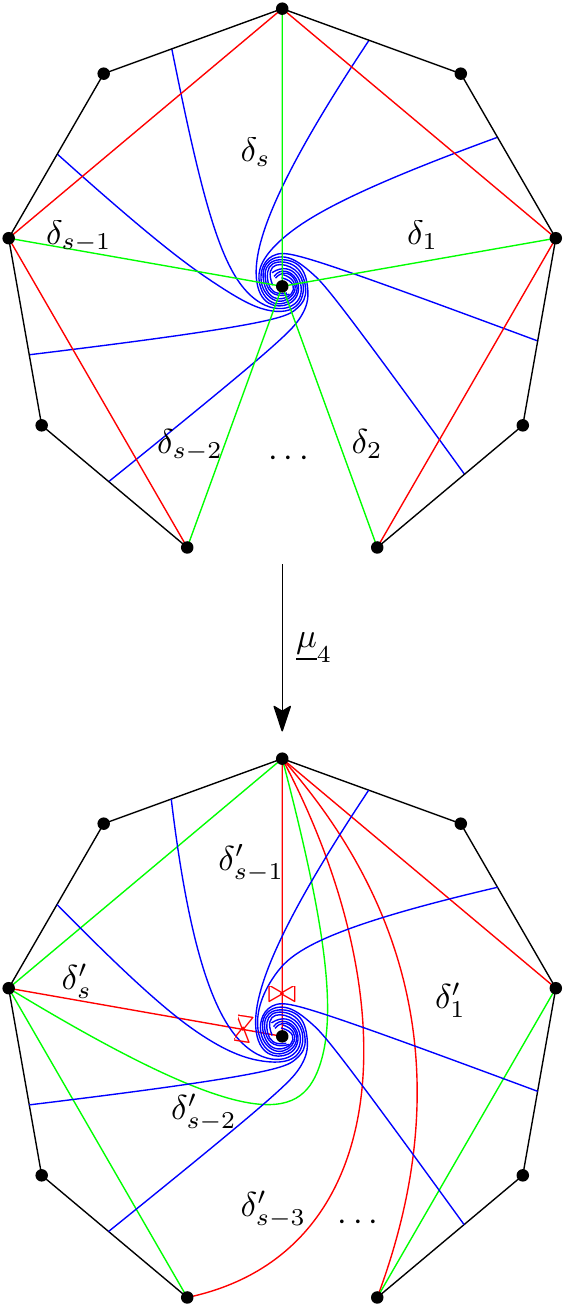} &\includegraphics[scale=.9]{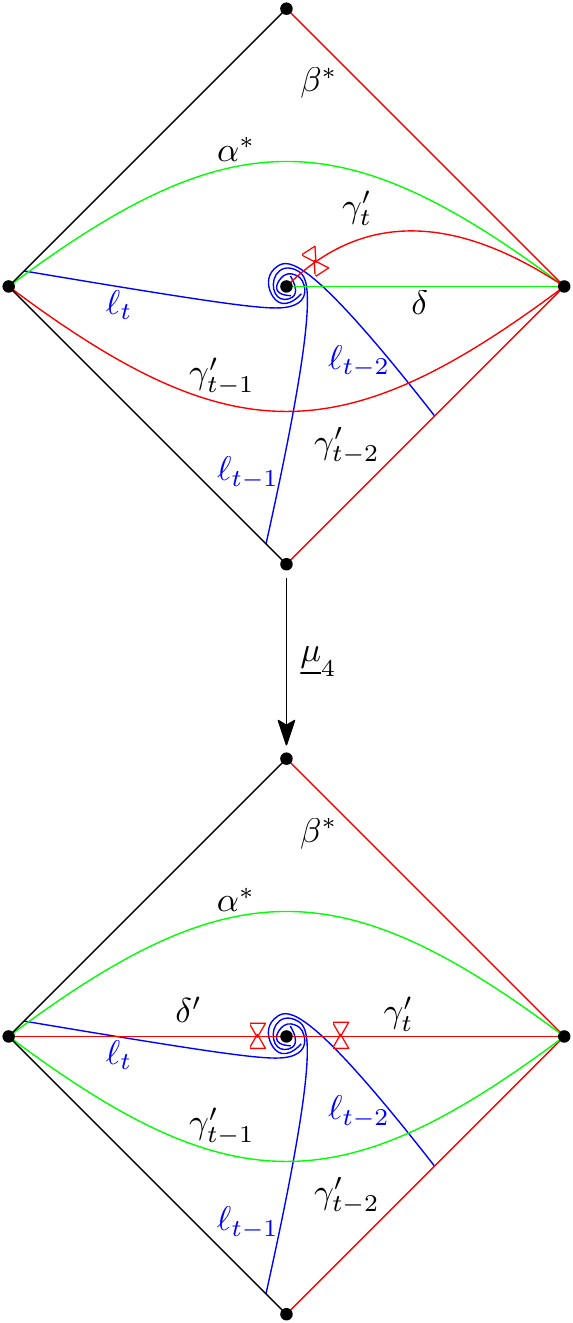} \\ (a) & (b)\end{array}$$
\caption{We show the effect of $\underline{\mu}_4$ on $\underline{\mu}_3\underline{\mu}_2\underline{\mu}_1(\textbf{T})$ by showing its effect on the arcs in $(\textbf{S}(\mathcal{D}), \textbf{M}(\mathcal{D}))$. In $(a)$, we write $\mathcal{F}_p = (\delta_1,\ldots, \delta_s)$ and $s \ge 2$. In $(b)$, we have $\mathcal{F}_p = (\delta)$.}
\label{i4_transformation}
\end{figure}

Lastly, we apply $\underline{\mu}_5 = \underline{\mu}_{\mathcal{S}_r}\cdots \underline{\mu}_{\mathcal{S}_1}$ to $\underline{\mu}_4\underline{\mu}_3\underline{\mu}_2\underline{\mu}_1(\textbf{T}).$ By the definition of the arcs in $\mathcal{S}_i$ and Lemma~\ref{Lem:subsurfaces} $a)$, we can enumerate the arcs appearing in $\mathcal{S}_i$ is any order. We notice from Figures~\ref{i2_transformation_c_intermed} and \ref{i4_transformation} and our description of the boundary arcs of $(\textbf{S}(\mathcal{D}), \textbf{M}(\mathcal{D}))$ that any arc appearing in $\mathcal{S}_1$ is of the form $\alpha, \alpha^*, \gamma^\prime_t, \gamma^\prime_{t-1}, \gamma^\prime_{t-2},$ or $\delta_{s-2}^\prime$ and each arc appearing in $\mathcal{S}_1$ is green in $\underline{\mu}_4\underline{\mu}_3\underline{\mu}_2\underline{\mu}_1(\textbf{T})$. In fact, Figures~\ref{i2_transformation}, \ref{i2_transformation_a_intermed}, \ref{i2_transformation_b_intermed}, \ref{i2_transformation_c_intermed}, and \ref{i4_transformation} and our description of the boundary arcs of $(\textbf{S}(\mathcal{D}), \textbf{M}(\mathcal{D}))$ show that each arc of $\mathcal{S}_i$ is of the form $\alpha, \alpha^*, \gamma^\prime_j,$ or $\delta_{j}^\prime$. 

Next, we show that each flip in $\underline{\mu}_5$ takes place at a green arc. We have already shown that each flip in $\underline{\mu}_{\mathcal{S}_1}$ takes place at a green arc. Now assume that for some $i < r$ the arcs in ${\mathcal{S}_j}$ are green in $\underline{\mu}_{\mathcal{S}_{j-1}} \cdots \underline{\mu}_{\mathcal{S}_1}\underline{\mu}_4\underline{\mu}_3\underline{\mu}_2\underline{\mu}_1(\textbf{T})$ for each $j<i$.  In Figure~\ref{i5_transformation} $(a)$, we show the generic configuration of arcs from $\underline{\mu}_{\mathcal{S}_{i-1}}\cdots \underline{\mu}_{\mathcal{S}_1}\underline{\mu}_{4}\underline{\mu}_3\underline{\mu}_2\underline{\mu}_1(\textbf{T})$ around an arc $\sigma^{(i)}_{j_1}$ appearing in $\mathcal{S}_i$.   Here, either $\sigma^{(i)}_{j_1} = \delta^\prime_\ell$ for some arc $\delta_\ell$ in $\mathcal{F}_p$ where $\ell \in [s-2]$ or $\sigma^{(i)}_{j_1} = \gamma_\ell^\prime$ for some $\ell > 1$ and $\gamma_\ell$ is an arc in one of the fans defining $\textbf{i}_2$. 


Suppose $\sigma^{(i)}_{j_1} = \delta^\prime_\ell$. Then Figures~\ref{i2_transformation}, \ref{i2_transformation_a_intermed}, \ref{i2_transformation_b_intermed}, \ref{i2_transformation_c_intermed}, and \ref{i4_transformation} and our description of the boundary arcs of $(\textbf{S}(\mathcal{D}), \textbf{M}(\mathcal{D}))$ shows that $\sigma^{(i+1)}_{j_2+1} = \gamma^\prime_t$ where $\gamma_t$ is the last arc to be flipped in some fan defining $\textbf{i}_2$ and $\sigma^{(i+2)}_{j_3+1}$ is either a boundary component of $(\textbf{S}, \textbf{M})$ or it is an arc of the form $\alpha$. Now assume $\sigma^{(i)}_{j_1} = \gamma_\ell^\prime$ where $\ell > 1$. Then Figures~\ref{i2_transformation}, \ref{i2_transformation_a_intermed}, \ref{i2_transformation_b_intermed}, and \ref{i2_transformation_c_intermed} show that the arcs $\sigma^{(i+1)}_{j_2+1}$ and $\sigma^{(i+2)}_{j_3+1}$ may be boundary components of $(\textbf{S}, \textbf{M})$ or these are arcs of the form $\alpha$. 

We can now conclude that if an arc in the left image in Figure~\ref{i5_transformation} $(a)$ is not an boundary arc of $(\textbf{S}, \textbf{M})$, then it has the indicated color in $\underline{\mu}_{\mathcal{S}_{i-1}}\cdots \underline{\mu}_{\mathcal{S}_1}\underline{\mu}_4\underline{\mu}_3\underline{\mu}_2\underline{\mu}_1(\textbf{T})$. The elementary laminations $\ell^{j_2}$ (resp., $\ell^{j_2+1}$) in Figure~\ref{i5_transformation} $(a)$ are those witnessing that $\sigma^{(i+1)}_{j_2}$ (resp., $\sigma^{(i+1)}_{j_2+1}$) is red (resp., green) in $\underline{\mu}_{\mathcal{S}_{i-1}}\cdots \underline{\mu}_{\mathcal{S}_1}\underline{\mu}_4\underline{\mu}_3\underline{\mu}_2\underline{\mu}_1(\textbf{T})$. These elementary laminations also show that the arcs other than $\tau$ in Figure~\ref{i5_transformation} $(a)$ have the indicated color in $\underline{\mu}_{\mathcal{S}_{i}}\cdots \underline{\mu}_{\mathcal{S}_1}\underline{\mu}_4\underline{\mu}_3\underline{\mu}_2\underline{\mu}_1(\textbf{T})$. We will justify that $\tau$ is red in $\underline{\mu}_{\mathcal{S}_{i}}\cdots \underline{\mu}_{\mathcal{S}_1}\underline{\mu}_4\underline{\mu}_3\underline{\mu}_2\underline{\mu}_1(\textbf{T})$ shortly.

The remaining cases are that $\sigma^{(i)}_{j_1} = \gamma_1^\prime$ where $\gamma_1$ is the first arc to be flipped in some fan defining $\textbf{i}_2$ or that $\sigma^{(i)}_{j_1}$ is an arc $\alpha$ or $\alpha^*$. If $\sigma^{(i)}_{j_1} = \gamma_1^\prime$, then Figures~\ref{i2_transformation}, \ref{i2_transformation_a_intermed}, \ref{i2_transformation_b_intermed}, \ref{i2_transformation_c_intermed} show that $\sigma^{(i)}_{j_1}$ appears in one of the leftmost configurations of arcs in Figure~\ref{i5_transformation} $(b)$, $(c)$, or $(d)$. In Figures~\ref{i5_transformation} $(b)$ and $(c)$, the arcs $\sigma^{(i+1)}_{j_2+1}$ and $\sigma^{(i+1)}_{j_2}$ are either boundary arcs of $(\textbf{S},\textbf{M})$ or they are arcs $\alpha$ or $\alpha^*$. If $\sigma^{(i)}_{j_1}$ is an arc $\alpha$ or $\alpha^*$,  then $\sigma^{(i)}_{j_1}$ plays the role of arc $\sigma^{(i+1)}_{j_2}$ in the middle configuration of Figures~\ref{i5_transformation} $(b)$ and $(c)$. 

Using Figures~\ref{i2_transformation}, \ref{i2_transformation_a_intermed}, \ref{i2_transformation_b_intermed}, \ref{i2_transformation_c_intermed}, one checks that the elementary laminations shown in Figure~\ref{i5_transformation} $(b)$, $(c)$, or $(d)$ are present in these configurations. Furthermore, one checks that these elementary laminations show that the arcs other than $\tau$ have the indicated colors in $\underline{\mu}_{\mathcal{S}_{i}}\cdots \underline{\mu}_{\mathcal{S}_1}\underline{\mu}_4\underline{\mu}_3\underline{\mu}_2\underline{\mu}_1(\textbf{T})$. However, each arc $\tau$ is realizable as an arc $\sigma$ in a configuration of one of the forms shown in Figure~\ref{i5_transformation} about some arc $\sigma^{(i^\prime)}_{\ell} \in \mathcal{S}_{i^\prime}$ where $\sigma^{(i^\prime)}_{\ell} \neq \sigma^{(i)}_{j_1}$, but $\sigma^{(i^\prime)}_{\ell}$ plays the role of $\sigma^{(i)}_{j_1}$ in the configuration. Thus $\tau$ will be red in $\underline{\mu}_{\mathcal{S}_{i}}\cdots \underline{\mu}_{\mathcal{S}_1}\underline{\mu}_4\underline{\mu}_3\underline{\mu}_2\underline{\mu}_1(\textbf{T})$.

We conclude that all flips in $\underline{\mu}_{\mathcal{S}_i}$ take place at green arcs. By induction, we see that all flips of $\underline{\mu}_{5}$ take place at green arcs. Moreover, the rightmost configurations in Figure~\ref{i5_transformation} $(b)$, $(c)$, and $(d)$ show that all arcs in $\underline{\mu}_5\underline{\mu}_4\underline{\mu}_3\underline{\mu}_2\underline{\mu}_1(\textbf{T})$ are red. Thus $\textbf{i}_1\circ \textbf{i}_2\circ \textbf{i}_3\circ \textbf{i}_4 \circ \textbf{i}_5$ is a maximal green sequence.\end{proof}

\begin{figure}
$$\begin{array}{ccccc} \includegraphics[scale=.8]{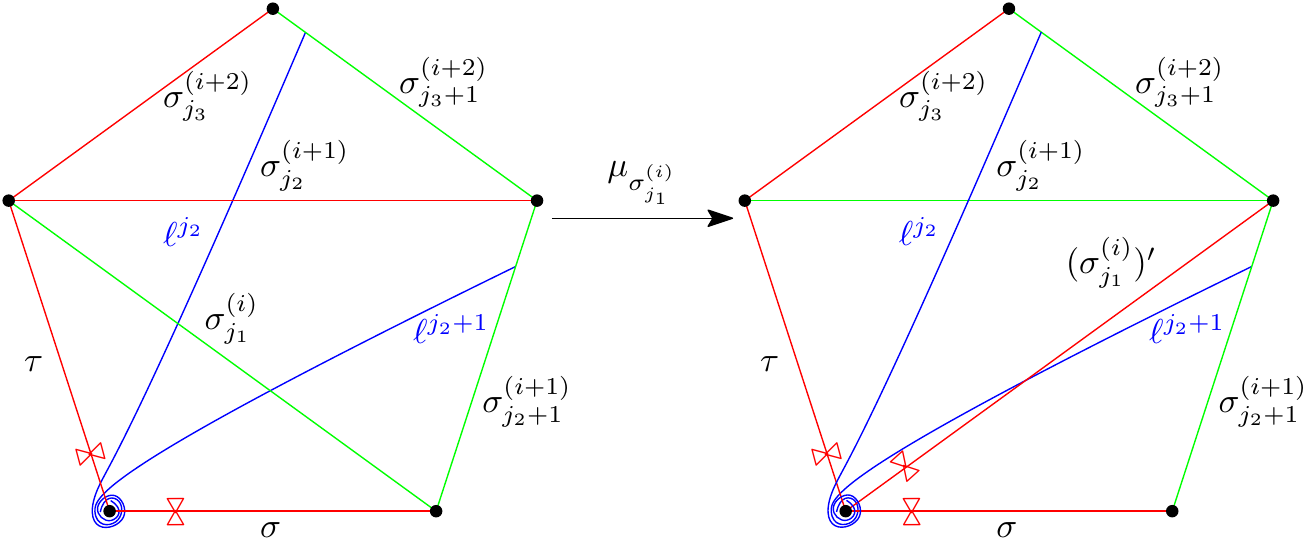} \\ (a) \end{array} $$
$$\begin{array}{ccccc} \includegraphics[scale=.8]{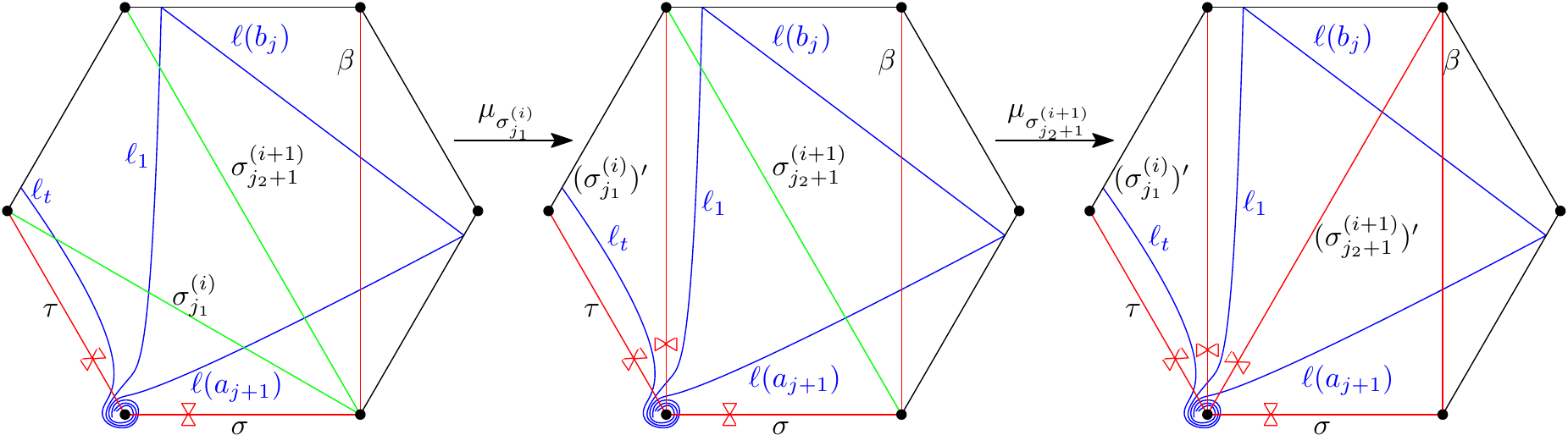} \\ (b) \end{array} $$
$$\begin{array}{ccccc} \includegraphics[scale=.8]{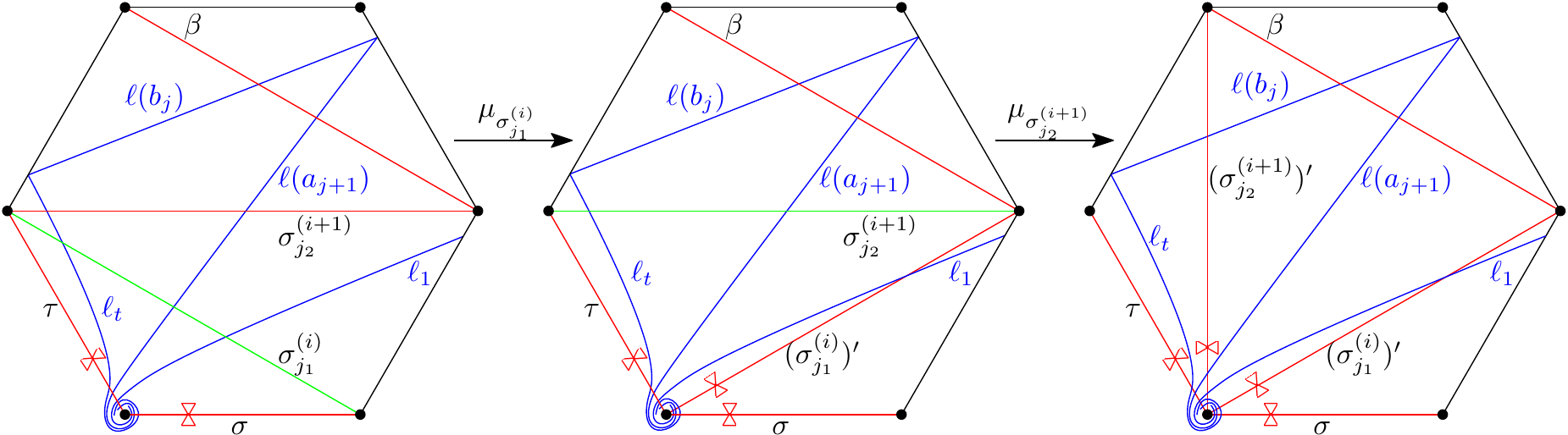} \\ (c) \end{array} $$
$$\begin{array}{ccccc} \includegraphics[scale=.8]{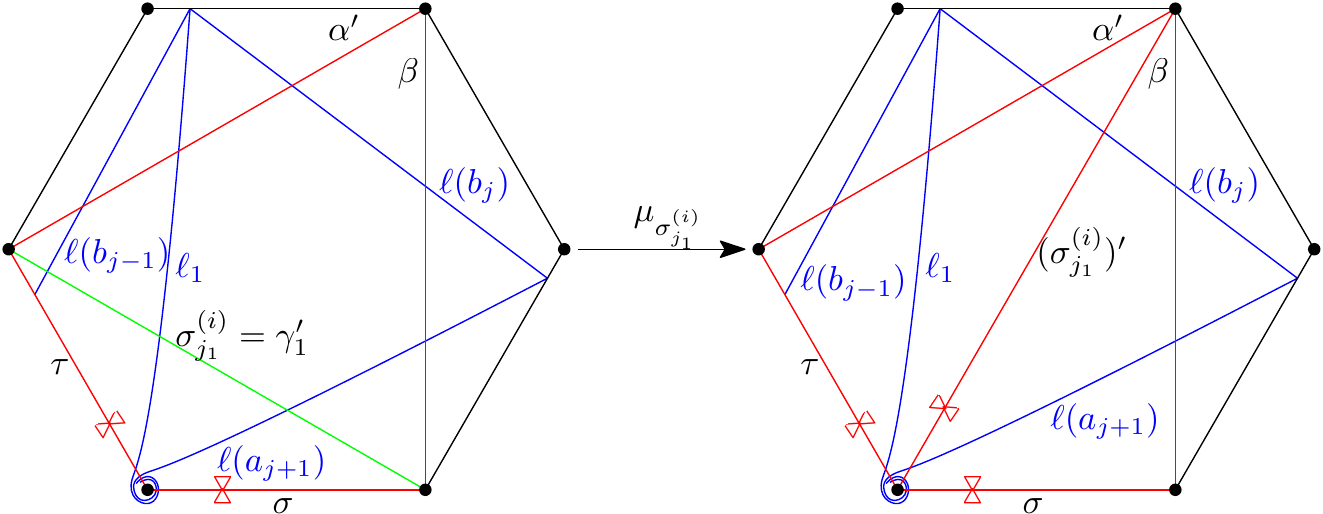} \\ (d) \end{array} $$
\caption{The types of flips performed by applying $\underline{\mu}_{\mathcal{S}_i}$ to $\underline{\mu}_{\mathcal{S}_{i-1}}\cdots \underline{\mu}_{\mathcal{S}_1}\underline{\mu}_4\underline{\mu}_3\underline{\mu}_2\underline{\mu}_1(\textbf{T})$.}
\label{i5_transformation}
\end{figure}


\begin{proof}[Proof of Corollary~\ref{Cor:Dn_short_seq_length}]
As in the proof of Theorem~\ref{Thm:Dn_short_seq}, we prove that $\textbf{i}_1\circ \textbf{i}_2 \circ \textbf{i}_3\circ \textbf{i}_4 \circ \textbf{i}_5$ has the desired length when at least one of the quivers $Q^{(i)}$ is the empty quiver or none of the quivers $Q^{(i)}$ is the empty quiver and $k$ is even. 

Observe that applying the sequence of flips $\underline{\mu}_2\underline{\mu}_1$ to $\textbf{T}$ flips each arc of $\textbf{T}$ exactly once. Thus $\textbf{i}_1\circ \textbf{i}_2$ is a green sequence of $Q_{\textbf{T}}$ with length $\#(Q_\textbf{T})_0 = k_\text{in} + k_\text{out}$. It remains to show that $\ell(\textbf{i}_3\circ \textbf{i}_4 \circ \textbf{i}_5) = k_\text{in} - 2 + m$. 

We first show that $\ell(\textbf{i}_3\circ \textbf{i}_4) = m$. We claim that there is a bijection $f: \text{supp}(\textbf{i}_3\circ \textbf{i}_4) \to \{a_i \in (Q_\textbf{T})_0: \ \text{deg}(a_i) = 4\}$. Let $f(\alpha_i) := v_{\gamma_1} \in (Q_\textbf{T})_0$ where $\alpha_i \in \text{supp}(\textbf{i}_3)$ and where $\gamma_1$ is the arc appearing in a triangle of $\underline{\mu}_1(\textbf{T})$ with $\alpha_i$ from the definition of $\textbf{i}_3.$ From the structure of $Q_\textbf{T}$, we have that $f(\alpha_i) \neq f(\alpha_j)$ for $i \neq j$ and that $\text{deg}(f(\alpha_i)) = 4$.

Let $f(\delta_i) := v_{\overline{\varrho^{-1}(\delta_i)}} \in (Q_{\textbf{T}})_0$ where $\delta_i \in \text{supp}(\textbf{i}_4)$. We need to show that $v_{\overline{\varrho^{-1}(\delta_i)}} \in (Q_\textbf{T})_0$. Observe that for each $\delta_i \in \text{supp}(\textbf{i}_4)$ there exists $b_j \in (Q_\textbf{T})_0$ such that $\delta_i = {b^\prime_j}.$ This implies $\overline{\varrho^{-1}(\delta_i)} = {a_j}$ and $f(\delta_i) \neq f(\delta_j)$ for $i \neq j$. We conclude that $f(\delta_i) = a_j \in (Q_\textbf{T})_0$ and that this vertex has degree 4.

It remains to show that $f$ is surjective. Observe that for any degree 4 vertex $a_i \in (Q_\textbf{T})_0$, either ${a_i}$ appears in the fan $({b_i}, {a_i},{b_{i-1}})$--one of the fans that defines $\textbf{i}_1$--or ${a_i}$ is an arc that gives rise to one of the arcs in $\textbf{i}_3$. As such, the vertex $a_i$ is either the image under $f$ of an arc in $\text{supp}(\textbf{i}_3)$ or of an arc in $\text{supp}(\textbf{i}_4)$.

Lastly, we show that $\ell(\textbf{i}_5) = k_{\text{in}} - 2$. By Theorem~\ref{Thm:Dn_short_seq} and Theorem~\ref{univ_tagged_thm}, we know that $\underline{\mu}_5\underline{\mu}_4\underline{\mu}_3\underline{\mu}_2\underline{\mu}_1(\textbf{T}) = \varrho(\textbf{T}).$ In particular, we know that $\varrho(\textbf{T})$ has exactly $k_\text{in}$ arcs connected to $p$ that are notched at $p$. In the triangulation $\underline{\mu}_4\underline{\mu}_3\underline{\mu}_2\underline{\mu}_1(\textbf{T})$ (see  Figures~\ref{i2_transformation_c_intermed} $(c_3)$ and \ref{i4_transformation}), there are exactly two arcs connected to $p$ that are notched at $p$. Since each flip in $\underline{\mu}_5$ produces a triangulation with exactly one additional arc connected to $p$ that is notched at $p$, we conclude that $\ell(\textbf{i}_5) = k_{\text{in}}-2$. \end{proof}

\section{Quivers of mutation type $\widetilde{\mathbb{A}}(n_1,n_2)$}\label{Sec_affine_A_quivers}

We now focus on the minimal length maximal green sequences of quivers that we refer to as \textbf{mutation type $\widetilde{\mathbb{A}}(n_1,n_2)$} where $n_1$ and $n_2$ are positive integers. Let $\widetilde{\mathbb{A}}_n$ denote an affine type $\mathbb{A}$ Dynkin diagram with $n$ vertices where $n \ge 1$. By definition, the mutation type $\widetilde{\mathbb{A}}(n_1,n_2)$ quivers are those that are mutation-equivalent to an acyclic orientation of $\widetilde{\mathbb{A}}_n$, denoted $Q(n_1,n_2)$, where $Q(n_1,n_2)$ has $n_1$ edges are oriented \textit{clockwise}, $n_2$ edges are oriented \textit{counterclockwise}, and $n = n_1+n_2-1$. Note that the notion of clockwise and counterclockwise arrows is only unique up to changing the roles of the clockwise and counterclockwise arrows. 

\subsection{The mutation class of quivers of mutation type $\widetilde{\mathbb{A}}(n_1,n_2)$ quivers} Here we recall the description of mutation type $\widetilde{\mathbb{A}}(n_1, n_2)$ quivers that was found in \cite{bastian2012mutation}. We first mention the following result, which shows that the mutation class of mutation type $\widetilde{\mathbb{A}}(n_1,n_2)$ quivers depends only on the choice of the parameters $n_1 \in \mathbb{N}$ and $n_2 \in \mathbb{N}$. 

\begin{lemma}\cite[Lemma 6.8]{fomin2008cluster}\label{Lem:2params}
Let $Q(n_1, n_2)$ and $Q(m_1, m_2)$ be two acyclic orientations of $\widetilde{\mathbb{A}}_n$. Then $Q(n_1,n_2)$ and $Q(m_1, m_2)$ are mutation-equivalent if and only if $n_1 = m_1$ and $n_2 = m_2$.
\end{lemma}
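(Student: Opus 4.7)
The proof splits into an essentially trivial converse and a substantive direct direction, and the natural tool for the latter is the realization of these quivers via triangulated surfaces recalled in Section~\ref{Sec:QT}.

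For the direction $n_1 = m_1$ and $n_2 = m_2 \implies$ mutation-equivalence, I would simply observe that any acyclic orientation of the $(n+1)$-cycle $\widetilde{\mathbb{A}}_n$ with exactly $n_1$ arrows pointing clockwise and $n_2$ arrows pointing counterclockwise is isomorphic, as a quiver, to $Q(n_1,n_2)$: two such orientations differ by a cyclic relabeling of the vertices of the cycle. Hence $Q(n_1,n_2) \cong Q(m_1,m_2)$ as quivers, and in particular they lie in the same mutation class.

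For the direction mutation-equivalence $\implies n_1=m_1$ and $n_2=m_2$, the plan is to realize $Q(n_1,n_2)$ as a signed adjacency quiver $Q_{\textbf{T}}$ of a triangulation $\textbf{T}$ of the annulus $(\textbf{S}_{n_1,n_2},\textbf{M}_{n_1,n_2})$ with $n_1$ marked points on one boundary component, $n_2$ marked points on the other, and no punctures. Explicitly, fix a marked point $x$ on one boundary component and connect it by pairwise compatible arcs to every marked point on the opposite boundary; then triangulate each resulting quadrilateral by an additional arc in a consistent ``zigzag'' fashion. The resulting arcs number $n_1 + n_2 - 1 = n+1$, and one checks directly that the signed adjacency quiver is an oriented cycle whose two arcs of opposite clockwise/counterclockwise orientation total $n_1$ and $n_2$ respectively---that is, precisely $Q(n_1,n_2)$ up to quiver isomorphism.

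Having done this, I would invoke the theorem of Fomin-Shapiro-Thurston recalled in Section~\ref{Sec:QT}, which states $\mu_{i_\gamma}(Q_{\textbf{T}}) = Q_{\textbf{T}'}$ whenever $\textbf{T}'$ is obtained from $\textbf{T}$ by flipping the arc $\gamma$. It follows that the mutation class of $Q(n_1,n_2)$ consists entirely of signed adjacency quivers of triangulations of $(\textbf{S}_{n_1,n_2},\textbf{M}_{n_1,n_2})$, and in particular the homeomorphism type of the marked surface $(\textbf{S}_{n_1,n_2},\textbf{M}_{n_1,n_2})$ is a mutation invariant of $Q(n_1,n_2)$. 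Since an unpunctured annulus with marked points on its boundary is determined up to homeomorphism by the unordered pair of boundary-point counts, if $Q(n_1,n_2)$ and $Q(m_1,m_2)$ are mutation-equivalent then $\{n_1,n_2\} = \{m_1,m_2\}$, and the stated equalities $n_1=m_1$ and $n_2=m_2$ then follow under the convention that fixes which boundary component is ``clockwise.''

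The main obstacle I anticipate is the first step: verifying that the explicit triangulation $\textbf{T}$ of $(\textbf{S}_{n_1,n_2},\textbf{M}_{n_1,n_2})$ produces precisely the oriented cycle $Q(n_1,n_2)$ with the correct counts of clockwise and counterclockwise arrows. This is a concrete combinatorial verification---one must carefully track how adjacencies in the triangulation of an annulus translate into arrows under the signed adjacency quiver construction, and check that the two ``senses'' of the annulus correspond exactly to the two orientation directions along the cycle. Once this identification is in hand, the rest of the argument is formal and relies only on the flip/mutation correspondence and a standard topological invariant of the surface.
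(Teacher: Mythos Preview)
The paper does not give its own proof of this lemma; it is simply cited from \cite{fomin2008cluster}. So there is no in-house argument to compare against, and your surface-based strategy for the ``only if'' direction is very much in the spirit of the cited source.

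There is, however, a genuine error in your ``if'' direction. You claim that any two acyclic orientations of the $(n{+}1)$-cycle with the same counts $(n_1,n_2)$ of clockwise and counterclockwise arrows are isomorphic as quivers, differing only by a cyclic relabeling. This is false. Take the $4$-cycle with $(n_1,n_2)=(2,2)$: one orientation has the two clockwise arrows adjacent (giving one source and one sink), while another has them alternating with the counterclockwise arrows (giving two sources and two sinks). These quivers are not isomorphic, yet both are acyclic orientations with parameters $(2,2)$. So the ``if'' direction is not trivial and does require actual work --- for instance, one can argue that source/sink mutations preserve $(n_1,n_2)$ and suffice to connect any two such orientations, or one can again use the surface model and the connectivity of the flip graph of the annulus.

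Your ``only if'' direction is on the right track, though note a small arithmetic slip: a triangulation of an annulus with $n_1$ and $n_2$ marked points on its two boundary components has $n_1+n_2$ arcs, not $n_1+n_2-1$; this matches the number of vertices of $Q(n_1,n_2)$ under the paper's convention $n=n_1+n_2-1$ for $\widetilde{\mathbb{A}}_n$. The substantive step you flag --- that the marked surface is a mutation invariant --- is indeed the content one must import from \cite{fomin2008cluster}, and once granted, the conclusion $\{n_1,n_2\}=\{m_1,m_2\}$ follows as you say.
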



\begin{definition}\cite[Definition 3.3]{bastian2012mutation}\label{Qnquiversbastian}
Let $\mathcal{Q}_n$ be the class of connected quivers $Q$ with $n+1$ vertices that satisfy the following conditions:

\begin{itemize}
\item[i)] There exists precisely one full subquiver of $Q$ which is an acyclic orientation of $\widetilde{\mathbb{A}}_m$, denoted $Q(m_1,m_2)$, for some $m$ satisfying $1 \le m \le n$. 
\item[ii)] for each arrow $x \stackrel{\alpha}{\longrightarrow} y$ in $Q(m_1,m_2)$, there may be a vertex $z_\alpha \in Q_0\backslash Q(m_1,m_2)_0$ such that the 3-cycle $z_\alpha \longrightarrow x \stackrel{\alpha}{\longrightarrow} y \longrightarrow z_\alpha$ is a full subquiver of $Q$, and there are no other vertices in $Q_0\backslash Q(m_1,m_2)_0$ that are connected to vertices of $Q(m_1,m_2)$;
\item[iii)] the full subquiver $Q^\prime$ obtained from $Q$ by removing the vertices and arrows of $Q(m_1,m_2)$ consists of the quivers $\{Q^{(\alpha)}\}$, one for each $\alpha \in Q(m_1,m_2)_1$, and each nonempty quiver $Q^{(\alpha)}$ is of mutation type $\mathbb{A}$.
\end{itemize}
Let $Q$ be a quiver in $\mathcal{Q}_n$. We refer to the quiver $Q(m_1,m_2)$ as the \textbf{non-oriented cycle} of $Q$. Now let $x \stackrel{\alpha}{\longrightarrow} y$ be an arrow of its non-oriented cycle $Q(m_1, m_2)$. 
\end{definition}

\begin{example}
In Figure~\ref{affineexample}, we give an example of a quiver $Q$ belonging to $\mathcal{Q}_{16}.$ The full subquiver of $Q$ on the vertices 2, 3, 4, 5, 6, 7, 8, and 9 is the acyclic orientation $Q(3,5)$ of $\widetilde{\mathbb{A}}_7$ described in part i) of Definition~\ref{Qnquiversbastian}. The quiver $Q^\prime$ is the full subquiver of $Q$ on the vertices 1, 10, 11, 13, 12, 14, 15, and 16. It consists of a quiver of mutation type $\mathbb{A}_1$, a quiver of mutation type $\mathbb{A}_3$, and quiver of mutation type $\mathbb{A}_4$. 
\end{example}

\begin{figure}[h]
$$\scalebox{.75}{$\begin{xy} 0;<1pt,0pt>:<0pt,-1pt>:: 
(0,45) *+{1} ="0",
(30,90) *+{2} ="1",
(60,45) *+{3} ="2",
(120,45) *+{4} ="3",
(180,45) *+{5} ="4",
(210,90) *+{6} ="5",
(60,135) *+{7} ="6",
(120,135) *+{8} ="7",
(180,135) *+{9} ="8",
(150,0) *+{10} ="9",
(210,0) *+{11} ="10",
(240,45) *+{12} ="11",
(270,0) *+{13} ="12",
(240,135) *+{14} ="13",
(300,135) *+{15} ="14",
(270,90) *+{16} ="15",
"1", {\ar"0"},
"0", {\ar"2"},
"2", {\ar"1"},
"6", {\ar"1"},
"2", {\ar"3"},
"4", {\ar"3"},
"3", {\ar"9"},
"4", {\ar"5"},
"9", {\ar"4"},
"8", {\ar"5"},
"5", {\ar"13"},
"6", {\ar"7"},
"7", {\ar"8"},
"13", {\ar"8"},
"9", {\ar"10"},
"10", {\ar"11"},
"12", {\ar"10"},
"11", {\ar"12"},
"13", {\ar"14"},
"15", {\ar"13"},
"14", {\ar"15"},
\end{xy}$}$$
\caption{}
\label{affineexample}
\end{figure}


It is clear that any acyclic orientation of $\widetilde{\mathbb{A}}_n$ belongs to $\mathcal{Q}_n$. By \cite[Lemma 3.5]{bastian2012mutation}, $\mathcal{Q}_n$ is closed under mutation. Thus each quiver in $\mathcal{Q}_n$ is mutation equivalent to an acyclic orientation of $\widetilde{\mathbb{A}}_n$.


\begin{theorem}\label{Thm_affinequivers_min_length}
Let $\widetilde{Q}$ be a quiver of mutation type $\widetilde{\mathbb{A}}(n_1,n_2)$. The length of a minimal length maximal green sequence of $\widetilde{Q}$ is $|\widetilde{Q}_0| + |\{\text{3-cycles in } \widetilde{Q}\}|.$
\end{theorem}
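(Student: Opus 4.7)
The plan is to follow the same template as for mutation type $\mathbb{D}_n$: first peel off the mutation type $\mathbb{A}$ branches using Corollary~\ref{Cor:min_length_Qtilde}, then analyse the resulting ``core'' quiver using a triangulated surface model, in this case an annulus rather than the punctured disk of Section~\ref{Sec_type_IV}.

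First I would use Definition~\ref{Qnquiversbastian} to decompose $\widetilde{Q}$ into a core subquiver $\widetilde{Q}^\star$ consisting of the non-oriented cycle $Q(m_1,m_2)$ together with all attached 3-cycle vertices $z_\alpha$, plus mutation type $\mathbb{A}$ branches $Q^{(1)},\dots,Q^{(k)}$. Writing $v_i$ for the number of non-connecting vertices of $Q^{(i)}$ and $c_i$ for the number of 3-cycles of $Q^{(i)}$, Theorem~\ref{thmtypeA} gives $\ell_{\min}(Q^{(i)})=(v_i+1)+c_i$. Substituting this into Corollary~\ref{Cor:min_length_Qtilde} and simplifying,
\[
\ell_{\min}(\widetilde{Q}) \;=\; \ell_{\min}(\widetilde{Q}^\star)+\sum_i(v_i+c_i).
\]
Since $|\widetilde{Q}_0|=|\widetilde{Q}^\star_0|+\sum v_i$ and $|\{\text{3-cycles of }\widetilde{Q}\}|=t_\star+\sum c_i$, where $t_\star$ is the number of 3-cycles of $\widetilde{Q}^\star$, the theorem reduces to the identity $\ell_{\min}(\widetilde{Q}^\star)=|\widetilde{Q}^\star_0|+t_\star$.

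Next I would realize $\widetilde{Q}^\star=Q_{\textbf{T}}$ as the signed adjacency quiver of a canonical triangulation $\textbf{T}$ of an annulus $(\textbf{S},\textbf{M})$, in which the $m+1$ arcs joining the two boundary components are the vertices of the non-oriented cycle $Q(m_1,m_2)$ and each of the remaining $t_\star$ arcs (each cutting off a triangle with two boundary segments) corresponds to a 3-cycle vertex $z_\alpha$. For the upper bound I would exhibit an explicit maximal green sequence by flipping arcs in fan-like blocks, modelled on the construction preceding Theorem~\ref{Thm:Dn_short_seq}: each boundary triangle contributes three flips of its local fan of three arcs, while each remaining cycle arc contributes a single flip. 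Theorem~\ref{univ_tagged_thm} ensures the terminal triangulation equals $\varrho(\textbf{T})$, and a count gives exactly $(m+1)+2t_\star=|\widetilde{Q}^\star_0|+t_\star$ mutations. For the lower bound I would adapt the argument of Section~\ref{Sec_type_IV}. Fix any maximal green sequence $\mu_I$ of $\widetilde{Q}^\star$ and consider the set $\mathcal{B}_{\mu_I}(\textbf{T})$ of arcs appearing in some intermediate triangulation other than $\textbf{T}$; one has $\ell(\mu_I)=|\mathcal{B}_{\mu_I}(\textbf{T})|$. By Theorem~\ref{univ_tagged_thm}, $\varrho(\textbf{T})\subseteq \mathcal{B}_{\mu_I}(\textbf{T})$, and on the annulus (with the degenerate small cases handled separately) $\textbf{T}\cap\varrho(\textbf{T})=\emptyset$, so already $\ell(\mu_I)\ge|\varrho(\textbf{T})|=|\widetilde{Q}^\star_0|$. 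To account for the extra $t_\star$ flips, for each $z_\alpha$ I would assign an auxiliary arc $\beta_\alpha\in\mathcal{B}_{\mu_I}(\textbf{T})\setminus\varrho(\textbf{T})$ built from the local geometry of the boundary triangle cut off by $z_\alpha$, in the spirit of Lemma~\ref{02}, and then show $\alpha\mapsto\beta_\alpha$ is injective by resolving collisions as in Lemma~\ref{03}.

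The hard part will be this lower bound. Although the absence of a puncture on the annulus simplifies the case analysis of Lemmas~\ref{01}--\ref{03} (there is no notched/plain dichotomy and no need for the sets $\mathcal{C}_{\mu_I}(\textbf{T})$ and $\mathcal{H}$), the replacement argument must operate purely in terms of the annular rotation $\varrho$ and the elementary laminations of boundary-triangle arcs, and must handle the fact that the two boundary circles can have very unequal numbers of marked points---so the construction of $\beta_\alpha$ splits into sub-cases depending on which boundary the 3-cycle vertex $z_\alpha$ is attached to.
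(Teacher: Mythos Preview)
Your reduction via Corollary~\ref{Cor:min_length_Qtilde} to a core $\widetilde{Q}^\star$ (the non-oriented cycle together with the $z_\alpha$) matches the paper's overall strategy; the paper additionally separates off the case where both a clockwise and a counterclockwise arrow of the non-oriented cycle carry no $z_\alpha$, in which case $\widetilde{Q}$ is a direct sum of two type~$\mathbb{A}$ quivers and Proposition~\ref{minlengthdirectsum} finishes immediately. The real divergence is in how the core is handled, and here the paper is substantially more economical than your plan. For the \emph{lower bound} the paper never touches the annulus: Lemma~\ref{min_length_lower_bound_affine_A} simply applies Theorem~\ref{thmMullerSubquiver} to each oriented 3-cycle subquiver $z_\alpha\to s(\alpha)\to t(\alpha)\to z_\alpha$, observes that the induced maximal green sequence there must contain a $\textbf{c}$-vector supported on at least two of these three vertices, and notes that distinct 3-cycles of $\widetilde{Q}^\star$ share at most one vertex, so these multi-support $\textbf{c}$-vectors are pairwise distinct and also distinct from the unit $\textbf{c}$-vectors forced by each vertex---this gives $\ell\ge|\widetilde{Q}^\star_0|+t_\star$ in a paragraph. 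Your proposed adaptation of Lemmas~\ref{01}--\ref{03} would presumably also succeed, but the surface combinatorics is doing no work that this short $\textbf{c}$-vector argument does not already do. For the \emph{upper bound} the paper likewise stays purely quiver-theoretic: it partitions $\widetilde{Q}^\star$ into ``clockwise'' and ``counterclockwise'' components $C(Q),CC(Q)$, writes down an explicit mutation sequence $\textbf{i}_1\circ\textbf{i}_2\circ\textbf{i}_3\circ\textbf{i}_4$, and verifies greenness by tracking the ice quiver through three intermediate normal forms (their Types~II--IV). Your fan-flip construction on the annulus is a reasonable alternative in spirit, but the length count you sketch (three flips per boundary triangle plus one per remaining bridging arc) is not quite right when adjacent 3-cycles share a cycle vertex, since the corresponding local fans overlap; you would need to organise the flips more carefully to land on $(m+1)+2t_\star$.
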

\begin{proof}
If there exists a clockwise arrow $\alpha \in \widetilde{Q}(n_1, n_2)_1$ and counterclockwise arrow $\beta \in \widetilde{Q}(n_1,n_2)_1$ where $\widetilde{Q}^{(\alpha)}$ and $\widetilde{Q}^{(\beta)}$ are both the empty quivers, then $\widetilde{Q} = Q^1 \oplus_{(s(\alpha), s(\beta))}^{(t(\alpha), t(\beta))} Q^2$ for some quivers $Q^1$ and $Q^2$. It follows from Definition~\ref{Qnquiversbastian} that $Q^1$ and $Q^2$ are both of mutation type $\mathbb{A}$. By Theorem~\ref{thmtypeA}, there exist minimal length maximal green sequences $\textbf{i}_1 \in \text{MGS}(Q^1)$ and $\textbf{i}_2 \in \text{MGS}(Q^2)$ where $\ell(\textbf{i}_1) = |Q^1_0| + |\{\text{3-cycles in } Q^1\}|$ and $\ell(\textbf{i}_2) = |Q^2_0| + |\{\text{3-cycles in } Q^2\}|$. By Theorem~\ref{tcolordirsum} and Proposition~\ref{minlengthdirectsum}, $\textbf{i}_1\circ \textbf{i}_2 \in \text{MGS}(\widetilde{Q})$ and this is a minimal length maximal green sequence of the desired length.

Suppose there does not exist a clockwise arrow $\alpha \in \widetilde{Q}(n_1, n_2)_1$ and counterclockwise arrow $\beta \in \widetilde{Q}(n_1,n_2)_1$ where $\widetilde{Q}^{(\alpha)}$ and $\widetilde{Q}^{(\beta)}$ are both the empty quivers. By Definition~\ref{Qnquiversbastian}, the quiver $\widetilde{Q}$ is an example of a quiver described by Definition~\ref{quiver_qtilde} where the quivers $Q^1, Q^2,\ldots$ are the nonempty quivers in $\{Q^{(\alpha)}\}$ from Definition~\ref{Qnquiversbastian} and $Q$ is the full subquiver of $\widetilde{Q}$ on the vertex set $\widetilde{Q}(n_1,n_2)_0\cup \{z_\alpha: \alpha \in \widetilde{Q}(n_1,n_2)_1\}$ where the vertices $z_\alpha$ are those appearing in Definition~\ref{Qnquiversbastian}. 

The quivers $Q^1, Q^2, \ldots$ are all of mutation type $\mathbb{A}$ so by Theorem~\ref{thmtypeA} there exist minimal length maximal green sequences $\textbf{i}_j \in \text{MGS}(Q^j)$ where $\ell(\textbf{i}_j) = |Q^j_0| + |\{\text{3-cycles in } Q^j\}|$. By Theorem~\ref{affine_mgs}, there exist a minimal length maximal green sequence $\textbf{i}_Q \in \text{MGS}(Q)$ where $\ell(\textbf{i}_Q) = |Q_0| + |\{\text{3-cycles in } Q\}|$. Now by Theorem~\ref{qtilde}, the quiver $\widetilde{Q}$ has a minimal length maximal green sequence $\textbf{i}$ with $\ell(\textbf{i}) = |\widetilde{Q}_0| + |\{\text{3-cycles in } \widetilde{Q}\}|.$
\end{proof}

\subsection{Construction of minimal length maximal green sequences} In this section, we construct a particular mutation sequence of a quiver of mutation type $\widetilde{\mathbb{A}}(n_1,n_2)$ that will turn out to be a minimal length maximal green sequence of the quiver.

Let $Q$ be a quiver of mutation type $\widetilde{\mathbb{A}}(n_1,n_2)$. In this section, for brevity, we let $\eta(Q)$ be the non-oriented cycle of $Q$. Assume that $Q$ cannot be expressed as a nontrivial direct sum of two mutation type $\mathbb{A}$ quivers. This implies that there do not exist distinct arrows $i_1\stackrel{\alpha_1}{\to} j_1, i_2\stackrel{\alpha_2}{\to} j_2 \in \eta(Q)_1$ such that $Q = Q^{(1)} \oplus_{(i_1,i_2)}^{(j_1,j_2)} Q^{(2)}$ for some full subquivers $Q^{(1)}$ and $Q^{(2)}$ of $Q$.  In particular, it follows that all clockwise arrows of $\eta(Q)_1$ belong to a 3-cycle of $Q$ or all counterclockwise arrows of $\eta(Q)_1$ belong to a 3-cycle of $Q$. Without loss of generality, we will assume that all clockwise arrows of $\eta(Q)_1$ belong to a 3-cycle of $Q$.


\begin{lemma}\label{min_length_lower_bound_affine_A}
The length of any maximal green sequence of $Q$ is at least $|Q_0| + |\{\text{3-cycles in } Q\}|$.
\end{lemma}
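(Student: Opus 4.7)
The plan is to first reduce, via Corollary~\ref{Cor:min_length_Qtilde}, to a ``core'' subquiver of $Q$ that retains only the structure directly attached to the non-oriented cycle $\eta(Q)$, and then to prove the bound for this core using a subquiver-restriction argument together with Theorem~\ref{thmtypeA}.

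For the reduction, view $Q$ as a quiver of the form $\widetilde{Q}$ from Definition~\ref{quiver_qtilde} with core $R$ equal to the full subquiver of $Q$ on $\eta(Q)_0 \cup \{z_\alpha : z_\alpha \text{ exists in } Q\}$ and branches given by the nonempty mutation type $\mathbb{A}$ subquivers $Q^{(\alpha)}$ of Definition~\ref{Qnquiversbastian}, each attached to $R$ at its connecting vertex $z_\alpha$. Corollary~\ref{Cor:min_length_Qtilde} yields
\[
\ell_{\min}(Q) \;=\; \ell_{\min}(R) \;+\; \sum_{i} l^i_{\min} \;-\; k,
\]
where $k$ is the number of nonempty branches, and Theorem~\ref{thmtypeA} gives $l^i_{\min} = |Q^{(i)}_0| + |\{\text{3-cycles in } Q^{(i)}\}|$. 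Since the 3-cycles of $Q$ partition into those contained in some $Q^{(i)}$ and the $k$ 3-cycles of $R$ (all of the form $z_\alpha \to s(\alpha) \to t(\alpha) \to z_\alpha$), a direct bookkeeping computation reduces the desired inequality for $Q$ to the analogous inequality $\ell_{\min}(R) \geq |R_0| + |\{\text{3-cycles in } R\}|$ for the core.

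To handle $R$, I would first try to select a vertex $v_0 \in \eta(Q)_0$ such that neither of $v_0$'s two incident arrows in $\eta(Q)$ carries a $z_\alpha$. Lemma~\ref{BV} then implies that $R \setminus \{v_0\}$ is of mutation type $\mathbb{A}$ with exactly the same set of 3-cycles as $R$, so Theorem~\ref{thmMullerSubquiver} combined with Theorem~\ref{thmtypeA} forces any MGS of $R$ to have length at least $(|R_0| - 1) + |\{\text{3-cycles in } R\}| + 1 = |R_0| + |\{\text{3-cycles in } R\}|$, where the extra $+1$ accounts for the fact that $v_0$ must be mutated at least once. The main obstacle is the remaining case, in which every arrow of $\eta(Q)$ carries a $z_\alpha$ and no such $v_0$ exists; for this case, I expect to need the triangulated annulus picture from Section~\ref{Sec:QT}, in the spirit of the lower-bound argument for Type~IV $\mathbb{D}_n$ quivers (Theorem~\ref{Dn_IV_lower_bound}). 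Concretely, one realizes $R$ as the signed adjacency quiver of a specific annulus triangulation, identifies the terminal triangulation reached by any MGS via an annulus analogue of Theorem~\ref{univ_tagged_thm}, and counts the required flips; the restrictions $m_{z_\alpha} + m_{s(\alpha)} + m_{t(\alpha)} \geq 4$ for each 3-cycle (which follow from Theorems~\ref{thmMullerSubquiver} and~\ref{thmtypeA}) should then contribute the $k$ extra mutations needed beyond the trivial lower bound $|R_0|$.
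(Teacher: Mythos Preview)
Your reduction to the core $R$ and the $v_0$-removal argument are fine where they apply, but the ``remaining case'' is a genuine gap, not a detail: when no vertex of $\eta(Q)$ has both incident $\eta(Q)$-arrows free of a $z_\alpha$ (and note this is strictly more general than ``every arrow of $\eta(Q)$ carries a $z_\alpha$''), you have no argument---only an appeal to an annulus analogue of the Type~IV lower bound that you have not carried out. That case is not exotic; it occurs already for small $n_1$ or whenever the $z$-free counterclockwise arrows are non-adjacent.

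The paper's proof avoids this bifurcation entirely with a much shorter, uniform argument, and in fact you nearly wrote it down yourself in your final parenthetical. After setting aside the two small Kronecker-based quivers by hand, one simply observes: for each 3-cycle $T$ of $Q$, Theorem~\ref{thmMullerSubquiver} restricts $\textbf{c}(\textbf{i})$ to an MGS of the oriented 3-cycle, which necessarily contains at least one $\textbf{c}$-vector supported on at least two of the three vertices of $T$. Since (outside the excluded Kronecker cases) any two distinct 3-cycles of $Q$ share at most one vertex, the ``non-simple'' $\textbf{c}$-vector obtained for $T$ cannot coincide in position with that obtained for any other $T'$. Together with the $|Q_0|$ positions carrying $\textbf{c}$-vectors of support size one (one per vertex, again by Theorem~\ref{thmMullerSubquiver} applied to singletons), this gives $\ell(\textbf{i}) \ge |Q_0| + |\{\text{3-cycles}\}|$ directly---no reduction to a core, no $v_0$, no surface model.
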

\begin{proof}
We will assume that $Q$ is not the quiver of mutation type $\widetilde{\mathbb{A}}_4$ where $\eta(Q)$ is the Kronecker quiver with arrows $\alpha_1, \alpha_2 \in \eta(Q)_1$ and both of its quivers $Q^{(\alpha_1)}$ and $Q^{(\alpha_2)}$ are single vertices. We will also assume that $Q$ is not the quiver of mutation type $\widetilde{\mathbb{A}}_3$ where $\eta(Q)$ is the Kronecker quiver with arrows $\alpha_1, \alpha_2 \in \eta(Q)_1$ and where $Q^{(\alpha_1)}$ is a single vertex and $Q^{(\alpha_2)}$ is the empty quiver. If $Q$ is either of these two quivers, one checks that its minimal length maximal green sequences have the desired length. 

Now let $\textbf{i} \in \text{MGS}(Q)$ and let $\alpha \in \eta(Q)_1$ be an arrow in the 3-cycle $z_\alpha \to s(\alpha) \to t(\alpha) \to z_\alpha$. By Theorem~\ref{thmMullerSubquiver}, we let $\textbf{i}^{(\alpha)}$ denote the induced maximal green sequence on the full subquiver of $Q$ whose vertices are $s(\alpha), t(\alpha),$ and $z_\alpha$. One checks that the sequence of \textbf{c}-vectors $\textbf{c}(\textbf{i}^{(\alpha)})$ includes at least one $\textbf{c}$-vector with two nonzero entries. Thus the sequence $\textbf{c}(\textbf{i})$ includes at least one \textbf{c}-vector with two nonzero entries for each 3-cycle of $Q$. It follows that $\ell(\textbf{i}) \ge |Q_0| + |\{\text{3-cycles in } Q\}|$.
\end{proof}

Now assume that $Q$ has the property that the family of subquivers $\{Q^{(\alpha)}: \alpha \in \eta(Q)_1\}$ of $Q$, using the notation of Definition~\ref{Qnquiversbastian}, contains only empty quivers or quivers of mutation type $\mathbb{A}_1$. It immediately follows that $Q$ has the property that each arrow $\alpha$ of $\eta(Q)$ satisfies one of the following: \begin{itemize}
\item $\alpha$ belongs to a (necessarily, unique) 3-cycle of $Q$
\item $\alpha$ is not an arrow in an oriented cycle of $Q$.
\end{itemize} Let $Q=Q_I$ denote a quiver satisfying all of the assumptions mentioned in the previous two paragraphs.

We now define two families $C(Q)$ and $CC(Q)$ of full subquivers of $Q$ that will be important in our construction of minimal length maximal green sequences of $Q$. We note that the elements of $C(Q)$ and $CC(Q)$ will form a set partition of the arrows of $Q$. Define the \textbf{clockwise components} of $Q$, denoted $C(Q)$, to be the set of full, connected subquivers $R$ of $Q$ such that \begin{itemize}\item $R$ contains no counterclockwise arrows of $\eta(Q)$, \item each clockwise arrow of $R_1\cap\eta(Q)_1$ belongs to a 3-cycle, and \item $R$ is an inclusion-maximal full subquiver of $Q$ with respect to these conditions. \end{itemize}

Let $R \in C(Q)$ and order the clockwise arrows in $R_1 \cap \eta(Q)_1$ as $\alpha_1, \ldots, \alpha_d$ so that $\text{deg}(s(\alpha_1)) = \text{deg}(t(\alpha_d)) = 2$ and $t(\alpha_i) = s(\alpha_{i+1})$ for each $i \in [d-1].$ Define $\textbf{i}_{R,initial}$ to be the following sequence of vertices of $R$: $$\begin{array}{rcl} \textbf{i}_{R,initial} & := & (z_{\alpha_1}, z_{\alpha_2}, s(\alpha_2), z_{\alpha_3}, s(\alpha_3), \ldots, z_{\alpha_d}, s({\alpha_d})).\end{array}$$ 

Next, define the \textbf{counterclockwise components} of $Q$, denoted $CC(Q)$, to be the set of full, connected subquivers $S$ of $Q$ such that \begin{itemize}\item $S$ contains no clockwise arrows of $\eta(Q)$ and \item $S$ is an inclusion-maximal full subquiver of $Q$ with respect to these conditions. \end{itemize}

Let $S \in CC(Q)$ and order the counterclockwise arrows in $S_1 \cap \eta(Q)_1$ as $\alpha_1, \ldots, \alpha_d$ so that $s(\alpha_1)$ and $t(\alpha_d)$ are vertices of some quivers belonging to $C(Q)$ and $t(\alpha_i) = s(\alpha_{i+1})$ for each $i \in [d-1]$. Define $\textbf{i}_{S,initial}$ to be the following sequence of vertices of $S$: $$\begin{array}{rcl}\textbf{i}_{S,initial} & := &(s(\alpha_1), s(\alpha_2), \ldots, s(\alpha_d), t(\alpha_d)).\end{array}$$ 

Let $\textbf{i}_{S, final}$ be a sequence of some distinct vertices of $S$, first containing all of the vertices in $S_0\setm\eta(Q)_0$, followed by the vertices $i$ of $S_0\cap\eta(Q)_0$ for which there exists an arrow $j\ra i$ where $j$ is in $S_0\setm\eta(Q)_0$. We exclude $i$ from this sequence if there exists an arrow $i\ra j$ and $j$ is part of some clockwise component of $Q$.

Now let $R\in C(Q)$. Let $i_1,\ldots,i_l$ be the vertices of $R_0\setm\eta(Q)_0$, ordered such that there exists vertices $j_k$ with a 2-path $i_{k+1}\ra j_k\ra i_k$ in $R$ for $k<l$. Let $j_l$ be the unique vertex in $R$ with an arrow $j_l\ra i_l$. Then $j_l$ is part of some counterclockwise component $S$. Let $j$ be the vertex of $\eta(Q)_0\cap S_0$ with an arrow $j\ra j_l$. If this arrow is part of a 3-cycle, then we define $\textbf{i}_{R,final}$ to be the sequence $(i_1,\ldots,i_l,j)$. Otherwise, we let $\textbf{i}_{R,final}$ be the sequence $(i_1,\ldots,i_l)$.

\begin{example}\label{vertex_seq}
Consider the quiver $Q$ shown in Figure~\ref{subquivers}. The set $C(Q) = \{R_1, R_2, R_3\}$ (resp., $CC(Q) = \{S_1, S_2, S_3\}$) consists of the 3 full subquivers of $Q$ that have solid (resp., dashed) arrows. For the quiver $Q$, the sequences of vertices defined above are as follows.

$$\begin{array}{rclccrcl}
\textbf{i}_{R_1, initial} & = & (1,2,3) & & & \textbf{i}_{S_1, final} & = & (23, 24, 25, 12, 13)\\
\textbf{i}_{R_2, initial} & = & (4) & & & \textbf{i}_{S_2, final} & = & (26) \\
\textbf{i}_{R_3, initial} & = & (5, 6, 7, 8, 9) & & & \textbf{i}_{S_3, final} & = & (27, 20) \\
\textbf{i}_{S_1, initial} & = & (10, 11, 12, 13, 14, 15, 16)  & & & \textbf{i}_{R_1, final} & = & (1, 2, 17) \\
\textbf{i}_{S_2, initial} & = & (17, 18) & & & \textbf{i}_{R_2, final} & = & (4) \\
\textbf{i}_{S_3, initial} & = & (19, 20, 21, 22) & & &  \textbf{i}_{R_3, final} & = & (5, 6, 8, 15)\\
\end{array}$$
\end{example}

\begin{figure}
$$\begin{xy} 0;<1pt,0pt>:<0pt,-1pt>:: 
(130,0) *+{1} ="1",
(150,30) *+{3} ="2",
(170,0) *+{2} ="3",
(250,0) *+{4} ="4",
(250,100) *+{5} ="5",
(210,100) *+{6} ="6",
(230,70) *+{7} ="7",
(170,100) *+{8} ="8",
(190,70) *+{9} ="9",
(110,30) *+{10} ="10",
(70,30) *+{11} ="11",
(30,30) *+{12} ="12",
(30,70) *+{13} ="13",
(70,70) *+{14} ="14",
(110,70) *+{15} ="15",
(150,70) *+{16} ="16",
(270,70) *+{19} ="17",
(310,70) *+{20} ="18",
(310,30) *+{21} ="19",
(230,30) *+{17} ="20",
(190,30) *+{18} ="21",
(130,100) *+{23} ="22",
(50,100) *+{24} ="23",
(0,50) *+{25} ="24",
(210,0) *+{26} ="25",
(340,50) *+{27} ="26",
(270,30) *+{22} ="27",
"2", {\ar"1"},
"1", {\ar"10"},
"3", {\ar"2"},
"10", {\ar"2"},
"2", {\ar"21"},
"21", {\ar"3"},
"4", {\ar"20"},
"27", {\ar"4"},
"7", {\ar"5"},
"5", {\ar"17"},
"6", {\ar"7"},
"9", {\ar"6"},
"7", {\ar"9"},
"17", {\ar"7"},
"8", {\ar"9"},
"16", {\ar"8"},
"9", {\ar"16"},
"10", {\ar@{-->}"11"},
"11", {\ar@{-->}"12"},
"12", {\ar@{-->}"13"},
"24", {\ar@{-->}"12"},
"13", {\ar@{-->}"14"},
"23", {\ar@{-->}"13"},
"13", {\ar@{-->}"24"},
"14", {\ar@{-->}"15"},
"14", {\ar@{-->}"23"},
"15", {\ar@{-->}"16"},
"22", {\ar@{-->}"15"},
"16", {\ar@{-->}"22"},
"17", {\ar@{-->}"18"},
"18", {\ar@{-->}"19"},
"26", {\ar@{-->}"18"},
"19", {\ar@{-->}"26"},
"19", {\ar@{-->}"27"},
"20", {\ar@{-->}"21"},
"25", {\ar@{-->}"20"},
"20", {\ar"27"},
"21", {\ar@{-->}"25"},
\end{xy}$$
\caption{The subquivers $C(Q)$ and $CC(Q)$.}
\label{subquivers}
\end{figure}
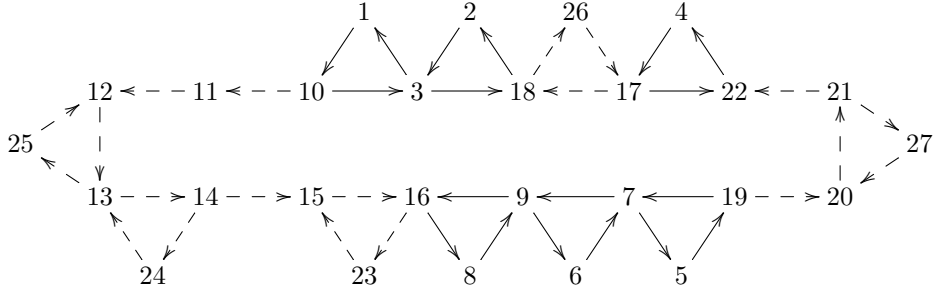


\begin{remark}
There is an obvious bijection between the quivers in $C(Q)$ and those in $CC(Q)$ given by sending $R \in C(Q)$ to $S \in CC(Q)$ so that $s(\alpha^R_1) = s(\alpha^S_1)$ is satisfied. Thus $|C(Q)| = |CC(Q)| = |\{\text{sources of $\eta(Q)$}\}|$. We will henceforth write $C(Q) = \{R_1, \ldots, R_k\}$ and $CC(Q) = \{S_1, \ldots, S_k\}$ so that $s(\alpha^{R_i}_1) = s(\alpha^{S_i}_1)$ for each $i \in [k],$ as we have done in Example~\ref{vertex_seq}.
\end{remark}

We define $\textbf{i} = \textbf{i}_1 \circ \textbf{i}_2 \circ \textbf{i}_3 \circ \textbf{i}_4$ to be the following sequence of vertices.

$$\begin{array}{rcl}
\textbf{i}_{1} & := & \textbf{i}_{R_1,initial}\circ \cdots \circ \textbf{i}_{R_k, initial} \\
\textbf{i}_{2} & := & \textbf{i}_{S_1,initial} \circ \cdots \circ \textbf{i}_{S_k, initial} \\
\textbf{i}_{3} & := & \textbf{i}_{S_1, final} \circ \cdots \circ \textbf{i}_{S_k, final} \\
\textbf{i}_{4} & := & \textbf{i}_{R_1, final} \circ \cdots \circ \textbf{i}_{R_k, final} \\
\end{array}$$

\begin{theorem}\label{affine_mgs}
The sequence $\textbf{i}$ is a minimal length maximal green sequence of $Q$ with $\ell(\textbf{i}) = |Q_0| + |\{\text{3-cycles in } Q\}|$.
\end{theorem}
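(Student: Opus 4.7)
The plan is to first reduce, by the same argument as in the proof of Theorem~\ref{Thm_affinequivers_min_length}, to the simplified case where each $Q^{(\alpha)}$ is empty or a single vertex and $Q$ cannot be written as a nontrivial direct sum of two mutation type $\mathbb{A}$ quivers. In that case, $Q$ is the signed adjacency quiver of an explicit triangulation $\textbf{T}$ of an annulus $(\textbf{S}, \textbf{M})$ with $n_1$ marked points on one boundary component and $n_2$ on the other: the vertices of $\eta(Q)$ correspond to arcs crossing the annulus, while each $z_\alpha$ corresponds to an arc inside a triangle with two boundary sides attached to the side of $\eta(Q)$ consistent with the orientation of $\alpha$. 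Thus every step of the mutation sequence $\textbf{i}$ can be interpreted as a sequence of flips on $\textbf{T}$, and I can check greenness via shear coordinates as in Section~\ref{Sec:QT}, in the same spirit as the proof of Theorem~\ref{Thm:Dn_short_seq}.

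The proof then splits into two tasks: (a) verify $\textbf{i}$ is a maximal green sequence; (b) count $\ell(\textbf{i})$ and invoke Lemma~\ref{min_length_lower_bound_affine_A} for minimality. For (a), I would analyze the four blocks in order. Applying $\underline{\mu}_{\textbf{i}_1}$ to $\textbf{T}$ performs, inside each clockwise component $R$, the flips of $z_{\alpha_1}$, then alternately $z_{\alpha_{j+1}}$ and $s(\alpha_{j+1})$; by direct inspection this turns the sub-triangulation of $(\textbf{S}(R), \textbf{M}(R))$ associated to $R$ into a fan on the opposite side, and each flip is at a green arc as can be read from the elementary lamination of the arc being flipped. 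Because the surfaces $(\textbf{S}(R_i), \textbf{M}(R_i))$ intersect only along boundary arcs, Lemma~\ref{Lem:subsurfaces}(a) lets me concatenate these blocks in any order. Applying $\underline{\mu}_{\textbf{i}_2}$ flips each $\eta(Q)$-arc in a counterclockwise component once, advancing it by one ``notch'' in the universal rotation direction; again shear coordinates give greenness at each step. Applying $\underline{\mu}_{\textbf{i}_3}$ and then $\underline{\mu}_{\textbf{i}_4}$ mops up the remaining arcs, with the final positions of $\textbf{i}_{S,final}$ and $\textbf{i}_{R,final}$ chosen precisely so that one more mutation at the boundary of adjacent components is needed exactly when the corresponding counterclockwise arrow participates in a 3-cycle. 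Finally I would show that $\underline{\mu}_{\textbf{i}}(\textbf{T}) = \varrho(\textbf{T})$, which by Theorem~\ref{univ_tagged_thm} certifies that every vertex is red at the end.

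For (b), count as follows. Let $k = |C(Q)| = |CC(Q)|$, let $c_1$ be the number of clockwise arrows of $\eta(Q)$ (all in 3-cycles), let $c_2$ be the number of counterclockwise arrows in 3-cycles, let $m$ be the number of counterclockwise arrows not in 3-cycles, so that $|\eta(Q)_0| = c_1 + c_2 + m$ and $|\{\text{3-cycles of } Q\}| = c_1 + c_2$. The lengths are $\ell(\textbf{i}_{R_i,initial}) = 2d_i - 1$ where $d_i$ is the number of clockwise arrows in $R_i$, and $\ell(\textbf{i}_{S_i,initial}) = d_i' + 1$ where $d_i'$ is the number of counterclockwise arrows in $S_i$; these sum to $(2c_1 - k) + (c_2 + m + k) = 2c_1 + c_2 + m$. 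The sequences $\textbf{i}_{S_i,final}$ pick up one vertex for each vertex in $S_0 \setminus \eta(Q)_0$ (that is, each counterclockwise 3-cycle contributes its $z_\alpha$) plus one vertex in $\eta(Q)_0$ when the 3-cycle closes appropriately, contributing $c_2$ in total; the sequences $\textbf{i}_{R_i,final}$ pick up $c_1$ vertices (one per clockwise 3-cycle) plus terminal boundary vertices accounting for the remaining $m$ counterclockwise arrows. Adding everything gives $|Q_0| + |\{\text{3-cycles in } Q\}|$. Together with Lemma~\ref{min_length_lower_bound_affine_A}, minimality follows.

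The main obstacle will be the case analysis in task (a), especially at the interfaces where a clockwise component meets a counterclockwise component: there the local triangulation, and hence the color of the arcs to be flipped next, depends on whether the adjacent counterclockwise arrow is in a 3-cycle. I expect this to require a careful enumeration of four or five local pictures, each treated via shear coordinates as in Figures~\ref{i2_transformation}--\ref{i5_transformation} of the Type IV argument, to confirm simultaneously that the next mutation occurs at a green arc and that after the final block every arc is red.
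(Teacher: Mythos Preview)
Your plan is workable but follows a genuinely different route from the paper. The paper does \emph{not} pass to the annulus model or shear coordinates here; instead, after a short counting paragraph establishing $\ell(\textbf{i})=|Q_0|+|\{\text{3-cycles}\}|$, it argues entirely with framed quivers. It introduces three explicit intermediate families of ice quivers, called Types~II, III, IV, each specified by listing, for every vertex, whether it is green or red and which frozen arrows touch it. Three short lemmas then show, by direct local mutation computations (and a relabeling of mutable vertices at each stage), that $\mu_{\textbf{i}_1}$ carries $\widehat{Q}$ to Type~II, $\mu_{\textbf{i}_2}$ carries Type~II to Type~III, $\mu_{\textbf{i}_3}$ to Type~IV, and $\mu_{\textbf{i}_4}$ to an all-red quiver, every mutation along the way being green. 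Minimality then comes from Lemma~\ref{min_length_lower_bound_affine_A}, as you say. This is more self-contained than your plan: it avoids all of the surface machinery of Section~\ref{Sec:QT} and the case analysis at component interfaces that you anticipate, at the cost of introducing the auxiliary Type definitions.

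Your surface approach would also go through, since $\widetilde{\mathbb{A}}(n_1,n_2)$ quivers are precisely signed adjacency quivers of annulus triangulations and the lamination bookkeeping of Section~\ref{Sec_type_IV} applies verbatim. Two places to tighten: (i) your appeal to Theorem~\ref{univ_tagged_thm} is in the wrong logical direction --- that theorem says a maximal green sequence \emph{ends} at $\varrho(\textbf{T})$, not that reaching $\varrho(\textbf{T})$ certifies all-red; you instead need the (true, but separate) fact that for an unpunctured surface the triangulation determines the $c$-matrix, and $\varrho(\textbf{T})$ corresponds to $-I$; (ii) your tally of $\ell(\textbf{i}_3)$ and $\ell(\textbf{i}_4)$ is too loose. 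The exclusion rule in $\textbf{i}_{S,final}$ removes $s(\alpha)$ whenever $t(\alpha)$ is a sink of $\eta(Q)$ (and also when $s(\alpha)$ is a source), while the optional extra vertex $j$ in $\textbf{i}_{R,final}$ compensates exactly for the sink exclusions; tracking this correctly requires distinguishing first and last arrows of each $S$, not just counting $c_2$.
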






The remainder of this section is dedicated to proving Theorem~\ref{affine_mgs}. As usual, we let $\ov{Q^\prime} \in \text{Mut}(\widehat{Q})$ denote a quiver with frozen vertices whose mutable part is $Q^\prime \in \text{Mut}(Q)$. Given two full subquivers $Q^1$ and $Q^2$ of some quiver $Q^\prime \in \text{Mut}(Q)$, we define $Q^1\cap Q^2$ (resp., $Q^1\backslash Q^2$) to be the full subquiver of $Q^\prime$ whose vertex set is $Q^1_0\cap Q^2_0$ (resp., $Q^1_0\backslash Q^2_0$). If $R$ is a clockwise or counterclockwise component of $Q$, we refer to the unique source (resp., sink) of $\eta(Q)\cap R$ as the \textbf{first} (resp., \textbf{last}) vertex of $R$. Similarly, we define the $\textbf{first}$ (resp., \textbf{last}) arrow of $R$ to be the unique arrow of $\eta(Q)\cap R$ incident to the first (resp., last) vertex of $R$.

Let us first remark that the sequence is of the appropriate length. Every vertex except the first and last in a clockwise component of $Q$ is in the sequence $\textbf{i}_1$. The sequence $\textbf{i}_2$ contains all of the vertices in the main cycle $\eta(Q)$ that lie in a counterclockwise component of $Q$. Together with $\textbf{i}_1$, this now includes all of the vertices in $\eta(Q)$ and the vertices not in $\eta(Q)$ that are in a clockwise component. The sequence $\textbf{i}_3$ contains the vertices in each counterclockwise component of $Q$ outside the main cycle, as well as one additional vertex for each 3-cycle in a counterclockwise component unless the 3-cycle contains the last vertex of that component. Finally, for each 3-cycle in a clockwise component, there is one corresponding vertex in $\textbf{i}_4$, as well as one vertex in the sequence for each 3-cycle in a counterclockwise component that contains the last vertex of that component. In total, there is a mutation for each vertex of $Q$ and an additional mutation corresponding to each 3-cycle.

To show that this sequence is a maximal green sequence, we characterize the quiver obtained from $Q$ by applying mutations at each of the sequences $\textbf{i}_1\circ\cdots\circ\textbf{i}_l$ for $l\in\{1,2,3,4\}$. We say a quiver is of Type I if it satisfies the defining criteria for $\widehat{Q}$. We say a quiver $\ov{Q}_{II}$ is of Type II if the following holds.

\begin{itemize}
\item For each $R\in C(Q_{II})$:
\begin{itemize}
\item $R$ contains at least three vertices in $\eta(Q_{II})$.
\item The first and last arrows of $R\cap\eta(Q_{II})$ are not part of a 3-cycle, and every other arrow of $R\cap\eta(Q_{II})$ is part of a 3-cycle.
\item If $i$ is a vertex in $R\setm\eta(Q_{II})$, then there is a unique arrow $i\ra j$ with source $i$, and $i$ is a red vertex with arrows $i\la i^{\pr}$ and $i\la j^{\pr}$.
\item If $i$ is the last vertex in $R\cap\eta(Q_{II})$, then there is a unique arrow $j\ra i$ in $R$ with target $i$, and $i$ is a green vertex with arrows $i\ra i^{\pr},\ i\ra j^{\pr}$.
\item If $i$ is the second-to-last vertex in $R\cap\eta(Q_{II})$, then $i$ is red with an arrow $i\la i^{\pr}$.
\end{itemize}

\item Every other vertex is green with an arrow $i\ra i^{\pr}$.

\end{itemize}

\begin{lemma}\label{lem_T2}
Applying the mutation sequence $\mu_{\textbf{i}_1}$ to $\widehat{Q}$ gives a quiver of Type II, up to a permutation of the mutable vertices.
\end{lemma}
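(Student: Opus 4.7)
The plan is to analyze $\mu_{\textbf{i}_1}$ one clockwise component at a time. First, observe that for distinct clockwise components $R_j$ and $R_\ell$ of $Q$, the sub-sequences $\mu_{\textbf{i}_{R_j,initial}}$ and $\mu_{\textbf{i}_{R_\ell,initial}}$ act on disjoint sets of mutable vertices: each $\textbf{i}_{R,initial}$ mutates only the $z$-vertices $z_{\alpha_1},\ldots,z_{\alpha_d}$ and the interior cycle vertices $s(\alpha_2),\ldots,s(\alpha_d)$ of $R$, all of which have their mutable-vertex neighbors inside $R$. Hence these sub-sequences commute in effect, and the counterclockwise components together with every mutable vertex outside the clockwise components remain entirely untouched by $\mu_{\textbf{i}_1}$; in particular they retain their original structure of being green with a single arrow $i\to i^\prime$, matching the ``every other vertex'' clause of Type II.

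Fixing a single clockwise component $R$ with clockwise arrows $\alpha_1,\ldots,\alpha_d$, I would proceed by induction on $i\in\{1,\ldots,d\}$. The inductive hypothesis is that after the first $2i-1$ mutations in $\textbf{i}_{R,initial}$ (namely $\mu_{z_{\alpha_1}}$ together with $\mu_{z_{\alpha_j}}\mu_{s(\alpha_j)}$ for $2\leq j\leq i$), the subquiver induced on $\{s(\alpha_1),z_{\alpha_1},s(\alpha_2),z_{\alpha_2},\ldots,s(\alpha_i),z_{\alpha_i},t(\alpha_i)\}$ together with the adjacent frozen vertices has the explicit structure: the directed path $s(\alpha_1)\to z_{\alpha_1}\to z_{\alpha_2}\to\cdots\to z_{\alpha_i}\to t(\alpha_i)$; the 3-cycles $z_{\alpha_{j-1}}\to z_{\alpha_j}\to s(\alpha_j)\to z_{\alpha_{j-1}}$ for $j=2,\ldots,i$; and the frozen arrows $s(\alpha_1)\to s(\alpha_1)^\prime$, $z_{\alpha_j}\to s(\alpha_{j+1})^\prime$ for $j<i$, the pair $s(\alpha_j)\leftarrow s(\alpha_j)^\prime$ and $s(\alpha_j)\leftarrow z_{\alpha_{j-1}}^\prime$ for $j\geq 2$, $z_{\alpha_i}^\prime\to z_{\alpha_i}$, and the pair $t(\alpha_i)\to t(\alpha_i)^\prime$ together with $t(\alpha_i)\to z_{\alpha_i}^\prime$. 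The base case $i=1$ is a direct unpacking of $\mu_{z_{\alpha_1}}$. The inductive step $i\to i+1$ is a short direct computation: $\mu_{z_{\alpha_{i+1}}}$ breaks the old $(i+1)$-th $3$-cycle into a path through $z_{\alpha_{i+1}}$ and installs the frozen arrows $z_{\alpha_{i+1}}^\prime\to z_{\alpha_{i+1}}$ and $t(\alpha_{i+1})\to z_{\alpha_{i+1}}^\prime$; then $\mu_{s(\alpha_{i+1})}$ creates the new 3-cycle $z_{\alpha_i}\to z_{\alpha_{i+1}}\to s(\alpha_{i+1})\to z_{\alpha_i}$ via 2-paths through $s(\alpha_{i+1})$, and crucially the 2-path $z_{\alpha_i}\to s(\alpha_{i+1})\to z_{\alpha_i}^\prime$ triggers a 2-cycle cancellation that disconnects $z_{\alpha_i}$ from $z_{\alpha_i}^\prime$ and leaves $z_{\alpha_i}$ green with the new arrow $z_{\alpha_i}\to s(\alpha_{i+1})^\prime$.

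To identify the result with a Type II quiver, apply the permutation $\sigma$ of mutable vertex labels that swaps the labels of $z_{\alpha_i}$ and $s(\alpha_{i+1})$ for each $1\leq i\leq d-1$ in every clockwise component (and acts as the identity elsewhere). Keeping frozen labels fixed, each ``mismatched'' frozen arrow $z_{\alpha_j}\to s(\alpha_{j+1})^\prime$ becomes of the canonical form $i\to i^\prime$; the two frozen arrows at each $s(\alpha_j)$ for $j\geq 2$ become $i\leftarrow i^\prime$ and $i\leftarrow j^\prime$ with $j$ the target of the unique mutable outgoing arrow from $i$, as required by Type II. A final case check then confirms: $s(\alpha_1)$ is the green first vertex of the new clockwise component of $\eta(Q_{II})$; the relabeled $z_{\alpha_j}$ for $j<d$ are the interior green vertices with $i\to i^\prime$; the relabeled $s(\alpha_j)$ for $j\geq 2$ populate the non-cycle part as red vertices with their two prescribed frozen arrows; $z_{\alpha_d}$ is the red second-to-last vertex with $z_{\alpha_d}^\prime\to z_{\alpha_d}$; and $t(\alpha_d)$ is the green last vertex with $t(\alpha_d)\to t(\alpha_d)^\prime$ and $t(\alpha_d)\to z_{\alpha_d}^\prime$. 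This exhausts the Type II conditions.

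The main obstacle is the careful bookkeeping in the inductive step, in particular the 2-cycle cancellation in $\mu_{s(\alpha_{i+1})}$ that severs $z_{\alpha_i}^\prime\to z_{\alpha_i}$: this is the crucial event that makes the permutation $\sigma$ work, because it is precisely what allows the new frozen arrow $z_{\alpha_i}\to s(\alpha_{i+1})^\prime$ to exist and, after the swap $z_{\alpha_i}\leftrightarrow s(\alpha_{i+1})$, to realize the required $i\to i^\prime$ adjacency at each interior vertex of the new clockwise component of $\eta(Q_{II})$. A secondary subtlety is verifying that no unexpected frozen arrows are created at the shared source $s(\alpha_1)$ or sink $t(\alpha_d)$ of $R$, which follows from the observation that the mutations of $\textbf{i}_{R,initial}$ are not adjacent to any counterclockwise-component vertex.
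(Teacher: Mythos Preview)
Your proof is correct and follows essentially the same approach as the paper: both argue one clockwise component at a time by direct computation, then apply the same relabeling $s(\alpha_{k})\leftrightarrow z_{\alpha_{k-1}}$ (in the paper's language, swapping each interior vertex $i\in R\cap\eta(Q)$ with the target of its unique outgoing arrow in the mutated quiver). The only organizational difference is that you induct along the literal alternating order of $\textbf{i}_{R,initial}$, whereas the paper regroups the mutations into two phases (first all of $R\setminus\eta(Q)$, then the interior vertices of $R\cap\eta(Q)$); your version is in fact more faithful to the stated definition of $\textbf{i}_{R,initial}$ and makes the key $2$-cycle cancellation at $z_{\alpha_i}^{\prime}$ explicit.
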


\begin{proof}
Let $\ov{Q}_{II}=\mu_{\textbf{i}_1}(\widehat{Q})$. Except for the endpoints, the vertices of any counterclockwise component are not adjacent to any vertex in the sequence $\textbf{i}_1$. Consequently, they will remained unchanged in $\ov{Q}_{II}$.

Let $R$ be a clockwise component of $Q$. After mutating at each of the vertices in $R\setm\eta(Q)$, the resulting quiver $R^{\pr}$ is a directed path with a clockwise orientation. The vertices in this path alternate between $R\cap\eta(Q)$ and $R\setm\eta(Q)$. For each vertex $i\in R\setm\eta(Q)$, there is an arrow $i\la i^{\pr}$ in $\ov{R^{\pr}}$. For a vertex $i\in R\cap\eta(Q)$ and arrow $j\ra i$ in $R^{\pr}$, there are arrows $i\ra i^{\pr}$ and $i\ra j^{\pr}$. Finally, if $i$ is the first vertex of $R\cap\eta(Q)$, then it is green with $i\ra i^{\pr}$.

Now let $R^{\pr\pr}$ be the result of mutating $R^{\pr}$ at each of the vertices in $R\cap\eta(Q)$ except the first and last. Then $R^{\pr\pr}$ is a clockwise component of $Q_{II}$. Each vertex of $R\cap\eta(Q)$ except the first and last is part of a 3-cycle in $R^{\pr\pr}$ but lies in $R^{\pr\pr}\setm\eta(Q_{II})$. Every other vertex of $R$ lies on the main cycle in $Q_{II}$. Since we assumed that every clockwise arrow in $\eta(Q)$ is part of a 3-cycle in $Q$, we deduce that $R^{\pr\pr}\cap\eta(Q_{II})$ contains at least 3 vertices. Furthermore, every arrow of $R^{\pr\pr}\cap\eta(Q_{II})$ except the first and last is part of a 3-cycle.

If $i$ is a vertex in $R\cap\eta(Q)$ besides the first and last vertex, then there is a unique arrow $i\ra j$ in $R^{\pr\pr}$ with source $i$. For each such pair $i,j$ we swap their labels in the mutable part of $\ov{Q}_{II}$. After doing this permutation, it is straight-forward to check that the component $R^{\pr\pr}$ satisfies the remaining conditions to be a clockwise component of a quiver of Type II.
\end{proof}

We say a quiver $\ov{Q}_{III}$ is of Type III if the following holds.

\begin{itemize}
\item For each $R\in C(Q_{III})$:
\begin{itemize}
\item Every arrow in $R$ is part of a 3-cycle.
\item If $i$ is the last vertex in $R\cap\eta(Q_{III})$, then $i$ is red with arrow $i\la i^{\pr}$.
\item If $i$ is any other vertex in $R\cap\eta(Q_{III})$, then $i$ is green with arrow $i\ra i^{\pr}$.
\item If $i$ is in $R\setm\eta(Q_{III})$, and $i\ra j$ is the unique arrow with source $i$, then $i$ is red with arrows $i\la i^{\pr}$ and $i\la j^{\pr}$.
\end{itemize}
\item For each $S\in CC(Q_{III})$:
\begin{itemize}
\item If $i$ is the first vertex in $S\cap\eta(Q_{III})$, then $i$ is green with arrow $i\ra i^{\pr}$.
\item If $i$ is any other vertex in $S\cap\eta(Q_{III})$, then $i$ is red with arrow $i\la i^{\pr}$.
\item If $i$ is in $S\setm\eta(Q_{III})$, and $j\ra i$ is the unique arrow with target $i$, then $i$ is green with arrows $i\ra i^{\pr}$ and $i\ra j^{\pr}$.
\end{itemize}
\end{itemize}

\begin{lemma}\label{lem_T3}
Applying $\mu_{\textbf{i}_2}$ to a quiver of Type II produces a quiver of Type III, up to a permutation of the mutable vertices.
\end{lemma}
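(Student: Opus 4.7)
The plan is to mimic the calculation-based approach used in the proof of Lemma~\ref{lem_T2}, tracking how the mutation sequence $\mu_{\textbf{i}_2} = \mu_{\textbf{i}_{S_k,initial}} \cdots \mu_{\textbf{i}_{S_1,initial}}$ transforms a Type II quiver $\ov{Q}_{II}$ into a Type III quiver step by step. First, I would argue that the factors $\mu_{\textbf{i}_{S_i,initial}}$ for distinct counterclockwise components $S_i$ of $Q_{II}$ act on essentially disjoint sets of mutable vertices: the internal vertices of each $S_i$ appear nowhere else, and the two extreme vertices of $S_i$ are sources or sinks of $\eta(Q_{II})$ that $S_i$ shares only with adjacent clockwise components of $Q_{II}$ and never with another counterclockwise component (thanks to our standing assumption that $Q$ is not a nontrivial direct sum of two mutation type $\mathbb{A}$ quivers). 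Consequently, the analysis reduces to understanding $\mu_{\textbf{i}_{S,initial}}$ for a single counterclockwise component $S$, together with its local effect on the two adjacent clockwise components.

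For a fixed $S$ with $\textbf{i}_{S,initial} = (v_0, v_1, \ldots, v_d)$, write $\alpha_i^S : v_{i-1} \to v_i$ for the counterclockwise cycle arrows and $z_i$ for the extra vertex whenever $\alpha_i^S$ supports a 3-cycle in $Q_{II}$. I would establish by induction on $j \in \{0, \ldots, d\}$ that after performing $\mu_{v_j} \cdots \mu_{v_0}$: $(a)$ the vertex $v_j$ being mutated is green in the preceding intermediate quiver, so the whole sequence is a green sequence; $(b)$ each cycle arrow $v_{i-1} \to v_i$ has been reversed for $i \le j$; $(c)$ each 3-cycle vertex $z_i$ for $i \le j$ has acquired the frame arrows $z_i \to z_i^\prime$ and $z_i \to v_{i-1}^\prime$ prescribed by Type III; and $(d)$ the frame orientations at $v_0, \ldots, v_{j-1}$ point inward (red) while $v_j$ itself is still green. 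The inductive step is a routine 2-path computation at $v_j$: the only nontrivial new arrows come from 2-paths through $v_j$ involving either the arrow $v_{j-1} \to v_j$ (already reversed by $\mu_{v_{j-1}}$) or the extra vertex $z_j$ of a 3-cycle attached to $\alpha_j^S$, and after cancellation of 2-cycles these yield exactly the Type III configuration described in $(b)$--$(c)$.

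The boundary mutations at $v_0$ and $v_d$ do additional work: they create new arrows incident to the first vertex of the clockwise component $R$ sharing $v_0$ and to the last vertex of the clockwise component $R^\prime$ sharing $v_d$. Combined with the arrows already present in $R$ and $R^\prime$, these new arrows produce 3-cycles attached to the previously 3-cycle-free first arrow of $R$ and last arrow of $R^\prime$, which is precisely the difference between the Type II and Type III conditions on clockwise components (in Type III, every arrow of $R$ must belong to a 3-cycle). After collecting the per-component analyses, verifying the remaining Type III conditions on $C(Q_{III})$ and $CC(Q_{III})$ is routine, as is exhibiting the label permutation of mutable vertices needed to match the template (analogous to the swap in the proof of Lemma~\ref{lem_T2}). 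The main obstacle is the careful bookkeeping of frame arrows and 3-cycle arrows at the shared source and sink vertices of $\eta(Q_{II})$, where the effects of $\mu_{\textbf{i}_{S,initial}}$ on $S$ interleave with the unmutated structure of the adjacent clockwise components; I expect to need a small number of separate sub-cases at each source depending on whether the adjacent counterclockwise arrow supports a 3-cycle in $Q_{II}$.
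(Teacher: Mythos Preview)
Your approach is essentially the paper's: reduce to a single counterclockwise component $S$ of $Q_{II}$, mutate along its vertices in order, and track the effect on $S$ together with the two neighbouring clockwise components. The paper's proof is sketchier than yours (it states the outcome and says the remainder is routine), but the structure is the same, including the final label swap.

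There is one concrete inaccuracy in your boundary analysis that you should fix before executing the proof. You claim that the mutation at $v_0$ creates a $3$-cycle on the previously $3$-cycle-free first arrow of the clockwise component $R$ sharing $v_0$. It does not. The vertex $v_0$ is the common source of $R$ and $S$, and mutating at $v_0$ simply reverses the first arrow $v_0\to i_{-1}$ of $R$ to $i_{-1}\to v_0$. This arrow is now counterclockwise, so in $Q_{III}$ the vertex $i_{-1}$ becomes the \emph{first} vertex of the counterclockwise component $S'$ (the paper records $S'\cap\eta(Q_{III})$ as $i_{-1}\to i_0\to\cdots\to i_{l-1}$). The clockwise component of $Q_{III}$ meeting $S'$ at $i_{-1}$ is therefore one arrow shorter than $R$ was, and its first arrow is what used to be the \emph{second} arrow of $R$, which already carried a $3$-cycle by the Type~II hypothesis. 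No new $3$-cycle is produced at this end.

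At the sink end the mechanism is also slightly different from what you wrote: the last arrow $i_{l+1}\to i_l$ of $R'$ is not decorated with a $3$-cycle; rather, after mutating at $i_{l-1}$ and then $i_l$ a \emph{new} clockwise arrow $i_{l+1}\to i_{l-1}$ appears, and together with $i_{l-1}\to i_l\to i_{l+1}$ it forms the required $3$-cycle, with $i_l$ now playing the role of the off-cycle vertex. This is why the paper swaps the labels $i_l$ and $i_{l+1}$ at the end. Once you adjust your boundary description along these lines, your induction goes through and matches the paper's argument.
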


\begin{proof}
Let $\ov{Q}_{II}$ be a quiver of Type II. Let $S$ be a counterclockwise component of $Q_{II}$. Let $i_0,i_1,\ldots,i_l$ be the list of vertices in $S\cap\eta(Q_{II})$ such that there exist arrows $i_0\ra i_1\ra\cdots\ra i_l$. Let $i_0\ra i_{-1}$ be the unique arrow outside $S$ whose source is $i_0$, and $i_l\la i_{l+1}$ be the unique arrow outside $S$ whose target is $i_l$. Since $Q_{II}$ is of Type II, both of the arrows $i_0\ra i_{-1}$ and $i_l\la i_{l+1}$ are not part of a 3-cycle. In particular, mutation at the vertices $i_0,\ldots,i_l$ only affects $i_{-1},\ i_{l+1}$, and the vertices in $S$.

Let $\ov{Q}_{III}=\mu_{\textbf{i}_2}(\ov{Q}_{II})$. Let $S^{\pr}$ be the counterclockwise component of $Q_{III}$ containing $i_0$. Then $S^{\pr}\cap\eta(Q_{III})$ is the directed path $i_{-1}\ra i_0\ra\cdots\ra i_{l-1}$. Furthermore, $S^{\pr}$ contains a 3-cycle $i_{l+1}\ra i_{l-1}\ra i_l\ra i_{l+1}$. In particular, any arrow in a clockwise component of $Q_{III}$ is part of a 3-cycle.

There is a 3-cycle $i_k\ra i_{k+1}\ra j\ra i_k$ in $S$ if and only if there is a 3-cycle $i_{k-1}\ra i_k\ra j\ra i_{k-1}$ in $S^{\pr}$.

Swap the labels on the mutable vertices $i_l$ and $i_{l+1}$ in $Q_{III}$. Once again, it is routine to check that $\ov{Q}_{III}$ satisfies the remaining conditions of a quiver of Type III.\end{proof}


Let $\ov{Q}_{III}$ be a Type III quiver. We define a mutation sequence $\mu_{\textbf{j}_3}$ on some of the vertices of the counterclockwise components of $\ov{Q}_{III}$ such that for each $S\in CC(Q_{III})$, the restriction of $\textbf{j}_3$ to $S$ consists of the vertices in $S\setm\eta(Q_{III})$, followed by the vertices $j$ in $S\cap\eta(Q_{III})$ such that $i\ra j$ for some $i\in S\setm\eta(Q_{III})$. If $j$ is the last vertex of $S\cap\eta(Q_{III})$, then it is removed from the list $\textbf{j}_3$.

We say a quiver $\ov{Q}_{IV}$ is of Type IV if the following holds.

\begin{itemize}
\item The clockwise components satisfy most of the same rules as a Type III quiver. The only exception is that if $i$ is the last vertex of some clockwise component, then either $i$ is red with an arrow $i\la i^{\pr}$ or it is green with an arrow $i\ra i^{\pr}$.

\item For $S\in CC(Q_{III})$:
\begin{itemize}
\item The first arrow of $S\cap\eta(Q_{IV})$ is not part of a 3-cycle.
\item If $i$ is the first vertex in $S\cap\eta(Q_{IV})$, then it is green with arrow $i\ra i^{\pr}$.
\item If $i$ is the last vertex in $S\cap\eta(Q_{IV})$, then it is either green with arrow $i\ra i^{\pr}$ or red with arrow $i\la i^{\pr}$. Furthermore, if $i$ is green, then the last arrow $j\ra i$ is not part of a 3-cycle, and $j$ is red with arrows $j\la i^{\pr}$ and $j\la j^{\pr}$.
\item Every other vertex in $S$ is red with arrow $i\la i^{\pr}$.
\end{itemize}
\end{itemize}

\begin{lemma}\label{lem_T4}
Applying the mutation sequence $\mu_{\textbf{j}_3}$ to a quiver of Type III produces a quiver of Type IV, up to a permutation of the mutable vertices.
\end{lemma}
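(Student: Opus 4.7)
The plan is to analyze each counterclockwise component $S \in CC(Q_{III})$ separately. Because the only arrows connecting $S$ to the rest of $Q_{III}$ are the single unoriented-cycle arrows at the first vertex of $S$ and at the last vertex of $S$, and because $\textbf{j}_3$ mutates only at vertices of counterclockwise components (in fact only at interior vertices and their outgoing targets on $\eta$), the sequence $\mu_{\textbf{j}_3}$ acts locally on each $S$ up to its two endpoints. In particular, the clockwise components of $Q_{III}$ are almost untouched; the only possible change is at the last vertex $i$ of a clockwise component $R$, since $i$ is the first vertex of the adjoining counterclockwise component and so may gain or lose one adjacent frozen arrow. This explains why the clockwise components of $\ov{Q}_{IV}$ satisfy the same local conditions as in $\ov{Q}_{III}$, with the stated exception at the last vertex of $R$.

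For a fixed counterclockwise component $S$, I would perform the mutations in $\textbf{j}_3\cap S_0$ in two phases, matching the two-step definition of $\textbf{j}_3$. In phase one, mutate at every vertex $i\in S\setminus\eta(Q_{III})$. In the Type III picture each such $i$ sits in a 3-cycle $j\to i\to k\leftarrow$ (or analogous, with $j,k\in S\cap\eta(Q_{III})$), $i$ is green with $i\to i^\pr$ and $i\to j^\pr$, and mutation at $i$ reverses the 3-cycle, deletes the corresponding arrow of $\eta(Q_{III})$, and turns $i$ red with $i\leftarrow i^\pr$ and $i\leftarrow k^\pr$. Because distinct $i$'s in $S\setminus\eta(Q_{III})$ are non-adjacent in $Q_{III}$, these mutations commute and can be carried out in any order. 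The result is a quiver in which $S\cap\eta(Q_{III})$ has its interior 3-cycle arrows reversed and each former 3-cycle tip $i$ now sends double frozen arrows into the two adjacent main-cycle vertices.

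In phase two, mutate at the targets $j$ of the erstwhile 3-cycle arrows $i\to j$ (excluding $j$ if $j$ is the last vertex of $S\cap\eta(Q_{III})$). After phase one, each such $j$ has become green with the fresh frozen arrow $j\leftarrow i^\pr$ from phase one in addition to $j\to j^\pr$; a direct check of the two-step composite confirms that mutating at $j$ re-reverses the local main-cycle arrow, eliminates the extra frozen arrow coming from $i^\pr$, and after relabeling $i\leftrightarrow j$ in the mutable part (this is the permutation mentioned in the statement), produces exactly the local picture required by Type IV: the first arrow of $S\cap\eta(Q_{IV})$ is not in a 3-cycle, the first vertex is green with $i\to i^\pr$, interior vertices of $S\cap\eta(Q_{IV})$ are red with $i\leftarrow i^\pr$, and the last vertex is handled separately depending on whether its incoming arrow was part of a 3-cycle in $Q_{III}$. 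In the sub-case where the last arrow was in a 3-cycle, phase two skips the mutation at $j=\text{last vertex}$, so $j$ stays red with $j\leftarrow j^\pr$, which is one of the two allowed Type IV configurations; in the opposite sub-case, no 3-cycle ever touches $j$, and tracking the frozen arrows shows $j$ is green with $j\to j^\pr$ and its predecessor $j^\prime\in\eta$ satisfies the extra constraints (red, with arrows from $i^\pr$ and $j^{\prime\,\pr}$) demanded of a Type IV last-vertex.

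The main obstacle is the bookkeeping of frozen arrows at the interfaces: the first vertex of $S$ is shared with a clockwise component, and the last vertex's local behaviour branches into two sub-cases depending on whether the last main-cycle arrow of $S$ lies in a 3-cycle. I would dispose of this by drawing out the two local configurations at the last vertex (3-cycle present vs. absent) and verifying the Type IV axioms by direct mutation computation in each. Once these local pictures are checked and the phase-one/phase-two commutation within each $S$ is justified, the lemma follows by assembling the components and recording the coordinate-permutation $i\leftrightarrow j$ over all 3-cycle pairs of each counterclockwise component.
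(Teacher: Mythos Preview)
Your approach is essentially the same as the paper's: treat each counterclockwise component $S$ separately, mutate first at all vertices of $S\setminus\eta(Q_{III})$ (phase one), then at the designated main-cycle vertices $j$ (phase two), and finish by swapping the labels $i\leftrightarrow j$ on each 3-cycle pair. The paper's proof is organized identically and is similarly terse about the frozen-arrow bookkeeping.

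A couple of small inaccuracies in your write-up are worth flagging before you finalize: after phase one the vertex $i\in S\setminus\eta(Q_{III})$ becomes red with $i\leftarrow i^{\pr}$ and $i\leftarrow j^{\pr}$ (not $i\leftarrow k^{\pr}$), since the frozen arrows $i\to i^{\pr},\ i\to j^{\pr}$ present in Type III simply reverse under $\mu_i$; and your two sub-cases for the last vertex of $S$ are inverted relative to what actually happens. When the last arrow of $S$ \emph{is} in a 3-cycle, phase one turns the last vertex $j$ green (it acquires $j\to i^{\pr}$), and since phase two skips it, it remains green---this is the ``green last vertex'' branch of Type IV (after relabeling, the last arrow is no longer in a 3-cycle). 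When the last arrow of $S$ is \emph{not} in a 3-cycle, the last vertex is untouched by both phases and stays red. These are exactly the checks you said you would carry out, so the strategy is sound; just be careful with the case assignments when you do them.
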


\begin{proof}
Let $\ov{Q}_{III}$ be a quiver of Type III. It is clear that the mutation sequence does not affect any clockwise component with the possible exception of the last vertex.

Let $S$ be a counterclockwise component of $Q_{III}$. Let $S^{\pr}$ be the result of mutating at each of the vertices in $S\setm\eta(Q_{III})$. Then $S^{\pr}$ is a directed path. If $i$ is a vertex of $S\setm\eta(Q_{III})$ and $j$ is the unique vertex in $S$ with $j\ra i$, then $i$ is red in $S^{\pr}$ with arrows $i\la i^{\pr}$ and $i\la j^{\pr}$. Furthermore, $j$ is green in $S^{\pr}$ with an arrow $j\ra i^{\pr}$. All other vertices of $S^{\pr}$ are still red.

Let $S^{\pr\pr}$ be the result of mutating at the remaining vertices in the mutation sequence $\mu_{\textbf{j}_3}$. Let $i$ and $j$ be the same vertices as before, and suppose $j$ is not the last vertex in $S$. Then $j$ is red in $S^{\pr\pr}$ with an arrow $j\la i^{\pr}$ and $i$ is red in $S^{\pr\pr}$ with an arrow $i\la j^{\pr}$. If $j$ is the last vertex, then $i$ and $j$ remain as they were in $S^{\pr}$.

Up to swapping some labels $i,j$ in the mutable part of $\ov{Q}_{III}$, the result of applying $\mu_{\textbf{j}_3}$ to $\ov{Q}_{III}$ is a quiver of Type IV.
\end{proof}

Given a quiver $\ov{Q}_{IV}$ of Type IV, we define a mutation sequence $\mu_{\textbf{j}_4}$ as follows. For each clockwise component $R$, if $\eta(Q_{IV})\cap R$ is $i_1\ra\cdots\ra i_l$ then the restriction of $\mu_{\textbf{j}_4}$ to $R$ is $\mu_{i_1}\cdots\mu_{i_{l-1}}$ if $i_l$ is red and $\mu_{i_1}\cdots\mu_{i_l}$ if $i_l$ is green.

With this setup, we have the following result whose proof we omit.

\begin{lemma}
Applying the mutation sequence $\mu_{\textbf{j}_4}$ to a quiver of Type IV produces a quiver whose vertices are all red.
\end{lemma}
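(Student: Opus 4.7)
The plan is to process $\mu_{\textbf{j}_4}$ one clockwise component at a time and verify that each processing step turns all green vertices of that component red while leaving the rest of the quiver in a Type IV-like state. Since every vertex in $\textbf{j}_4$ lies in some clockwise component and two distinct clockwise components are arc-disjoint in $\eta(Q_{IV})$, the only interaction between the portions of $\mu_{\textbf{j}_4}$ coming from different clockwise components is mediated by the counterclockwise components they share. A careful check, exploiting the Type IV constraints that the first arrow (and the last arrow, when $i_l$ is green) of each counterclockwise component's intersection with $\eta(Q_{IV})$ is not part of a 3-cycle, shows that these interactions do not interfere with the inductive analysis of any single component.

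For a fixed clockwise component $R$ with $R\cap\eta(Q_{IV}) = i_1\to\cdots\to i_l$, I would prove by induction on $s\in\{1,\ldots,l-1\}$ that after applying $\mu_{i_s}\cdots\mu_{i_1}$: the vertices $i_1,\ldots,i_s$ together with the 3-cycle vertices $j_1,\ldots,j_s$ of $R\setm\eta(Q_{IV})$ (where $j_t$ denotes the third vertex of the 3-cycle $i_t\to i_{t+1}\to j_t\to i_t$) are all red; the vertex $i_{s+1}$, if $s+1\le l-1$, is green with the same local arrow structure as in Type IV; and the colors of all other mutable vertices are unchanged. The inductive step is a direct computation of $\mu_{i_{s+1}}$: the two key 2-cycle cancellations are one between the newly added arrow $j_{s+1}\to i_{s+2}$ and the existing arrow $i_{s+2}\to j_{s+1}$ from the Type IV 3-cycle, and one between the newly added arrow $j_{s+1}\to i_{s+1}^{\pr}$ and the frozen arrow $i_{s+1}^{\pr}\to j_{s+1}$ prescribed for $R\setm\eta$ vertices. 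These cancellations make $i_{s+1}$ red, leave $j_{s+1}$ red with a new outgoing arrow to the previous layer, and preserve the expected green Type IV structure at $i_{s+2}$.

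The argument concludes by handling the last vertex $i_l$ in the two cases. In Case 1 ($i_l$ red), the sequence terminates at $\mu_{i_{l-1}}$; induction gives $i_1,\ldots,i_{l-1}$ red, while $i_l$'s frozen arrow $i_l^{\pr}\to i_l$ is untouched, so $i_l$ remains red. In Case 2 ($i_l$ green), the additional mutation $\mu_{i_l}$ is applied; here the essential input is the Type IV constraint that the last arrow $j\to i_l$ in the adjacent counterclockwise component $S^{\text{next}}$ is not part of a 3-cycle and that $j$ is red with frozen arrows $j\la i_l^{\pr}$ and $j\la j^{\pr}$. These precise conditions guarantee the required 2-cycle cancellations under $\mu_{i_l}$, turning $i_l$ red while keeping $j$ red. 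The main obstacle lies in this last-vertex analysis: the green-last-vertex case depends critically on the ``furthermore'' clause in the Type IV definition for counterclockwise components, and verifying that the mutation-induced arrows align precisely with the prescribed frozen structure at $j$ is the crux of the argument.
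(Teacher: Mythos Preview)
The paper explicitly omits the proof of this lemma (``With this setup, we have the following result whose proof we omit''), so there is nothing in the paper to compare your argument against; you are supplying the details the authors left out.

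Your inductive scheme is the natural one and the computations you sketch are correct. In particular: for a clockwise component $R$ with $R\cap\eta(Q_{IV})=i_1\to\cdots\to i_l$, the two 2-cycle cancellations you identify at the step $\mu_{i_{s+1}}$ (namely $j_{s+1}\to i_{s+2}$ against the Type~IV 3-cycle arrow $i_{s+2}\to j_{s+1}$, and $j_{s+1}\to i_{s+1}^{\pr}$ against the frozen arrow $i_{s+1}^{\pr}\to j_{s+1}$) are exactly right, and they propagate the invariant as you describe. Your treatment of the two terminal cases is also correct, and you correctly pinpoint that the ``furthermore'' clause in the Type~IV definition for counterclockwise components is precisely what makes $\mu_{i_l}$ succeed in Case~2.

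One point you should make more explicit is the claim that processing one clockwise component does not spoil the hypotheses needed for another. The mutation $\mu_{i_1}$ creates an arrow $j_1\to k$ into the adjacent counterclockwise component $S$ (where $k$ is the second vertex of $S\cap\eta(Q_{IV})$), and $\mu_{i_l}$ in Case~2 creates an arrow from the penultimate vertex of $S^{\text{next}}$ back into $R$. You should note that the Type~IV constraints force $|S\cap\eta(Q_{IV})|\ge 2$, and that when $|S\cap\eta(Q_{IV})|=2$ the first-vertex-green and last-vertex-green-implies-penultimate-red conditions are incompatible, so the last vertex must be red; hence the vertex $k$ receiving the stray arrow is never itself mutated in $\textbf{j}_4$. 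This closes the only potential cross-component interference. Finally, observe that every green vertex of $\ov{Q}_{IV}$ is either a first vertex of some $S$ (hence a first vertex $i_1$ of some $R$), a green last vertex of some $S$ (hence a green last vertex $i_l$ of some $R$), or an interior vertex of $R\cap\eta(Q_{IV})$; all of these appear in $\textbf{j}_4$, so no green vertex is left untreated.
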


Since the mutation sequences $\mu_{\textbf{j}_3}$ and $\mu_{\textbf{j}_4}$ are the same as $\mu_{\textbf{i}_3}$ and $\mu_{\textbf{i}_4}$ up to relabeling mutable vertices as done in the proofs of Lemmas~\ref{lem_T2}, \ref{lem_T3}, and \ref{lem_T4}, we have now proved Theorem~\ref{affine_mgs}.

\section{Additional Remarks}\label{Sec_conc}

Any quiver $Q$ in the mutation class of an acyclic quiver gives rise to a finite dimensional $\Bbbk$-algebra, denoted $\Bbbk Q/I$, known as a \textbf{cluster-tilted algebra} (as defined in \cite{bmr1}). The quivers of mutation types $\mathbb{A}_n$, $\mathbb{D}_n$, and $\widetilde{\mathbb{A}}_{n}$, which we have considered in this paper, all define cluster-tilted algebras of the corresponding type. By referring to the known results on derived equivalence of cluster-tilted algebras of these types, we suspect that the length of minimal length maximal green sequences is constant on derived equivalence classes. We now briefly summarize these known results.

Let $\Bbbk Q^1/I^1$ and $\Bbbk Q^2/I^2$ be two derived equivalent cluster-tilted algebras. It follows from \cite[Theorem 5.1]{bv08} that $\Bbbk Q^1/I^1$ and $\Bbbk Q^2/I^2$ are of type $\mathbb{A}_n$ if and only if the minimal length maximal green sequences of $Q^1$ have the same length as those of $Q^2$. It follows from \cite[Theorem 1.1]{bastian2012mutation} that if $\Bbbk Q^1/I^1$ and $\Bbbk Q^2/I^2$ are of type $\widetilde{\mathbb{A}}_{n}$, then they have the same number of 3-cycles. Thus the minimal length maximal green sequences of $Q^1$ will have the same length as those of $Q^2$.

In the case of cluster-tilted algebras of type $\mathbb{D}_n$, there is no complete derived equivalence classification. From \cite[Theorem 2.3]{bastian2014towards}, any cluster-tilted algebra $\Bbbk Q^1/I^1$ of type $\mathbb{D}_n$ is derived equivalent to an algebra $\Bbbk Q/I$ whose quiver $Q$ is in one of six classes of mutation type $\mathbb{D}_n$ quivers. Bastian, Holm, and Ladkani call these six classes \textbf{standard forms} and denote them by $(a), (b), (c), (d_1), (d_2), (d_3)$. Any two distinct standard forms which are not of the class $(d_3)$ are not derived equivalent.

If $\Bbbk Q^1/I^1$ is not self-injective, it may be put into its standard form by mutations that the authors call \textbf{good mutations} and \textbf{good double mutations}. Using Theorem~\ref{Thm:typeD}, one checks that the length of minimal length maximal green sequences is invariant under good mutations and good double mutations. If $\Bbbk Q^1/I^1$ is self-injective, then \cite[Lemma 4.5]{bastian2014towards} and Theorem~\ref{Thm:typeD}, shows that the quiver of the standard form of $\Bbbk Q^1/I^1$ has minimal length maximal green sequences with the same length as those of $Q^1.$ These observations suggest the following question.

\begin{question}
Suppose $\Bbbk Q^1/I^1$ and $\Bbbk Q^2/I^2$ are two derived equivalent cluster-tilted algebras where $\textbf{i}_1 \in \text{MGS}(Q^1)$ and $\textbf{i}_2 \in \text{MGS}(Q^2)$ are minimal length maximal green sequences. Is it true that $\ell(\textbf{i}_1) = \ell(\textbf{i}_2)$?
\end{question}





\bibliographystyle{plain}
\bibliography{bib_minlength}

\end{document}